\documentclass[11pt]{amsart}
\usepackage{fullpage, amsmath, amsthm,amsfonts,amssymb, mathrsfs}
\usepackage{hyperref}

\usepackage[pdftex]{color}
\usepackage[all]{xy}

\newtheorem{theorem}[subsection]{Theorem}
\newtheorem{lemma}[subsection]{Lemma}
\newtheorem{corollary}[subsection]{Corollary}
\newtheorem{conjecture}[subsection]{Conjecture}

\newtheorem{proposition}[subsection]{Proposition}
\theoremstyle{definition}

\newtheorem{definition}[subsection]{Definition}
\newtheorem{hypothesis}[subsection]{Hypothesis}
\newtheorem{example}[subsection]{Example}
\newtheorem{remark}[subsection]{Remark}

\newtheorem{notation}[subsection]{Notation}

\newcommand{\yichao}[1]{{\color{red} \sf $\heartsuit\heartsuit\heartsuit$ Yichao: [#1]}}

\numberwithin{equation}{subsection}

\def\calA{\mathcal{A}}
\def\calB{\mathcal{B}}

\def\calD{\mathcal{D}}
\def\calE{\mathcal{E}}
\def\calH{\mathcal{H}}
\def\calL{\mathcal{L}}
\def\calM{\mathcal{M}}
\def\calO{\mathcal{O}}

\def\calQ{\mathcal{Q}}
\def\calS{\mathcal{S}}
\def\calV{\mathcal{V}}

\def\gothe{\mathfrak{e}}

\def\gothp{\mathfrak{p}}

\def\AAA{\mathbb{A}}

\def\CC{\mathbb{C}}
\def\FF{\mathbb{F}}
\def\GG{\mathbb{G}}
\def\LL{\mathbb{L}}
\def\NN{\mathbb{N}}
\def\PP{\mathbb{P}}
\def\QQ{\mathbb{Q}}
\def\RR{\mathbb{R}}

\def\ZZ{\mathbb{Z}}

\def\bfe{\mathbf{e}}
\def\bff{\mathbf{f}}

\def\bfj{\mathbf{j}}

\def\rmM{\mathrm{M}}

\def\scrA{\mathscr{A}}
\def\scrC{\mathscr{C}}
\def\scrH{\mathscr{H}}

\DeclareMathOperator{\End}{End}
\DeclareMathOperator{\Gal}{Gal}
\DeclareMathOperator{\Hom}{Hom}
\DeclareMathOperator{\Aut}{Aut}

\DeclareMathOperator{\Res}{Res}
\DeclareMathOperator{\Spec}{Spec}

\newcommand{\cHom}{\calH om}
\newcommand{\cris}{\mathrm{cris}}
\newcommand{\cSh}{\mathcal{S}h}
\newcommand{\dR}{\mathrm{dR}}

\newcommand{\et}{\mathrm{et}}

\newcommand{\Frob}{\mathrm{Frob}}
\newcommand{\GL}{\mathrm{GL}}
\newcommand{\id}{\mathrm{id}}
\newcommand{\Image}{\mathrm{Im}}
\newcommand{\Ker}{\mathrm{Ker}}
\newcommand{\Lie}{\mathrm{Lie}}

\newcommand{\pr}{\mathrm{pr}}
\newcommand{\Qp}{\QQ_p}

\newcommand{\Sh}{\mathrm{Sh}}

\newcommand{\Zp}{\ZZ_p}

\newcommand{\coker}{\mathrm{Coker}}

\newcommand{\diag}{\mathrm{diag}}
\newcommand{\cl}{\mathrm{cl}}
\newcommand{\fin}{\mathrm{fin}}

\newcommand{\JL}{\mathcal{J}\!\mathcal{L}}

\newcommand{\cH}{\mathcal{H}}

\newcommand{\cO}{\mathcal{O}}
\newcommand{\F}{\mathbb{F}}
\newcommand{\Z}{\mathbb{Z}}
\newcommand{\Q}{\mathbb{Q}}

\newcommand{\Ql}{\overline{\mathbb{Q}}_{\ell}}
\newcommand{\Qlb}{\overline{\QQ}_{\ell}}
\newcommand{\Tr}{\mathrm{Tr}}
\newcommand{\univ}{\mathrm{univ}}

\DeclareMathOperator{\rank}{rank}

\newcommand{\Diag}{\mathrm{Diag}}
\newcommand{\Fpb}{\overline{\mathbb{F}}_p}
\newcommand{\cB}{\mathcal{B}}
\newcommand{\cD}{\mathcal{D}}
\newcommand{\tcD}{\tilde\calD}
\newcommand{\tcE}{\tilde\calE}
\newcommand{\Def}{\mathscr{D}\mathrm{ef}}
\newcommand{\uY}{\underline{Y}}
\newcommand{\Gr}{\mathbf{Gr}}
\newcommand{\im}{\mathrm{Im}}
\newcommand{\Isog}{\mathrm{Isog}}

\newcommand{\Gys}{\mathrm{Gys}}

\newcommand{\len}{\mathrm{length}}

\newcommand{\loc}{\mathrm{loc}}

\newcommand{\hra}{\hookrightarrow}
\newcommand{\ra}{\rightarrow}

\newcommand{\xra}{\xrightarrow}

\begin{document}

\title{Tate cycles on some unitary Shimura varieties mod $p$}
\author{David Helm, Yichao Tian, and Liang Xiao}

\begin{abstract}
Let $F$ be a real quadratic field in which a fixed prime $p$ is inert, and  $E_0$ be an imaginary quadratic field in which $p$ splits; put $E = E_0F$.   
Let $X$ be  the  fiber over $\FF_{p^2}$ of the Shimura variety for $G(U(1,n-1) \times U(n-1,1))$ with hyperspecial level structure at $p$ for some integer $n\geq 2$.
We show that, under some genericity conditions,
the middle-dimensional Tate classes of $X$ are generated by 
the irreducible components of its supersingular locus.
We also discuss a general conjecture regarding special cycles on the special fibers of unitary Shimura varieties, and on their relation to Newton stratification.
\end{abstract}

\subjclass[2010]{11G18 (primary), 14G35 14C25 14C17 11R39 (secondary).}
\keywords{Supersingular locus, Special fiber of Shimura varieties, Deligne--Lusztig varieties, Tate conjecture}

\maketitle

\setcounter{tocdepth}{1}
\tableofcontents

\section{Introduction}
The study of the geometry of Shimura varieties lies at the heart of the Langlands program.  Arithmetic information of Shimura varieties builds a bridge relating the world of automorphic representations and the world of Galois representations.

One of the interesting topics in this area is to understand the supersingular locus of the special fibers of Shimura varieties, or more generally, any interesting stratifications (e.g. Newton or Ekedahl-Oort stratification) of the special fibers of  Shimura varieties. 
 The case of unitary Shimura varieties has been extensively studied.  
In \cite{vollaard-wedhorn}, Vollaard and Wedhorn showed that the supersingular locus of the special fiber of the $GU(1,n-1)$-Shimura variety at an inert prime is a union of Deligne-Lusztig varieties. 
Further, Howard and Pappas \cite{howard-pappas} studied the case of $GU(2,2)$ at an inert prime, and Rapoport, Terstiege and Wilson proved similar results for    $GU(n-1,1)$ at a ramified prime. 
Finally, we remark that G\"ortz and He \cite{goertz-he} studied the basic loci in a slightly more general class of Shimura varieties. 

In all the work mentioned above, the authors use the uniformization theorem  of Rapoport--Zink to reduce the problem to the study of certain Rapoport--Zink spaces. 
In this paper, we take a different approach. Instead of using the uniformization theorem, we study the basic locus (or more generally other Newton strata) of certain unitary Shimura varieties by considering 
 correspondences between unitary Shimura varieties of different signatures. 
  This  method was introduced by the first author in  \cite{helm-PEL, helm}, and applied  successfully  to  quaternionic Shimura varieties by the second and the third authors \cite{tian-xiao1}.
  
  Another new aspect of this work is that, we study not only  the global geometry of the supersingular locus, but also their relationship  with the Tate Conjecture for Shimura varieties over finite fields. 
  We show that the basic locus  contributes to  all ``generic'' middle dimensional Tate cycles of the special fiber of the Shimura variety. Similar results have been obtained by the second and the third authors for even dimensional Hilbert modular varieties at an inert prime \cite{tian-xiao2}.
We believe that, this phenomenon is a general philosophy which holds for more general Shimura varieties.  Our slogan is: {\it irreducible components of the basic locus of a Shimura variety should generate all Tate classes under some genericity condition on the automorphic representations.}\\

We explain in more details the main results of this paper.
Let $F$ be a real quadratic field,  $E_0$ be an imaginary quadratic field, and $E=E_0F$.
 Let $p$ be a prime number inert in $F$, and split in $E_0$. 
 Let $ \gothp$ and $\bar \gothp$ denote the two places of $E$ above $p$ so that $E_\gothp$ and $ E_{\bar \gothp}$ are both isomorphic to $\QQ_{p^2}$, the unique unramified quadratic  extension of $\QQ_{p}$.
For an integer $n\geq 1$, let $G$ be the similitude unitary group associated to a division algebra over $E$ equipped with an involution of second kind. In the notation of Subsection~\ref{S:notation-real-quadratic}, our  $G$ is  denoted as $G_{1,n-1}$.
This is an algebraic group over $\QQ$ such  that $G(\QQ_p)\simeq \QQ_p^{\times }\times \GL_n(E_{\gothp})$ and $G(\RR)$ is the unitary similitude group with signature $(1,n-1)$ and $(n-1,1)$ at the two  archimedean places. (For a precise definition, see Section~\ref{S:Shimura data}.)

Let $\AAA$  denote the ring of finite adeles of $\QQ$, and $\AAA^{\infty}$ be its finite part.
 Fix a sufficiently small open compact subgroup  $K\subseteq G(\AAA^{\infty})$  with $K_p=\ZZ_p^{\times}\times \GL_n(\ZZ_{p^{2}}) \subseteq G(\QQ_p)$, where $\ZZ_{p^2}$ is the ring of integers of $\QQ_{p^2}$.
Let $\calS h(G)_{K}$ be the Shimura variety associated to $G$ of level $K$.\footnote{Strictly speaking, the moduli space $\calS h(G)_K$ is $\#\ker^1(\QQ,G)$-copies of the classical Shimura variety whose $\CC$-points are given by the double coset space $G(\QQ)\backslash G(\AAA)/K_{\infty} K$, where $K_{\infty}\subseteq G(\RR)$ is the maximal compact subgroup modulo center. See \cite[page 400]{kottwitz} for details.}
  
According to Kottwitz \cite{kottwitz},  when $K^p$ is neat,  $\calS h(G)_{K}$ admits a proper and smooth integral model over $\ZZ_{p^2}$ which parametrizes certain polarized abelian schemes with $K$-level structure (See Subsection~\ref{S:defn of Shimura var}).  
Let $\Sh_{1,n-1}$ denote the special fiber of $\calS h(G)_{K}$ over $\FF_{p^2}$. This is a proper smooth variety over $\FF_{p^2}$ of dimension $2(n-1)$.
Let $\Sh_{1,n-1}^{\mathrm{ss}}$ denote the supersingular locus of $\Sh_{1,n-1}$, i.e. the reduced closed subvariety of $\Sh_{1,n-1}$ that parametrizes supersingular abelian varieties. 
We will see in Proposition~\ref{P:normal-bundle} that $\Sh_{1,n-1}^{\mathrm{ss}}$ is equidimensional of dimension $n-1$.

Fix a prime $\ell\neq p$. The  $\ell$-adic \'etale  cohomology group 
$H^{2(n-1)}_{\et}\big(\Sh_{1,n-1,\Fpb},\Qlb(n-1)\big)$ is equipped with a natural action by $\Gal(\overline \FF_p/\FF_{p^2})\times \Qlb[K\backslash G(\AAA^{\infty})/K]$. 
We will take advantage of the Hecke action to  consider a variant of the Tate conjecture for $\Sh_{1,n-1}$.

Fix an irreducible admissible  representation $\pi$ of $G(\AAA^{\infty})$ (with coefficients in $\Qlb$). 
The $K$-invariant  subspace of $\pi$, denoted by $\pi^K$, is  a finite dimensional  irreducible representation of the Hecke algebra $\Qlb[K\backslash G(\AAA^{\infty})/K]$. 
We denote by 
  $H^{2(n-1)}_{\et}\big(\Sh_{1,n-1,\Fpb},\Qlb(n-1)\big)_{\pi}$ the $\pi^{K}$-isotypic component of $H^{2(n-1)}_\et\big(\Sh_{1,n-1,\Fpb},\Qlb(n-1)\big)$, and put 
\[
H^{2(n-1)}_\et\big(\Sh_{1,n-1,\Fpb},\Qlb(n-1)\big)_{\pi}^{\fin}:=\bigcup_{\FF_{q}/\FF_{p^2}} H^{2(n-1)}_\et\big(\Sh_{1,n-1,\Fpb},\Qlb(n-1)\big)_{\pi}^{\Gal(\Fpb/\FF_q)},
\] 
where $\FF_q$ runs through all finite extensions of $\FF_{p^2}$.
By projecting to the $\pi^K$-isotypic component, we have an $\ell$-adic cycle class map:
\begin{equation}\label{E:cycle-map}
\cl^{n-1}_{\pi}: A^{n-1}(\Sh_{1,n-1,\Fpb})\otimes_{\ZZ}\Qlb \longrightarrow H^{2(n-1)}_{\et}\big(\Sh_{1,n-1,\Fpb},\Qlb(n-1)\big)_{\pi}^{\fin},
\end{equation}
where $ A^{n-1}(\Sh_{1,n-1,\Fpb})$ is the abelian group of codimension $n-1$ algebraic cycles on $\Sh_{1,n-1,\Fpb}$. 
Then the Tate conjecture for $\Sh_{1,n-1}$ predicts that the above map is surjective. Our main result confirms exactly this statement under some ``genericity'' assumptions on $\pi$.

From now on, we  assume that $\pi$ satisfies Hypothesis~\ref{H:automorphic assumption} to ensure the non-triviality of the $\pi$-isotypic component of the cohomology groups.
 In particular, $\pi$ is the finite part of an automorphic cuspidal representation of $G(\AAA)$, and $H^{2(n-1)}_{\et}\big(\Sh_{1,n-1,\Fpb},\Qlb(n-1)\big)_{\pi}\neq 0$. 
Let  $\pi_p$ denote the $p$-component of $\pi$, which is an unramified principal series as $K_p$ is hyperspecial. Since $G(\QQ_p)\simeq \QQ_p^{\times}\times \GL_n(E_{\gothp})$, we  write $\pi_p=\pi_{p,0}\otimes \pi_{\gothp}$, where $\pi_{p,0}$ is a character of $\QQ_p^{\times}$ and $\pi_\gothp$ is an irreducible admissible representation of $\GL_n(E_\gothp)$.


Our main theorem is the following.
\begin{theorem}\label{T:main-result-introduction}
Suppose that $\pi$ is the finite part of an automorphic representation of $G(\AAA)$ that admits a cuspidal base change to $\GL_n(\AAA_E)\times\AAA_{E_0}^{\times}$, and 
  that the Satake parameters of $\pi_{\gothp}$ are distinct modulo roots of unity. 
 Then  $H^{2(n-1)}_{\et}\big(\Sh_{1,n-1,\Fpb},\Qlb(n-1)\big)_{\pi}^{\fin}$ is generated by the cohomological classes of the irreducible components of the supersingular locus  $\Sh_{1,n-1}^{\mathrm{ss}}$. In particular, the cycle class map \eqref{E:cycle-map} is surjective. \end{theorem}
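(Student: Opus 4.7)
The plan is to realize the irreducible components of $\Sh_{1,n-1}^{\mathrm{ss}}$ via a family of correspondences between $\Sh_{1,n-1}$ and a ``simpler'' auxiliary Shimura variety, and then compare the image of the associated Gysin pushforward against the full $\pi$-isotypic component of the middle cohomology using the genericity hypothesis on $\pi_{\gothp}$. Concretely, I would first analyze the supersingular locus $\Sh_{1,n-1}^{\mathrm{ss}}$ Newton-stratum by Newton-stratum and give a combinatorial description of its irreducible components. Building on the normal bundle computation of Proposition~\ref{P:normal-bundle} and an analogue of the Vollaard--Wedhorn picture for the $G(U(1,n-1)\times U(n-1,1))$-setup, I expect each top-dimensional component to be (a finite quotient of) a product of Deligne--Lusztig varieties of dimension $n-1$ fibered over the ``totally definite'' Shimura variety $\Sh_{0,n}$, which is zero-dimensional. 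This gives a concrete parametrization of components by pairs consisting of a $\mathbb{F}_{p^2}$-point of $\Sh_{0,n}$ and a type datum encoding a flag in an $n$-dimensional $E_\gothp$-vector space.

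Second, rather than invoking Rapoport--Zink uniformization, I would follow the Helm--Tian--Xiao correspondence method: introduce the $p$-isogeny moduli space $\pIsog$ parametrizing chains of $p$-isogenies between polarized abelian varieties of the relevant PEL type, which provides explicit algebraic correspondences between $\Sh_{1,n-1}$ and the $0$-dimensional Shimura variety $\Sh_{0,n}$ (or more precisely a tower of such). Taking the Gysin pushforward along these correspondences yields a map
\[
H^{0}_{\et}(\Sh_{0,n,\Fpb},\Qlb)^{\oplus N}\longrightarrow H^{2(n-1)}_{\et}\bigl(\Sh_{1,n-1,\Fpb},\Qlb(n-1)\bigr),
\]
for an appropriate $N$ counting flag types, whose image is exactly the span of the fundamental classes of the irreducible components of $\Sh_{1,n-1}^{\mathrm{ss}}$ (up to Frobenius descent to $\fin$-level). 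Identifying this map with an explicit Hecke-equivariant combinatorial operator built from intertwiners on $\GL_n(E_\gothp)$ will then be the core computation.

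Third, I would exploit the assumption that $\pi$ admits a cuspidal base change to $\GL_n(\AAA_E)\times\AAA_{E_0}^{\times}$ together with Kottwitz-type Matsushima formulas and the theorem of Kottwitz--Shin on the $\ell$-adic Galois representations attached to cuspidal automorphic forms on $G$. These results compute $H^{2(n-1)}_{\et}(\Sh_{1,n-1,\Fpb},\Qlb(n-1))_\pi^{\fin}$ in terms of $\pi_p=\pi_{p,0}\otimes\pi_\gothp$, and in particular identify it with an explicit representation of the unramified Hecke algebra at $\gothp$ cut out by the Satake parameters of $\pi_\gothp$. Matching the dimensions of source and target of the restriction of the Gysin map to the $\pi^K$-isotypic quotient then reduces the surjectivity statement to showing that a specific ``intertwining'' matrix built from the Satake parameters is invertible. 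The hypothesis that the Satake parameters of $\pi_\gothp$ are distinct modulo roots of unity is precisely the non-vanishing of the relevant determinant (a Vandermonde-type expression).

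The main obstacle I anticipate is the last step: verifying that the explicit matrix of Hecke intertwiners, obtained from the Gysin contributions of the various supersingular components, is generically non-degenerate and matches the combinatorics of the Jacquet module of $\pi_\gothp$. This demands a careful, essentially representation-theoretic, computation of how the irreducible components of $\Sh_{1,n-1}^{\mathrm{ss}}$ decompose under the Hecke action, and translating it into a statement on the image in the $\pi$-isotypic part. Once this matrix is shown invertible under the genericity condition, surjectivity of $\cl^{n-1}_\pi$ onto the $\pi$-isotypic component, and hence Theorem~\ref{T:main-result-introduction}, follows; summing over cuspidal $\pi$'s satisfying Hypothesis~\ref{H:automorphic assumption} and handling the remainder via the vanishing of the non-cuspidal contribution then completes the argument.
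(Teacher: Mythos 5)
Your strategy coincides with the paper's: the cycles you want are exactly the correspondences $Y_j$ ($1\leq j\leq n$) between $\Sh_{1,n-1}$ and the discrete Shimura variety $\Sh_{0,n}$ constructed in Section~\ref{Section:U(1,n)}, whose images cover $\Sh_{1,n-1}^{\mathrm{ss}}$ (Proposition~\ref{P:normal-bundle}), the map you write down is the Jacquet--Langlands map $\JL=\sum_i\JL_i$ of \eqref{E:total-JL}, and the endgame (Kottwitz's description of the $\pi$-isotypic cohomology, multiplicity one via White, reduction to invertibility of an intersection matrix, dimension count) is exactly Subsections~\ref{S:l-adic-cohomology} and \ref{S:Tate-cycles} and the proof of Theorem~\ref{T:main-theorem}. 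So this is not a different route.

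The genuine gap is that the step you defer as ``the main obstacle'' is the actual content of the proof, and your description of it is off in a way that matters. The composite $\nu\circ\JL$ is computed by excess intersection: its $(i,j)$-entry is an explicit $\Z$-linear combination of Hecke operators $T_\gothp^{(k)}S_\gothp^{-1}$ whose coefficients are the fundamental intersection numbers $N(n,k)$ of top Chern classes of the excess bundle on the Frobenius-twisted incidence varieties $Z_k^{\langle n\rangle}$ (Theorem~\ref{T:intersection}, proved by a blow-up induction, and Theorem~\ref{Th-Conj}); the resulting determinant is not a Vandermonde but, after a resultant-type identity (Theorem~\ref{T:determinant}), equals $\pm p^{n(n^2-1)/3}\prod_{i<j}(\alpha_{\pi_\gothp,i}-\alpha_{\pi_\gothp,j})^2\big/\big(\prod_i\alpha_{\pi_\gothp,i}\big)^{n-1}$ (Lemma~\ref{L:Hecke-determiant}). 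None of this is supplied or even correctly anticipated by ``invertibility of a Vandermonde-type matrix.'' Moreover you misplace the hypothesis: nonvanishing of this determinant, hence injectivity of $\JL_\pi$, needs only that the $\alpha_{\pi_\gothp,i}$ be \emph{distinct}; the stronger assumption that they are distinct \emph{modulo roots of unity} is used elsewhere, namely in the dimension count via \eqref{E:Galois-Sh}, to guarantee that the generalized $\Frob_{p^2}$-eigenvalues of $\rho_{\pi_\gothp}\otimes\wedge^{n-1}\rho_{\pi_\gothp}$ lying in $p^{n(n-1)}\mu_\infty$ have total multiplicity exactly $n$, so that $\dim H^{2(n-1)}_{\et}(\Sh_{1,n-1,\Fpb},\Qlb(n-1))_\pi^{\fin}\leq n\dim H^0_{\et}(\Sh_{0,n,\Fpb},\Qlb)_\pi$ and injectivity forces surjectivity (this also uses $m_{1,n-1}(\pi)=m_{0,n}(\pi)$, which is where the cuspidal base change enters, together with $\#\ker^1(\Q,G_{1,n-1})=\#\ker^1(\Q,G_{0,n})$). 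Finally, your last sentence about ``vanishing of the non-cuspidal contribution'' is moot here: $D$ is a division algebra, so $\Sh_{1,n-1}$ is proper and there is no continuous spectrum to handle.
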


This theorem will be restated in a more precise form in Theorem~\ref{T:main-theorem}. 
Here, the assumption that the Satake parameters of $\pi_{\gothp}$ are distinct modulo roots of unity is crucial for our method. It is closely tied to our geometric description of the irreducible components.  This condition will be reformulated  in Theorem~\ref{T:main-theorem} in terms of the Frobenius eigenvalues of certain Galois representation attached to $\pi_{\gothp}$ via the unramified local Langlands correspondence.
The other automorphic assumption on $\pi$ is of technical nature. 
It is imposed here to ensure certain equalities on the automorphic multiplicity on $\pi$ (See Remark~\ref{R:automorphic-mult}). 
The method of our paper may be extended to more general representations $\pi$, if we have  more knowledge of the multiplicity of automorphic forms on unitary groups.

What we will  prove is more precise than stated in Theorem~\ref{T:main-result-introduction}. 
We need another unitary group $G'=G_{0,n}$ over $\QQ$ for $E/F$ as in Lemma~\ref{L:change-signature}, which is the unique inner form of $G$ such that  $G'(\AAA^{\infty})\simeq G(\AAA^{\infty})$ and   the signatures of $G'$ at the two archimedean places are $(0,n)$ and  $(n,0)$. 
Let $\Sh_{0,n}$ denote the (zero-dimensional) Shimura variety over $\FF_{p^2}$ associated to $G'$. 
We will show  in Proposition~\ref{P:normal-bundle} that the supersingular locus $\Sh_{1,n-1}^{\mathrm{ss}}$ is a union of $n$ closed subvarieties $Y_j$ with $1\leq j\leq n$ such that each  of $Y_j$ admits a fibration   over $\Sh_{0,n}$ of the same level $K\subseteq G(\AAA^{\infty})\simeq G'(\AAA^{\infty})$ with fibers isomorphic to  a certain proper and smooth closed subvariety in a product of Grassmannians. 
 In other words, each $Y_j$ is an algebraic correspondence between $\Sh_{1,n-1}$ and $\Sh_{0,n}$:
 \[
 \Sh_{0,n} \longleftarrow Y_j \longrightarrow \Sh_{1,n-1}.
 \] This can be viewed as a geometric realization of the  Jacquet--Langlands correspondence  between $G$ and $G'$ in the sense of \cite{helm-PEL}. 
Alternatively, we may view these $Y_j$ as Hecke correspondences between special fibers of unitary Shimura varieties of different signatures.
To prove Theorem~\ref{T:main-result-introduction},
it suffices to show that, when the Satake parameters of $\pi_{\gothp}$ are distinct modulo roots of unity,   $H^{2(n-1)}_{\et}\big(\Sh_{1,n-1,\Fpb},\Qlb(n-1)\big)_{\pi}^{\fin}$ is generated by the cohomology classes of the irreducible components of $Y_j$. 
The key point is to show that the $\pi$-projection of  the intersection matrix of $Y_j$ is non-degenerate under the assumption above on $\pi_{\gothp}$.\\

We briefly describe the structure of this paper. In Section~\ref{Sec:intro}, we consider a more general setup of unitary Shimura varieties, and propose a general conjecture, which roughly predicts the existence of a certain algebraic correspondences between the special fibers of  Shimura varieties  with hyperspecial level at $p$ associated to unitary groups with different signatures at infinity (Conjecture~\ref{Conj:main}). 
Theorem~\ref{T:main-result-introduction} is a special case of Conjecture~\ref{Conj:main}. 
We believe that our conjecture will provide a  new perspective to understand of the special fibers of Shimura varieties.
In Section~\ref{S:preliminary}, we review some Dieudonn\'e theory and Grothendieck--Messing deformation theory that will be frequently used in later sections. Section~\ref{Section:U(1,n)} is devoted to the study of the supersingular locus $\Sh_{1,n-1}^{\mathrm{ss}}$, and construct the subvarieties $Y_j$ mentioned above. 
In Section~\ref{S:fundamental-intersection-number}, we compute certain intersection number on  products of Grassmannian varieties. 
These numbers will play a fundamental role in our later computation of the intersection matrix of the $Y_j$. 
In Section~\ref{Sect:intersection-matrix}, we will compute explicitly  the intersection matrix of the $Y_j$'s (Theorem~\ref{Th-Conj}), and show that its $\pi$-isotypic projection of the intersection matrix is non-degenerate as long as the Satake parameters of $\pi_{\gothp}$ are distinct (as opposed to being distinct modulo roots of unity).
 Then an easy cohomological  computation  allows us to conclude the proof of our main theorem. 
In Section~\ref{Sec:GU rs}, we will generalize the construction of the cycles $Y_j$ to the Shimura variety associated to unitary group for $E/F$ of signature $(r,s)\times (s,r)$ at infinity. 
In this case, we only obtain some partial results on these cycles predicted   by Conjecture~\ref{Conj:main}: the union of these cycles is exactly the supersingular locus of the unitary Shimura variety in question (Theorem~\ref{T:Zj supersingular locus}).
%



\subsection*{Acknowledgments}
We thank Xinwen Zhu for sharing his insights into Conjecture~\ref{Conj:main} and allowing us to include them in this current paper.
We also thank Ga\"etan Chenevier,
Matthew Emerton, Robert Kottwitz,   Sug-Woo Shin and Jacques Tilouine
for extremely helpful discussions.
We thank the anonymous referees for their careful reading of the paper and helpful comments.
We thank Fields Institute for hosting the thematic program, during which this collaboration started.
D.H. thanks University of Chicago and University of California at Irvine for hospitality when he visited.
Y.T. thanks the hospitality of Institut de Hautes \'Etudes Scientifiques where part of this work was written.
Y.T. thanks University of California at Irvine and University of Connecticut at Storrs for hospitality when he visited.
D.H. was partially supported by NSF grant DMS--1161582.
Y.T.  was supported  in part by the National Natural Science Foundation of China (No. 11321101).
L.X. was partially supported by Simons Collaboration Grant \#278433, CORCL grant from University of California, Irvine, and NSF Grant DMS--1502147.

\section{The conjecture on special cycles}
\label{Sec:intro}
We will only discuss certain unitary Shimura varieties so that the description becomes explicit. We will discuss after Conjecture~\ref{Conj:main} on how to possibly extend this conjecture to more general Shimura varieties.

\subsection{Notation}
\label{S:division algebra}
We fix a prime number $p$ throughout this paper.
We fix an isomorphism $\iota_{p}: \CC \xra{\sim} \overline \QQ_p$.
Let $\QQ_p^\mathrm{ur}$ be the maximal unramified extension of $\QQ_p$ inside $\overline \QQ_p$.

Let $F$ be a totally real field of degree $f$ in which $p$ is inert.
We label all real embeddings of $F$, or equivalently (via $\iota_p$), all $p$-adic embeddings of $F$ (into $\QQ_p^\mathrm{ur}$) by $\tau_1, \dots, \tau_f$ so that post-composition by the Frobenius map takes $\tau_i $ to $\tau_{i+1}$. Here the subindices are taken modulo $f$.
Let $E_0$ be an imaginary quadratic extension of $\Q$, in which $p$ splits.   Put $E=E_0F$. Denote by $v$ and $\bar v$ the two $p$-adic places of $E_0$.
Then  $p$ splits into two primes $\gothp$ and $\bar \gothp$ in $E$, where $\gothp$ (resp. $\bar \gothp$) is the $p$-adic place above $v$ (resp. $\bar v$).
Let $q_i$ denote the embedding $E \to E_\gothp \cong F_p \xrightarrow{\tau_i} \overline \QQ_p$ and $\bar q_i$ the analogous embedding which factors through $E_{\bar \gothp}$ instead. Composing with $\iota^{-1}_p$, we regard $q_i$ and $\bar q_i$ as complex embeddings of $E$, and  we put $\Sigma_{\infty,E}=\{q_1,\dots, q_f, \bar q_1,\dots,\bar q_f\}.$

\subsection{Shimura data}
\label{S:Shimura data}
Let $D$ be a division algebra of dimension $n^2$ over its center $E$, equipped with a positive involution $*$ which restricts to the complex conjugation $c$ on $E$. In particular, $D^{\mathrm{opp}}\cong D\otimes_{E, c}E$. 
We assume that $D$ splits at $\gothp$ and $\bar \gothp$, and we fix an isomorphism
$$
D \otimes_\QQ \QQ_p \simeq \rmM_n(E_{\gothp}) \times\rmM_n(E_{\bar\gothp})\cong \rmM_{n}(\QQ_{p^f})\times \rmM_n(\QQ_{p^{f}}),
$$
where  $*$ switches the two direct factors.  We use $\gothe$ to denote the element of $D \otimes_\QQ \QQ_p$ corresponding to the $(1,1)$-elementary matrix\footnote{By a $(1,1)$-elementary matrix, we mean an $n\times n$-matrix whose $(1,1)$-entry is $1$ and whose other entries are zero.} in the first factor.
Let  $a_{\bullet}=(a_i)_{1\leq i\leq f}$ be a tuple of $f$ numbers with  $a_i\in \{0,\dots, n\}$. Assume that there exists an element $\beta_{a_{\bullet}}\in (D^\times)^{*=-1}$ such that the following condition is  satisfied:\footnote{As explained in the proof of \cite[Lemma~I.7.1]{harris-taylor}, when $n$ is odd, such $\beta_{a_\bullet}$ always exists, and when $n$ is even, existence of $\beta_{a_\bullet}$ depends on the parity of $a_1+\cdots +a_f$. See also the proof of Lemma~\ref{L:change-signature}.}

\begin{itemize}
\item Let $G_{a_{\bullet}}$ be  the algebraic group over $\Q$ such that $G_{a_{\bullet}}(R)$ for a $\QQ$-algebra $R$ consists of  elements $g\in (D^{\mathrm{opp}} \otimes_\QQ R)^{\times}$ with  $g\beta_{a_{\bullet}} g^*=c(g) \beta_{a_{\bullet}}$ for some $c(g)\in R^{\times}$. If $G_{a_{\bullet}}^1$ denotes the kernel of the similitude character  $c: G_{a_{\bullet}}\ra \GG_{m,\Q}$, then there exists an isomorphism
\[
G^1_{a_{\bullet}}(\RR)\simeq \prod_{i=1}^f U(a_i,n-a_i),
\]
where the $i$-th factor corresponds to the real  embedding $\tau_i:F\hra \RR$.

\end{itemize}
Note that the assumption on $D$ at $p$ implies that
\[
G_{a_{\bullet}}(\Q_p)\simeq \Q_p^{\times} \times \GL_n(E_{\gothp})\cong \Q_p^{\times}\times \GL_n(\Q_{p^f}).
\]
We put  $V_{a_{\bullet}}=D$ and view it as a left $D$-module.
Let $\langle-,-\rangle_{a_{\bullet}}: V_{a_{\bullet}}\times V_{a_{\bullet}}\ra \Q$ be the perfect alternating pairing  given by
\[
\langle x,y\rangle_{a_{\bullet}}=\Tr_{D/\Q}(x\beta_{a_{\bullet}} y^*),\quad \text{for  }x,y\in V_{a_{\bullet}}.
\]
Then $G_{a_{\bullet}}$ is identified with the similitude group associated to $(V_{a_{\bullet}},\langle-,-\rangle_{a_{\bullet}})$, i.e. for all $\Q$-algebra $R$, we have
\[
G_{a_{\bullet}}(R)=\big\{g\in \End_{D\otimes_{\Q} R}(V_{a_{\bullet}}\otimes_{\Q} R)\;\big|\; \langle gx,gy\rangle_{a_{\bullet}}=c(g)\langle x,y\rangle_{a_{\bullet}} \text{ for some }c(g)\in R^{\times}\big\}.
\]

Consider the homomorphism of $\RR$-algebraic groups $h: \Res_{\CC/\RR}(\GG_m)\ra G_{a_{\bullet},\RR}$ given by
\begin{equation}\label{E:deligne-homomorphism}
h(z)=\prod_{i=1}^{f} \diag(\underbrace{z,\dots,z}_{a_{i}}, \underbrace{\bar z,\dots, \bar z}_{n-a_{i}}),\quad \text{for }z=x+\sqrt{-1}y.
\end{equation}
Let $\mu_{h}: \GG_{m,\CC}\ra G_{a_{\bullet},\CC}$ be the composite of $h_{\CC}$ with the map $\GG_{m,\CC}\ra \Res_{\CC/\RR}(\GG_{m})_{\CC}\cong \CC^{\times }\times \CC^{\times}$ given by $z\mapsto (z,1)$. Here, the first copy of $\CC^{\times}$ in $\Res_{\CC/\RR}(\GG_{m})_{\CC}$ is the one indexed by the identity element in $\mathrm{Aut}_{\RR}(\CC)$, and the other copy of $\CC^\times$ is indexed by the complex conjugation.

Let $E_{h}$ be the reflex field of $\mu_{h}$, i.e. the minimal subfield of $\CC$ where the conjugacy class of $\mu_{h}$ is defined.
 It has the following explicit description.  The group $\Aut_{\QQ}(\CC)$ acts naturally on $\Sigma_{\infty,E}$, and hence on the functions on $\Sigma_{\infty,E}$.
Then $E_{h}$ is the subfield of $\CC$ fixed by  the stabilizer of the  $\ZZ$-valued function $a$ on $\Sigma_{\infty,E}$ defined by $a(q_i)=a_i$ and $a(\bar q_i)=n-a_i$.
The isomorphism $\iota_p: \CC \xrightarrow{\sim} \overline \QQ_p$ defines a $p$-adic place $\wp$ of $E_h$. By our hypothesis on $E$, the local field $E_{h,\wp}$ is an unramified extension of $\Q_p$ contained in $\Q_{p^f}$,  the unique unramified extension over $\Q_p$ of degree $f$.

\subsection{Unitary Shimura varieties of PEL-type}
\label{S:defn of Shimura var}
Let $\calO_D$ be a $*$-stable order of $D$ and $\Lambda_{a_{\bullet}}$ an $\calO_D$-lattice of $V_{a_{\bullet}}$ such that $\langle \Lambda_{a_{\bullet}}, \Lambda_{a_{\bullet}} \rangle_{a_{\bullet}} \subseteq \ZZ$ and  $\Lambda_{a_{\bullet}} \otimes_{\Z} \Z_{p}$ is self-dual under the alternating pairing induced by $\langle-,-\rangle_{a_{\bullet}}$.
We put  $K_p=\Z_p^{\times}\times \GL_{n}(\cO_{E_{\gothp}})\subseteq G_{a_{\bullet}}(\Q_p)$,
and fix an open compact subgroup $K^p\subseteq G_{a_{\bullet}}(\AAA^{\infty,p})$ such that $K = K^pK_p$ is \emph{neat}, i.e. $G_{a_{\bullet}}(\Q)\cap g K g^{-1}$ is torsion free  for any $g\in G_{a_{\bullet}}(\AAA^{\infty})$.

Following
\cite{kottwitz}, we have a unitary Shimura variety $\cSh_{a_\bullet}$ defined over $\ZZ_{p^f}$;\footnote{Although one can descend $\cSh_{a_\bullet}$ to the  subring $\cO_{E_{h,\wp}}$ of $\ZZ_{p^f}$, we ignore this minor improvement here.} it represents the functor that takes a locally noetherian $\ZZ_{p^f}$-scheme $S$ to the set of isomorphism classes of tuples $(A, \lambda, \eta)$, where
\begin{enumerate}
\item
$A$ is an $fn^2$-dimensional abelian variety over $S$ equipped with an action of $\calO_D$ such that the induced action on $\Lie(A/S)$ satisfies the \emph{Kottwitz determinant condition}, that is, if we view the \emph{reduced} relative de Rham homology $H_1^\dR(A/S)^\circ : = \gothe H_1^\dR(A/S)$ and its quotient $\Lie^{\circ}_{A/S} : = \gothe\cdot \Lie_{A/S}$  as a module over $F_p \otimes_{\Zp} \calO_S \cong \bigoplus_{i=1}^f \calO_S$, they, respectively, decompose into the direct sums of locally free $\calO_S$-modules $H_1^\dR(A/S)^\circ_i$ of rank $n$ and, their quotients, locally free $\calO_S$-modules $\Lie^\circ_{A/S,i}$ of rank $n-a_i$;
\item
$\lambda: A \to A^\vee$ is a prime-to-$p$ $\calO_D$-equivariant polarization such that the Rosati involution induces the involution $*$ on $\calO_D$;
\item
$\eta$ is a collection of,
for each connected component $S_j$ of $S$ with a geometric point $\bar s_j$, a $\pi_1(S_j, \bar s_j)$-invariant $K^p$-orbit of isomorphisms $\eta_j: \Lambda_{a_{\bullet}} \otimes_\ZZ \widehat \ZZ^{(p)} \simeq T^{(p)}(A_{\bar s_j})$ such that the following diagram commutes for an isomorphism $\nu(\eta_j) \in \Hom( \widehat{\ZZ}^{(p)}, \widehat{\ZZ}^{(p)}(1))$:
\[
\xymatrix@C=60pt{
\Lambda_{a_{\bullet}} \otimes_{\ZZ}\widehat{\ZZ}^{(p)}\ \times\ \Lambda_{a_{\bullet}} \otimes_{\ZZ}\widehat{\ZZ}^{(p)} \ar[d]^{\eta_j \times \eta_j}
\ar[r]^-{\langle -, - \rangle}
& \widehat{\ZZ}^{(p)}
\ar[d]^{\nu(\eta_j)}
\\
T^{(p)}A_{\bar s_j} \ \times \ T^{(p)}A_{\bar s_j}
\ar[r]^-{\textrm{Weil pairing}}
&
\widehat{\ZZ}^{(p)}(1),
}
\]
where $\widehat \ZZ^{(p)} = \prod_{\ell \neq p}\ZZ_\ell$ and $T^{(p)}(A_{\bar s_j})$ denotes the product of the $\ell$-adic Tate modules of $A_{\bar s_j}$ for all $\ell\neq p$.
\end{enumerate}

The Shimura variety $\cSh_{a_\bullet}$ is smooth and projective over $\ZZ_{p^f}$ of relative dimension $d(a_\bullet) := \sum_{i=1}^f a_i(n-a_i)$.
 Note that if $a_i\in \{0,n\}$ for all $i$, then  $\cSh_{a_{\bullet}}$ is of relative dimension zero; we call it a \emph{discrete Shimura variety}.

We denote by $\cSh_{a_{\bullet}}(\CC)$  the complex points of $\cSh_{a_{\bullet}}$ via the embedding $\Z_{p^f}\hra\overline{ \Q}_p\xra{\iota^{-1}_p} \CC$.
Let $K_{\infty}\subseteq G_{a_{\bullet}}(\RR)$ be the stabilizer of $h$ \eqref{E:deligne-homomorphism} under the conjugation action, and let $X_{\infty}$ denote the $G_{a_{\bullet}}(\RR)$-conjugacy class of $h$.
 Then $K_{\infty}$ is a maximal compact-modulo-center subgroup of $G_{a_{\bullet}}(\RR)$.
 According to \cite[page 400]{kottwitz}, the complex manifold $\cSh_{a_{\bullet}}(\CC)$ is the disjoint union of
  $\#\mathrm{ker}^1(\Q,G_{a_{\bullet}})$ copies of
   \begin{equation}\label{E:shimura-complex}
   G_{a_{\bullet}}(\Q)\backslash \big (G_{a_{\bullet}}(\AAA^{\infty})\times X_{\infty}\big)/K\cong G_{a_{\bullet}}(\Q)\backslash G_{a_{\bullet}}(\AAA)/K\times K_{\infty}.
   \end{equation}
Here, if $n$ is even,  then  $\mathrm{ker}^1(\Q,G_{a_{\bullet}})=(0)$, while if $n$ is odd then
 \[
 \mathrm{ker}^1(\Q,G_{a_{\bullet}})=\mathrm{Ker}\Big(F^{\times}/\Q^{\times}N_{E/F}(E^{\times})\ra \AAA^{\times}_{F}/\AAA^{\times}N_{E/F}(\AAA^{\times}_E)\Big).
 \]
 In either case,  $\mathrm{ker}^1(\Q,G_{a_{\bullet}})$ depends only on the CM extension $E/F$ and the parity of  $n$ but not on the tuple $a_\bullet$. 

Let $\Sh_{a_\bullet} : = \cSh_{a_\bullet} \otimes_{\ZZ_{p^f}} \FF_{p^f}$ denote the special fiber of $\cSh_{a_{\bullet}}$, and let $\overline \Sh_{a_\bullet}: = \Sh_{a_\bullet} \otimes_{\FF_{p^f}} \overline \FF_p$ denote the geometric special fiber.

\subsection{$\ell$-adic cohomology}\label{S:l-adic-cohomology}
We fix a prime number $\ell\neq p$, and an isomorphism $\iota_{\ell}: \CC\simeq \Qlb$. Let $\xi$ be an  algebraic representation of $G_{a_{\bullet}}$ over $\Qlb$, and $\xi_{\CC}$ be the base change via $\iota_{\ell}^{-1}$.
The  theory of automorphic sheaves (\cite[Section~III]{milne}) or just reading off from   the rational $\ell$-adic Tate modules of the universal abelian variety allows us to attach to $\xi$ a lisse $\Qlb$-sheaf $\calL_\xi$ over $\cSh_{a_\bullet}$.
For example, if $\xi$ is the representation of $ G_{a_{\bullet}}$ on the vector space $V_{a_{\bullet}}$ (Section~\ref{S:Shimura data}),   the corresponding $\ell$-adic local system is given by the rational  $\ell$-adic Tate module (tensored with $\Qlb$) of the universal abelian scheme over $\cSh_{a_{\bullet}}$.

We assume that $\xi$ is irreducible.
Let $\scrH_{K}=\scrH(K,\Qlb)$ be the Hecke algebra of compactly supported $K$-bi-invariant $\Qlb$-valued functions on $G_{a_{\bullet}}(\AAA^{\infty})$.
The \'etale cohomology group $H^{d(a_{\bullet})}_{\et}(\overline \Sh_{a_{\bullet}}, \calL_{\xi})$ is equipped with a  natural action of $\scrH_{K}\times \Gal(\Fpb/\FF_{p^f})$. Since $\Sh_{a_{\bullet}}$ is proper and smooth, there is no continuous spectrum and we have  a canonical decomposition of $\scrH_{K}\times \Gal(\Fpb/\FF_{p^f})$-modules (see e.g. \cite[Proposition~III.2.1]{harris-taylor})
\begin{equation}\label{E:decomposition-cohomology}
H^{d(a_{\bullet})}_{\et}(\overline\Sh_{a_{\bullet}},\calL_{\xi})=\bigoplus_{\pi\in \mathrm{Irr}(G_{a_{\bullet}}(\AAA^{\infty}))} \iota_{\ell}(\pi^K) \otimes R_{a_{\bullet},\ell}(\pi),
 \end{equation}
  where $\mathrm{Irr}(G_{a_{\bullet}}(\AAA^{\infty}))$ denotes the set of irreducible admissible representations of $G_{a_{\bullet}}(\AAA^{\infty})$ with coefficients in $\CC$,  $\pi^K$ is the $K$-invariant subspace of $\pi\in \mathrm{Irr}(G_{a_{\bullet}}(\AAA^{\infty}))$, and $R_{a_{\bullet},\ell}(\pi)$ is a certain $\ell$-adic representation of $\Gal(\Fpb/\FF_{p^f})$ which we specify below.
  
We write $H_\et^{d(a_\bullet)}(\overline\Sh_{a_\bullet}, \calL_\xi)_\pi$ for the  \emph{$\pi$-isotypic component} of the cohomology group, that is the direct summand of \eqref{E:decomposition-cohomology} labeled by $\pi$.
We  make the following assumption on $\pi$.
  \begin{hypothesis}
\label{H:automorphic assumption}
\begin{enumerate}
\item
We have $\pi^K \neq 0$.
\item
There exists an admissible irreducible representation $\pi_{\infty}$  of $G_{a_{\bullet}}(\RR)$ such that  $\pi\otimes\pi_{\infty}$ is a cuspidal  automorphic representation of $G_{a_{\bullet}}(\AAA)$, 
\begin{itemize}
\item[(2a)]
$\pi_\infty$ is \emph{cohomological in degree $d(a_{\bullet})$ for $\xi$} in the sense that
\begin{equation}\label{E:g-K cohomology} 
H^{d(a_{\bullet})}\big(\Lie(G_{a_{\bullet}}(\RR)), K_{\infty}, \pi_{\infty}\otimes\xi_{\CC}\big)\neq 0,\footnote{This automatically implies that $\pi_\infty$ has the same central  and infinitesimal characters as the \emph{contragradient} of $\xi_\CC$.}
\end{equation}
where $K_{\infty}$ is a maximal compact subgroup of $G_{a_{\bullet}}(\RR)$,

\item[(2b)] and 
$\pi\otimes\pi_{\infty}$ admits a  base change to a \emph{cuspidal} automorphic representation of $\GL_{n}(\AAA_{E})\times \AAA_{E_0}^{\times}$.

\end{itemize}

\end{enumerate}
\end{hypothesis}



Note that Hypothesis~\ref{H:automorphic assumption}(1) implies that the $p$-component $\pi_p$ is unramified.
Hypothesis~\ref{H:automorphic assumption} (2)(a)  ensures that $R_{a_{\bullet},\ell}(\pi)$ is non-trivial.
Moreover,  by \cite[Theorem~1.2]{caraiani}, this hypothesis implies that the base change of $\pi \otimes \pi_\infty$ to $\GL_{n,E}$ is tempered at all finite places, and hence $\pi_p$ is tempered. 

We recall now an explicit description, due to Kottwitz \cite{kottwitz2}, of the Galois module  $R_{a_{\bullet},\ell}(\pi)$.
As  $G_{a_{\bullet}}(\Q_p)=\Q_p^{\times}\times \GL_n(E_\gothp)$, we may write $\pi_{p}=\pi_{p,0}\otimes \pi_{\gothp}$, where $\pi_{p,0}$ is a character of $\Q_p^{\times}$ trivial on $\Z_p^{\times}$, and $\pi_{\gothp}$ is an irreducible admissible representation of $\GL_n(E_{\gothp})$ such that $\pi_{\gothp}^{\GL_n(\cO_{E_{\gothp}})}\neq 0$.
Choose a square root $\sqrt{p}$ of $p$ in $\overline{\Q}$. Depending on this choice of $\sqrt{p}$, one has an  (unramified) local Langlands parameter attached to $\pi_p$:
 $$
 \varphi_{\pi_{p}}=(\varphi_{\pi_{p,0}},\varphi_{\pi_{\gothp}})\colon  W_{\Qp} \longrightarrow {}^L(G_{a_{\bullet},\Q_p})\simeq \CC^\times \times \big( \GL_n(\CC)^{\Z/f\Z}\rtimes \Gal(\overline\QQ_p/\Qp) \big).
 $$
   Here, $W_{\Q_p}$ denotes the Weil group of $\Q_p$, and  $\Gal(\overline{\QQ}_p/\Qp)$  permutes cyclically the $f$ copies of $\GL_n(\CC)$ though the quotient $\Gal(\Q_{p^f}/\Q_p)\cong \Z/f\Z$.
   The image of   $\varphi_{\pi_p}|_{W_{\Q_{p^f}}}$  lies in $({}^{L}G_{a_{\bullet}})^{\circ}\simeq\CC^{\times}\times \GL_n(\CC)^{\Z/f\Z}$.
The cocharacter $\mu_{h}: \GG_{m,E_h}\ra G_{a_{\bullet},E_{h}}$ induces  a character $\check{\mu}_h$ of $({}^{L}G_{a_{\bullet}})^{\circ}$ over $E_h$. Let $r_{\mu_h}$ denote the algebraic representation  of $({}^{L}G_{a_{\bullet}})^{\circ}$ with extreme weight $\check\mu_{h}$.
Denote by  $\Frob_{p^f}$  a \emph{geometric} Frobenius element in $W_{\Q_{p^f}}$.
Let $\overline \QQ_\ell(\frac 12)$ denote the unramified representation of $W_{\Q_{p^f}}$ which sends $\Frob_{p^f}$ to $(\sqrt{p})^{-f}$. 
Then $R_{a_{\bullet},\ell}(\pi)$ can be described in terms of $\varphi_{\pi_p}$ as follows.

\begin{theorem}[{\cite[Theorem 1]{kottwitz2}}]\label{T:galois-action}
Under the hypothesis and notation above, we have an equality in the Grothendieck group of $W_{\Q_{p^f}}$-modules:
\[
[R_{a_{\bullet},\ell}(\pi)]=\#\mathrm{ker}^1(\Q,G_{a_{\bullet}}) \;m_{a_{\bullet}}(\pi)\;\big[ \big(\iota_{\ell}(r_{\mu_{h}}\circ \varphi_{\pi_p})\otimes \overline \QQ_\ell(-\tfrac{d(a_{\bullet})}{2})\big],
\]
where $m_{a_{\bullet}}(\pi)$ is a certain integer related to the automorphic multiplicities of automorphic representations of $G_{a_{\bullet}}$ with finite part $\pi$. 
\footnote{Rigorously speaking, Kottwitz' theorem describes the direct sum of the $\pi$-component of all cohomological degrees. Since our $\pi_p$ is tempered, so $\pi$ appears only in the middle degree for purity reasons because $\Sh_{a_\bullet}$ is compact.}
\end{theorem}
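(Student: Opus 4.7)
The plan is to follow the Langlands--Kottwitz method, exactly as in Kottwitz's original paper. The input is: (i) a description of the $\mathbb{F}_{p^{fm}}$-points of $\Sh_{a_\bullet}$ equipped with Frobenius, provided by PEL-type moduli interpretation together with Honda--Tate theory; (ii) the stabilization of the resulting Lefschetz trace formula; (iii) a comparison with the Arthur--Selberg trace formula on endoscopic groups, reduced via base change to an identity on $\GL_n$ over $E$ (and on $\GG_m$ over $E_0$).

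First I would fix a finite extension $\FF_{p^{fm}}/\FF_{p^f}$ and a Hecke operator $f^p \in \scrH_K$ and compute
\[
\Tr\bigl(\Frob_{p^f}^m \times f^p \,\big|\, H^*_\et(\Sh_{a_\bullet,\Fpb}, \calL_\xi)\bigr)
\]
by the Lefschetz--Verdier trace formula, turning it into a sum over isogeny classes of abelian varieties in the moduli problem. Using Honda--Tate theory and the Dieudonn\'e-theoretic classification of $p$-divisible groups with $\calO_D$-action, each isogeny class is indexed by a Kottwitz triple $(\gamma_0; \gamma, \delta)$, where $\gamma_0 \in G_{a_\bullet}(\QQ)$ is semisimple elliptic, $\gamma \in G_{a_\bullet}(\AAA^{\infty,p})$ is stably conjugate to $\gamma_0$, and $\delta \in G_{a_\bullet}(\QQ_{p^{fm}})$ has norm stably conjugate to $\gamma_0$. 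The resulting expression is
\[
\sum_{(\gamma_0;\gamma,\delta)} c(\gamma_0;\gamma,\delta)\, O_\gamma(f^p) \, TO_\delta(\phi_{m,\mu_h}) \, \mathrm{tr}\, \xi(\gamma_0),
\]
where $\phi_{m,\mu_h}$ is the truncated Bernstein function at $p$ attached to $\mu_h$ and the level $m$; this is essentially Kottwitz's main point-counting identity.

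Next I would stabilize the above expression, grouping Kottwitz triples into stable conjugacy classes and introducing Langlands--Shelstad transfer factors. This rewrites the sum as a linear combination, indexed by (elliptic) endoscopic data $(H, \dots)$ of $G_{a_\bullet}$, of stable orbital integrals $SO_{\gamma_H}(f^H)$ on $H(\AAA)$ against transferred test functions. The transfer of $f^p$ and of $\phi_{m,\mu_h}$ to the endoscopic groups relies on the fundamental lemma (now a theorem of Ng\^o and Waldspurger) and on the explicit identification of the transfer of $\phi_{m,\mu_h}$ as base change of an unramified function, via Kottwitz's computation of the twisted transfer of $\phi_{m,\mu_h}$.

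Then I would match with the spectral side of the stable trace formula on the endoscopic groups. Using strong base change from unitary similitude groups to $\Res_{E/\QQ}\GL_n \times \Res_{E_0/\QQ}\GG_m$ (available under Hypothesis~\ref{H:automorphic assumption}), the stable contribution of $\pi$ is expressed in terms of the Satake parameters of $\pi_p$ at $p$ and of $f^p$ away from $p$. At infinity I would pair with a pseudo-coefficient of a discrete series in the $L$-packet of $\pi_\infty$; Clozel's and Kottwitz's stable character identities then yield the factor $\mathrm{tr}\,\xi(\gamma_0)$. Isolating the $\pi^K$-isotypic part and feeding in the definition of $r_{\mu_h}$ (whose trace against $\varphi_{\pi_p}(\Frob_{p^f}^m)$ is exactly what base change computes at $p$), one recovers the formula
\[
\Tr\bigl(\Frob_{p^f}^m \,\big|\, R_{a_\bullet,\ell}(\pi)\bigr) = \#\ker^1(\QQ,G_{a_\bullet})\cdot m_{a_\bullet}(\pi) \cdot p^{fm\, d(a_\bullet)/2}\cdot \mathrm{tr}\bigl(r_{\mu_h}\circ \varphi_{\pi_p}(\Frob_{p^f}^m)\bigr),
\]
for all $m$, giving the desired equality in the Grothendieck group. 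The Tate twist $\overline{\QQ}_\ell(-d(a_\bullet)/2)$ appears because $r_{\mu_h}$ is normalized so that $\check\mu_h$ is a weight, while the geometric cohomology is in degree $d(a_\bullet)$; the sign normalization of $\sqrt{p}$ tracks through the half-integral powers.

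The hard part, as always in this method, is the stabilization: matching the geometric side (an orbital-integral sum indexed by abelian varieties over $\Fpb$) with the spectral side via transfer and the fundamental lemma, and verifying that the test function $\phi_{m,\mu_h}$ base-changes to the correct Satake-parameter combination. Once these ingredients are in place, pulling out the $\pi$-isotypic piece via linear independence of characters (applied to varying $f^p$ and $m$) is routine.
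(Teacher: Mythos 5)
You should first note that the paper gives no proof of Theorem~\ref{T:galois-action}: it is quoted directly from Kottwitz \cite[Theorem 1]{kottwitz2}, so there is no internal argument to match against. Your sketch is a fair outline of the Langlands--Kottwitz method, which is indeed the method behind the cited result, but it diverges from Kottwitz's actual proof at the stabilization step. For these \emph{simple} Shimura varieties ($D$ a division algebra, so $G_{a_\bullet}$ is anisotropic modulo center and the trace formula needed is the simple one), Kottwitz deliberately avoids the full endoscopic stabilization you describe: after the point-counting formula of \cite{kottwitz}, he performs a ``pseudostabilization'' in which the archimedean test function is an Euler--Poincar\'e (pseudo-coefficient) function, the only nontrivial local input at $p$ is his base-change identity matching twisted orbital integrals of $\phi_{m,\mu_h}$ on $G_{a_\bullet}(\QQ_{p^{fm}})$ with orbital integrals of its base-change transfer on $G_{a_\bullet}(\QQ_p)$, and the factor $\#\ker^1(\QQ,G_{a_\bullet})$ falls out of the moduli space being that many copies of the canonical Shimura variety. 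No Langlands--Shelstad transfer factors, no fundamental lemma for unitary groups, and no global base change to $\GL_n$ enter; the integer $m_{a_\bullet}(\pi)$ is produced spectrally, as a signed sum of automorphic multiplicities weighted by Euler--Poincar\'e characteristics at infinity, not via an identification of $\pi$ with an automorphic representation of $\Res_{E/\QQ}\GL_n$.

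There is also one concrete gap in your route as written: you invoke ``strong base change \dots available under Hypothesis~\ref{H:automorphic assumption}.'' That hypothesis only asserts $\pi^K\neq 0$ and nonvanishing of $(\mathfrak{g},K_\infty)$-cohomology; it does not provide a cuspidal base change, which in this paper is a separate assumption imposed only in Theorems~\ref{T:main-result-introduction} and~\ref{T:main-theorem} (and in Conjecture~\ref{Conj:main}(4)), precisely because Kottwitz's theorem does not need it. If you run the argument through global base change and the stable trace formula for unitary similitude groups, you prove a weaker statement under stronger hypotheses, and you would still have to do the endoscopic multiplicity bookkeeping to recover the integer $m_{a_\bullet}(\pi)$ in the stated form. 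The fully stabilized route is viable today (Ng\^o's fundamental lemma, base change for unitary groups), but it is genuinely different from, and much heavier than, the argument the paper actually cites.
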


     In our case, one can make Kottwitz' Theorem more transparent.
    Define an $\ell$-adic representation
     \begin{equation}\label{E:defn-rho-pi}
     \rho_{\pi_{\gothp}}=\iota_{\ell}(\varphi_{\pi_{\gothp}}^{(1), \vee})\otimes \overline \QQ_\ell\big(\tfrac{1-n}{2}\big)\colon W_{\Q_{p^f}}\longrightarrow \GL_n(\Qlb),
     \end{equation}
where $\varphi^{(1),\vee}_{\pi_{\gothp}}: W_{\Q_{p^f}}\ra \GL_n(\CC)$ denotes the {\it contragredient} of  the projection to the first (or any) copy of $\GL_{n}(\CC)$. Both $\varphi_{\pi_{\gothp}}$ and $\overline \QQ_\ell(\frac 12)$ depend on the choice of $\sqrt{p}$,  but $\rho_{\pi_{\gothp}}$ does not.
Explicitly, $\rho_{\pi_{\gothp}}(\Frob_{p^f})$ is semisimple with the characteristic polynomial given by \cite[(6.7)]{gross}:
\begin{equation}\label{E:hecke-polynomial}
X^n + \sum_{i=1}^n(-1)^{i} (N\gothp)^{i(i-1)/2}a_\gothp^{(i)} X^{n-i},
\end{equation}
 where $a_\gothp^{(i)}$ is the eigenvalue on $\pi_\gothp^{\GL_n(\cO_{E_\gothp})}$ of the  Hecke operator
\[
T_\gothp^{(i)} = \GL_n(\cO_{E_{\gothp}}) \cdot
\mathrm{diag}(\underbrace{p, \dots, p}_i, \underbrace{1, \dots, 1}_{n-i}) \cdot \GL_n(\cO_{E_{\gothp}}).
\]
An easy computation shows that  $r_{\mu_{h}}=\mathrm{Std}_{\QQ_p^\times}^{-1}\otimes \bigotimes_{i=1}^f(\wedge^{a_i}\mathrm{Std}^\vee)$.
 Since the projection of $\varphi_{\pi_{p}}|_{W_{\Q_{p^f}}}$ to  each copy of $\GL_n(\CC)$ is conjugate to all others, Theorem~\ref{T:galois-action} is equivalent to
\begin{equation}\label{E:Galois-Sh}
[R_{a_{\bullet},\ell}(\pi)]=\#\mathrm{ker}^1(\Q,G_{a_{\bullet}}) \;m_{a_{\bullet}}(\pi)\;\big[\rho_{a_{\bullet}}(\pi_{\gothp})\otimes \chi_{\pi_{p,0}}^{-1}\otimes\Qlb\big(\textstyle\sum_i\tfrac{a_i(a_i-1)}{2}\big)\big],
\end{equation}
where $\rho_{a_{\bullet}}(\pi_{\gothp})=r_{a_{\bullet}}\circ\rho_{\pi_{\gothp}}$ with $r_{a_{\bullet}}=\bigotimes_{i=1}^f \wedge^{a_i}\mathrm{Std}$, and $\chi_{\pi_{p,0}}$ denotes the  character of $\Gal(\Fpb/\F_{p^f})$ sending $\Frob_{p^f}$ to $\iota_{\ell}(\pi_{p,0}(p^f))$.

\begin{remark}
The reason why we normalize the Galois representation as above is the following: By Hypothesis~\ref{H:automorphic assumption}, $\pi$ is the finite part of an automorphic representation of $G_{a_{\bullet}}(\AAA)$ which admits a base change to a cuspidal automorphic representation $\Pi\otimes \chi$ of $\GL_n(\AAA_E)\times \AAA_{E_0}^{\times}$. 
If $\rho_{\Pi}$ denotes the the  Galois representation  of $\Gal(\overline \QQ/E)$ attached to $\Pi$,  then    $\rho_{\pi_{\gothp}}$ is the semi-simplification of the  restriction of $\rho_{\Pi}$ to $W_{E_{\gothp}}$ (See \cite[Theorem~1.1]{caraiani}).
\end{remark}

\subsection{Tate conjecture}\label{S:Tate-conjecture}
We recall first  the Tate conjecture over finite fields.
Let $X$ be a projective smooth variety over a finite field $\FF_q$ of characteristic $p$. Put $\overline X = X_{\Fpb}$. For each prime $\ell\neq p$ and integer $r\leq \dim(X)$, we have a cycle class map
\[
\cl_{X}^r:
A^r( X)\otimes_{\ZZ} \overline\QQ_{\ell}\longrightarrow H^{2r}_\et\big(\overline X, \overline  \QQ_{\ell}(r)\big)^{\Gal(\Fpb/\FF_{q})},
\]
where $A^r(X)$ denotes the abelian group of codimension $r$ algebraic cycles in $X$ defined over $\FF_{q}$.
Then the Tate conjecture predicts that this map is surjective.
One has a geometric variant of the Tate conjecture, which claims that
the geometric cycle class map:
\[
\cl^r_{\overline X}\colon A^r(\overline X)\otimes_{\ZZ}\Qlb\longrightarrow H^{2r}_\et\big(\overline X, \Qlb(r)\big)^{\mathrm{fin}}:=\bigcup_{m\geq 1}H^{2r}_\et\big(\overline X, \Qlb(r)\big)^{\Gal(\Fpb/\FF_{q^m})}
\]
is surjective. Here, the superscript ``fin'' means  the subspace on which $\Gal(\Fpb/\FF_{q})$ acts through a finite quotient.
Note that the surjectivity of $\cl^{r}_{\overline X}$ implies that of $\cl_{X}^r$ by taking the $\Gal(\overline \FF_p/\FF_{q})$-invariant subspace.

Consider the case $X = \Sh_{a_\bullet}$ with  $d(a_{\bullet})$ even.
 Let $\pi$ be an irreducible admissible representation of $G_{a_{\bullet}}(\AAA^{\infty})$  as in Theorem~\ref{T:galois-action}.
 By Theorem~\ref{T:galois-action},
  the $\pi$-isotypic component of  $H^{d(a_{\bullet})}_{\et}\big(\overline\Sh_{a_{\bullet}},\Qlb(\frac{d(a_{\bullet})}{2})\big)^{\mathrm{fin}}$ is, up to Frobenius semi-simplification\footnote{Conjecturally, the   Frobenius action on the \'etale $\ell$-adic cohomology groups of a projective smooth variety over a finite field is always semisimple.}, isomorphic to
$\dim(\pi^K)\cdot \#\mathrm{ker}^1(\Q,G_{a_{\bullet}})\cdot m_{a_{\bullet}}(\pi)$ copies of
\begin{equation}\label{E:invariant-subspace}
\big(\rho_{a_{\bullet}}(\pi_{\gothp})\otimes \chi^{-1}_{\pi_{p,0}}\otimes \Qlb(\tfrac{(n-1)}2\textstyle\sum_{i=1}^fa_{i})
\big)^{\fin}.
\end{equation}
Note that $\chi_{\pi_{p,0}}(\Frob_{p^f})=\pi_{p,0}(p^f)$ is a root of unity.
 Hence, the dimension of
\eqref{E:invariant-subspace} is equal to the sum of the dimensions of the $\Frob_{p^f}$-eigenspaces of $\rho_{a_{\bullet}}(\pi_{\gothp})$ with eigenvalues $(p^f)^{\frac{(n-1)}{2}\sum_{i} a_i}\zeta$ for some root of unity $\zeta$.
In many examples, this space is known to be non-zero.

For instance, when $f=2$, $a_1 = r$ and $a_2 = n-r$ for some $1\leq r\leq n-1$, we have $d(a_\bullet) = 2r(n-r)$ and
$$
\rho_{a_{\bullet}}(\pi_{\gothp})=\wedge^{r}\rho_{\pi_{\gothp}}\otimes \wedge^{n-r}\rho_{\pi_{\gothp}}.
$$
Let $V_{\pi_\gothp, a_\bullet}$ denote the space of representation $\rho_{a_\bullet}(\pi_\gothp)$.
If $\rho_{\pi_{\gothp}}(\Frob_{p^f})$ has distinct eigenvalues $\alpha_1, \dots, \alpha_n$, then the eigenvalues of $\Frob_{p^f}$ on $V_{\pi_\gothp, a_\bullet}$ are given by $\alpha_{i_1} \cdots \alpha_{i_r} \cdot \alpha_{j_1} \cdots \alpha_{j_{n-r}}$, for distinct subscripts  $i_1, \dots, i_r$ and distinct subscripts $j_1, \dots, j_{n-r}$.
This product is exactly $(p^f)^{\frac{n(n-1)}{2}} a_{\gothp}^{(n)}$ (note that $a_{\gothp}^{(n)}$ is a root of unity) if the set $\{i_1, \dots, i_r\}$ and the set $\{j_1, \dots, j_{n-r}\}$ are the complement of each other as subsets of $\{1,\dots, n\}$.
On the other hand, if the subsets $\{i_1, \dots, i_r\}$ and $\{j_1, \dots, j_{n-r}\}$ are not the complement of each other and if the  $\alpha_i$ are ``sufficiently generic".\footnote{For example, if $r=1$ and $\alpha_1 = \alpha_2$, the eigenvalues $\alpha_1 \cdot \alpha_1\alpha_3\alpha_4\cdots \alpha_n$ is equal to $\alpha_1\cdots \alpha_n$ and hence is $p^{n(n-1)}$ times a root of unity. So to be in the generic case, we will need to require that $\alpha_i/\alpha_j$ for $i \neq j$ is not a root of unity if $r=1$. For another example, if $r=2$, ``generic" will mean that $\alpha_i/\alpha_j$ for $i \neq j$ and $\alpha_i\alpha_{i'}/\alpha_j\alpha_{j'}$ for $\{i,i'\} \neq \{j,j'\}$ are not roots of unity.}, the eigenvalue $\alpha_{i_1}\cdots \alpha_{i_r} \cdot \alpha_{j_1}\cdots \alpha_{j_{n-r}}$ is not a root of unity. 
In other words, the dimension of \eqref{E:invariant-subspace} is ``generically" equal to  $\binom nr$.
As predicted by the Tate conjecture, these cohomology classes should come from algebraic cycles.  Our main conjecture addresses exactly this,  and it  predicts that those desired  ``generic'' algebraic   cycles can be given by the irreducible components of the basic locus, and are birationally equivalent to certain fiber bundles over the special fiber of some other Shimura varieties associated to  inner forms of $G_{a_{\bullet}}$.
To make this precise, we need the following lemma. 

\begin{lemma}\label{L:change-signature}
Let $b_{\bullet}=(b_{i})_{1\leq i\leq f}$ be a tuple with $b_i\in \{0,\dots, n\}$ such that $\sum_{i=1}^fb_i\equiv \sum_{i=1}^fa_i\pmod 2$ if $n$ is even. Then there exists $\beta_{b_{\bullet}}\in (D^{\times})^{*=-1}$ such that
\begin{itemize}
\item
the alternating $D$-Hermitian space $(V_{b_\bullet}, \langle-, - \rangle_{b_\bullet})$ defined using $\beta_{b_\bullet}$ in place of $\beta_{a_\bullet}$ is isomorphic to $(V_{a_\bullet}, \langle-, - \rangle_{a_\bullet})$ when tensored with $\AAA^\infty$, and
\item
if $G_{b_{\bullet}}$ denotes the corresponding algebraic group over $\Q$ defined in the similar way with $\beta_{a_{\bullet}}$ replaced by $\beta_{b_{\bullet}}$, then
\[
G^1_{b_\bullet}(\RR)\simeq \prod_{i=1}^f U(b_i,n-b_i).
\]
\end{itemize}
\end{lemma}
\begin{proof}
We reduce the problem to the existence of a certain cohomology class. 
Note that $G_{a_{\bullet}}^1=\Aut(V_{a_{\bullet}}, \langle-,-\rangle_{a_{\bullet}})$ is the Weil restriction to $\QQ$ of a unitary group $U_{a_{\bullet}}$ over $F$.
The cohomology set $H^1(\QQ,G_{a_{\bullet}}^1)\cong H^1(F,U_{a_{\bullet}})$ is in bijection with the isomorphism classes of  one-dimensional skew-Hermitian $D$-modules $V$. 
 As $U_{a_{\bullet}}\times_{F}E\simeq \GL_{n,E}$, Hilbert Theorem 90  for $\GL_{n}$ implies that the inflation map induces an isomorphism
\[
 H^1(E/F,U_{a_{\bullet}})\xra{\sim} H^1(F, U_{a_{\bullet}}). 
\]
Denote by $g\mapsto g^{\sharp_{\beta_{a_{\bullet}}}}=\beta_{a_{\bullet}}g^*\beta_{a_{\bullet}}^{-1}$ the involution on $D$ induced by the alternating pairing $\langle -,-\rangle_{{a_{\bullet}}}$.
  Then a $1$-cocyle of $\Gal(E/F)$ with values in $U_{a_{\bullet}}$ is given by an element $\alpha\in D^{\times}$ such that $\alpha=\alpha^{\sharp_{\beta_{a_{\bullet}}}}$, and $\alpha_1,\alpha_2\in D^{\times}$ define the same cohomology class in $ H^1(F,U_{a_{\bullet}})$ if and only if there exists $g\in D^{\times}$ such that $g\alpha_1 g^{\sharp_{\beta_{a_{\bullet}}}}=\alpha_2$. 
  Explicitly, given such an $\alpha$, the corresponding skew-Hermitian $D$-module is given by $V=D$ equipped with the alternating pairing 
  \[
  \langle -,-\rangle_{\alpha}: V\times V\ra \QQ,\quad\quad   (x,y)\mapsto \Tr_{D/\QQ}(x \alpha\beta_{a_{\bullet}}y^*).
  \]
  For a place $v$ of $F$, we denote by 
  \[\loc_v: H^1(F,U_{a_{\bullet}})\ra H^1(F_v, U_{a_{\bullet}})\]
   the canonical localization map.
By  \cite[Proposition 8.1]{helm}, if $\sum_{i=1}^fb_i\equiv \sum_{i=1}^f a_i\mod 2$,  there exists a cohomology class  $[\alpha]\in H^1(F,U_{a_{\bullet}})$ such that 
  \begin{itemize}
  \item $\loc_v([\alpha])$ is trivial for every finite place $v$ of $F$, and 
  \item if $v=\tau_i$ with $1\leq i\leq n$ is an archimedean place, one has an isomorphism  of unitary groups over $\RR$: $\Aut(V\otimes_{F,\tau_i}\RR, \langle -,-\rangle_{\alpha})\simeq U(b_i, n-b_i)$.  
  \end{itemize} 
Then the element $\beta_{b_\bullet}=\alpha \beta_{a_{\bullet}}$ meets the requirements of  Lemma~\ref{L:change-signature}. 
\end{proof}

In the sequel,  we  always fix  a choice of $\beta_{b_{\bullet}}$, and as well as an isomorphism $\gamma_{a_{\bullet},b_{\bullet}}: V_{a_{\bullet}}\otimes_{\Q}\AAA^{\infty}\xra{\sim} V_{b_{\bullet}}\otimes_{\Q}\AAA^{\infty}$, which induces an isomorphism $G_{a_{\bullet}}(\AAA^{\infty})\simeq G_{b_{\bullet}}(\AAA^{\infty})$.
Recall that we have chosen a lattice $\Lambda_{a_{\bullet}}\subseteq V_{a_{\bullet}}$ to define the moduli problem for $\cSh_{a_{\bullet}}$.
   We put $\Lambda_{b_{\bullet}}:= V_{b_{\bullet}}\cap \gamma_{a_{\bullet},b_{\bullet}}(\Lambda_{a_{\bullet}}\otimes_{\Z} \widehat{\Z} )$. Then applying the construction of Subsection~\ref{S:defn of Shimura var} to the lattice $\Lambda_{b_{\bullet}}\subseteq V_{b_{\bullet}}$ and the open compact subgroup $K^p\subseteq G_{a_{\bullet}}(\AAA^{\infty,p})\simeq G_{b_{\bullet}}(\AAA^{\infty,p})$, we get a Shimura variety $\calS h_{b_{\bullet}}$ over $\Z_{p^f}$ of level $K^p$ as well as its special fiber $\Sh_{b_{\bullet}}$.
   Moreover, an algebraic representation $\xi$ of $G_{a_{\bullet}}$ over $\Qlb$ corresponds, via the fixed isomorphism $G_{a_{\bullet}}(\AAA^{\infty})\simeq G_{b_{\bullet}}(\AAA^{\infty})$, to an algebraic representation of $G_{b_{\bullet}}$ over $\Qlb$.
    We  use the same notation $\calL_{\xi}$ to denote  the \'etale sheaf on $\cSh_{a_{\bullet}}$ and $\cSh_{b_{\bullet}}$ defined by $\xi$.

\subsection{Gysin/trace maps}\label{S:Gysin-maps}
Before stating the main conjecture of this paper, we recall the general definition of Gysin maps. Let $f:Y \to X$ be a proper morphism of  smooth varieties over an algebraically closed field $k$. Let $d_X$ and $d_Y$ be the dimensions of $X$ and $Y$ respectively. 
Recall that the derived direct image $Rf_*$ on the derived category of constructible $\ell$-adic \'etale sheaves  has a left adjoint $f^!$. 
Since both $X$ and $Y$ are smooth, the $\ell$-adic dualizing complex of $X$ (resp. $Y$) is $\overline \Q_{\ell}(d_X)[2d_X]$ (resp. $\Qlb(d_Y)[2d_Y]$). 
Therefore, one has 
\[
f^!\big(\Qlb(d_X)[2d_X]\big)=\Qlb(d_Y)[2d_Y].
\]
The adjunction map $Rf_* f^!\Qlb\ra \Qlb$ induces a canonical morphism 
\[\Tr_f\colon  Rf_*\Qlb\ra \Qlb(d_X-d_Y)[2(d_X-d_Y)].\]
More generally, if $\calL$ is a lisse $\Qlb$-sheaf on $X$, it induces  a Gysin/trace map 
\[
Rf_*(f^*\calL)\cong \calL\otimes  Rf_*(\Qlb)\xra{1\otimes\Tr_f} \calL(d_X-d_Y)[2(d_X-d_Y)],
\]
where the first isomorphism is the projection formula \cite[XVII 5.2.9]{SGA4}. When $f$ is  flat with equidimensional  fibers of  dimension $d_Y-d_X$, this is the trace map as defined in \cite[XVIII 2.9]{SGA4}. When $f$ is a closed immersion of codimension $r=d_X-d_Y$, it is the usual Gysin map. 
For any integer $q$, the Gysin/trace map induces  a morphism on cohomology groups:
\begin{equation}\label{E:Gysin-map}
f_!\colon H^q_{\et}(Y, f^*\calL)\longrightarrow H^{q+2(d_X-d_Y)}_{\et}\big(X, \calL(d_X-d_Y)\big).
\end{equation} 

\subsection{Representation theory of $\GL_n$}
\label{S:representation of GLn}
As suggested by the description in Theorem~\ref{T:galois-action} of Galois representations appearing in the middle cohomology group of Shimura varieties, as well as the Tate conjecture, we need to understand the representation theory of $\GL_n$ embedded diagonally into the Langlands dual group
\[
({}^LG_{a_\bullet})^\circ \simeq \CC^\times \times \GL_n(\CC)^{\ZZ/f\ZZ}.
\]
The Hodge cocharacter $\mu$ of $G_{a_\bullet}$ gives rise to the representation $r_{a_\bullet}=\bigotimes_{i=1}^f (\wedge^{a_i}\mathrm{Std})$ of the diagonal $\GL_n$.
If $\lambda$ is a dominant weight of $\GL_n$ (with respect to the usual diagonal torus and upper triangular Borel subgroup) appearing in $r_{a_\bullet}$, we can write this weight $\lambda$ as the sum of $f$ dominant minuscule weights $\omega_{b_1}+\cdots +\omega_{b_f}$, where $\omega_i$ for $0 \leq i\leq n$ is the weight of $\GL_n$ that takes $\mathrm{Diag}(t_1, \dots, t_n)$ to $t_1 \cdots t_i$.  The set $\{b_1, \dots, b_f\}$ (counted with multiplicity) is unique, which we denote by $B_\lambda$.
Explicitly, if $\lambda$ takes $\mathrm{Diag}(t_1, \dots, t_n)$ to $t_1^{\beta_1} \cdots t_n^{\beta_n}$ (necessarily $\beta_1\leq f$), then
\[
B_\lambda = \big\{\underbrace{n, \dots, n}_{\beta_n},\, \underbrace{n-1, \dots, n-1}_{\beta_{n-1}-\beta_n}, \dots, \underbrace{1, \dots, 1}_{\beta_1-\beta_0}, \, \underbrace{0, \dots, 0}_{f-\beta_1}\big\}.
\]

 Moreover, we always have $\sum a_i = \sum b_i$.
In particular, this implies by Lemma~\ref{L:change-signature} that the Shimura variety $\Sh_{b_\bullet}$ makes sense, and the \'etale sheaf $\calL_\xi$ is well defined on $\Sh_{b_\bullet}$.

We write $m_\lambda(a_\bullet)$ for the multiplicity of the weight $\lambda$ in $r_{a_\bullet}$.

\begin{conjecture}
\label{Conj:main}
Let $\Sh_{a_{\bullet}}$ and $\calL_{\xi}$ be as in Subsection~\ref{S:l-adic-cohomology}.
Let $\lambda$ be a dominant weight that appears in 
the representation $r_{a_{\bullet}}$ as in Section~\ref{S:representation of GLn}. Define $B_\lambda$ and $m_\lambda(a_\bullet)$ as in Section~\ref{S:representation of GLn}.

Then there exist varieties $Y_1, \dots, Y_{m_\lambda(a_\bullet)}$ of dimension $\frac{d(a_\bullet) + d(b_\bullet)}{2}$ over $\FF_{p^f}$, equipped with natural action of prime-to-$p$ Hecke correspondences, such that each $Y_j$ fits into a diagram
\[
\xymatrix{
& Y_j\ar[rd]^{\pr_{b_{\bullet}^{(j)}}}\ar[ld]_{\pr^{(j)}_{a_{\bullet}}}\\
\Sh_{a_{\bullet}} &&\Sh_{b_{\bullet}^{(j)}},
}
\]
satisfying the following properties.
\begin{enumerate}

\item 
For each $j$,  $b_\bullet^{(j)} = (b_1^{(j)}, \dots, b_f^{(j)})$ is a reordering of the elements of the set $B_\lambda$, and  both $\pr^{(j)}_{a_{\bullet}}$ and $\pr_{b_{\bullet}^{(j)}}$ are  equivariant for the prime-to-$p$ Hecke correspondences.

\item The morphism $\pr^{(j)}_{a_{\bullet}}$ is a proper morphism and is birational onto the image. The morphism $\pr_{b_{\bullet}^{(j)}}$ is proper and generically smooth of relative dimension $\frac{d(a_\bullet) - d(b_\bullet)}{2}$ (note that $d(b_{\bullet})\equiv d(a_{\bullet})\pmod 2$ since $\sum_ia_i= \sum_ib_i$).

\item There exists a $p$-isogeny of abelian schemes over $Y_j$
\[
\phi_{b_{\bullet}^{(j)},a_{\bullet}}\colon \pr_{b_{\bullet}^{(j)}}^*(\calA_{b_{\bullet}^{(j)}})\longrightarrow \pr_{a_{\bullet}}^{(j),*}(\calA_{a_{\bullet}}),
\]
where $\calA_{a_{\bullet}}$ and $\calA_{b_\bullet^{(j)}}$ denote respectively the universal abelian scheme on $\Sh_{a_{\bullet}}$ and $\Sh_{b_{\bullet}^{(j)}}$. Let  
\[
\phi_{b_{\bullet}^{(j)},a_{\bullet}, *}\colon \pr_{b_\bullet^{(j)}}^*\calL_\xi\xra{\ \sim\ } \pr_{a_\bullet}^{(j),*}\calL_\xi.
\]
be the  isomorphism of the $\ell$-adic sheaves induced by $\phi_{b_{\bullet}^{(j)},a_{\bullet}}$ via the construction in Section~\ref{S:l-adic-cohomology}.\footnote{This isomorphism depends on the choice of the isomorphism $\gamma_{a_\bullet, b_\bullet}$ made earlier.}

\item
Let $\pi$ be an    irreducible admissible representation of $G_{a_{\bullet}}(\AAA^{\infty})\simeq G_{b_{\bullet}^{(j)}}(\AAA^{\infty})$ satisfying Hypothesis~\ref{H:automorphic assumption} for both $a_\bullet$ and $b_\bullet$, and   assume that $m_{a_{\bullet}}(\pi)=m_{b_{\bullet}^{(j)}}(\pi)$ for all $j$ \footnote{This assumption is satisfied when $\pi$ is the finite part of an automorphic cuspidal representation of $G_{a_{\bullet}}(\AAA)$, which admits a  base change to a cuspidal automorphic representation of $\GL_{n}(\AAA_E)\times\AAA_E^{\times} $. Indeed, in this case, White proved that $m_{a_{\bullet}}(\pi)=m_{b^{(j)}_\bullet}(\pi)=1$ \cite[Theorem E]{white}.}.
Suppose that the $n$ eigenvalues $\alpha_1, \dots, \alpha_n$ of $\rho_{\pi_{\gothp}}(\Frob_{p^f})$ are ``sufficiently generic" in the sense that
the generalized eigenspace decomposition of $\rho_{a_\bullet}(\Frob_{p^N})$ for any large $N$ is the same as the weight space decomposition of the algebraic representation $r_{a_\bullet}$.
Then the  natural homomorphism of $\pi$-isotypic components\footnote{The $\pi$-isotypic component is the same as the $\pi^p$-isotypic component according to Lemma~\ref{L:automorphic-prime-to-p}.} of the cohomology groups
\begin{align*}
\bigoplus_{j=1}^{m_{\lambda}(a_\bullet)}
H^{d(b_\bullet)}_\et\Big(\overline \Sh_{b_\bullet^{(j)}}, \calL_\xi\big(\tfrac{d(b_{\bullet})}{2}\big)\Big)_\pi^{\Frob_{p^f} = \lambda} &\xrightarrow{\oplus\pr_{b_{\bullet}^{(j)}}^*}
\bigoplus_{j=1}^{m_{\lambda}(a_\bullet)}
H^{d(b_\bullet)}_\et\Big(\overline Y_{j},\pr_{b_{\bullet}^{(j)}}^* \calL_\xi\big(\tfrac{d(b_{\bullet})}{2}\big)\Big)_\pi^{\Frob_{p^f} = \lambda}\\
&\xra{\oplus\phi_{b_{\bullet}^{(j)},a_{\bullet}, *}} \bigoplus_{j=1}^{m_{\lambda}(a_\bullet)}
H^{d(b_{\bullet})}_{\et}\Big(\overline Y_{j},\pr_{a_{\bullet}}^*\calL_{\xi}\big(\tfrac{d(b_\bullet)}{2}\big)\Big)_{\pi}^{\Frob_{p^f} = \lambda}\\
&\xrightarrow{\sum \pr^{(j)}_{a_{\bullet}, !}}
H^{d(a_\bullet)}_\et\Big(\overline \Sh_{a_\bullet}, \calL_\xi\big(\tfrac{d(a_\bullet)}{2}\big)\Big)_\pi^{\Frob_{p^f} = \lambda}
\end{align*}
is an isomorphism,  where $\pr^{(j)}_{a_{\bullet},!}$ is the Gysin map \eqref{E:Gysin-map}
 and the superscript $\Frob_{p^f}=\lambda$  means taking the (direct sum of) generalized $\Frob_{p^f}$-eigenspace with  eigenvalues in the Weyl group orbit
 $$
 \lambda \circ \rho_{\pi_{\gothp}}(\Frob_{p^f}) \cdot \chi_{\pi_{p,0}}^{-1}(p^f)(\sqrt{p})^{-f(n-1)\sum_{i}b_i}.
 $$
 Here, since the semisimple conjugacy classes of $\GL_{n}(\overline \Q_{\ell})$ is in natural bijection with the orbits of  $T(\overline \Q_{\ell})$ under the Weyl group of $\GL_n$,  it makes sense to evaluate a dominant  weight of $T$ on $\rho_{\pi_\gothp}(\Frob_{p^f})$ to get an orbit under the action of the Weyl group of $\GL_n$; hence the notation $ \lambda \circ \rho_{\pi_{\gothp}}(\Frob_{p^f})$.
\end{enumerate}

In particular, when $\xi$ is the trivial representation and the weight $\lambda$ is a power of determinant (so automatically, $\sum_i a_i$ is divisible by $n$, and $d(a_{\bullet})$ is even),
 the cycles given by the images of $Y_1, \dots, Y_{m_\lambda(a_\bullet)}$ parametrized by the \emph{discrete} Shimura varieties $\Sh_{b_\bullet^{(j)}}$, generate the Tate classes of $H_\et^{d(a_\bullet)}\big(\overline \Sh_{a_\bullet}, \Qlb(\frac{d(a_\bullet)}{2})\big)_{\pi}$ when  $\rho_{\pi_\gothp}(\Frob_{p^f})$ is ``sufficiently generic".
\end{conjecture}

\begin{remark}
\label{R:remark after Conj}
\begin{enumerate}
\item
A key feature of this Conjecture is that the codimension of the cycle map $\pr_{a_\bullet}: Y_j \to \Sh_{a_\bullet}$ is the same as the fiber dimension of $\pr_{b_\bullet^{(j)}} : Y_j \to \Sh_{b_\bullet^{(j)}}$.
\item
It seems that the fiber of $\pr_{b_\bullet^{(j)}}: Y_j \to \Sh_{b_\bullet^{(j)}}$ over a generic point $\eta \in \Sh_{b_\bullet^{(j)}}$ is likely to be isomorphic to a certain ``iterated Deligne--Lusztig variety," that is, a tower of maps $Y_{j,\eta} = Z_\alpha \to \cdots \to Z_0 =\eta$ such that each $Z_i \to Z_{i-1}$ is a fiber bundle with certain Deligne--Lusztig varieties as fibers.

\item
Xinwen Zhu pointed out to us that 
since the universal abelian varieties $\calA_{a_\bullet}$ and $\calA_{b_\bullet}$ are isogenous over each $Y_j$, the union of the images of $Y_1, \dots, Y_{m_\lambda(a_\bullet)}$ on $\Sh_{a_\bullet}$ is contained in the closure of the Newton strata, where the slope is the same as the $\mu$-ordinary slope of the universal abelian varieties on $\Sh_{b_\bullet^{(j)}}$ (for different $j$, they have the same $\mu$-ordinary slopes).
In fact, one should expect the union of images to be the same as the closure of this Newton stratum.

When $\lambda$ is central (i.e. a power of the determinant), Conjecture~\ref{Conj:main} says: \emph{irreducible components of the basic locus of the special fiber of a Shimura variety, generically, contribute to all Tate cycles in the cohomology}.
Implicitly, this means that the dimension of the basic locus is half of the dimension of the Shimura variety if and only if the Galois representations of the Shimura variety has generically non-trivial Tate classes.
Here two appearances of ``generic" both mean that we only consider those $\pi$-isotypic components where the Satake parameter for $\pi_p$ is sufficiently generic as in Conjecture~\ref{Conj:main}(4).
For example, the supersingular locus of Hilbert modular surface at a split prime or the supersingular locus of a Siegel modular variety (over $\QQ$) is not half the dimension. This is related to the fact that the $\pi$-isotypic component of the cohomology of the Shimura varieties  are not expected to have Tate classes, at least when the Satake parameter of $\pi_p$ is sufficiently general.\footnote{The Siegel varieties are Shimura varieties associated to $\mathrm{GSp}_{2g}(\QQ)$. The Langlands dual group is isogenous to $\mathrm{Spin}(2g+1)$ and the associated representation $r_\mu$  is the spin representation, which is minuscule and hence does not contain trivial weight subspace.}

\item
These varieties $Y_j$ may be viewed as Hecke correspondences at $p$ \emph{between the special fibers of two different Shimura varieties} $\Sh_{a_\bullet}$ and $\Sh_{b_\bullet^{(j)}}$. These correspondences certainly cannot be lifted to characteristic zero. We hope that the Conjecture will bring interests into the study of such Hecke correspondences.
\end{enumerate}

\end{remark}
\begin{remark}
\label{R:generalizations}
\begin{enumerate}


\item
The assumption on the decomposition of the place $p$ in $E/\QQ$ and working with unitary Shimura varieties is to simplify our presentation and to get to a situation where most terms can be defined.
We certainly expect the validity of analogous conjectures for the special fibers of Shimura varieties of PEL type or more generally of abelian type
(using the integral model of M. Kisin \cite{kisin}).
This would be a more precise version of the Tate conjecture in the context of special fibers of Shimura varieties:
if $\Sh_G$ and $\Sh_{G'}$ are the special fibers of two unitary Shimura varieties associated to the groups $G$ and $G'$ such that $G(\AAA_f) \simeq G'(\AAA_f)$, then, generically, the cycles on the product $\Sh_G \times \Sh_{G'}$ predicted by the Tate conjectures are likely to be constructed by understanding the ``isogenies" between the corresponding universal abelian varieties, and are closely related to the Newton stratifications of $\Sh_G$ and $\Sh_{G'}$.
In the case of Shimura varieties of abelian type, we expect some technical difficulties to reinterpret the meaning of isogenies of abelian varieties in terms of certain ``$G$-crystals".

For example, consider a real quadratic field $F/\QQ$ in which a prime $p$ is inert. Let $\Sh_G$ denote the special fiber of the Hilbert--Siegel modular variety for $G: = \Res_{F/\QQ} \mathrm{GSp}_{2g}$, with hyperspecial level structure at $p$.
Then by Langlands' prediction of the cohomology of $\Sh_G$, we should look at the representation $r_{\mathrm{spin}}^{\otimes 2}$ of the ``essential part" $\mathrm{Spin}_{2g+1}$ of the Langlands dual group, where $r_{\mathrm{spin}}$ is the $2^g$-dimensional spin representation.\footnote{As pointed out above, we have to work with the Hilbert--Siegel setup as opposed to the usual Siegel setup because $r_{\mathrm{spin}}$ is a minuscule representation.} The central weight space of $r_{\mathrm{spin}}^{\otimes 2}$ has dimension $2^g$. So we expect that the supersingular locus of $\Sh_G$ is the union of $2^g$ collection of varieties parameterized by the discrete Shimura variety $\Sh_{G'}$ where $G'$ is the inner form of $G$ which is split at all finite places and is compact modulo center at both archimedean places.
Unfortunately, the moduli problem that describes $G'$ uses a different division algebra from that describing $G$. We do not know how to interpret the meaning of isogenies of universal abelian varieties in this case, and the method of our paper does not apply directly to this case.

\item

Zhu pointed out to us that even if $p$ is ramified, we should expect Conjecture~\ref{Conj:main} continue to hold for (the special fiber of) the ``splitting models" of Pappas and Rapoport \cite{pappas-rapoport}.
Some evidences of this have already appeared in the case of Hilbert modular varieties; see \cite{RTW,reduzzi-xiao}.
\item
In our setup, we took advantage of many coincidences that ensures that for example the Shimura variety is compact and there is no endoscopy.  It would be certainly an interesting future question to study the case involving Eisenstein series, as well as the case when the representations come from endoscopy transfers.
 
\item
As explained in Remark~\ref{R:remark after Conj}(3), the images of $Y_j$ are expected to form the closure of certain Newton polygon where the slopes are related to $\lambda$.
Conjecture~\ref{Conj:main}(1)--(3) may have a degenerate situation:
when $\sum_i a_i$ is not divisible by $n$, the representation $V_{a_\bullet}$ does not contain a weight corresponding to a power of the determinant (which corresponds to the basic locus).
So our conjecture does not describe the basic locus of $\Sh_{a_\bullet}$, and it is indeed not of half dimension of $\Sh_{a_\bullet}$.

Yet, this basic locus may still have a good description as the union of some fiber bundles over the special fibers of some other Shimura varieties for reductive groups which are \emph{not} quasi-split at $p$.  For example, the supersingular locus of modular curve is related to the Shimura variety associated to the definite quaternion algebra which is ramified at $p$, by a theorem of Serre and Deuring \cite{serre}.  More such examples are given in \cite{tian-xiao1} and \cite{vollaard-wedhorn}.
\end{enumerate}

\end{remark}

\subsection{Known cases of Conjecture~\ref{Conj:main}}

Conjecture~\ref{Conj:main} is largely inspired by the work of second and the third authors \cite{tian-xiao1} and \cite{tian-xiao2}, where we proved the analogous conjecture for the special fibers of the Hilbert modular varieties assuming that $p$ is inert in the totally real field.

Another strong evidence of  Conjecture~\ref{Conj:main} is the work of Vollaard and Wedhorn \cite{vollaard-wedhorn}, where they considered certain stratification of the supersingular locus of the Shimura variety for $GU(1,n-1)$ with $s\in \NN$ at an \emph{inert} prime $p$. 
What concerns us is the case when $n-1$ is even.  In this case, it is hidden in the writing of \cite[Section~6]{vollaard-wedhorn} that one gets a correspondence (in the notation of {\it loc. cit.})
\begin{equation}
\label{E:correspondence VW}
I(\QQ) \big\backslash I(\AAA_f)  \big/ C^pC_p^{(n)} \longleftarrow I(\QQ) \big\backslash N_n \times^{C_p^{(n)}} J(\QQ_p) \times \GG(\AAA_f^{(p)}) \big/ C^p \longrightarrow \calM_{C^p}^{ss} \subset \calM_{C^p}.
\end{equation}
Note that $I(\AAA_f) \simeq \GG(\AAA_f)$.  Here $N_n$ is a certain Deligne--Lusztig variety.
In \cite{vollaard-wedhorn}, the parameterizing space, namely the first term in \eqref{E:correspondence VW}, is interpreted very differently, in terms of Bruhat--Tits building. 
The method of this paper should be applicable to their situation to verify the analogous Conjecture~\ref{Conj:main}.
In fact, in their case, there will be only one collection of cycles as given by \eqref{E:correspondence VW}, but the computation of the intersection matrix (only essentially one entry in this case) of them needs some non-trivial Schubert calculus similar to Section~\ref{S:fundamental-intersection-number}. 

When $n-1$ is odd, the result of \cite{vollaard-wedhorn} is related to the degenerate version of the Conjecture~\ref{Conj:main} in the sense of Remark~\ref{R:generalizations}(4).
\\

The aim of the rest of paper is to provide evidences for Conjecture~\ref{Conj:main} for some large rank groups.
In particular, we will construct cycles in the case of the unitary group  $G(U(r,s)\times U(s,r))$ with $s,r \in \NN$ (Section \ref{Sec:GU rs}). While we expect these cycles to verify Conjecture~\ref{Conj:main}, we do not know how to compute the ``intersection matrix" in general.
Nonetheless, when $r=1$, we are able to make the computation and prove Conjecture~\ref{Conj:main} (with trivial coefficients for the sake of simple presentation) in this case; see Section~\ref{Section:U(1,n)}--\ref{Sect:intersection-matrix}.
We point out that our method should be applicable to many other examples, and even in general reduce Conjecture~\ref{Conj:main} to a question of combinatorial nature. This combinatorics problem is the heart of the question.
In the Hilbert case \cite{tian-xiao2}, we model combinatorics question by  the so-called periodic semi-meander (for $\GL_2$).  The generalization of the usual (as opposed to periodic) semi-meander to other groups has been introduced; see \cite{FKK} for the corresponding references. 
The straightforward generalization to the periodic case does seem to agree with some of our computations with small groups.  Nonetheless, the corresponding Gram determinant formula seems to be extremely difficult. Even in the non-periodic case, we only know it for a special case; see \cite{difrancesco}. 

We also mention that in a very recent work \cite{xiao-zhu} of Zhu and the last author, we relate  Conjecture~\ref{Conj:main} with the geometric Satake theory \cite{zhu} of Zhu in mixed characteristic, and we proved many new cases of Conjecture~\ref{Conj:main}.

\section{Preliminaries on Dieudonn\'e modules and deformation theory}\label{S:preliminary}

We first introduce the basic tools that we will use in this paper.

\subsection{Notation}\label{S:notation}
Recall that we have an isomorphism
\[
\calO_D \otimes_\ZZ \ZZ_{p^f}
\cong \bigoplus_{i=1}^f
\big(\calO_D \otimes_{\calO_E, q_i}\ZZ_{p^f} \oplus \calO_D \otimes_{\calO_E,\bar q_i}\ZZ_{p^f} \big) \simeq \bigoplus_{i=1}^f \big(\rmM_n(\ZZ_{p^f}) \oplus \rmM_n(\ZZ_{p^f})\big).
\]
Let $S$ be a locally noetherian $\ZZ_{p^f}$-scheme. An $\cO_{D}\otimes_{\ZZ}\cO_{S}$-module $M$ admits a canonical decomposition
\[
M=\bigoplus_{i=1}^f\big( M_{q_{i}}\oplus M_{\bar{q}_i}\big),
\]
where $M_{q_i}$ (resp. $M_{\bar{q}_{i}}$) is  the direct summand of $M$ on which $\cO_E$ acts via $q_i$ (resp. via $\bar{q}_i$).
 Then each $M_{q_i}$ has a natural action by $\rmM_{n}(\cO_S)$.
Let $\gothe$ denote the element of $\rmM_{n}(\calO_S)$  whose $(1,1)$-entry is $1$ and whose other entries are $0$. We put $M^{\circ}_{i}:=\gothe M_{q_i}$, and call it the \emph{reduced part} of $M_{q_i}$.

Let $A$ be an $fn^2$-dimensional abelian variety over an $\FF_{p^f}$-scheme $S$, equipped with an $\calO_D$-action.
The de Rham homology $H^\dR_1(A/S)$ 
has a Hodge filtration
\[
0\ra \omega_{A^\vee/S}\ra H^\dR_1(A/S)\ra \Lie_{A/S}\ra 0,
\]
compatible with the natural  action of $\calO_D \otimes_\ZZ \calO_S$ on $H^{\dR}_1(A/S)$.
We call  $H^\dR_1(A/S)^\circ_{i}$ (resp. $\omega^\circ_{A^\vee/S, i}$, $\Lie^\circ_{A/S, i}$) the \emph{reduced de Rham homology} of $A/S$ (resp. the \emph{reduced invariant $1$-forms} of $A^\vee/S$, the \emph{reduced Lie algebra} of $A/S$) at $q_i$.
 In particular, the former is a locally free $\calO_S$-module of rank $n$ and the latter is a subbundle\footnote{Here and after, by a subbundle of a locally free coherent sheaf, we mean a locally free coherent sheaf that is Zariski locally a direct factor.} of the former; when $A \to S$ satisfies the moduli problem in Subsection~\ref{S:defn of Shimura var}, $\omega_{A^\vee/S,i}^\circ$ is locally free of rank $a_i$.

The Frobenius morphism $ A \to A^{(p)}$ induces a natural homomorphism
\[
V: H^\dR_1(A/S)^\circ_i \longrightarrow H^\dR_1(A/S)^{\circ, (p)}_{i-1},
\]
where the index $i$ is considered as an element of $\ZZ/f\ZZ$, and the superscript ``$(p)$'' means the pullback via the absolute Frobenius of $S$.
The image of $V$
is exactly $\omega^{\circ,(p)}_{A^\vee/S, i-1}$.
Similarly, the
Verschiebung morphism $A^{(p)} \to A$ induces a natural homomorphism\footnote{The notation $F$ for Frobenius was also used to denote the real quadratic field. But we think the chance for confusion is minimal.}
\[
F: H^\dR_1(A/S)^{\circ, (p)}_{i-1} \longrightarrow H^\dR_1(A/S)^\circ_i.
\]
We have $\Ker(F) = \Image(V)$ and $\Ker(V) = \Image(F)$.

When $S =  \Spec(k)$ with $k$ a perfect field containing $\F_{p^f}$, let $W(k)$ denote the ring of Witt vectors in $k$. Let  $\tcD(A)$ denote  the (covariant) Dieudonn\'e module associated to the $p$-divisible group of  $A$.
 This is a free $W(k)$-module of rank $2fn^2$ equipped with a $\Frob$-linear action of  $F$ and a $\Frob^{-1}$-linear action of $V$ such that $FV=VF = p$.
  The $\cO_D$-action on $A$ induces a natural action of $\cO_D$ on $\tcD(A)$ that commutes with $F$ and $V$.
   Moreover, there is a canonical isomorphism  $\tilde \calD(A)/p\tilde \calD(A)\cong H^\dR_1(A/k)$ compatible with all structures on both sides.
For each $i\in \Z/f\Z$, we have the reduced part $\tcD(A)^{\circ}_i:=\gothe \tcD(A)_{q_i}$. The Verschiebung and the Frobenius induce natural maps
$$
V: \tilde \calD(A)^\circ_i \longrightarrow \tilde \calD(A)^\circ_{i-1},\quad F: \tilde \calD(A)^\circ_i \longrightarrow \tilde \calD(A)^\circ_{i+1}.
$$
 Note that
$
\tcD(A)_{q_i}=(\tcD(A)^{\circ}_{i})^{\oplus n},
$
 and $\oplus_{i\in \Z/f\Z}\tcD(A)_{q_i}$ is the covariant Dieudonn\'e module of the $p$-divisible group $A[\gothp^{\infty}]$.
\\

For any $fn^2$-dimensional abelian variety $A'$ over $k$ equipped with an $\calO_D$-action, an $\calO_D$-equivariant isogeny $A' \to A$ induces a morphism $\tilde \calD(A')^\circ_i \to \tilde \calD(A)^\circ_i$ compatible with the actions of $F$ and $V$.
Conversely, we have the following.

\begin{proposition}\label{P:abelian-Dieud}
Let $A$ be an abelian variety of dimension $fn^2$ over prefect field $k$ which contains $\FF_{p^f}$, equipped with an $\cO_{D}$-action and  an $\cO_D$-compatible prime-to-$p$ polarization $\lambda$.
Suppose given an integer $m\geq 1$ and a $W(k)$-submodule   $\tcE_i\subseteq \tcD(A)^{\circ}_i$ for each  $i\in \Z/f\Z$ such that
\begin{equation}
\label{E:condition for getting another abelian varieties}
p^m\tcD(A)^{\circ}_{i}\subseteq \tcE_i, \quad F(\tcE_i)\subseteq \tcE_{i+1}, \quad \textrm{and} \quad  V(\tcE_i)\subseteq \tcE_{i-1}.
\end{equation}
Then there exists a unique abelian variety $A'$ over $k$ (depending on $m$) equipped with  an $\cO_{D}$-action,  a prime-to-$p$ polarization $\lambda'$,  and an $\cO_{D}$-equivariant $p$-isogeny $\phi: A'\ra A$ such that the natural inclusion $\tcE_i\subseteq \tcD(A)_i^\circ$ is naturally identified with the map
$\phi_{*,i}\colon\tcD(A')^{\circ}_i\ra \tcD(A)_i^{\circ}$
induced by $\phi$ and such that $\phi^\vee\circ\lambda\circ\phi=p^m\lambda'$.
Moreover, we have
\begin{enumerate}
\item

If $\dim \omega^\circ_{A^\vee/k, i} = a_i$ and $\len_{W(k)} \big( \tilde \calD(A)^\circ_i / \tilde \calE_i\big) = \ell_i$ for $i \in \ZZ/f\ZZ$, then
\begin{equation}
\label{E:dimension of new differentials}
\dim \omega^\circ_{A'^\vee/k, i} =a_i + \ell_i -\ell_{i+1}.
\end{equation}

\item
If $A$ is equipped with a prime-to-$p$ level structure $\eta$ (in the sense of Subsection~\ref{S:defn of Shimura var}(3)), then there exists a unique prime-to-$p$ level structure $\eta'$ on $A'$ such that $\eta=\phi\circ\eta'$.
\end{enumerate}
\end{proposition}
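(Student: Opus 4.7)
The plan is to use the equivalence of categories between abelian varieties up to $p$-isogeny and their covariant Dieudonn\'e modules over the perfect field $k$. This reduces producing $A'$ to producing an $\calO_D$-stable, $F$- and $V$-stable submodule $\tcE\subseteq\tcD(A)$ containing $p^m\tcD(A)$ and self-dual up to a factor of $p^m$ under the alternating pairing $\langle-,-\rangle_\lambda$ induced by $\lambda$. Since we are given only the reduced $q_i$-pieces $\tcE_i$, the main work is to extend them consistently to such a $\tcE$.

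The $\calO_D$-action identifies $\tcD(A)_{q_i}$ with $(\tcD(A)^\circ_i)^{\oplus n}$ as $\rmM_n(W(k))$-modules via the idempotent $\gothe$; accordingly $\tcE_{q_i}:=\rmM_n(W(k))\cdot\tcE_i\cong\tcE_i^{\oplus n}$ is the unique $\calO_D$-stable extension of $\tcE_i$, and it automatically inherits $F$- and $V$-stability as well as $p^m\tcD(A)_{q_i}\subseteq\tcE_{q_i}$. Since the involution $*$ on $\calO_D$ swaps $q_i$ and $\bar q_i$, the pairing $\langle-,-\rangle_\lambda$ restricts to a perfect $W(k)$-duality between $\tcD(A)_{q_i}$ and $\tcD(A)_{\bar q_i}$; we therefore \emph{define} $\tcE_{\bar q_i}$ as $p^m$ times the $\langle-,-\rangle_\lambda$-annihilator of $\tcE_{q_i}$ inside $\tcD(A)_{\bar q_i}$. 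Then $\tcE:=\bigoplus_i(\tcE_{q_i}\oplus\tcE_{\bar q_i})$ is self-dual up to $p^m$ by construction and contains $p^m\tcD(A)$; because $F$ and $V$ are mutually adjoint under $\langle-,-\rangle_\lambda$ (a direct consequence of compatibility of $\lambda$ with $*$), $F,V$-stability of $\tcE$ on the $q_i$-side forces the same on the $\bar q_i$-side.

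The inclusion $\tcE/p^m\tcD(A)\subseteq\tcD(A[p^m])$, viewed via the covariant Dieudonn\'e equivalence for finite group schemes, corresponds to an $\calO_D$-stable finite closed subgroup $K\hookrightarrow A[p^m]$. Setting $A':=A/K$, the factorisation of $[p^m]_A$ through the quotient map $\psi\colon A\twoheadrightarrow A'$ produces the required $\calO_D$-equivariant $p$-isogeny $\phi\colon A'\to A$ with $\phi\circ\psi=[p^m]_A$; by construction $\phi_{*,i}$ identifies $\tcD(A')^\circ_i$ with $\tcE_i$. Since $\phi$ is a $p$-isogeny, it induces an isomorphism on prime-to-$p$ Tate modules, whence $\eta$ pulls back uniquely to a prime-to-$p$ level structure $\eta'$ on $A'$ with $\eta=\phi\circ\eta'$, proving (ii). The polarization $\lambda':=p^{-m}(\phi^\vee\circ\lambda\circ\phi)$ is integral and induces a perfect pairing on $\tcD(A')$ precisely because $\tcE$ is self-dual up to $p^m$; it is $\calO_D$-compatible and prime-to-$p$, and uniqueness of all the data is immediate from full faithfulness of the Dieudonn\'e functor.

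It remains to verify the dimension formula \eqref{E:dimension of new differentials}. Working inside $\tcD(A)^\circ_i$, the identity $\omega^\circ_{A^\vee/k,i}=V(\tcD(A)^\circ_{i+1})/p\tcD(A)^\circ_i$ gives $\len_{W(k)}(\tcD(A)^\circ_i/V(\tcD(A)^\circ_{i+1}))=n-a_i$. Since $V$ is injective on the torsion-free module $\tcD(A)^\circ_{i+1}$ (as $FV=p$ is invertible after inverting $p$), we also have $\len_{W(k)}(V(\tcD(A)^\circ_{i+1})/V(\tcE_{i+1}))=\ell_{i+1}$; combined with $\len_{W(k)}(\tcD(A)^\circ_i/\tcE_i)=\ell_i$, this yields $\dim_k(\tcE_i/V(\tcE_{i+1}))=(n-a_i)+\ell_{i+1}-\ell_i$, and taking the complementary dimension in the $n$-dimensional space $\tcE_i/p\tcE_i$ gives $\dim\omega^\circ_{A'^\vee/k,i}=a_i+\ell_i-\ell_{i+1}$. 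The main subtlety of the proof is the joint compatibility of the Dieudonn\'e conditions and the self-duality forced by requiring $\lambda'$ to be prime-to-$p$; what makes everything work is that the $*$-compatibility of $\lambda$ with $\calO_D$ translates exactly into mutual $F,V$-adjointness under $\langle-,-\rangle_\lambda$, so no further conditions on $\tcE$ need to be imposed by hand.
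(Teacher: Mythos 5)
Your proof is correct and is essentially the paper's construction transposed to the level of Dieudonn\'e lattices: the $p^m$-scaled dual lattice you take on the $\bar\gothp$-side is precisely the module-theoretic counterpart of the paper's orthogonal complement $H_{\bar\gothp}=H_{\gothp}^{\perp}\subseteq A[\bar\gothp^m]$ under the $\lambda$-Weil pairing, your quotient $A'=A/K$ coincides with the paper's $A/H_p$, and your computation of $\dim\omega^\circ_{A'^\vee/k,i}$ is the same length count (using $p\tcE_i=VF(\tcE_i)\subseteq V(\tcE_{i+1})$). The only real divergence is the polarization step: you define $\lambda'=p^{-m}(\phi^\vee\circ\lambda\circ\phi)$ and check integrality and prime-to-$p$-ness from the self-duality of $\tcE$ up to $p^m$, whereas the paper quotients by the maximal isotropic subgroup $H_p\subseteq A[p^m]$ and invokes Mumford's descent theorem to obtain $\lambda'$ with $p^m\lambda=\psi^\vee\circ\lambda'\circ\psi$ --- two equivalent justifications of the same fact.
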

\begin{proof}
 By Dieudonn\'e theory,   the Dieudonn\'e submodule
$$
\bigoplus_{i\in \Z/f\Z}(\tcE_i/p^m\tcD(A)^{\circ}_{i})^{\oplus n}
\subseteq\bigoplus_{i\in \Z/f\Z}
\big(\tcD(A)^{\circ}_i/p^m\tcD(A)^{\circ}_i\big)^{\oplus n}
$$
corresponds to a closed  subgroup scheme  $H_{\gothp}\subseteq A[\gothp^m]$.
The prime-to-$p$ polarization $\lambda$ induces a perfect pairing
\[
\langle\bullet,\bullet \rangle_{\lambda}\colon  A[\gothp^{m}]\times A[\bar{\gothp}^{m}]\longrightarrow \mu_{p^m}.
\]
Let $H_{\bar \gothp}=H^{\perp}_{\gothp}\subseteq A[\bar{\gothp}^m]$ denote the orthogonal complement of $H_\gothp$.
 Put $H_{p}=H_{\gothp}\oplus H_{\bar\gothp}$.
  Let $\psi: A\ra A'$ be  the canonical quotient with kernel $H_p$, and $\phi: A'\ra A$ be the quotient with kernel $\psi(A[\gothp^m])$ so that $\psi\circ\phi=p^m\id_{A'}$ and $\phi\circ\psi=p^m\id_A$.
  By construction, $H_p\subseteq A[p^m]$ is a maximal totally isotropic subgroup.
  By \cite[\S 23, Theorem 2]{mumford}, there is a prime-to-$p$ polarization $\lambda'$ on $A'$ such that $p^m\lambda=\psi^\vee\circ\lambda'\circ\psi$.
It follows also that $p^m\lambda'=\phi^\vee\circ\lambda\circ\phi$.
The fact that $\phi_{*,i}:\tcD(A')^{\circ}_i\ra \tcD(A)^{\circ}_i$ is identified with the natural inclusion $\tcE_i\subseteq \tcD(A)^{\circ}_i$ follows from the construction.
The existence and uniqueness of tame level structure is clear.
The dimension of the differential forms can be computed as follows:
\begin{align*}
&\dim_k \omega^\circ_{A'^\vee/k, i} = \dim_k \frac{V(\tcD(A')^\circ_{i+1})}{p\tcD(A')^\circ_i} = \dim_k \frac{V(\tilde \calE_{i+1})}{p\tilde \calE_i}\\
 = &\dim_k \frac{V(\tcD(A)^\circ_{i+1})}{p\tcD(A)^\circ_i} - \len_{W(k)} \frac{V(\tcD(A)_{i+1}^\circ)}{V(\tilde \calE_{i+1})} + \len_{W(k)} \frac{p\tcD(A)^\circ_i}{p\tilde \calE_i} = a_i-\ell_{i+1} + \ell_i .\qedhere
\end{align*}
\end{proof}

\subsection{Deformation theory}
\label{S:deformation}
We shall frequently use Grothendieck--Messing deformation theory to compare the tangent spaces of moduli spaces.  We make this explicit in our setup.

Let $ \hat R$ be a noetherian $\FF_{p^f}$-algebra and $ \hat I \subset \hat R$ an ideal such that $\hat I^2=0$.  Put $R=  \hat R/ \hat I$.
 Let $\scrC_{\hat R}$ denote the category of tuples $(\hat A, \hat \lambda,\hat  \eta)$, where $\hat  A$ is an $fn^2$-dimensional abelian variety over $ \hat R$ equipped with an $\calO_D$-action, $ \hat  \lambda$ is a polarization on $\hat A$ such that the Rosati involution induces the $*$-involution on $\calO_D$, and $ \hat \eta$ is a level structure as in Subsection~\ref{S:defn of Shimura var}(3). We define $\scrC_{R}$ in the same way.
For an object $(A,\lambda,\eta)$ in the category $\scrC_{R}$, let $H^{\cris}_1(A/\hat R)$ be the evaluation of the first relative crystalline homology (i.e. dual crystal of the first crystalline cohomology) 
of $A/R$ at the divided power thickening $\hat R\ra R$, and $H^{\cris}_1(A/\hat R)^{\circ}_{i}:=\gothe H^{\cris}_1(A/\hat R)_{q_i}$ be the $i$-th reduced part.
  We denote by  $\Def( R,\hat R)$ the category of tuples $(A, \lambda,\eta, (\hat \omega^{\circ}_i)_{i=1,\dots, f})$, where $(A,\lambda,\eta)$ is an object in $\scrC_{R}$, and $\hat \omega^{\circ}_{i}\subseteq H^{\cris}_1(A/\hat R)_{i}^{\circ}$ for each $i\in \Z/f\Z$ is a subbundle that lifts $\omega^{\circ}_{A^\vee/R,i}\subseteq H^{\dR}_1(A/R)^{\circ}_i$.
   The following is a combination of Serre--Tate  and Grothendieck--Messing deformation theory.

\begin{theorem}[Serre--Tate, Grothendieck--Messing]
\label{T:STGM}
The  functor $(\hat A, \hat \lambda, \hat \eta)\mapsto (\hat A \otimes_{\hat R}R , \lambda,\eta,\omega_{\hat A^\vee/\hat R,i}^{\circ})$,  where   $\lambda$ and $\eta$ are the natural induced polarization and level structure on $\hat A\otimes_{\hat R}R$, is an equivalence of categories between $\scrC_{\hat R}$ and $\Def(R,\hat R)$.
\end{theorem}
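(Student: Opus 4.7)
The plan is to reduce to the classical Serre--Tate and Grothendieck--Messing statements and then to keep careful track of how the extra PEL data (the $\calO_D$-action, the polarization, and the level structure) fit into the Morita/eigenspace decomposition indexed by $q_i, \bar q_i$.

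First I would strip off the prime-to-$p$ level structure: because $\hat I$ is nilpotent, the small \'etale sites of $\Spec R$ and $\Spec \hat R$ coincide, so any $\hat A$ lifting $A$ carries a canonical lift of $\hat\eta$ of $\eta$, and conversely $\eta$ is recovered from $\hat\eta$ by reduction. Next, by the classical Serre--Tate theorem for nilpotent thickenings, the functor $\hat A \mapsto (A = \hat A \otimes_{\hat R} R,\, \hat A[p^\infty])$ identifies the category of abelian schemes over $\hat R$ (together with their $p$-divisible groups) with the category of pairs consisting of an abelian scheme $A$ over $R$ and a deformation of $A[p^\infty]$ to $\hat R$. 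Moreover $\calO_D$-actions and polarizations deform uniquely if the corresponding actions on $A[p^\infty]$ deform, because the prime-to-$p$ parts extend automatically and the $p$-part is controlled by the $p$-divisible group. Thus the problem reduces to deforming $A[p^\infty]$ together with its $\calO_D$-action and polarization.

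I would then invoke Grothendieck--Messing: deformations of the $p$-divisible group $A[p^\infty]$ from $R$ to $\hat R$ are in bijection with sub-bundles $\hat\omega \subset H^{\cris}_1(A/\hat R)$ lifting the Hodge filtration $\omega_{A^\vee/R}\subset H^{\dR}_1(A/R)$. An $\calO_D$-action (resp.\ a polarization) on $A[p^\infty]$ deforms if and only if $\hat\omega$ is stable under the crystalline $\calO_D$-action (resp.\ isotropic for the crystalline Weil pairing induced by $\lambda$); when it is, the lift is unique. Under the decomposition
\[
H^{\cris}_1(A/\hat R) \;=\; \bigoplus_{i=1}^f\Bigl(H^{\cris}_1(A/\hat R)_{q_i} \,\oplus\, H^{\cris}_1(A/\hat R)_{\bar q_i}\Bigr),
\]
an $\calO_D\otimes\calO_{\hat R}$-stable lift $\hat\omega$ is the same, by Morita equivalence via the idempotent $\gothe$, as a collection of lifts $\hat\omega^\circ_i \subset H^{\cris}_1(A/\hat R)^\circ_i$ of $\omega^\circ_{A^\vee/R,i}$ together with analogous lifts $\hat\omega^{\circ}_{\bar q_i}$ on the $\bar q_i$-parts.

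The key point — and the only delicate step — is that the polarization $\lambda$ swaps $\gothp$ and $\bar\gothp$, so the crystalline Weil pairing gives a perfect duality between the $q_i$- and $\bar q_i$-parts of $H^{\cris}_1(A/\hat R)$. Consequently, requiring $\hat\omega$ to be isotropic forces $\hat\omega^{\circ}_{\bar q_i}$ to be the annihilator of $\hat\omega^\circ_i$; hence specifying the lifts $\hat\omega^\circ_i$ for $i = 1,\dots,f$ determines $\hat\omega$ uniquely, and every such choice satisfies the isotropy condition automatically by a dimension count. Combining this with the uniqueness parts of Serre--Tate and Grothendieck--Messing, the functor of the theorem admits an inverse sending $(A,\lambda,\eta,(\hat\omega^\circ_i)_i)$ to the unique lift $\hat A$ whose crystalline Hodge filtration at $\hat R$ is $\bigoplus_i (\hat\omega^\circ_i)^{\oplus n} \oplus \bigoplus_i (\hat\omega^\circ_i)^{\perp,\oplus n}$; this gives the desired equivalence of categories.
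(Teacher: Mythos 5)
Your proposal is correct and takes essentially the same route as the paper: the paper invokes the crystalline deformation theory of Grothendieck and Mazur--Messing to identify lifts with $\calO_D$-stable, isotropic lifts $\hat\omega$ of the Hodge filtration in $H_1^\cris(A/\hat R)$, and then observes that this datum is equivalent to the collection of reduced subbundles $\hat\omega_i^\circ$ at the places $q_i$. Your Serre--Tate plus Grothendieck--Messing reduction and, in particular, the annihilator/duality argument (the pairing induced by $\lambda$ is perfect between the $q_i$- and $\bar q_i$-parts, so the $\bar q_i$-components are forced and isotropy is automatic by rank count) is precisely the content of the step the paper dismisses as ``clearly equivalent.''
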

\begin{proof}
The main theorem of the crystalline deformation theory (cf. \cite[pp. 116--118]{grothendieck},   \cite[Chap. II \S 1]{mazur-messing}) says that the category $\scrC_{\hat R}$ is equivalent to the category of objects $(A, \lambda, \eta)$ in $\scrC_R$ together with a lift of $\omega_{A^\vee/R} \subseteq H_1^\cris(A/R)$ to a subbundle $\hat \omega$ of $H_1^\cris(A/\hat R)$, such that $\hat \omega$ is stable under the induced $\calO_D$-action and is isotropic for the pairing on $H_1^\cris(A/\hat R)$ induced by the polarization $\lambda$.
But the additional information $\hat \omega$ is clearly equivalent to the subbundles $\hat \omega_i^\circ \subseteq H_1^\cris(A/\hat R)_i^\circ$ lifting $\omega^\circ_{A^\vee/R,i}$.
\end{proof}

\begin{corollary}
\label{C:tangent space of Sh}
If $\calA_{a_\bullet}$ denotes the universal abelian variety over $\Sh_{a_\bullet}$, then the tangent space $T_{\Sh_{a_\bullet}}$ of $\Sh_{a_\bullet}$ is
\[
\bigoplus_{i=1}^f \Lie_{\calA^\vee_{a_\bullet}/\Sh_{a_\bullet},i}^\circ \otimes \Lie_{\calA_{a_\bullet}/\Sh_{a_\bullet},i}^\circ.
\]
\end{corollary}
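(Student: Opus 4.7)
The plan is to identify the tangent space fibrewise by unwinding Theorem~\ref{T:STGM} at the universal $R$-point of $\Sh_{a_\bullet}$ and then applying a standard deformation-theoretic computation of lifts of subbundles.

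Fix an $\FF_{p^f}$-algebra $R$ and an $R$-point $(A,\lambda,\eta) \in \Sh_{a_\bullet}(R)$.  By definition the tangent space at this point is the set of lifts of this point to $\hat R := R[\epsilon]/(\epsilon^2)$.  Applying Theorem~\ref{T:STGM} with $\hat I = (\epsilon)$, such a lift is the same datum as a choice, for each $i \in \ZZ/f\ZZ$, of a subbundle $\hat \omega_i^\circ \subseteq H_1^\cris(A/\hat R)_i^\circ$ lifting the reduced Hodge subbundle $\omega^\circ_{A^\vee/R,i} \subseteq H_1^\dR(A/R)_i^\circ$.  (The $\cO_D$-equivariance and the isotropy conditions have already been repackaged into the reduced datum.)  Thus $T_{\Sh_{a_\bullet},(A,\lambda,\eta)}$ is the product over $i$ of the sets of such lifts.

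The first step is to compute the set of lifts of a given subbundle $\omega \subseteq H$ of locally free $R$-modules to a subbundle $\hat \omega \subseteq \hat H := H \otimes_R \hat R$.  This is a classical computation: the set of such lifts is a torsor under $\Hom_R(\omega, H/\omega) \otimes_R \epsilon R = \Hom_R(\omega, H/\omega)$, via the rule that given two lifts $\hat \omega$ and $\hat \omega'$, their difference is measured by the composition $\omega \to \hat \omega' \to \hat H/\hat \omega \cong (H/\omega) \otimes_R \hat R$, which lands in $(H/\omega) \otimes_R \epsilon R$ since the two lifts agree modulo $\epsilon$.  Applied to $\omega = \omega^\circ_{A^\vee/R,i}$ and $H = H_1^\dR(A/R)_i^\circ$ (with $H/\omega \cong \Lie^\circ_{A/R,i}$), the $i$-th factor of the tangent space is canonically
\[
\Hom_R\bigl(\omega^\circ_{A^\vee/R,i},\, \Lie^\circ_{A/R,i}\bigr) \;\cong\; (\omega^\circ_{A^\vee/R,i})^\vee \otimes_R \Lie^\circ_{A/R,i}.
\]

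The final step is to identify $(\omega^\circ_{A^\vee/R,i})^\vee$ with $\Lie^\circ_{A^\vee/R,i}$.  This is canonical: the Hodge filtration for the dual abelian variety is obtained from that for $A$ by passing to duals and swapping $A$ and $A^\vee$, so $\omega^\circ_{A^\vee/R,i}$ and $\Lie^\circ_{A^\vee/R,i}$ are naturally $R$-linear duals (and the reduced part construction is compatible with this).  Substituting and assembling over $i$ gives
\[
T_{\Sh_{a_\bullet}} \;\cong\; \bigoplus_{i=1}^f \Lie^\circ_{\calA^\vee_{a_\bullet}/\Sh_{a_\bullet},i} \otimes \Lie^\circ_{\calA_{a_\bullet}/\Sh_{a_\bullet},i},
\]
which is the claim.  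The main point that actually uses input from earlier in the paper is the equivalence of Theorem~\ref{T:STGM}, which reduces the full PEL deformation problem to the single datum of lifts of the reduced subbundles $\omega^\circ_i$; once this reduction is in place, everything else is the standard linear-algebra description of lifts of subbundles over a square-zero thickening, so no genuine obstacle arises.
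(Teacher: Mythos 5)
Your proof is correct and follows essentially the same route as the paper: apply Theorem~\ref{T:STGM} to reduce the deformation problem to lifting each reduced Hodge subbundle $\omega^\circ_{A^\vee/R,i}\subseteq H_1^\cris(A/\hat R)_i^\circ$, note that such lifts form a torsor under $\Hom_R(\omega^\circ_{A^\vee/R,i},\Lie^\circ_{A/R,i})\otimes_R\hat I$, and identify $(\omega^\circ_{A^\vee/R,i})^\vee$ with $\Lie^\circ_{A^\vee/R,i}$. The only cosmetic difference is that you specialize to the dual numbers and spell out the torsor computation, which the paper leaves implicit.
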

\begin{proof}
Even though this is a well-known statement often referred to as the Kodaira--Spencer isomorphism (e.g. \cite[Proposition~2.3.4.2]{lan}), we include a short proof, as the proof serves as a toy model of many arguments later.
Let $\hat R$ be a noetherian $\FF_{p^f}$-algebra and $\hat I \subset \hat R$ an ideal such that $\hat I^2=0$; put $R =\hat R/\hat I$.
By Theorem~\ref{T:STGM}, to lift an $R$-point $(A, \lambda, \eta)$ of $\Sh_{a_\bullet}$ to an $\hat R$-point, it suffices to lift, for $i=1, \dots, f$, the differentials $\omega^\circ_{A^\vee,i} \subseteq H_1^\cris(A/R)_i^\circ$ to a subbundle $\hat \omega_i \subseteq H_1^\cris(A/\hat R)_i^\circ$.
Such lifts form a torsor for the group
\[
\Hom_{R}\big(\omega_{A^\vee/R,i}^\circ, \Lie_{A/R,i}^\circ \big) \otimes_{R} \hat I.
\]
It follows from this
\[
T_{\Sh_{a_\bullet}} \cong \bigoplus_{i=1}^f \cHom\big(\omega_{\calA^\vee_{a_\bullet}/\Sh_{a_\bullet},i}^\circ , \Lie_{\calA_{a_\bullet}/\Sh_{a_\bullet},i}^\circ \big) \cong \bigoplus_{i=1}^f \Lie_{\calA^\vee_{a_\bullet}/\Sh_{a_\bullet},i}^\circ \otimes \Lie_{\calA_{a_\bullet}/\Sh_{a_\bullet},i}^\circ.
\]
Note that this proof also shows that $\Sh_{a_\bullet}$ is smooth.
\end{proof}

\subsection{Notation in the real quadratic case}\label{S:notation-real-quadratic}
For the rest of the paper, we assume $f=2$ so that  $F$ is a real quadratic field in which $p$ is inert.
For non-negative  integers $r \leq s$ such that $n=r+s$, we denote by $G_{r,s}$ the algebraic group previously denoted by $G_{a_{\bullet}}$ with $a_1=r$ and $a_2=s$; in particular, $G_{r,s}(\RR)=G(U(r,s)\times U(s,r))$.
If $r', s'$ is another pair of non-negative integers such that $n=r'+s'$ and $r'\leq s'$, Lemma~\ref{L:change-signature} gives an isomorphism $G_{r,s}(\AAA^{\infty})\simeq G_{r',s'}(\AAA^{\infty})$.

Let $\cSh_{r,s}$ be the Shimura variety  over $\Z_{p^2}$ attached to $G_{r,s}$ defined in Subsection~\ref{S:defn of Shimura var} of some fixed sufficiently small prime-to-$p$ level  $K^p\subseteq G_{r,s}(\AAA^{\infty,p})$.  Let  $\Sh_{r,s}$ denote  its special fiber over $\FF_{p^2}$.  Let $\calA = \calA_{r,s}$ denote the universal abelian variety over $\Sh_{r,s}$. It is a $2n^2$-dimensional abelian variety, equipped with an action of $\calO_D$ and a prime-to-$p$ polarization $\lambda_{\calA}$.  Moreover, $\omega_{\calA^\vee/\Sh_{r,s},1}^\circ$ (resp. $\omega_{\calA^\vee/\Sh_{r,s},2}^\circ$) is a locally free module over $\Sh_{r,s}$ of rank $r$ (resp. rank $s$).

\begin{remark}
\label{R:Sh0n supersingular}
When $r=0$ and $s=n$, the universal abelian variety $\calA =\calA_{0,n}$ over $\Sh_{0,n}$ is supersingular. Indeed, for each $\overline \FF_p$-point $z$ of $\Sh_{0,n}$, the Kottwitz condition implies that the Frobenius induces \emph{isomorphisms}
\[
\tilde \calD(\calA_z)^\circ_1 \xrightarrow{\;F\;} \tilde \calD(\calA_z)^\circ_2 \xrightarrow{\; F \;} p\tilde \calD(\calA_z)^\circ_1.
\]
In particular, $\frac 1pF^2$ induces a $\sigma^2$-linear automorphism of $\tilde \calD(\calA_z)^\circ_1$.
By Hilbert's Theorem 90, there exists a $\ZZ_{p^2}$-lattice $\LL$ of $\tilde \calD(\calA_z)^\circ_1$ that is invariant under the action of $\frac 1p F^2$, in other words, $F^2$ acts by multiplication by $p$ for a basis chosen from this lattice. It follows that all  slopes of the Frobenius  on $\tilde \calD(\calA_z)$ are $\frac 12$, and hence $\calA_z$ is supersingular.
\end{remark}

\section{The case of $G(U(1,n-1)\times U(n-1,1))$}\label{Section:U(1,n)}

We will verify Conjecture~\ref{Conj:main} for $\Sh_{1,n-1}$, namely the existence of some cycles $Y_j$ having morphisms to both $\Sh_{0,n}$ and $\Sh_{1,n-1}$ and generating Tate classes of $\Sh_{1,n-1}$ under a certain genericity hypothesis on the Satake parameters.
 We always fix an isomorphism $G_{1,n-1}(\AAA^{\infty})\simeq G_{0,n}(\AAA^{\infty})$, and write  $G(\AAA^{\infty})$ for either group.
 
\begin{notation}
For a smooth variety $X$ over $\F_{p^2}$, we denote by $T_X$ the tangent bundle of $X$, and for a locally free $\cO_X$-module $M$, we put $M^*=\cHom_{\cO_X}(M,\cO_X)$.
\end{notation}

\subsection{Cycles on $\Sh_{1,n-1}$}\label{S:cycles}
For each integer  $j$ with $1\leq j\leq n $, we first define the variety $Y_j$ we briefly mentioned in the introduction.   Let $Y_{j}$ be the moduli space over $\FF_{p^2}$ that associates to  each  locally noetherian $\FF_{p^2}$-scheme $S$, the set of isomorphism classes of tuples  $(A,\lambda,\eta, B,\lambda',\eta',\phi)$, where 
\begin{itemize}
\item
$(A,\lambda,\eta)$ is an $S$-point of $\Sh_{1,n-1}$,  
\item
$(B,\lambda',\eta')$ is an $S$-point of $\Sh_{0,n}$,  and 
\item
$\phi: B\ra A$ is an $\calO_D$-equivariant isogeny whose kernel is contained in $B[p]$,
\end{itemize}
such that 
\begin{itemize}
\item
$p\lambda'=\phi^\vee \circ\lambda\circ \phi$, 
\item
$\phi\circ\eta'=\eta$, and  \item
the cokernels of the maps
 \[
 \phi_{*,1}: H^\dR_1(B/S)^\circ_1 \to H^\dR_1(A/S)^\circ_1 \quad\textrm{and}\quad
\phi_{*,2}: H^\dR_1(B/S)^\circ_2 \to H^\dR_1(A/S)^\circ_2
\]
are locally free $\cO_S$-modules of rank $j-1$ and $j$, respectively.
\end{itemize}

There is a unique isogeny $\psi: A\ra B$ such that $\psi\circ\phi=p \cdot \id_{B}$ and $\phi\circ\psi=p \cdot \id_{A}$.
 We have
 $$
 \Ker(\phi_{*,i})=\im(\psi_{*,i}) \quad \textrm{and} \quad \Ker(\phi_{*,i})=\im(\psi_{*,i}),
 $$
 where $\psi_{*,i}$ for $i=1,2$ is the induced homomorphism on the reduced de Rham homology in the evident sense.
This moduli space $Y_j$ is represented by a scheme of finite type over $\FF_{p^2}$.
We have a natural diagram of morphisms:
\begin{equation}\label{D:two-projection}
\xymatrix{& Y_j\ar[rd]^{\pr'_j}\ar[ld]_{\pr_j}\\
\Sh_{1,n-1} &&\Sh_{0,n},
}
\end{equation}
where $\pr_j$ and $\pr'_j$  send  a tuple $(A,\lambda,\eta, B,\lambda',\eta',\phi)$ to $(A, \lambda, \eta)$ and to $(B, \lambda', \eta')$, respectively.
Letting $K^p$ vary, we see easily that both $\pr_{j}$ and $\pr'_{j}$ are equivariant under prime-to-$p$ Hecke actions given by the double cosets $K^p\backslash G(\AAA^{\infty,p})/K^p$.

\subsection{Some auxiliary moduli spaces}
The moduli problem for $Y_j$ is slightly complicated.  We will introduce a more explicit moduli space $Y'_j$ below and then show they are isomorphic.

Consider the functor $\uY'_j$ which associates to each locally noetherian $\F_{p^2}$-scheme $S$, the set of isomorphism classes of tuples $(B, \lambda', \eta', H_1,H_2)$, where
\begin{itemize}
\item $(B,\lambda',\eta')$ is an $S$-valued point of $\Sh_{0,n}$;

\item $H_1\subset H^{\dR}_1(B/S)^{\circ}_{1}$ and  $H_2\subset H^{\dR}_1(B/S)_{2}^{\circ}$ are  $\calO_S$-subbundles of rank $j$ and $j-1$ respectively such that
\begin{equation}\label{Equ:condition-Yj}
V^{-1}(H_2^{(p)})\subseteq H_1,\quad  H_2\subseteq F(H_1^{(p)}).
\end{equation}
Here, $F: H^{\dR}_1(B/S)^{\circ,(p)}_1\xra{\sim} H^{\dR}_1(B/S)_{2}^{\circ}$ and $V:H^{\dR}_1(B/S)^{\circ}_{1}\xra{\sim} H^{\dR}_1(B/S)_{2}^{\circ, (p)} $ are respectively the Frobenius and Verschiebung homomorphisms, which are actually isomorphisms because of the signature condition on $\Sh_{0,n}$.
\end{itemize}
It follows from the moduli problem that the  quotients $H_1/V^{-1}(H_2^{(p)})$ and $F(H_1^{(p)})/H_2$ are both locally free $\cO_{S}$-modules of rank one.

There is a natural projection  $\pi'_j\colon \uY'_j\ra \Sh_{0,n}$ given by $(B,\lambda',\eta',H_1,H_2)\mapsto (B,\lambda',\eta')$.

\begin{proposition}\label{P:Yj-prime}
The functor $\uY'_j$ is representable by a scheme $Y'_j$ smooth and projective over $\Sh_{0,n}$ of  dimension $n-1$.
 Moreover, if $(\calB,\lambda',\eta',\cH_1,\cH_2)$ denotes the universal object over $Y'_j$, then  the tangent bundle of $Y'_j$ is
\[
T_{Y'_j}\cong \big((\cH_1/V^{-1}(\cH_2^{(p)})\big)^*\otimes \big (H^{\dR}_1(\calB/\Sh_{0,n})^{\circ}_1/\cH_1\big) \oplus \bigl(\cH^*_{2}\otimes F(\cH_1^{(p)})/\cH_2\bigr).
\]
\end{proposition}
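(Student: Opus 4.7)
The plan is to realize $Y'_j$ as a closed subscheme of a product of relative Grassmannians over $\Sh_{0,n}$, and then simultaneously establish smoothness, compute the dimension, and identify the tangent bundle through a direct deformation-theoretic computation.

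For representability and projectivity, I would consider the product of relative Grassmannian bundles
\[
\Gr\big(j,\, H^{\dR}_1(\calB/\Sh_{0,n})^{\circ}_1\big) \times_{\Sh_{0,n}} \Gr\big(j-1,\, H^{\dR}_1(\calB/\Sh_{0,n})^{\circ}_2\big),
\]
which is smooth and projective over $\Sh_{0,n}$. The two containment conditions in \eqref{Equ:condition-Yj} each translate into the vanishing of a morphism between locally free sheaves of constant rank on this product, and hence cut out a closed subscheme. Once these containments hold, the two quotients $\cH_1/V^{-1}(\cH_2^{(p)})$ and $F(\cH_1^{(p)})/\cH_2$ are automatically locally free of rank $1$: in general, if $A \subseteq B$ are two subbundles of a common ambient bundle $C$, then $A$ is automatically a subbundle of $B$, so $B/A$ is locally free of the expected rank. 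This realizes $Y'_j$ as a closed subscheme of a scheme projective over $\Sh_{0,n}$, and hence $Y'_j$ is projective over $\Sh_{0,n}$.

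To compute the tangent bundle and verify smoothness, I let $\hat R$ be a noetherian $\FF_{p^2}$-algebra with a square-zero ideal $\hat I$, let $R = \hat R/\hat I$, and fix an $R$-valued point $(B,\lambda',\eta',H_1,H_2)$ of $Y'_j$. Since $\Sh_{0,n}$ is zero-dimensional, the triple $(B,\lambda',\eta')$ admits a unique lift to $\hat R$, so the deformation problem reduces to counting lifts of $H_1, H_2$ to sub-bundles $\hat H_1, \hat H_2$ of $H^{\dR}_1(B/\hat R)^{\circ}_i$ satisfying the analogous conditions. The crucial observation is that the Frobenius endomorphism of $\hat R$ annihilates $\hat I$ (since $\hat I^2 = 0$ and $p \geq 2$), so for any lift $\hat H_i$ the Frobenius twist $\hat H_i^{(p)}$ depends only on the reduction $H_i^{(p)}$; it is the canonical lift $H_i^{(p)} \otimes_R \hat R$. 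Consequently, the subbundles $V^{-1}(\hat H_2^{(p)})$ and $F(\hat H_1^{(p)})$ appearing in the lifted incidence conditions are canonical lifts of $V^{-1}(H_2^{(p)})$ and $F(H_1^{(p)})$, and the two deformation problems for $\hat H_1$ and $\hat H_2$ decouple.

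The lifts $\hat H_1$ of $H_1$ containing the canonical lift of $V^{-1}(H_2^{(p)})$ are in bijection with the lifts of the rank-one subbundle $H_1/V^{-1}(H_2^{(p)})$ inside $(H^{\dR}_1(B/R)^{\circ}_1/V^{-1}(H_2^{(p)})) \otimes_R \hat R$, which form a torsor under
\[
\Hom_R\big(H_1/V^{-1}(H_2^{(p)}),\,H^{\dR}_1(B/R)^{\circ}_1/H_1\big)\otimes_R \hat I.
\]
Dually, the lifts $\hat H_2$ of $H_2$ contained in the canonical lift of $F(H_1^{(p)})$ form a torsor under $\Hom_R\big(H_2,\,F(H_1^{(p)})/H_2\big)\otimes_R \hat I$. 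Summing these two contributions yields the formula for $T_{Y'_j}$ stated in the proposition; its rank at every point equals $(n-j)+(j-1)=n-1$. Since the deformation functor is formally a torsor under a locally free sheaf of constant rank $n-1$, the scheme $Y'_j$ is formally smooth over $\Sh_{0,n}$, and therefore smooth of relative dimension $n-1$. The main technical subtlety is precisely the Frobenius-twist argument, which is what allows the two deformation problems to decouple and yields the clean tangent-bundle formula.
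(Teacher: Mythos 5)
Your proposal is correct and follows essentially the same route as the paper: representability and projectivity by realizing $\uY'_j$ as a closed subfunctor of the product of relative Grassmannians, and smoothness plus the tangent-bundle formula via square-zero deformation theory, with the key observation that Frobenius kills the square-zero ideal so that $V^{-1}(\hat H_2^{(p)})$ and $F(\hat H_1^{(p)})$ are canonical and the two lifting problems decouple into torsors of ranks $n-j$ and $j-1$. The only (harmless) deviations are that you note the rank-one local-freeness condition in \eqref{Equ:condition-Yj} is automatic and that you deform a point of $Y'_j$ absolutely using the unique (formally \'etale) lift of the $\Sh_{0,n}$-point, whereas the paper proves formal smoothness of $\pi'_j$ relatively.
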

\begin{proof}
For each integer $m$ with $0\leq m\leq n$ and $i=1,2$, let $\Gr(H^{\dR}_1(\calB/\Sh_{0,n})^{\circ}_{i}, m)$ be the Grassmannian scheme over $\Sh_{0,n}$ that parametrizes subbundles of  the universal de Rham homology $H^{\dR}_1(\calB/\Sh_{0,n})^{\circ}_{i}$ of rank $m$.
Then $\underline Y'_j$ is a closed subfunctor of the product of the Grassmannian schemes
\[
\Gr\big(H^{\dR}_1(\calB/\Sh_{0,n})^{\circ}_{1}, j\big)\times \Gr\big(H^{\dR}_1(\calB/\Sh_{0,n})^{\circ}_{2}, j-1\big).
\]
The representability of $\uY'_j$ follows.  Moreover, $Y'_j$ is projective.

We show now that the structural map $\pi'_j: Y'_{j}\ra \Sh_{0,n}$ is smooth of relative dimension $n-1$.
Let $S_0\hra S$ be an immersion of locally noetherian $\F_{p^2}$-schemes with ideal sheaf $I$ satisfying $I^2=0$. Suppose we are given a commutative diagram
\[
\xymatrix{
S_0\ar[r]^{g_0}\ar[d] & Y'_j \ar[d]^{\pi'_j}\\
S\ar[r]^-{h}\ar@{-->}^{g}[ru] &\Sh_{0,n}
}
\]
with solid arrows.
 We have to show that, locally for the Zariski topology on $S_0$, there is  a morphism $g:S\to Y'_j$ making the diagram commute.
  Let $B$ be the abelian scheme over $S$ given by $h$, and $B_0$ be the base change to $S_0$. The morphism $g_0$ gives rises to subbundles $\overline{H}_1\subset H^{\dR}_1(B_0/S_0)^{\circ}_1$ and $\overline{H}_2\subset H^{\dR}_1(B_0/S_0)^{\circ}_2$ with
\[
F(\overline{H}_1^{(p)})\supset \overline{H}_2, \quad V^{-1}(\overline{H}_2^{(p)})\subset \overline{H}_1.
\]
Finding $g$ is equivalent to finding a subbundle $H_i\subset  H^{\dR}_1(B/S)^{\circ}_i$   which lifts each $\overline{H}_i$ for $i=1,2$ and satisfies \eqref{Equ:condition-Yj}; this is certainly possible when passing to small enough affine open subsets of $S_0$.
Thus  $\pi'_j: Y_{j}'\ra \Sh_{0,n}$ is formally smooth, and hence smooth.
 We note that $F_{S}^*: \cO_S\ra \cO_{S}$ factors through $\cO_{S_0}$.
Hence $V^{-1}(H_2^{(p)})$ and $F(H_1^{(p)})$ actually depend only on $\overline{H}_1, \overline{H}_2$, but not on the lifts $H_1$ and $H_2$.
 Therefore, the possible lifts $H_2$ form a torsor under the group
 $$
 \calH om_{\cO_{S_0}}\big(\overline{H}_2, F(\overline{H}^{(p)}_1)/\overline{H}_2\big)
 \otimes_{\cO_{S_0}} I,
 $$
 and similarly the possible lifts $H_1$ form a torsor under the group
 $$
 \calH om_{\cO_{S_0}}\big(\overline{H}_1/V^{-1}(\overline{H}_2^{(p)}),H^{\dR}_1(B_0/S_0)^{\circ}_1/\overline{H}_1\big)\otimes_{\cO_{S_0}} I.
 $$
 To compute the tangent bundle $T_{Y'_j}$, we take $S=\Spec(\cO_{S_0}[\epsilon]/\epsilon^2)$ and $I=\epsilon \cO_{S}$. 
 The morphism $g_0: S_0\ra Y'_j$ corresponds to an $S_0$-valued point of $Y'_j$, say $y_0$. 
 Then  the possible liftings $g$ form the tangent space $T_{Y'_j}$ at $y_0$, denote by $T_{Y'_j,y_0}$. 
  The discussion above shows that  
\[
T_{Y'_{j},y_0}\cong \cHom_{\cO_{S_0}} \big(\overline{H}_2, F(\overline{H}^{(p)}_1)/\overline{H}_2\big )\oplus \cHom_{\cO_{S_0}}\big(\overline{H}_1/V^{-1}(\overline{H}_2^{(p)}),H^{\dR}_1(B_0/S_0)^{\circ}_1/\overline{H}_1\big),
\]
which is certainly a vector bundle over $S_0$ of rank $j-1+(n-j)=n-1$. Applying this to the universal case when $g_0: S_0\ra Y'_j$ is the identity morphism, the formula of the tangent bundle follows.
\end{proof}

\begin{remark}
Let $(B,\lambda',\eta',H_1,H_2)$ be an $S$-point of $Y'_j$.

(a) If $j=n$, $H_1$ has to be  $H^{\dR}_1(B/S)^{\circ}_{1}$, and $H_2$ is a hyperplane of $H^{\dR}_1(B/S)^{\circ}_2$. Condition~\eqref{Equ:condition-Yj} is trivial.  In this case, $Y'_{n}$ is the projective space over $\Sh_{0,n}$ associated to $H^{\dR}_1(\calB/\Sh_{0,n})^{\circ}_2$, where $\calB$ is the universal abelian scheme over $\Sh_{0,n}$.  So it is geometrically a union of copies of $\PP^{n-1}_{\overline \FF_p}$.

(b) If $j=1$, then $H_1$ is a line in $H^{\dR}_1(B/S)^{\circ}_1$ and $H_2=0$.  So $Y'_{1}$ is the projective space over $\Sh_{0,n}$ associated to $(H^{\dR}_1(\calB/\Sh_{0,n})^{\circ}_1)^*$.

(c) If $j=2$, $H_2\subseteq H^{\dR}_1(B/S)^{\circ}_2$ is a line, and $H_1\subseteq H^{\dR}_1(B/S)^{\circ}_1$ is a subbundle of rank 2 such that $F(H_1^{(p)})$ contains both $H_2$ and $F\big(V^{-1}(H_2^{(p)})^{(p)}\big)$. Therefore,
if $H_2\neq F(V^{-1}(H_2^{(p)}))^{(p)}$, $H_1$ is determined up to Frobenius pullback.
 If $H_2 = F\big(V^{-1}(H_2^{(p)})^{(p)}\big)$, then $H_1$ could be  any rank 2 subbundle  containing $V^{-1}(H_2^{(p)})$.

 We fix a geometric point $z=(B,\lambda',\eta')\in \Sh_{0,n}(\Fpb)$.
 It is possible to find good basis for $H^{\dR}_1(B/\Fpb)^{\circ}_{1}$ and $H^{\dR}_1(B/\Fpb)^{\circ}_{2}$ such that both $F, V: H^{\dR}_1(B/\Fpb)^{\circ}_{1}\ra H^{\dR}_1(B/\Fpb)^{\circ}_{2}$ are given by the identity matrix.
  With these choices, we may identify the fiber $Y'_{2, z}=\pi'^{-1}_2(z)$ with a closed subvariety of
\[
 \Gr(\Fpb^{n}, 2)\times \Gr(\Fpb^{n},1).
\]
Moreover, one may equip $\Gr(\Fpb^{n},1)\cong \PP^{n-1}_{\Fpb}$ with an $\F_{p^2}$-rational structure such that $H_2 = F\big(V^{-1}(H_2^{(p)})^{(p)}\big)$ if and only if $[H_2]\in \PP^{n-1}_{\Fpb}$ is an $\F_{p^2}$-rational point.
 So $Y'_{2, z}$ is isomorphic to a ``Frobenius-twisted'' blow-up of $\PP^{n-1}_{\Fpb}$ at all of its $\F_{p^2}$-rational points.
  Here, ``Frobenius-twisted'' means that each irreducible component of the exceptional divisor has multiplicity $p$. For instance, when $n=3$, each  $Y_{2, z}$ is isomorphic to the closed subscheme of $\PP^2_{\Fpb}\times \PP^2_{\Fpb}$ defined by
\[\begin{cases}
a_1b_1^p+a_2b_2^p+a_3b_3^p=0\\
a_1^pb_1+a_2^pb_2+a_3^pb_3=0,
\end{cases}\]
where $(a_1:a_2:a_3)$ and $(b_1:b_2:b_3)$ are  the homogeneous coordinates on the two copies of $\PP^2$.

\end{remark}

\begin{lemma}\label{L:omega-F}
Let $(A,\lambda, \eta, B,\lambda',\eta',\phi)$ be an $S$-point of $Y_{j}$.
 Then the image of $\phi_{*,1}$ contains both $\omega^{\circ}_{A^\vee/S,1}$ and $F\big(H^{\dR}_1(A/S)^{\circ,(p)}_{2}\big)$, and the image of $\phi_{*,2}$ is contained in both $\omega^{\circ}_{A^\vee/S,2}$ and $F\big(H^{\dR}_1(A/S)^{\circ,(p)}_{1}\big)$.
\end{lemma}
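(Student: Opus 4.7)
The plan is to derive all four inclusions from two general functoriality principles — that both $\phi_*$ and $\psi_*$ preserve Hodge filtrations and commute with the Frobenius $F$ and Verschiebung $V$ on de Rham homology — together with the degenerate signature of $B$.

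Since $B$ defines an $S$-point of $\Sh_{0,n}$, the moduli description forces $\omega^{\circ}_{B^\vee/S,1}=0$ and $\omega^{\circ}_{B^\vee/S,2}=H^{\dR}_1(B/S)^{\circ}_2$. Combined with $\im V = \omega^{\circ,(p)}_{B^\vee/S,\bullet}$ and the identity $\ker F=\im V$, this completely pins down the action of $F$ and $V$ on $B$: the maps $V\colon H^{\dR}_1(B/S)^{\circ}_1\xra{\sim}H^{\dR}_1(B/S)^{\circ,(p)}_2$ and $F\colon H^{\dR}_1(B/S)^{\circ,(p)}_1\xra{\sim}H^{\dR}_1(B/S)^{\circ}_2$ are isomorphisms, while $F\colon H^{\dR}_1(B/S)^{\circ,(p)}_2\to H^{\dR}_1(B/S)^{\circ}_1$ and $V\colon H^{\dR}_1(B/S)^{\circ}_2\to H^{\dR}_1(B/S)^{\circ,(p)}_1$ both vanish. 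This degeneracy is the only nontrivial input.

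For the two Hodge-type inclusions: since $\psi_*$ preserves the Hodge filtration, $\psi_{*,1}(\omega^{\circ}_{A^\vee/S,1})\subseteq\omega^{\circ}_{B^\vee/S,1}=0$, and hence $\omega^{\circ}_{A^\vee/S,1}\subseteq\ker\psi_{*,1}=\im\phi_{*,1}$; dually, $\im\phi_{*,2}=\phi_{*,2}(\omega^{\circ}_{B^\vee/S,2})\subseteq\omega^{\circ}_{A^\vee/S,2}$. For the two $F$-type inclusions we run the same trick with Frobenius compatibility. For $F(H^{\dR}_1(A/S)^{\circ,(p)}_2)\subseteq\im\phi_{*,1}$, combine $\psi_{*,1}\circ F=F\circ\psi_{*,2}^{(p)}$ with the vanishing of $F$ on $H^{\dR}_1(B/S)^{\circ,(p)}_2$ to conclude that $F(H^{\dR}_1(A/S)^{\circ,(p)}_2)\subseteq\ker\psi_{*,1}=\im\phi_{*,1}$. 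For $\im\phi_{*,2}\subseteq F(H^{\dR}_1(A/S)^{\circ,(p)}_1)$, use the surjectivity of $F\colon H^{\dR}_1(B/S)^{\circ,(p)}_1\twoheadrightarrow H^{\dR}_1(B/S)^{\circ}_2$ to write every element of $H^{\dR}_1(B/S)^{\circ}_2$ as $F(w)$, whence $\phi_{*,2}(F(w))=F(\phi_{*,1}^{(p)}(w))\in F(H^{\dR}_1(A/S)^{\circ,(p)}_1)$. There is essentially no obstacle to this argument: the proof is pure bookkeeping once one has correctly decoded the degenerate $F/V$ structure on $B$ forced by its signature.
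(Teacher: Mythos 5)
Your proof is correct and follows essentially the same route as the paper: functoriality of the Hodge filtration applied to $\phi_{*,2}$ and $\psi_{*,1}$ together with the signature condition $\omega^\circ_{B^\vee/S,1}=0$, $\omega^\circ_{B^\vee/S,2}=H^\dR_1(B/S)^\circ_2$, and then the $F$/$V$-compatibility of $\phi_*$ and $\psi_*$ with the resulting isomorphisms $F\colon H^\dR_1(B/S)^{\circ,(p)}_1\xra{\sim}H^\dR_1(B/S)^\circ_2$ and $V\colon H^\dR_1(B/S)^\circ_1\xra{\sim}H^\dR_1(B/S)^{\circ,(p)}_2$. The only cosmetic difference is that for the inclusion $F\big(H^\dR_1(A/S)^{\circ,(p)}_2\big)\subseteq\im(\phi_{*,1})$ the paper cites Verschiebung functoriality while you use the vanishing of $F$ on $H^\dR_1(B/S)^{\circ,(p)}_2$ via $\psi$, which is the same degeneracy in equivalent form.
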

\begin{proof}
By the functoriality, $\phi_{2,*}$ sends $\omega^{\circ}_{B^\vee/S,2}$ to $\omega^{\circ}_{A^\vee/S,2}$.  Since $\omega^{\circ}_{B^\vee/S,2}=H^{\dR}_1(B/S)^{\circ}_{2}$ by the Kottwitz determinant condition, it follows that $\im(\phi_{*,2})$ in contained in  $\omega^{\circ}_{A^\vee/S,2}$.
Similar arguments by considering  $\psi_{*,1}$ shows that $\omega^{\circ}_{A^\vee/S,1}\subseteq \Ker(\psi_{*,1})=\im(\phi_{*,1})$.
 The fact that $\im(\phi_{*,2})$ is contained in $F\big(H^{\dR}_1(A/S)^{\circ,(p)}_{1}\big)$  follows from the commutative diagram
\begin{equation}\label{D:Frob-phi}
\xymatrix{H^{\dR}_1(B/S)^{\circ,(p)}_1\ar[d]_{F}^{\cong}\ar[r]^{\phi_{*,1}^{(p)}} & H_1^{\dR}(A/S)^{\circ,(p)}_1\ar[d]^{F}\\
H_1^{\dR}(B/S)^{\circ}_2 \ar[r]^{\phi_{*,2}} &H_1^{\dR}(A/S)^{\circ}_2,
}
\end{equation}
  and the fact that the left vertical arrow is an isomorphism.
   The inclusion $F\big(H^{\dR}_1(A/S)^{\circ,(p)}_{2}\big)\subseteq \im(\phi_{*,1})=\Ker(\psi_{*,1})$ can be proved similarly using the functoriality of Verschiebung homomorphisms.
\end{proof}

\subsection{A morphism from $Y_j$ to $Y'_j$}
There is a natural  morphism $\alpha: Y_j\ra Y'_j$ for $1\leq j\leq n$ defined as follows. For a locally noetherian $\FF_{p^2}$-scheme $S$ and an $S$-point $(A,\lambda, \eta,B,\lambda',\eta',\phi)$ of $Y_j$,  we define
\begin{equation}
\label{E:definition of H12}
H_1:=\phi_{*,1}^{-1}(\omega^{\circ}_{A^\vee/S,1})\subseteq H^{\dR}_1(B/S)^{\circ}_{1},
\quad \text{and}\quad
 H_2:=\psi_{*,2}(\omega^{\circ}_{A^\vee/S,2})\subseteq H^{\dR}_{1}(B/S)^{\circ}_{2}.
\end{equation}
In particular, $H_1$ and  $H_2$ are $\calO_S$-subbundles of rank $j$ and $j-1$, respectively.
 Also, there is a canonical isomorphism $\omega^{\circ}_{A^\vee/S,2}/\im(\phi_{*,2})\xra{\sim} H_2$.
 From the commutative diagram \eqref{D:Frob-phi}, it is easy to see that $F(H_1^{(p)}) \subseteq\Ker(\phi_{*,2})=\im(\psi_{*,2})$, but comparing the rank forces this to be an equality. It follows that $H_2\subseteq F(H_1^{(p)})$.
 Similarly, $V^{-1}(H_2^{(p)})$ is identified with $\im(\psi_{*,1})= \Ker(\phi_{*,1})$, hence $V^{-1}(H_2^{(p)})\subseteq H_1$.
From these, we deduce two canonical isomorphisms:
\begin{equation}\label{Equ:H-omega}
H_1/V^{-1}(H_2^{(p)})\xra{\sim} \omega^{\circ}_{A^\vee/S,1},\quad \text{and} \quad
F(H_1^{(p)})/H_2\xra{\sim} H^{\dR}_1(A/S)^{\circ}_{2}/\omega^{\circ}_{A^\vee/S,2}\cong \Lie_{A/S,2}^{\circ}.
\end{equation}
Therefore, we have a well-defined map $\alpha\colon Y_j\ra Y'_j$ given by
 \[
 \alpha\colon (A,\lambda,\eta,B,\lambda',\eta',\phi)\longmapsto (B,\lambda',\eta',H_1,H_2).
 \]
Moreover, it is clear from the definition that $\pi'_j \circ \alpha = \pr'_j$.

 \begin{proposition}\label{P:isom-Yj}
The morphism $\alpha$ is an isomorphism.
\end{proposition}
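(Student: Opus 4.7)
The plan is to exhibit an explicit inverse morphism $\beta: Y'_j \to Y_j$ over $\Sh_{0,n}$. Given an $S$-point $(B, \lambda', \eta', H_1, H_2)$ of $Y'_j$, I will produce a tuple $(A,\lambda,\eta,B,\lambda',\eta',\phi) \in Y_j(S)$ by realizing $A$ as the quotient of $B$ by a finite locally free subgroup scheme $K \subseteq B[p]$ determined by $(H_1, H_2)$.

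Concretely, set $K_1 := V^{-1}(H_2^{(p)}) \subseteq H^{\dR}_1(B/S)^\circ_1$ and $K_2 := F(H_1^{(p)}) \subseteq H^{\dR}_1(B/S)^\circ_2$, of ranks $j-1$ and $j$. The inclusions $V^{-1}(H_2^{(p)}) \subseteq H_1$ and $H_2 \subseteq F(H_1^{(p)})$ imposed in the definition of $Y'_j$, combined with the fact that $V_1$ and $F_2$ are isomorphisms on the signature-$(0,n)$ abelian scheme $B$, make $K_1 \oplus K_2$ (together with its $\bar\gothp$-part determined by $\lambda'$-duality) an $F,V$-stable Dieudonn\'e sub-module of $H^{\dR}_1(B/S)$, corresponding to a maximal isotropic finite flat subgroup $K$. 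Set $A := B/K$, $\phi: B \to A$ the quotient, and $\psi: A \to B$ its complement; then $\lambda'$ descends to a prime-to-$p$ polarization $\lambda$ on $A$ with $\phi^\vee \lambda \phi = p\lambda'$, and the level structure transports uniquely. The resulting tuple lies in $Y_j(S)$: by construction $\ker(\phi_{*,i}) = K_i$, giving the correct cokernel ranks, and $A$ has signature $(1,n-1)$ via Proposition~\ref{P:abelian-Dieud}, for which applying \eqref{E:dimension of new differentials} with $\ell_1 = n-j+1$ and $\ell_2 = n-j$ yields $\dim\omega^\circ_{A^\vee/k,1} = 1$ and $\dim\omega^\circ_{A^\vee/k,2} = n-1$.

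Checking $\alpha \circ \beta = \id$ and $\beta \circ \alpha = \id$ is then direct: the isomorphisms in \eqref{Equ:H-omega} immediately read off $H_1 = \phi_{*,1}^{-1}(\omega^\circ_{A^\vee/S,1})$ and $H_2 = \psi_{*,2}(\omega^\circ_{A^\vee/S,2})$ from the construction of $A$; conversely, for any $(A,\phi)$ in $Y_j(S)$, Lemma~\ref{L:omega-F} together with a rank count shows $\ker(\phi_{*,i}) = K_i$ for the $H_i$'s produced by $\alpha$, so $\beta$ reconstructs the original $(A,\phi)$ canonically.

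The main technical obstacle is carrying out the subgroup-scheme construction in families over a non-perfect base: Proposition~\ref{P:abelian-Dieud} is stated over a perfect field only, and realizing $A = B/K$ over a general $\FF_{p^2}$-scheme $S$ requires either the theory of Dieudonn\'e crystals for finite flat group schemes in characteristic $p$, or propagating the geometric-point construction via Grothendieck-Messing (Theorem~\ref{T:STGM}). An alternative strategy bypasses this altogether: show that $\alpha$ is bijective on $\overline\FF_p$-points (via Proposition~\ref{P:abelian-Dieud} pointwise) and induces isomorphisms on tangent spaces, where both sides can be identified, through Theorem~\ref{T:STGM} and Corollary~\ref{C:tangent space of Sh}, with the space of Hodge-filtration lifts of $A$ compatible with $\phi$; then conclude using the smoothness of $Y'_j$ from Proposition~\ref{P:Yj-prime} together with standard criteria (e.g.\ $Y_j$ being a priori proper over $\Sh_{0,n}$ and $\alpha$ unramified and bijective on geometric points).
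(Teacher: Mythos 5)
Your primary plan has a real gap, which you partly acknowledge: over an arbitrary locally noetherian base $S$ (possibly non-reduced, non-perfect), an $F,V$-stable subbundle of $H^{\dR}_1(B/S)^\circ$ built from $V^{-1}(H_2^{(p)})$ and $F(H_1^{(p)})$ does not directly correspond to a finite locally free subgroup scheme $K\subseteq B[p]$, so the scheme-theoretic quotient $A:=B/K$ and hence the inverse morphism $\beta\colon Y'_j\to Y_j$ is not constructed as stated. Proposition~\ref{P:abelian-Dieud} is classical Dieudonn\'e theory over a perfect field; promoting it to families would require crystalline Dieudonn\'e theory over general bases, where the relevant full-faithfulness/equivalence statements are only available under hypotheses such as perfectness or reducedness (compare the careful use of Gabber/Lau over a perfect base in Lemma~\ref{L:isog-abelian-scheme}). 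So the first two paragraphs of your proposal do not by themselves prove the proposition.

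Your fallback strategy, however, is exactly the route the paper takes, and it is sound: one only defines $\beta$ on $k$-points for perfect fields $k$, applying Proposition~\ref{P:abelian-Dieud} with $m=1$ to the preimages $\tcE_i\subseteq \tcD(B)^\circ_i$ of $V^{-1}(H_2^{(p)})$ and $F(H_1^{(p)})$ (your count $\ell_1=n-j+1$, $\ell_2=n-j$ giving signature $(1,n-1)$ agrees with the paper), checks it is a set-theoretic inverse of $\alpha$, and then shows $\alpha$ is bijective on tangent spaces at closed points via Theorem~\ref{T:STGM}: since the $(0,n)$-object $(B,\lambda',\eta')$ admits only the trivial deformation over $k[\epsilon]$, deforming a $Y_j$-point amounts to lifting $\omega^\circ_{A^\vee,i}$ subject to $\hat\omega^\circ_{A^\vee,1}\subseteq \im(\phi_{*,1})\otimes_k k[\epsilon]$ and $\hat\omega^\circ_{A^\vee,2}\supseteq \im(\phi_{*,2})\otimes_k k[\epsilon]$, which via \eqref{E:definition of H12} is precisely the lifting problem describing $T_{Y'_j}$ in Proposition~\ref{P:Yj-prime}. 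Combined with bijectivity on closed points (and, as you note, properness/unramifiedness considerations together with smoothness of $Y'_j$ to pass from these pointwise statements to an isomorphism of schemes), this gives the proposition. In short: drop or substantially justify the global subgroup-scheme construction; the pointwise-plus-deformation argument you sketch as the alternative is the correct proof and coincides with the paper's.
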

\begin{proof}
Let $k$ be a perfect field containing $\F_{p^2}$. We first prove that $\alpha$ induces a bijection of points   $\alpha: Y_j(k)\xra{\sim} Y'_j(k)$.
It suffices to show that there exists  a morphism of sets $\beta: Y'_j(k)\ra Y_j(k)$ inverse to $\alpha$.
Let $y=(B, \lambda',\eta', H_1, H_2)\in Y'_j(k)$.
We define $\beta(y)=(A,  \lambda, \eta, B, \lambda',\eta', \phi)$ as follows.
Let $\tcE_1\subseteq \tcD(B)^{\circ}_1$ and $\tcE_2\subseteq \tcD(B)^{\circ}_{2}$ be respectively the inverse images of $V^{-1}(H_2^{(p)})\subseteq H^{\dR}_1(B/k)^{\circ}_{1}$ and $F(H_1^{(p)})\subseteq H^{\dR}_1(B/k)_{2}^{\circ}$ under the natural reduction maps
$$
\tcD(B)^{\circ}_{i}\ra\tcD(B)^{\circ}_{i}/p\tcD(B)^{\circ}_i \cong H^{\dR}_1(B/k)_i^{\circ}\quad \textrm{for }i=1,2.
$$
The condition \eqref{Equ:condition-Yj} ensures that $F(\tilde \calE_i) \subseteq \tilde \calE_{3-i}$ and $V(\tilde \calE_i) \subseteq \tilde \calE_{3-i}$ for $i = 1,2$.
Applying Proposition~\ref{P:abelian-Dieud} with $m=1$, we get a triple $(A,\lambda,\eta)$ and an $\calO_D$-equivariant isogeny $\psi: A\ra B$, where $A$ is an abelian variety over $k$ with an action of $\cO_D$, $\lambda$ is a prime-to-$p$ polarization on $A$, and $\eta$ is a prime-to-$p$ level structure on $A$, such that $\psi^\vee\circ\lambda'\circ\psi=p\lambda$, $p \eta' = \psi \circ \eta$ and such that $\psi_{*,i}: \tcD(A)^{\circ}_i\ra \tcD(B)^{\circ}_i$ is naturally identified with the inclusion $\tcE_i\hra \tcD(B)^{\circ}_i$ for $i=1,2$.
Moreover, the dimension formula \eqref{E:dimension of new differentials} implies that $\omega^{\circ}_{A^\vee/k,1}$ has dimension $1$, and $\omega^{\circ}_{A^\vee/k,2}$ has dimension $n-1$.
Therefore, $(A,\lambda, \eta)$ is a point of $\Sh_{1,n-1}$.
Finally, we take $\phi: B\ra A$ to be the unique isogeny such that $\phi\circ\psi=p\cdot \id_A$ and $\psi\circ\phi=p \cdot \id_{B}$. Thus we have $\phi \circ \eta' = \eta$.
    This finishes the construction of $\beta(y)$.
     It is direct to check that $\beta$ is the set theoretic inverse to $\alpha: Y_j(k)\ra Y'_j(k)$.

We show now that $\alpha$ induces an isomorphism on the tangent spaces at each closed point; as we have already shown that $Y'_j$ is smooth, it will then follow that $\alpha$ is an isomorphism.
Let $x=(A, \lambda, \eta, B, \lambda',\eta', \phi)\in Y_j(k)$ be a closed point.  Consider the infinitesimal deformation over $k[\epsilon]=k[t]/t^2$.
Note that
 $(B,\lambda',\eta')$ has a unique deformation $(\hat B,\hat \lambda', \hat \eta')$ to $k[\epsilon]$, namely the trivial deformation.
By the Serre--Tate and Grothendieck--Messing deformation theory (cf. Theorem~\ref{T:STGM}), giving a deformation $(\hat{A},\hat \lambda, \hat \eta)$ of $(A,\lambda, \eta)$ to $k[\epsilon]$ is equivalent to giving free $k[\epsilon]$-submodules  $\hat\omega_{A^\vee, i}^{\circ}\subseteq H^{\cris}_1(A/k[\epsilon])^{\circ}_i$  for $i=1,2$ which  lift  $\omega^{\circ}_{A^\vee/k, i}$.
The  isogeny $\phi$ and the polarization $\lambda$ deform to an isogeny  $\hat\phi: \hat B\ra \hat A$ and a polarization $\hat \lambda: \hat A^\vee \to \hat A$ (satisfying $p \hat \lambda' = \hat \phi^\vee \circ \hat \lambda \circ \hat \phi$), necessarily unique if they exist,  if and only if
\[
\hat\omega^{\circ}_{A^\vee, 2}\supseteq \phi_{*,2}^\cris\big(H^{\cris}_1(B/k[\epsilon])^{\circ}_2 \big) \quad \textrm{and } \quad \big(\phi^\cris_{*,1}(H^{\cris}_1(B/k[\epsilon])^{\circ}_1)\big)^\vee \subseteq (\hat\omega_{A^\vee, 1}^{\circ})^\vee,
\]
where the second inclusion comes from the consideration at the embedding $\bar q_2$ by taking duality using the polarization $\lambda$, and is equivalent to $\hat\omega_{A^\vee, 1}^{\circ} \subseteq \phi^\cris_{*,1}\big(H^{\cris}_1(B/k[\epsilon])^{\circ}_1\big)$.

    As discussed before Proposition \ref{P:isom-Yj}, we have  $\Ker(\phi_{*,1}) = V^{-1}(H_2^{(p)})$ and $F(H_1^{(p)})= \Ker(\phi_{*,2})=\im(\psi_{*,2})$. Then
according to the relation between $\omega^\circ_{A^\vee/k, i}$ and $H_1$ in \eqref{E:definition of H12},    
giving such $\hat \omega^{\circ}_{A^\vee, i}$ for $i=1,2$ is equivalent to lifting each $H_i$ to a free $k[\epsilon]$-submodule $\hat H_{i}\subseteq H^{\dR}_1(B/k)^{\circ}_i\otimes_k k[\epsilon] \cong H^\cris_1(B/k[\epsilon])_i^\circ$ for $ i=1,2$ such that $\hat H_{1}\supseteq V^{-1}(H_2^{(p)})\otimes_{k}{k[\epsilon]}$ and $\hat H_{2}\subseteq F(H_1^{(p)})\otimes_k {k[\epsilon]}$. This is exactly the description of the tangent space of $Y'_j$ at $\alpha(x)$. This concludes the proof.
\end{proof}

In the sequel, we will always identify $Y_j$ with $Y'_j$ and $\pr'_j$ with $\pi'_j$. Before proceeding, we prove some results on the structure of $\Sh_{0,n}(\Fpb)$.\\


We turn to the study of the  Shimura variety $\calS h_{0,n}$. The following Proposition was suggested  by an anonymous referee of this article.

\begin{proposition}\label{P:lifting-zero-dim}

\emph{(1)}
The Shimura variety $\calS h_{0,n}$ is finite and \'etale over $\ZZ_{p^2}$. In particular, the reduction map induces a bijection of geometric points $$\calS h_{0,n}(\overline \QQ_p)\xra{\sim} \Sh_{0,n}(\overline \FF_p).$$ 

\emph{(2)}
Let $\tilde x_i=(\tilde B_i,\tilde \lambda_i,\tilde \eta_i)\in \calS h_{0,n}(\overline \QQ_p)$ for $i=1,2$ be two geometric points in characteristic $0$, and $x_i=(B_i,\lambda_i,\eta_i)\in \Sh_{0,n}(\overline\FF_p)$ be their reductions. Then the reduction map on 
\[
\Hom_{\cO_D}(\tilde B_1,\tilde B_2)\xra{\sim} \Hom_{\cO_D}(B_1,B_2)
\]
 is an isomorphism.

\end{proposition}
\begin{proof}
(1)
 Let $\tilde z\in (\tilde B, \tilde \lambda, \tilde \eta)\in \calS h_{0,n}(\CC)$. 
Put $H=H_1(\tilde B(\CC), \Q)$.
  It is a left $D$-module of rank $1$ equipped with an alternating $D$-Hermitian pairing $\langle-, -\rangle_{\tilde \lambda}$ induced by the polarization $\tilde \lambda$.
  Let $(V_{0,n}=D,\langle-,-\rangle_{0,n})$ be the left $D$-module together with its alternating  $D$-Hermitian pairing as in the definition of $\calS h_{0,n}$.
    By results of Kottwitz \cite[\S 8]{kottwitz}, for every place $v$ of $\Q$, the skew-Hermitian $D_{\Q_v}$-modules $H_{\Q_v}$ and $V_{0,n,\Q_v}$ are isomorphic.\footnote{Note that  the two skew-Hermitian forms $(H, \langle-,-\rangle)_{\tilde\lambda}$ and $(V_{0,n},\langle-,-\rangle_{0,n})$ are not necessarily isomorphic  over $\QQ$. However, they differ at most  only by a scalar in $F$, hence define the same similitude unitary group. See \cite[p. 400]{kottwitz} for details.}
    Then $\End_{\cO_D}(\tilde B_{\CC})_{\Q}$ consists of  the elements of $D^{\mathrm{opp}}=\End_{D}(H)$  that preserves the complex structure on $H_{1,\RR}\simeq V_{0,n,\RR}$ induced the Deligne homomorphism by $h:\CC^{\times}\ra G_{0,n}(\RR)$.
   Since  $h(i)$ is necessarily central (because $G_{0,n}^1$ is compact), it follows that $\End_{\cO_{D}}(\tilde B_{\CC})_{\Q}=D^{\mathrm{opp}}$, and 
   $$D\otimes_{E}D^{\mathrm{opp}}\simeq \mathrm{M}_{n^2}(E)\subseteq \End(\tilde B)_{\QQ}.$$
   For dimension reasons, the  inclusion above is an equality, and  
 $\tilde B$ is  isogenous to the product of $n^2$-copies  of abelian varieties with complex multiplication by $E$. Therefore, $\tilde B$ is defined over a number field and has potentially good reduction everywhere.  
This implies that $\calS h_{0,n}$ is proper over $\ZZ_{p^2}$.

To see that $\calS h_{0,n}$ is finite and \'etale over $\ZZ_{p^2}$, it remains to show its \'etaleness over $\ZZ_{p^2}$. But this is clear from the description of its relative differential sheaf in Corollary~\ref{C:tangent space of Sh}, which is trivial as $\Lie_{\calA^\vee / \calS h_{0,n}, 1} = \Lie_{\calA / \calS h_{0,n}, 2} = 0$ by Kottwitz' determinant condition.

 (2) In general, the reduction map  
 \[\Hom_{\cO_D}(\tilde B_1,\tilde B_2)\hra \Hom_{\cO_D}(B_1,B_2)\]
is injective. It remains to see that every element $f\in \Hom_{\cO_D}(B_1,B_2)$ lifts to a homomorphism $\tilde f\in \Hom_{\cO_D}(\tilde B_1,\tilde B_2)$. 
Note that  points $\tilde x_1,\tilde x_2$ can be viewed over $W(\overline \FF_p)$. 
As recalled in Section~\ref{S:preliminary}, to show that $f$ lifts to a map $\tilde f: \tilde B_1\ra \tilde B_2$, it suffices to see that the  induced map on  crystalline homology 
\[f^*: H^{\cris}_{1}(B_2/W(\overline \FF_p))\ra H^{\cris}_1(B_1/W(\overline \FF_p))
\]
preserves the Hodge filtrations
\[
\omega_{\tilde B_i^{\vee}}\subseteq H^{\dR}_1(\tilde B_i/W(\overline \FF_p))\cong H^{\cris}_1(B_i/W(\overline \FF_p)). 
\]
 It is clear that $f^*$ preserves the decomposition 
 \[
 H^{\dR}_1(\tilde B_i/W(\overline \FF_p))=H^{\dR}_1(\tilde B_i/W(\overline \FF_p))_1\oplus H^{\dR}_1(\tilde B_i/W(\overline \FF_p))_2
 \]
 according to the two embeddings of $\cO_{E}$ into $W(\overline\FF_p)$. 
By the Kottwitz'  determinant condition for $\calS h_{0,n}$, the Hodge filtrations on $H^{\dR}_1(\tilde B_i/W(\overline \FF_p))$ are trivial, namely, one has    $\omega^\circ_{\tilde B_i^{\vee} / W(\overline \FF_p),1}=0$, and $\omega_{\tilde B_i^{\vee}/ W(\overline \FF_p),2}^{\circ}=H^{\dR}_1(\tilde B_i/W(\overline \FF_p))_{2}^\circ$ for $i=1,2$. It is clear now $f^*$ preserves this trivial Hodge filtration, since so does it when tensoring with $\overline \FF_p$.
\end{proof}

Fix a geometric point  $z=(B,\lambda,\eta)\in \Sh_{0,n}(\Fpb)$.
Put $C=\End_{\cO_D}(B)_{\Q}$, and  denote by  $\dagger$ the Rosati involution on $C$ induced by $\lambda$.
 Let  $I$ be the algebraic group over $\Q$ such that
\begin{equation}
I(R)=\{x\in C\otimes_{\Q}R\;|\; xx^{\dagger}\in R^{\times}\}, \quad \text{for all $\Q$-algebras }R.
\end{equation}

 \begin{corollary}\label{P:endomorphism-ring}
 We have an isomorphism of algebraic groups over $\QQ$: $I\simeq G_{0,n}$.
 \end{corollary}
 \begin{proof}
  Let $\tilde z=(\tilde B,\tilde \lambda, \tilde \eta)\in \calS h_{0,n}(\overline \QQ_p)$ denote the unique lift of $z$ according to Proposition~\ref{P:lifting-zero-dim} (1). By \ref{P:lifting-zero-dim} (2), we have a canonical  isomorphism 
  \[\End_{\cO_D}(\tilde B)_{\QQ}\xra{\sim}\End_{\cO_D}(B)_{\QQ}=C.\]
 In the proof of \ref{P:lifting-zero-dim}, we have seen that $C=D^{\mathrm{opp}}$. Moreover, the Rosati involution on $C$ corresponds to the involution
   $$b\mapsto b^{\sharp_{\beta_{0,n}}}=\beta_{0,n}b^*\beta_{0,n}^{-1}$$
    on $D^{\mathrm{opp}},$ where $\beta_{0,n}$ is the element in the definition of $\langle-,-\rangle_{0,n}$. It follows immediately that $I\simeq G_{0,n}$.
 \end{proof}
 
 Let $\mathrm{Isog}(z)\subseteq \Sh_{0,n}(\Fpb)$ denote the subset of points $z'=(B',\lambda',\eta')$ such that there exists an $\calO_D$-equivariant quasi-isogeny $\phi: B'\ra B$  such that $\phi^\vee\circ \lambda\circ\phi=c_0\lambda'$ for some $c_0\in \Q_{>0}$. We denote such a quasi-isogeny by   $\phi: z'\ra z$ for simplicity.

\begin{corollary}\label{C:isogeny-classes}
There exists a natural bijection of sets 
\[
\Theta_z\colon \mathrm{Isog}(z)\xra{\sim} G_{0,n}(\QQ)\backslash G_{0,n}(\AAA^{\infty})/K
\]
\end{corollary}
\begin{proof}
First, we give the construction of $\Theta_z$. 
Put $V^{(p)}(B)=T^{(p)}(B)\otimes_{\hat{\Z}^{(p)}}\AAA^{\infty,p}$.
Then $\eta$ determines an isomorphism
 \[
 \tilde\eta: V_{0,n}^{(p)}\otimes_{\Q}\AAA^{\infty,p}\xra{\ \sim\ } V^{(p)}(B),
 \]
modulo right translation by $K^p$.
 For any $z'=(B',\lambda',\eta')\in \mathrm{Isog}(z)$ and  a choice of $\phi:B'\ra B$ as above.
  The quasi-isogeny $\phi$ induces an isomorphism $\phi_*: V^{(p)}(B')\xra{\sim} V^{(p)}(B)$.
Then  there exists a $g^p\in G_{0,n}(\AAA^{\infty,p})$, unique up to right multiplication  by elements of  $K^p$, such that the $K^p$-orbit of $\phi_{*}^{-1}\circ\tilde\eta\circ g$ gives $\eta'$.

We put
\begin{equation}\label{Equ:lattice}
\LL_{z}=\tcD(B)^{\circ, F^2=p}_{1}=\{v\in \tcD(B)^{\circ}_1: F^2(v)=pv\}.
\end{equation}
Since $B$ is supersingular (See Remark~\ref{R:Sh0n supersingular}), this is  a free $\Z_{p^2}$-module of rank $n$, and we have $\tcD(B)^{\circ}_1=\LL_z\otimes_{\ZZ_{p^2}}W(\overline \FF_p)$. Put $\LL_z[1/p]=\LL_z\otimes_{\Z_{p^2}}\Q_{p^2}$.
 Then  $\phi$ induces an isomorphism $\phi_{*}: \LL_{z'}[1/p]\xra{\sim}\LL_{z}[1/p]$.
  Fix a $\Z_{p^2}$-basis for $\LL_{z}$.
  Then  there exists a $g_{\LL}\in \GL_n(\Q_{p^2})$ such that $\phi_{*}(\LL_{z'})=g_{\LL}(\LL_{z})$, and the right coset $g_{\LL}\GL_{n}(\ZZ_{p^2})$ is independent of the choice of such a basis.
  We put $g_p=(c_0, g_{\LL})\in \Q_p^{\times}\times \GL_n(\Q_{p^2})\simeq G_{0,n}(\Q_p)$, which is well defined up to right multiplication by elements of  $K_p=\ZZ^{\times}_{p^2}\times \GL_n(\ZZ_{p^2})$.

  Finally, note that  the quasi-isogeny $\phi': B'\ra B$ is well determined by $z'$ up to left composition with an element $\gamma\in I(\QQ)=G_{0,n}(\QQ)$. 
  If we replace $\phi$ by $\gamma\circ\phi$, then $g:=(g^p,g_p)\in G_{0,n}(\AAA^{\infty})$ is replaced by $\gamma g=(\gamma g^p, \gamma g_p)$. 
Therefore, the map 
\[\Theta_z: \Isog(z)\ra G_{0,n}(\QQ)\backslash G_{0,n}(\AAA^{\infty})/K, \quad  z'\mapsto G_{0,n}(\QQ)g K\] 
is well defined. 
 The fact that $\Theta_z$ is  a bijection follows from the similar classical statement in characteristic $0$.
\end{proof}

 \if false

 In particular, there exists an isomorphism
 \begin{equation}\label{E:isom-IQp}
I(\Q_p)\xra{\ \sim\ } G_{0,n}(\Q_p)\simeq \Q_p^{\times}\times \GL_n(\Q_{p^2}),
\end{equation}
 which is necessarily unique up to conjugation, because all automorphisms of $\GL_n(\Q_{p^2})$ are inner.
This conjugacy class of  isomorphisms  can be explicitly described as follows.

\subsection{Isogeny classes}\label{S:isogeny-class}

Put $V^{(p)}(B)=T^{(p)}(B)\otimes_{\hat{\Z}^{(p)}}\AAA^{\infty,p}$.
Then the prime-to-$p$ level structure $\eta$ determines an isomorphism
 \[
 \tilde\eta: V_{0,n}^{(p)}\otimes_{\Q}\AAA^{\infty,p}\xra{\ \sim\ } V^{(p)}(B),
 \]
unique up to right translation by elements of $K^p$.
 For any $z'=(B',\lambda',\eta')\in \mathrm{Isog}(z)$ and  a choice of $\phi:z'\ra z$ as above.
  The quasi-isogeny $\phi$ induces an isomorphism $\phi_*: V^{(p)}(B')\xra{\sim} V^{(p)}(B)$.
Then  there exists a $g\in G_{0,n}(\AAA^{\infty,p})$, unique up to right multiplication  by elements of  $K^p$, such that the $K^p$-orbit of $\phi_{*}^{-1}\circ\tilde\eta\circ g$ gives $\eta'$.
 If one replaces $\phi$ by $\gamma\phi$ with $\gamma\in I(\Q)\simeq G_{0,n}(\Q)$, then $g$ is replaced by $\gamma g$.

 Similarly, $\phi$ induces an isomorphism $\phi_{*}: \LL_{z'}[1/p]\xra{\sim}\LL_{z}[1/p]$. Fix a $\Z_{p^2}$-basis for $\LL_{z}$.
  Then  there exists a $g_{\LL}\in \GL_n(\Q_{p^2})$ such that $\phi_{*}(\LL_{z'})=g_{\LL}(\LL_{z})$, and the right coset $g_{\LL}\GL_{n}(\ZZ_{p^2})$ is independent of the choice of such a basis.
  We put $g_p=(c_0, g_{\LL})\in \Q_p^{\times}\times \GL_n(\Q_{p^2})\simeq G_{0,n}(\Q_p)$.
   If one replaces $\phi$ by $\gamma\phi$ with $\gamma\in I(\Q)\simeq G_{0,n}(\Q)$, then $(c_0,g_{\LL})$ is replaced by $\gamma g_p=(\gamma\gamma^{\dagger}c_0, \gamma g_{\LL})$.

  To sum up,  we obtain a well-defined map
  \begin{equation}\label{E:isogeny-class}
  \Theta_{z}\colon \Isog(z)\longrightarrow G_{0,n}(\Q)\backslash \big (G_{0,n}(\AAA^{\infty,p})\times G_{0,n}(\Q_p) \big )/K
  \end{equation}
  with $K=K^p\times K_p$,
  which attaches to $z'$ the class of pairs $(g,g_p)$.

 \begin{proposition}\label{P:isogenous-class}
 The map $\Theta_{z}$ is bijective.
 \end{proposition}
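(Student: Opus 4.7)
The plan is to exhibit an explicit two-sided inverse to $\Theta_z$, exploiting the fact that $\Sh_{0,n}$ is zero-dimensional and that every point of $\Isog(z)$ has, at $\gothp$, an isoclinic $p$-divisible group of slope $1/2$; consequently the entire moduli data of a point in $\Isog(z)$ is encoded by a $\ZZ_{p^2}$-lattice inside the isocrystal $\LL_z[1/p]$, a prime-to-$p$ Tate module lattice in $V_{0,n}\otimes_\QQ\AAA^{\infty,p}$, and a similitude factor recording the polarization degree.

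For surjectivity, take a representative $(g, g_p) = (g, c_0, g_{\LL})$. Using the central $\QQ^{\times} \subset I(\QQ) \cong G_{0,n}(\QQ)$, multiply by some $p^k$ so that $p^k g_{\LL}(\LL_z) \subseteq \LL_z$ (the class in the double coset is unchanged). Set $\tcE_1 := p^k g_{\LL}(\LL_z) \otimes_{\ZZ_{p^2}} W(\Fpb) \subseteq \tcD(B)^\circ_1$ and $\tcE_2 := F(\tcE_1) \subseteq \tcD(B)^\circ_2$. The identity $F^2 = p$ on $\LL_z$ yields the containments $F(\tcE_i) \subseteq \tcE_{i+1}$ and $V(\tcE_i) \subseteq \tcE_{i-1}$ required by Proposition~\ref{P:abelian-Dieud}, and since $F$ defines a $\sigma$-linear isomorphism $\tcD(B)^\circ_1 \xra{\sim} \tcD(B)^\circ_2$ in signature $(0,n)$, the lengths $\ell_i := \len_{W(\Fpb)}(\tcD(B)^\circ_i/\tcE_i)$ agree for $i = 1, 2$. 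Hence formula~\eqref{E:dimension of new differentials} shows that the resulting $B'$ still has signature $(0, n)$. Equip $B'$ with the level structure $\eta' := \phi_*^{-1} \circ \tilde\eta \circ p^k g \pmod{K^p}$, whose existence and uniqueness are guaranteed by Proposition~\ref{P:abelian-Dieud}(2). By construction $\Theta_z([B', \lambda', \eta']) = [(g, g_p)]$.

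For injectivity, suppose $z_1, z_2 \in \Isog(z)$ satisfy $\Theta_z(z_1) = \Theta_z(z_2)$; choose quasi-isogenies $\phi_i : z_i \to z$ and pick $\gamma \in I(\QQ) \cong G_{0,n}(\QQ)$ and $k \in K$ witnessing the equality of double cosets. The composite $\chi := \phi_2^{-1} \circ \gamma \circ \phi_1 : B_1 \to B_2$ is an $\cO_D$-equivariant quasi-isogeny which, by unwinding the definition of $\Theta_z$, sends $T^{(p)}(B_1)$ into $T^{(p)}(B_2)$ up to $K^p$ and sends $\LL_{z_1}$ onto $\LL_{z_2}$. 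By covariant Dieudonn\'e theory and Tate's theorem, $\chi$ is therefore an honest isomorphism of polarized abelian varieties with $\cO_D$-action and level structure, giving $z_1 = z_2$ in $\Sh_{0,n}(\Fpb)$.

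The main obstacle is to verify that the quadruple produced in the surjectivity step has the correct polarization degree, namely that the resulting polarization on $B'$ is prime-to-$p$ and scales by $c_0$, and that the new lattices $\tcE_1, \tcE_2$ at $p$ remain self-dual (up to $c_0$) under the polarization pairing on $\tcD(B)[1/p]$. This ultimately reduces to the fact that $(g_{\LL}, c_0)$ lies in $G_{0,n}(\QQ_p)$, i.e., scales the induced $D$-hermitian pairing on $\LL_z[1/p] \otimes_{\QQ_{p^2}} W(\Fpb)[1/p]$ by exactly $c_0$, combined with the hyperspecial hypothesis that $\Lambda_{0,n} \otimes \ZZ_p$ is self-dual, so that Proposition~\ref{P:abelian-Dieud} indeed outputs a genuine prime-to-$p$ polarization on $B'$.
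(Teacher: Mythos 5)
Your injectivity argument is essentially the paper's: compose the two quasi-isogenies with $\gamma\in G_{0,n}(\QQ)$, observe that the resulting quasi-isogeny matches the prime-to-$p$ Tate modules and the lattices $\LL_{z_i}$, hence induces an isomorphism of Dieudonn\'e modules and so is an isomorphism of polarized abelian varieties (the appeal to Tate's theorem is unnecessary). The surjectivity step, however, has a genuine gap: you only alter $B$ by the $p$-power isogeny furnished by Proposition~\ref{P:abelian-Dieud}, and such an isogeny identifies $T^{(p)}(B')$ with $T^{(p)}(B)$. Your proposed level structure $\eta'=\phi_*^{-1}\circ\tilde\eta\circ p^k g$ therefore carries $\Lambda_{0,n}\otimes\widehat\ZZ^{(p)}$ onto $\tilde\eta\big(g(\Lambda_{0,n}\otimes\widehat\ZZ^{(p)})\big)$ (note that $p^k$ is a unit in $\widehat\ZZ^{(p)}$), which is a different lattice from $T^{(p)}(B')\cong T^{(p)}(B)$ whenever $g$ does not stabilize $\Lambda_{0,n}\otimes\widehat\ZZ^{(p)}$ up to $K^p$. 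So for a general class $(g,g_p)$ the tuple you build is not a point of $\Sh_{0,n}$, and Proposition~\ref{P:abelian-Dieud}(2) cannot rescue this: it transports a \emph{given} level structure across a $p$-isogeny, it does not manufacture one from a Hecke-translated lattice.

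What is missing is a prime-to-$p$ isogeny realizing the prime-to-$p$ part of the double coset, which is exactly how the paper proceeds. After scaling $(g,g_p)$ by an integer so that $g(\Lambda_{0,n}\otimes\widehat\ZZ^{(p)})\subseteq\Lambda_{0,n}\otimes\widehat\ZZ^{(p)}$ and $p^m\LL_z\subseteq g_\LL(\LL_z)\subseteq\LL_z$, one chooses $N$ prime to $p$ with $N\,T^{(p)}(B)\subseteq\tilde\eta\big(g(\Lambda_{0,n}\otimes\widehat\ZZ^{(p)})\big)$ and puts the corresponding subgroup $H^{(p)}\subseteq B[N]$, together with the subgroup $H_\gothp\subseteq B[\gothp^m]$ cut out by $\tcE_1\oplus\tcE_2$ and its orthogonal complement $H_{\bar\gothp}$, into the kernel of $\psi\colon B\to B'$; then $\phi_*$ on prime-to-$p$ Tate modules is identified with the inclusion of the translated lattice, so $\phi_*^{-1}\circ\tilde\eta\circ g$ is an honest $K^p$-level structure, and Mumford's theorem gives a prime-to-$p$ polarization $\lambda'$ with $\phi^\vee\circ\lambda\circ\phi=p^mN\lambda'$. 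Incidentally, the ``main obstacle'' you single out at the end is not the real difficulty: no self-duality of $\tcE_1\oplus\tcE_2$ is required, since the construction behind Proposition~\ref{P:abelian-Dieud} takes $H_{\bar\gothp}$ to be the orthogonal complement of $H_\gothp$, so the kernel is automatically maximal isotropic and a prime-to-$p$ polarization on $B'$ always comes out; the genuine issue is the prime-to-$p$ component of the double coset.
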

\begin{proof}
We show first the injectivity of $\Theta_z$.
Given $z_1=(B_1,\lambda_1,\eta_1), z_2=(B_2,\lambda_2,\eta_2)\in \Isog(z)$ that correspond to the same double coset, we have to prove that  they are isomorphic.
Choose quasi-isogenies $\phi_1: B_1\ra B$ and $\phi_2: B_2\ra B$, and let   $(g_1, g_{p,1}), (g_2, g_{p,2})\in G_{0,n}(\AAA^{\infty,p})\times G_{0,n}(\Q_p)$ denote the corresponding pairs. Then there exists $\gamma\in G_{0,n}(\Q)$ and $(k^p,k_p)\in K^p\times K_p$ such that $\gamma(g_{1},g_{p,1})=(g_{2}k^p,g_{p,2}k_p)$. Consider the quasi-isogeny
\[
\phi=\phi_2^{-1}\circ\gamma\circ \phi_1: B_1\ra B_2.
\]
Then  $\phi$ induces  isomorphisms $T^{(p)}(B_1)\xra{\sim}T^{(p)}(B_2)$, and $\LL_{z_1}\xra{\sim}\LL_{z_2}$.
Since $\tcD(B_i)^{\circ}_{1}\oplus \tcD(B_i)^{\circ}_2\simeq\big(\LL_{z_i}\oplus F(\LL_{z_i})\big)\otimes_{\Z_{p^2}}W(\Fpb)$, it follows that $\phi_{*}: \tcD(B_1)\xra{\sim} \tcD(B_2)$ is an isomorphism.
Hence, $\phi$ is an isomorphism of abelian varieties, and it is easily seen to be  compatible with all structures. This shows the injectivity of $\Theta_{z}$.

We now prove the surjectivity of $\Theta_z$. Suppose that we are given   $(g,g_p)\in G_{0,n}(\AAA^{\infty,p})\times G_{0,n}(\Q_p)$. We need to show that $(g,g_p)$ comes from some $z'\in \Isog(z)$.
Write $g_p=(p^mu,g_{\LL})\in G_{0,n}(\Q_p)\simeq \Q_p^{\times}\times \GL_n(\Q_{p^2})$ with $u\in \Z_{p}^{\times}$.
 Up to multiplying $(g,g_p)$ by an integer (viewed  in $\Q^{\times}\in G_{0,n}(\Q)$), we may assume that $g(\Lambda_{0,n}\otimes \widehat{\Z}^{(p)})\subseteq \Lambda_{0,n}\otimes \widehat{\Z}^{(p)}$, $m\geq 0$ and $p^{m}\LL_z\subseteq g_{\LL}(\LL_{z})\subseteq \LL_{z}$.  Here, $\Lambda_{0,n}\subseteq V_{0,n}$ denotes the $\cO_D$-stable lattice used in the definition of $\Sh_{0,n}$.
Then $T^{(p)}_{z'}:=\tilde\eta\circ g(\Lambda_{0,n}\otimes \widehat{\Z}^{(p)})$ is an $\cO_{D}$-stable sublattice of $T^{(p)}(B)=\tilde\eta(\Lambda_{0,n}\otimes \widehat{\Z}^{(p)})$.
Choose an integer $N\geq 1$ coprime to $p$ such that $N T^{(p)}(B)\subseteq  T^{(p)}_{z'}$ with quotient  $H^{(p)}$.
It is a subgroup of $B[N]$ stable under $\cO_D$.

Similarly, choose  an integer $m\geq 0$ such that $p^m\LL_z\subseteq g_{\LL}(\LL_z)\subseteq \LL_z$.
 Put $\tcE_1=g_{\LL}(\LL_z)\otimes_{\Z_{p^2}}W(\Fpb)$, $\tcE_2=F(\tcE_1)$.
  Then $\tcE_1\oplus \tcE_2$ is a Dieudonn\'e submodule  of $\tcD(B)^{\circ}$, and the quotient
 \[
 \bigoplus_{i=1}^2 \big(\tcE_i/p^m\tcD(B)_i^{\circ}\big)^{\oplus n}
 \]
 corresponds to a subgroup scheme $H_{\gothp}$ of $B[\gothp^m]$ stable under the action of $\cO_D$.
 Denote by $H_{\bar\gothp}\subseteq B[\gothp^m]$ the orthogonal complement of $H_{\gothp}$ under the perfect pairing $B[\gothp^m]\times B[\bar\gothp^m]\ra \mu_{p^m}$.
 Put $H_p=H_{\gothp}\oplus H_{\bar\gothp}$, and $H=H^{(p)}\oplus H_{p}\subseteq B[Np^m]$. Let $\psi: B\ra B'$ be the canonical  isogeny with kernel $H$, and $\phi: B'\ra B$ be the isogeny such that $\phi\circ\psi=Np^m \mathrm{Id}_{B}$.
  Then $B'$ is an abelian scheme equipped with an induced $\cO_D$-action.
   The induced map  $\phi_{*}: T^{(p)}(B')\ra T^{(p)}(B)$ is naturally identified with the inclusion  $T^{(p)}_{z'}\subseteq T^{(p)}(B)$, and $\phi_{*}: \tcD(B')^{\circ}_i\ra \tcD(B)^{\circ}_i$ is identified with the inclusion $\tcE_i\hra \tcD(B)^{\circ}_i$ for $i=1,2$.
   This implies in particular that $\Lie_{B',i}^{\circ}$ is of dimension $n$ for $i=1$ and dimension $0$ for $i=2$.
  By \cite[\S 23, Theorem 2]{mumford}, there is a prime-to-$p$ polarization $\lambda'$ on $B'$ such that $p^mN\lambda'=\phi^\vee\circ\lambda\circ\phi$.  Finally, the orbit of $\phi^{-1}_{*}\circ \tilde\eta\circ g$ under $K^p$  defines a $K^p$-level structure  on $B'$. Therefore $z'=(B',\lambda',\eta')$ defines a point of $\Isog(z)
\subseteq \Sh_{0,n}(\Fpb)$, and its image under $\Theta_z$ is the class of $(g,g_p)$. This finishes the proof of the surjectivity of $\Theta_z$.
\end{proof}
\fi

\begin{remark}\label{R:isogenous-class}
It follows from  Proposition~\ref{P:lifting-zero-dim} and  the description of $\calS h_{0,n}(\CC)$  in Subsection~\ref{S:defn of Shimura var} that
$\Sh_{0,n}(\Fpb)$  consists of $\#\mathrm{ker}^1(\Q,G_{0,n})$ isogeny classes of abelian varieties equipped with additional structures.
\end{remark}


\begin{lemma}\label{L:finiteness-auto}
Let $N$ be a fixed non-negative integer.
Up to replacing $K^p$ by an open compact subgroup of itself,  the following properties are satisfied:
if  $(B,\lambda,\eta)$ is an $\Fpb$-point of $\Sh_{0,n}$ and  $f: B\ra B$ is an $\cO_D$-quasi-isogeny such that
\begin{itemize}
\item $p^Nf\in \End_{\cO_D}(B)$,
\item $f^\vee\circ\lambda\circ f=\lambda$,
\item $f\circ \eta=\eta$,
\end{itemize}
 then $f=\id$.
\end{lemma}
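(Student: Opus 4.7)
The plan is to realize each candidate $f$ as an element of the discrete subgroup $G_{0,n}^1(\QQ) \subset G_{0,n}^1(\AAA)$, bound it at every place so that it lies in a fixed compact subset, and then shrink $K^p$ to eliminate the finite set of non-identity elements that remain. By Proposition~\ref{P:endomorphism-ring}, $\End_{\cO_D}(B)_\QQ$ is identified with $D^{\mathrm{opp}}$ compatibly with the Rosati involution $\dagger$ induced by $\lambda$, and the units of similitude factor $1$ form precisely $G_{0,n}^1(\QQ)$; so the condition $f^\vee \circ \lambda \circ f = \lambda$ places $f$ in $G_{0,n}^1(\QQ)$ and moreover yields $f^\dagger = f^{-1}$.

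Next I would bound $f$ at every place. Since $\dagger$ preserves $\End(B)_\QQ$ and $\lambda$ has degree prime to $p$, the hypothesis $p^N f \in \End(B)$ combined with $f^\dagger = f^{-1}$ forces $p^N f^{-1} = \lambda^{-1} \circ (p^N f)^\vee \circ \lambda$ to be bounded at every finite place (with the bound at $p$ depending only on $N$ and the lattice $\LL_z$ of~\eqref{Equ:lattice}, and at $\ell \neq p$ depending also on the degree of $\lambda$). Together with the compactness of $G_{0,n}^1(\RR) \cong U(n) \times U(n)$, this places $f$ in a fixed compact subset $\Omega$ of $G_{0,n}^1(\AAA)$. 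Since $G_{0,n}^1$ is anisotropic at infinity, $G_{0,n}^1(\QQ)$ is discrete in $G_{0,n}^1(\AAA)$, and so for each $z = (B, \lambda, \eta)$ the set of $f$ satisfying the first two hypotheses is finite.

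Finally, $\Sh_{0,n}(\Fpb)$ is finite (it is zero-dimensional and proper), and passing to any smaller $K^{\prime p} \subseteq K^p$ only refines the level structures on the same finite set of underlying pairs $(B, \lambda)$; hence the set $\Gamma \subset G_{0,n}^1(\QQ)$ of all potential non-identity $f$'s, taken over all points at all prime-to-$p$ levels refining $K^p$, is a fixed finite set. For each $f \in \Gamma$ and each $\eta$ with $f\circ\eta = \eta$, the condition translates via $\eta$ to a non-identity element $f_\eta$ of the profinite group $K^p$. The last step is to choose $K^{\prime p} \subseteq K^p$ open and normal in $K^p$ (which exists since $K^p$ has a neighborhood basis of open normal subgroups) avoiding the finitely many $f_\eta$; by normality, no $K^p$-conjugate of any $f_\eta$ lies in $K^{\prime p}$ either, so at level $K^{\prime p}$ the condition $f\circ\eta' = \eta'$ forces $f = \id$. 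The main technical point is this uniformity of $\Gamma$ across all prime-to-$p$ levels, which is resolved by the observation that $\Gamma$ depends only on the finite set of underlying pairs $(B,\lambda)$ and not on the choice of level.
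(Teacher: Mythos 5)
Your proof is correct and reaches the same endgame as the paper --- realize $f$ as an element of $G_{0,n}^1(\QQ)$, reduce to a finite list of non-identity candidates that is uniform under shrinking the level, and then pass to an open \emph{normal} subgroup $K'^p\subseteq K^p$ avoiding them, with normality absorbing the ambiguity of the representative of the $K^p$-orbit $\eta$ --- but the finiteness step is obtained by a genuinely different mechanism. The paper works adelically: it parametrizes the isogeny class via $\Theta_z$ (Proposition~\ref{P:isogenous-class}), writes $G_{0,n}(\AAA^{\infty,p})=\coprod_i G_{0,n}(\QQ)g_iK$, encodes the condition $p^Nf\in\End_{\cO_D}(B)$ by the finitely many cosets $h_jK_p$ inside $\coprod_\delta K_p\delta K_p$, and uses neatness of $K$ to show each set $G^1_{0,n}(\QQ)\cap g_i(K^p h_jK_p)g_i^{-1}$ has at most one element; uniformity at the smaller level is then checked on the representatives $g_ib_l$. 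You instead argue intrinsically on $\End_{\cO_D}(B)_\QQ$: the Rosati trick $f^\dagger=f^{-1}$ plus $p^Nf\in\End_{\cO_D}(B)$ and the prime-to-$p$ polarization bound $f$ and $f^{-1}$ at all finite places, compactness of $G^1_{0,n}(\RR)$ and discreteness of $G^1_{0,n}(\QQ)$ in $G^1_{0,n}(\AAA)$ give finiteness per point, and finiteness of $\Sh_{0,n}(\Fpb)$ (together with the fact that shrinking $K^p$ does not change the underlying pairs $(B,\lambda)$) gives uniformity. This buys you an argument that never invokes neatness (torsion automorphisms are simply among the finitely many $f_\eta$ you avoid), at the cost of two small points you should make explicit: (i) the collection $\{f_\eta\}$ is finite because for each fixed $(B,\lambda)$ there are only finitely many $K^p$-orbits of prime-to-$p$ level structures (they form a quotient of $K_0^p/K^p$ for the stabilizer $K_0^p$ of the lattice $\Lambda_{0,n}\otimes\widehat\ZZ^{(p)}$, or alternatively use finiteness of $\Sh_{0,n}(\Fpb)$ together with finiteness of $\Aut(B,\lambda)$); and (ii) $f_\eta\neq 1$ whenever $f\neq\id$ because $\End_{\cO_D}(B)_\QQ$ acts faithfully on the rational prime-to-$p$ Tate module. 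With these noted, your argument is a complete and slightly more conceptual alternative to the double-coset bookkeeping in the paper.
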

\begin{proof}
It suffices to prove the Lemma for $(B,\lambda,\eta)$ in a fixed isogeny class $\Isog(z)$ of $\Sh_{0,n}(\Fpb)$.
 We write $G_{0,n}(\AAA^{\infty})=\coprod_{i\in I} G_{0,n}(\Q)g_i K$ with $K=K^pK_p$, where $g_i=g_i^pg_{i,p}$, with $g_i^p\in G_{0,n}(\AAA^{\infty,p})$ and $g_{i,p}\in G_{0,n}(\QQ_p)$, runs through a finite set of representatives of the double coset
 $$G_{0,n}(\Q)\backslash G_{0,n}(\AAA^{\infty})/K.$$
 Let $(B,\lambda, \eta)$ be a point of $\Sh_{0,n}$ corresponding to $G_{0,n}(\Q)g_i K$ for some $i\in I$, and  $f$ be an $\cO_D$-quasi-isogeny  of $B$ as in the statement.
  Then $f$ is given by an element of $G_{0,n}^1(\Q)$.
  The condition that $f\circ \eta=\eta$ is equivalent to  saying that the image of  $f$ in $G_{0,n}(\AAA^{\infty,p})$ lies in  $g_i^pK^p{g_i}^{p,-1}$.
   Moreover,  $p^Nf\in \End_{\cO_D}(B)$ implies that the image of $f$ in $G_{0,n}(\Q_p)$ belongs to  $\coprod_{\delta}g_{i,p}(K_p\delta K_p) g_{i,p}^{-1}$, where  $\delta$ runs through the set
  \[
  \big\{(1, \diag(p^{a_1}, p^{a_2},\dots, p^{a_{n}}))\in G_{0,n}(\Q_p)\simeq \Q_p^{\times}\times \GL_n(\QQ_{p^2})\;\big|\; 0\geq a_1\geq a_2\geq  \cdots \geq a_n\geq -N\big\}.
  \]
  Write $\coprod_{\delta}K_p\delta K_p=\coprod_{j\in J} h_jK_p $ for some finite set $J$. Hence, it suffices to show that there exists an open compact subgroup   $K'^p\subseteq K^p$ such  that, for all $g_i$,   $G_{0,n}^1(\Q)\cap g_i(K'^p\cdot h_jK_p)g_i^{-1}$ is equal to $\{1\}$  if $h_jK_p=K_p$, and empty otherwise.
  Since $K$ is neat, we have $G_{0,n}^1(\Q)\cap g_i (K'^pK_p) g_i^{-1}=\{1\}$ for any $g_i$ and any $K'^p\subseteq K^p$.
  Note that this implies that, for each $i\in I$, $G_{0,n}^1(\Q)\cap g_i (K^p\cdot h_jK_p) g_i^{-1}$ contains at most one element (because if it contains both  $x$ and $y$, then $x^{-1}y\in G_{0,n}^1(\Q)\cap g_i K g_i^{-1}=\{1\}$).
   Let $S\subset I\times J$ be the subset consisting of $(i,j)$ such that $h_jK_p\neq K_p$ and $G_{0,n}^1(\Q)\cap g_i (K^p\cdot h_jK_p) g_i^{-1}$ indeed contains one element, say $x_{i,j}$. Then $x_{i,j}\neq 1$ for all $(i,j)\in S$.
 Hence, one can choose a normal open compact subgroup  $K'^p\subseteq K^p$ so that $x_{i,j}\notin g_i^pK'^pg_i^{p,-1}$ for all $i$. We claim that this choice of $K'^p$ will satisfy the desired property. Indeed, if $K^p=\coprod_{l} b_{l}K'^p$, then the double coset $G_{0,n}(\Q)\backslash G_{0,n}(\AAA^{\infty})/K'^pK_p$ has a set of representatives of the form $g_ib_{l}$. Here, by abuse of notation, we consider $b_l$  as an element of $K$ with $p$-component equal to $1$.
 Then one has, for $h_jK_p\neq K_p$,
  \[
  G_{0,n}^1(\Q)\cap g_ib_l (K'^p h_jK_p)b_l^{-1}g_{i}^{-1}=G_{0,n}^1(\Q)\cap g_i(K'^p h_jK_p)g_i^{-1}=\emptyset.
  \]
Here, the first  equality uses the fact that $K'^p$ is normal in $K^p$. This finishes the proof.
\end{proof}

We come back to the discussion on the cycles $Y_j\subseteq \Sh_{1,n-1}$ for $1\leq j\leq n$.

\begin{proposition}\label{P:normal-bundle}
Let $(\calA,\lambda,\eta,\calB,\lambda',\eta',\phi^{\univ})$ denote the universal object on $Y_j$ for $1\leq j\leq n$, 
 and   $\cH_i\subset H^{\dR}_{1}(\calB/\Sh_{0,n})$ for $i=1,2 $ be the universal subbundles on $Y'_j\cong Y_j$.
\begin{enumerate}
\item The induced map $T_{Y_j}\ra \pr^*_jT_{\Sh_{1,n-1}}$ is universally injective, and we have canonical isomorphisms
\begin{align*}
N_{Y_j}(&\Sh_{1,n-1}): =\pr_j^*T_{\Sh_{1,n-1}}/T_{Y_j}\\
&\cong \big(\cH_1/V^{-1}(\cH_2^{(p)})\big)^*\otimes V^{-1}(\cH_2^{(p)})
\oplus \big(F(\cH_1^{(p)})/\cH_2 \big) \otimes \big (H^{\dR}_1(\calB/\Sh_{0,n})^{\circ}_2/F(\cH_1^{(p)})\big)^*\\
&\cong \Lie_{\calA^\vee,1}^{\circ}\otimes \coker(\phi^{\univ}_{*,1})\oplus \Lie_{\calA,2}^{\circ}\otimes\im(\phi^{\univ}_{*,2})^*.
\end{align*}

\item Assume that  $K^p$ is sufficiently small so that the consequences of Lemma~\ref{L:finiteness-auto} hold for $N=1$. For each fixed closed point $z\in \Sh_{0,n}$, the map $\pr_{j,z}:=\pr_j|_{Y_{j,z}}: Y_{j,z}\ra \Sh_{1,n-1}$ is a closed immersion, or equivalently, the morphism $(\pr_j,\pr'_j): Y_j\ra \Sh_{1,n-1}\times_{\Spec(\F_{p^2})} \Sh_{0,n}$ is a closed immersion.

\item The union of the images of $\pr_{j}$ for all $1\leq j\leq n$ is the supersingular locus of $\Sh_{1,n-1}$, i.e. the reduced closed  subscheme of $\Sh_{1,n-1}$ where all the slopes of the Newton polygon of the $p$-divisible group $\calA[\gothp^{\infty}]$ are $1/2$.

  \end{enumerate}
\end{proposition}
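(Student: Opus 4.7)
My plan is to treat the three parts in order, using the tangent descriptions from Proposition~\ref{P:Yj-prime} and Corollary~\ref{C:tangent space of Sh}, Grothendieck--Messing deformation theory, the Dieudonn\'e construction of Proposition~\ref{P:abelian-Dieud}, and the rigidity Lemma~\ref{L:finiteness-auto}.

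\textbf{Part (1).} At a point $x=(A,\lambda,\eta,B,\lambda',\eta',\phi)\in Y_j$, a tangent vector corresponds (as in the proof of Proposition~\ref{P:isom-Yj}) to lifts $\hat\cH_1,\hat\cH_2$ of $\cH_1,\cH_2$ over $k[\epsilon]$, and these induce the lifts $\hat\omega^\circ_{\calA^\vee,1}=\phi_{*,1}(\hat\cH_1)$ and $\hat\omega^\circ_{\calA^\vee,2}=\psi_{*,2}^{-1}(\hat\cH_2)$, where $\psi$ is quasi-inverse to $\phi$. Using \eqref{Equ:H-omega} and Lemma~\ref{L:omega-F}, the differential of $\pr_j$ on the first summand is post-composition with the injection $H^\dR_1(\calB)^\circ_1/\cH_1\hookrightarrow H^\dR_1(\calA)^\circ_1/\omega^\circ_{\calA^\vee,1}$ induced by $\phi_{*,1}$ (of cokernel $\coker\phi_{*,1}$), and on the second summand is pre-composition with the surjection $\omega^\circ_{\calA^\vee,2}\twoheadrightarrow\cH_2$ induced by $\psi_{*,2}$ (of kernel $\im\phi_{*,2}$). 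Both maps are universally injective, and taking cokernels gives the second displayed description of $N_{Y_j}(\Sh_{1,n-1})$. The first description follows from the $p$-isogeny identities $\phi_{*,i}\psi_{*,i}=\psi_{*,i}\phi_{*,i}=0$ together with rank counts, which yield natural isomorphisms $\coker\phi_{*,1}\simeq \im\psi_{*,1}=V^{-1}(\cH_2^{(p)})$ and $\im\phi_{*,2}\simeq H^\dR_1(\calB)^\circ_2/\Ker\phi_{*,2}=H^\dR_1(\calB)^\circ_2/F(\cH_1^{(p)})$.

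\textbf{Part (2).} Proposition~\ref{P:Yj-prime} gives properness of $\pr'_j$ and hence of $(\pr_j,\pr'_j)$, so it suffices to check that this is a monomorphism, equivalently that each $\pr_{j,z}$ is injective on $\Fpb$-points and on tangent spaces. Tangent injectivity is exactly Part~(1). For point injectivity, take two preimages $(A,\lambda,\eta,B,\lambda',\eta',\phi_i)$, $i=1,2$, of a fixed pair $((A,\lambda,\eta),z)$ and set $f:=\phi_1^{-1}\phi_2\in \End_{\cO_D}(B)_\QQ$. Then $pf=\psi_1\phi_2\in \End_{\cO_D}(B)$, $f^\vee\lambda'f=\lambda'$ (from $p\lambda'=\phi_i^\vee\lambda\phi_i$), and $f\circ\eta'=\eta'$ mod $K^p$ (from $\phi_i\circ\eta'=\eta$ mod $K^p$). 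Lemma~\ref{L:finiteness-auto} with $N=1$ then forces $f=\id_B$, so $\phi_1=\phi_2$. A proper monomorphism is a closed immersion.

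\textbf{Part (3).} One direction is easy: every $B\in\Sh_{0,n}(\Fpb)$ is supersingular because the signature $(0,n)$ forces $\omega^\circ_{\calB^\vee,2}=H^\dR_1(\calB)^\circ_2$, which forces the reduced Dieudonn\'e module of $B$ at $\gothp$ to be isoclinic of slope $\tfrac12$; hence any $A$ isogenous to such a $B$ is also supersingular. Conversely, given supersingular $A$, I would construct a $p$-isogeny $\psi\colon A\to B$ into $\Sh_{0,n}$ via Proposition~\ref{P:abelian-Dieud} with $m=1$: take $\tcE_1=\tcD(A)^\circ_1$ and let $\tcE_2\subsetneq\tcD(A)^\circ_2$ be any $F$- and $V$-stable $W(\Fpb)$-sublattice of colength one containing $p\tcD(A)^\circ_2+F(\tcD(A)^\circ_1)+V(\tcD(A)^\circ_1)$. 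The isoclinic slope-$\tfrac12$ structure on $\tcD(A)^\circ_\bullet$ ensures that such an $\tcE_2$ exists, and the signature formula~\eqref{E:dimension of new differentials} yields $a'_1=1+0-1=0$ and $a'_2=(n-1)+1-0=n$, so $B\in \Sh_{0,n}$. Setting $\phi\colon B\to A$ to be the quasi-inverse of $\psi$ and transferring the polarization and level structure via Proposition~\ref{P:abelian-Dieud}(2), we obtain a preimage of $A$ in some $Y_j$, where $j$ is read off from the cokernel ranks of $\phi_{*,i}$.

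The step I expect to be most delicate is the tangent-map analysis in Part~(1): one must carefully track how each Grothendieck--Messing lift $\hat\cH_i$ recovers $\hat\omega^\circ_{\calA^\vee,i}$ through the alternating use of $\phi$ and $\psi$, and then reconcile the two apparently different descriptions of $N_{Y_j}(\Sh_{1,n-1})$ via the kernel/image identities forced by $\phi\psi=\psi\phi=p$ on reduced de Rham homology.
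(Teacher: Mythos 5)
Your parts (1) and (2) are correct and essentially coincide with the paper's own arguments: the same Grothendieck--Messing computation of the tangent map and of $N_{Y_j}(\Sh_{1,n-1})$ (merely phrased through the $Y'_j$-coordinates $\cH_1,\cH_2$ rather than through deformations of the pair), and the same rigidity argument via Lemma~\ref{L:finiteness-auto} with $N=1$, packaged as ``a proper monomorphism is a closed immersion''.

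Part (3), however, has a genuine gap in the converse direction. With $\tcE_1=\tcD(A)^\circ_1$, the conditions $F(\tcE_1)\subseteq\tcE_2$, $V(\tcE_1)\subseteq\tcE_2$, $p\tcD(A)^\circ_2\subseteq\tcE_2$ force $\tcE_2\supseteq F(\tcD(A)^\circ_1)+V(\tcD(A)^\circ_1)$ (and $p\tcD(A)^\circ_2\subseteq F(\tcD(A)^\circ_1)$ automatically). The signature $(1,n-1)$ implies that $F(\tcD(A)^\circ_1)$ and $V(\tcD(A)^\circ_1)$ are each of colength one in $\tcD(A)^\circ_2$, so a colength-one $\tcE_2$ containing both exists if and only if these two sublattices coincide. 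That coincidence is \emph{not} a consequence of supersingularity, which is a condition on the isocrystal $\tcD(A)^\circ[1/p]$ and controls nothing mod $p$; up to Frobenius-twist bookkeeping it is exactly the condition $\omega^\circ_{\calA^\vee,2}=F\big(H^\dR_1(\calA)^{\circ,(p)}_1\big)$ that cuts out the closed subvariety $Z_1=\pr_1(Y_1)$ in Corollary~\ref{C:vanishing}. Consistently, your construction always yields $\coker(\phi_{*,1})=0$ and $\coker(\phi_{*,2})$ of rank one, i.e.\ always a point of $Y_1$; if it worked at every supersingular point, the entire supersingular locus would equal $\pr_1(Y_1)$, which is false (this is precisely why all $n$ cycles $Y_j$ are needed). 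At a general supersingular point one has $F(\tcD(A)^\circ_1)+V(\tcD(A)^\circ_1)=\tcD(A)^\circ_2$, and no admissible $\tcE_2$ with $\tcE_1=\tcD(A)^\circ_1$ exists at all.

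The paper's proof of (3) is genuinely different and this is where the real work lies: it sets $\LL_\QQ=\big(\tcD(A)^\circ_1[1/p]\big)^{F^2=p}$, $\tcE_1^\circ=(\LL_\QQ\cap\tcD(A)^\circ_1)\otimes_{\Z_{p^2}}W(\Fpb)$, $\tcE_2^\circ=F(\tcE_1^\circ)$, and then proves the key containment $p\tcD(A)^\circ\subseteq\tcE^\circ$ by the graded-piece and Hilbert~90 argument; Proposition~\ref{P:abelian-Dieud} with $m=1$ then produces a point of $Y_j$ with $j=\dim_{\Fpb}(\tcD(A)^\circ_2/\tcE_2^\circ)$, an index that varies from point to point. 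Your proposal is missing this step, and no minor repair of the colength-one ansatz will supply it.
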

\begin{proof}

(1) Let $S$ be an affine noetherian $\FF_{p^2}$-scheme and let
$y=(A,\lambda,\eta,B,\lambda',\eta', \phi)$ be an $S$-point of $Y_j$. Put $\hat S=S\times_{\Spec(\F_{p^2})}\Spec(\F_{p^2}[t]/t^2)$. Then we have a natural bijection
\[
\Def(y,\hat S)\cong \Gamma(S, y^*T_{Y_j}),
\]
where $\Def(y,\hat S)$ is the set of deformations of $y$ to $\hat S$.
 Similarly, we have $\Def(\pr_j\circ y, \hat S)\cong  \Gamma(S, y^*\pr_{j}^*T_{\Sh_{1,n-1}})$.
 To prove the universal injectivity of $T_{Y_j}\ra \pr^*_jT_{\Sh_{1,n-1}}$, it suffices to show that the natural map $\Def(y,\hat S)\ra \Def(\pr_j\circ y,\hat S)$ is injective.
By crystalline deformation theory (Theorem~\ref{T:STGM}), giving a point of $\Def(y,\hat S)$ is equivalent to giving $\cO_{\hat S}$-subbundles  $\hat\omega^{\circ}_{A^\vee,i}\subseteq H^{\cris}_{1}(A/\hat S)_i^\circ$ over $\hat S$ for $i =1,2$ such that
\begin{itemize}
\item $\hat \omega^{\circ}_{A^\vee,i}$ lifts $\omega^{\circ}_{A^\vee/S,i}$;
\item $\hat \omega^{\circ}_{A^\vee, 1}\subseteq \im( \phi_{*,1})\otimes \F_{p^2}[t]/t^2$ and   $\im(\phi_{*,2})\otimes \F_{p^2}[t]/t^2\subseteq \hat \omega^\circ_{A^\vee, 2}$ are locally direct factors.
\end{itemize}
 Hence, one sees easily that
\begin{align*}
\Def(y,\hat S)&\cong \Hom_{\cO_S}\big(\omega^{\circ}_{A^\vee/S, 1}, \im(\phi_{*,1})/\omega^{\circ}_{A^\vee/S, 1}\big)\oplus \Hom_{\cO_S}\big(\omega^{\circ}_{A^\vee/S,2}/\im(\phi_{*,2}), H^{\dR}_1(A/S)^{\circ}_2/\omega^\circ_{A^\vee/S,2}\big)\\
&\cong \Lie^{\circ}_{A^\vee/S,1}\otimes \big(\im(\phi_{*,1})/\omega^{\circ}_{A^\vee/S,1}\big)\oplus \big(\omega_{A^\vee/S,2}^{\circ}/\im(\phi_{*,2})\big)^*\otimes \Lie_{A/S,2}^{\circ}.
\end{align*}
Similarly, $\Def(\pr_j\circ y,\hat S)$ is given by the lifts of $\omega^\circ_{A^\vee/S,i}$ to $\hat S$ for $i=1,2$.
These lifts are classified by $\Hom_{\cO_S}\big(\omega^\circ_{A^\vee/S,i}, H^{\dR}_1(A/S)^{\circ}_i/\omega^\circ_{A^\vee/S,i}\big)\cong \Lie^{\circ}_{A^\vee/S,i}\otimes_k\Lie_{A/S,i}^{\circ}$.
  Hence, $\Def(\pr_j\circ y,\hat S)$ is canonically isomorphic to
\[
\Lie^{\circ}_{A^\vee/S,1}\otimes_{\cO_S}\Lie_{A/S,1}^{\circ}
\oplus \Lie^{\circ}_{A^\vee/S,2}\otimes_{\cO_S}\Lie_{A/S,2}^{\circ}.
\]
The natural map
 $\Def(y,\hat S)\ra \Def(\pr_j\circ y, \hat S)$ is induced by the natural maps
 \begin{align*}
 \im(\phi_{*,1})/\omega^{\circ}_{A^\vee/S,1}&\hookrightarrow H^{\dR}_1(A/S)^{\circ}_{1}/\omega^{\circ}_{A^\vee/S, 1}\cong\Lie_{A/S,1}^{\circ},\\
 \big(\omega^{\circ}_
  {A^\vee/S, 2}/\im(\phi_{*,2}) \big)^*&\hra  \omega^{\circ,*}_{A^\vee/S,2}\cong\Lie_{A^\vee/S,2}^{\circ}.
 \end{align*}
It follows that $\Def(y,\hat S)\ra \Def(\pr_j\circ y, \hat S)$ is injective.
To prove the formula for $N_{Y_j}(\Sh_{1,n-1})$, we apply the arguments above to affine open subsets of $Y_j$. We see easily that
\begin{align*}
N_{Y_j}(\Sh_{1,n-1})&\cong \Lie_{\calA^\vee,1}^{\circ}\otimes_{\cO_{Y_j}}\coker(\phi^{\univ}_{*,1})\oplus \Lie^{\circ}_{\calA,2}\otimes_{\cO_{Y_j}}\im(\phi^{\univ}_{*,2})^*\\
&\cong \big(\cH_1/V^{-1}(\cH_2^{(p)})\big)^*\otimes V^{-1}(\cH_2^{(p)})
\oplus  \big(F(\cH_1^{(p)})/\cH_2\big)\otimes \big(H^{\dR}_1(\cB/Y_j)^{\circ}_2/F(\cH_1^{(p)})\big)^*.
\end{align*}
Here, the last step uses \eqref{Equ:H-omega} and the isomorphism
$$
\im(\phi^{\univ}_{*,2})\cong H^{\dR}_1(\cB/Y_j)^{\circ}_{2}/\Ker(\phi^{\univ}_{*,2})\cong H^{\dR}_1(\cB/Y_j)^{\circ}_{2}/F(\cH_1^{(p)}).
$$

(2) By statement (1), $\pr_{j,z}$ induces an injection of tangent spaces at each closed points of $Y_{j,z}$. To complete the proof, it suffices to  prove that $\pi_{j,z}$ induces injections on the closed points. Write $z=(B,\lambda',\eta')\in \Sh_{0,n}(\Fpb)$. Assume $y_1$ and $y_2$ are two closed points of  $Y_{j,z}$ with $\pi_{j}(y_1)=\pi_{j}(y_2)=(A,\lambda,\eta)$. Let $\phi_1,\phi_2:B\ra A$ be the isogenies given by $y_1$ and $y_2$. Then the quasi-isogeny $\phi_{1,2}=\phi_2^{-1}\phi_1\in \End_{\calO_D}(B)_{\Q}$ satisfies the conditions of Lemma~\ref{L:finiteness-auto} for $N=1$. Hence, we get $\phi_{1,2}=\id_{B}$, which is equivalent to  $y_1=y_2$. This proves that $\pi_{j,z}$ is injective on closed points.

(3) 
The proof resembles the work of Vollaard--Wedhorn \cite{vollaard-wedhorn}.
Since all the points of $\Sh_{0,n}(\Fpb)$ are supersingular by Remark~\ref{R:Sh0n supersingular}, it is clear that the image of each $\pr_{j}$ lies in the supersingular locus of $\Sh_{1,n-1}$. Suppose now given a  supersingular point $x=(A,\lambda, \eta)\in \Sh_{1,n-1}(\Fpb)$.
We have to show that there exists $(B, \lambda',\eta')\in \Sh_{0,n}$ and
an isogeny $\phi: B\ra A$ such that $(A,\lambda, \eta, \lambda',\eta';\phi)$ lies in $Y_j$ for some $1\leq j\leq n $.

Consider
$$
\LL_{\Q}=\big(\tcD(A)^{\circ}_{1}[1/p]\big)^{F^{2}=p}=\big\{a\in \tcD(A)^{\circ}_1[1/p]\;\big|\;F^2(a)=pa\big\}.
$$
Since $x$ is supersingular,  $\LL_{\Q}$ is a $\Q_{p^2}$-vector space of dimension $n$ by the Dieudonn\'e--Manin  classification, and $\tcD(A)^{\circ}_{1}[1/p]=\LL_{\Q}\otimes_{\QQ_{p^2}}W(\Fpb)[1/p]$. We put $\tcE^{\circ}_1=\big(\LL_{\Q}\cap \tcD(A)^{\circ}_1\big)\otimes_{\Z_{p^2}}W(\Fpb)$, and $\tcE^{\circ}_2=F(\tcE^{\circ}_1)\subseteq \tcD(A)^{\circ}_2$.
Thus $\tcE^{\circ}=\tcE_{1}^{\circ}\oplus \tcE_{2}^{\circ}$ is a Dieudonn\'e submodule of $\tcD(A)^\circ$.
We claim that $\tcE^{\circ}$ contains $p\tcD(A)^{\circ}$ as a submodule.
Then applying Proposition~\ref{P:abelian-Dieud} with $m=1$, we get an $\cO_{D}$-abelian variety $(B,\lambda',\eta')$ together with an $\cO_D$-isogeny $\phi: B\ra A$ with $\phi^\vee\circ\lambda\circ\phi=p\lambda$. It is easy to see in this case that $(A,\lambda,\eta,B,\lambda',\eta',\phi)$ defines a point in $Y_j$ with $j=\dim_{\Fpb}(\tcD(A)^{\circ}_2/\tcE^{\circ}_2)$.

It then suffices to prove the claim that $p \tcD(A)^\circ \subseteq \tcE^\circ$. Suppose not, then $\tcD(A)^\circ \nsubseteq \frac 1p \tcE^\circ$.
Consider $M_i:=\tcD(A)^{\circ}_i/\tcE^{\circ}_i$ for $i=1,2$.
For any integer $\alpha\geq 0$,  its   $p^{\alpha}$-torsion submodule is 
\[
M_i[p^{\alpha}]=\big(\tcD(A)^{\circ}_{i}\cap \frac{1}{p^\alpha}\tcE^{\circ}_i\big)/\tcE^{\circ}_i.
\]
It follows easily that 
\[M_i[p^{\alpha+1}]/M_i[p^{\alpha}]\cong \Big( \tfrac1{p^{\alpha+1}}\tcE^\circ_i \cap \big(\tcD(A)^\circ_i + \tfrac 1{p^\alpha} \tcE^\circ_i\big) \Big)\big/ \tfrac 1{p^\alpha} \tcE^\circ_i.
\]
On the other hand, the Kottwitz' signature condition implies that both $F$ and $V: \tcD(A)^\circ_1 \to \tcD(A)^\circ_2$ have cokernel isomorphic to $\overline \FF_p$, and both $F$ and $ V: \tilde \calE_1^\circ \to \tilde \calE_2^\circ$ are isomorphism.
Therefore, the two induced morphisms 
\[
F \textrm{ and }V: M_1\ra M_2
\]
are injective and both 
have cokernel isomorphic to $\overline \FF_p$.
It follows that the induced maps on the graded pieces  
\begin{equation}
\label{E:F and V are isomorphisms}
F\textrm{ and }V: \Big( \tfrac1{p^{\alpha+1}}\tcE^\circ_1 \cap \big(\tcD(A)^\circ_1 + \tfrac 1{p^\alpha} \tcE^\circ_1\big) \Big) \big/ \tfrac 1{p^\alpha} \tcE^\circ_1 
\longrightarrow 
\Big( \tfrac1{p^{\alpha+1}}\tcE^\circ_2 \cap \big(\tcD(A)^\circ_2 + \tfrac 1{p^\alpha} \tcE^\circ_2\big) \Big) \big/ \tfrac 1{p^\alpha} \tcE^\circ_2
\end{equation}
are injective maps, and are isomorphisms for all $\alpha\geq 0$ except for exactly one $\alpha$.\footnote{We point out that, for \eqref{E:F and V are isomorphisms}, $F$ is an isomorphism if and only if $V$ is an isomorphism, because this is equivalent to requiring the source and the target to have the same dimension.}
The assumption  $\tcD(A)^\circ \nsubseteq \frac 1p \tcE^\circ$ implies that there are at least two $\alpha\geq 0$ for which the right hand side of \eqref{E:F and V are isomorphisms} is non-zero.
 So there exists $\alpha \geq 0$ such that \eqref{E:F and V are isomorphisms} are isomorphisms of \emph{non-zero $\overline \FF_p$-vector spaces}.
Multiplication by $p^{\alpha}$ gives isomorphisms:
\begin{equation}
\label{E:F and V both isomorphism}
F\textrm{ and }V: \Big( \tfrac1{p}\tcE^\circ_1 \cap \big(p^{\alpha}\tcD(A)^\circ_1 +  \tcE^\circ_1\big) \Big) 
\longrightarrow 
\Big( \tfrac1p\tcE^\circ_2 \cap \big(p^\alpha\tcD(A)^\circ_2 +  \tcE^\circ_2\big) \Big).
\end{equation}
Now, Hilbert 90 theorem implies that
$\LL' = \Big( \tfrac1{p}\tcE^\circ_1 \cap \big(p^\alpha\tcD(A)^\circ_1 +  \tcE^\circ_1 \big)  \Big)^{F^2=p}$ in fact generates the source of \eqref{E:F and V both isomorphism}.
On the other hand, it is obvious that $\LL' \subset \LL_\QQ$ and $\LL' \subseteq p^\alpha \tcD(A)^\circ_1 + \tcE^\circ_1 \subseteq \tcD(A)^\circ_1$.
This means that $\LL'$, and hence $\LL_\QQ \cap \tcD(A)^\circ_1$,  generates the entire $\tfrac1{p}\tcE^\circ_1 \cap \big(p^\alpha\tcD(A)^\circ_1 +  \tcE^\circ_1\big) $, i.e. one has  $\tfrac1{p}\tcE^\circ_1 \cap \big(p^\alpha\tcD(A)^\circ_1 +  \tcE^\circ_1\big) =\tcE^{\circ}_1$.
But this contradicts with the non-triviality of the vector spaces in \eqref{E:F and V are isomorphisms} by our choice of $\alpha$.
Now the Proposition is proved.
\end{proof}

\begin{corollary}\label{C:vanishing}
 The morphism $\pr_1$ $($resp. $\pr_{n})$ is a closed immersion, and it identifies $Y_1$ $($resp. $Y_n)$ with the closed subscheme of $\Sh_{1,n-1}$ defined by the vanishing of $V:\omega^{\circ}_{\calA^\vee,2}\ra \omega^{\circ,(p)}_{\calA^\vee,1}$  $($resp. $V:\omega^{\circ}_{\calA^\vee,1}\ra \omega^{\circ,(p)}_{\calA^\vee,2})$.

\end{corollary}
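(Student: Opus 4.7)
The plan is to prove the corollary for $Y_1$; the assertion for $Y_n$ follows by the symmetric argument interchanging the two embeddings $q_1, q_2$. Write $Z\subseteq \Sh_{1,n-1}$ for the closed subscheme cut out by the vanishing of the $\cO_{\Sh_{1,n-1}}$-linear morphism
\[
V|_{\omega^\circ_{\calA^\vee,2}}\colon \omega^\circ_{\calA^\vee,2}\longrightarrow \omega^{\circ,(p)}_{\calA^\vee,1},
\]
well-defined since the full image of $V\colon H^\dR_1(\calA)^\circ_2\to H^{\dR,(p)}_1(\calA)^\circ_1$ is the rank-$1$ subbundle $\omega^{\circ,(p)}_{\calA^\vee,1}$. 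The strategy is to show that $\pr_1$ factors through $Z$ and then exhibit a morphism $g\colon Z\to Y_1$ inverse to $\pr_1|_{Y_1}$.

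For the first step, given an $S$-point $(A,\lambda,\eta,B,\lambda',\eta',\phi)$ of $Y_1$, Lemma~\ref{L:omega-F} yields
\[
\im(\phi_{*,2})\ \subseteq\ \omega^\circ_{A^\vee,2}\ \cap\ F(H^\dR_1(A)^{\circ,(p)}_1)\ =\ \omega^\circ_{A^\vee,2}\ \cap\ \ker V.
\]
For $j=1$, $\coker(\phi_{*,2})$ is locally free of rank $1$, so $\im(\phi_{*,2})$ is locally free of rank $n-1$; both $\omega^\circ_{A^\vee,2}$ and $\ker V=F(H^\dR_1(A)^{\circ,(p)}_1)$ are likewise locally free of rank $n-1$. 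Equality throughout forces $\omega^\circ_{A^\vee,2}=\ker V$, so $V|_{\omega^\circ_{A^\vee,2}}=0$ and $\pr_1$ factors through $Z$.

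For the second step, on $Z$ we have $\omega^\circ_{\calA^\vee,2}=\ker V=F(H^\dR_1(\calA)^{\circ,(p)}_1)$. Over a geometric point $\Spec k\to Z$, we invoke Proposition~\ref{P:abelian-Dieud} with $m=1$, $\tcE_1:=\tcD(A)^\circ_1$, and $\tcE_2$ the preimage of $\omega^\circ_{A^\vee,2}$ under $\tcD(A)^\circ_2\twoheadrightarrow H^\dR_1(A)^\circ_2$; this produces $(B,\lambda',\eta')$ and a $p$-isogeny $\phi\colon B\to A$. The inclusion $F(\tcE_1)\subseteq \tcE_2$ holds because $F(\tcD(A)^\circ_1)\bmod p = \ker V = \omega^\circ_{A^\vee,2}$ by the defining condition of $Z$, while $V(\tcE_2)\subseteq\tcE_1$ is automatic. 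The signature formula~\eqref{E:dimension of new differentials} with $(\ell_1,\ell_2)=(0,1)$ and $(a_1,a_2)=(1,n-1)$ yields $\dim\omega^\circ_{B^\vee,1}=0$ and $\dim\omega^\circ_{B^\vee,2}=n$, so $B\in \Sh_{0,n}(k)$; by construction $\phi_{*,1}$ is an isomorphism and $\coker(\phi_{*,2})$ is of rank $1$, whence $(A,\lambda,\eta,B,\lambda',\eta',\phi)\in Y_1(k)$. We promote this pointwise assignment to a morphism of schemes $g\colon Z\to Y_1$ by observing that the sublattice $\tcE_2$ is uniquely determined by the subbundle $\omega^\circ_{A^\vee,2}$, and invoking Grothendieck--Messing deformation theory (Theorem~\ref{T:STGM}) to propagate the construction across infinitesimal base changes.

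Finally, $\pr_1\circ g=\mathrm{id}_Z$ is tautological, and $g\circ\pr_1=\mathrm{id}_{Y_1}$ follows because, for any $Y_1$-point, the Dieudonn\'e submodule $\phi_*\tcD(B)^\circ\subseteq \tcD(A)^\circ$ satisfies $\tcE_1=\tcD(A)^\circ_1$ (since $\phi_{*,1}$ is an isomorphism) and $\tcE_2/p\tcD(A)^\circ_2 = \im(\phi_{*,2}) = \omega^\circ_{A^\vee,2}$ (by the first step), precisely matching the output of $g$; the uniqueness in Proposition~\ref{P:abelian-Dieud} then lifts this to a canonical isomorphism of $Y_1$-points. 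The main obstacle is the rigorous in-families formulation of the second step---producing the abelian scheme $\calB$ and isogeny $\phi$ globally over $Z$: over a perfect field this is immediate from the covariant Dieudonn\'e equivalence, but in families one must either invoke crystalline Dieudonn\'e theory or, equivalently, construct the finite flat subgroup scheme $\ker\psi\subseteq\calA[\gothp]$ directly via Grothendieck--Messing applied to the canonically determined sub-Hodge filtration.
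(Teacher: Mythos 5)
Your first step (that $\pr_1$ factors through the vanishing locus $Z$) is exactly the paper's argument, via Lemma~\ref{L:omega-F} and the rank count forcing $\omega^\circ_{A^\vee,2}=F\big(H^\dR_1(A/S)^{\circ,(p)}_1\big)=\Ker(V)$. The pointwise construction of the inverse over a perfect field, via Proposition~\ref{P:abelian-Dieud} with $\tcE_1=\tcD(A)^\circ_1$ and $\tcE_2$ the preimage of $\omega^\circ_{A^\vee,2}$, also matches the paper. The problem is the second half: you never actually produce the inverse $g\colon Z\to Y_1$ as a morphism of schemes. A Dieudonn\'e-theoretic construction is only available over perfect fields, and ``invoking Grothendieck--Messing to propagate the construction across infinitesimal base changes'' does not upgrade a pointwise assignment to a morphism: Theorem~\ref{T:STGM} only governs square-zero thickenings of a base that is already given, and cannot by itself assemble a functorial family $(\calB,\lambda',\eta',\phi)$ over an arbitrary (finite type, possibly non-reduced) $\FF_{p^2}$-scheme mapping to $Z$. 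You acknowledge this as ``the main obstacle'' but do not close it, so as written the proof is incomplete. Moreover, without that inverse, knowing bijectivity on geometric points is not enough: $Z$ is defined by equations and could a priori be non-reduced or singular, so one must also control its scheme structure (e.g.\ its tangent spaces) before concluding $\pr_1$ is an isomorphism onto $Z$ rather than onto $Z_{\mathrm{red}}$.

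The paper sidesteps the in-families construction entirely: since $\pr_1$ is proper and, by Proposition~\ref{P:normal-bundle}(1), injective on tangent spaces, it suffices to check that $\pr_1\colon Y_1\to Z_1$ is bijective on closed points (your pointwise Dieudonn\'e argument does this) \emph{and} bijective on tangent spaces at closed points. The latter is a short Grothendieck--Messing computation internal to $Z_1$: the vanishing condition forces the lift $\hat\omega^\circ_{A^\vee,2}$ to be the unique one killed by $V$, so $T_{Z_1,x}$ is parametrized by lifts of $\omega^\circ_{A^\vee,1}$ alone and has dimension $n-1=\dim T_{Y_1,y}$, whence the sandwich $T_{Y_1,y}\hookrightarrow T_{Z_1,x}\hookrightarrow T_{\Sh_{1,n-1},x}$ gives the bijection. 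Either supply this tangent-space count (and quote the closed-points-plus-tangent-spaces criterion as in Proposition~\ref{P:isom-Yj}), or carry out honestly the family version of your construction (e.g.\ via crystalline Dieudonn\'e theory or by exhibiting the kernel subgroup scheme of $\psi$ inside $\calA[\gothp]$ over $Z$); as it stands, neither is done.
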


\begin{proof}
We just prove the statement for $\pr_1$, and the case of $\pr_n$ is similar.
 Let $Z_1$ be the closed subscheme of $\Sh_{1,n-1}$ defined by the condition that $V: \omega^{\circ}_{\calA^\vee,2}\ra \omega^{\circ,(p)}_{\calA^\vee, 1}$ vanishes.
 We show first that $\pr_1:Y_1\ra \Sh_{1,n-1}$ factors through the natural inclusion $Z_1\hra \Sh_{1,n-1}$.
 Let $y=(A,\lambda,\eta, B,\lambda',\eta',\phi)$ be an $S$-valued point of $Y_1$.   By Lemma~\ref{L:omega-F}, $\im(\phi_{2,*})$ has rank $n-1$ and contains both $\omega_{A^\vee/S,2}^{\circ}$  and $F\big(H^{\dR}_1(A/S)^{\circ,(p)}_1\big)$, which are both $\calO_S$-subbundles  of rank $n-1$.
 This forces $\omega^{\circ}_{A^\vee/S, 2}=F\big(H^{\dR}_1(A/S)^{\circ,(p)}_1\big)$, and therefore  $V:\omega^{\circ}_{A^\vee/S,2}\ra \omega^{\circ,(p)}_{A^\vee/S, 1}$ vanishes. This shows that $\pr_1(y)\in Z_1$.

To prove that $\pr_1: Y_1\ra Z_1$ is an isomorphism, as $Y_1$ is smooth, it suffices to show that it induces a bijection between closed points and tangent spaces of $Y_1$ and $Z_1$.
 For any perfect field $k$ containing $\F_{p^2}$, one constructs a map $\theta: Z_1(k)\ra Y_1(k)$ inverse to $\pr_1: Y_1(k)\ra Z_1(k)$ as follows.
 Given $x=(A,\lambda, \eta)\in Z_1(k)$.
 Let $\tcE^{\circ}_1=\tcD(A)^{\circ}_1$ and $\tcE^{\circ}_{2}\subseteq \tcD(A)^{\circ}_2$ be the inverse image of $\omega^{\circ}_{A^\vee/k,2}\subseteq \tcD(A)^{\circ}_2/p\tcD(A)_2^{\circ}$.
  Then the condition that $y\in Z_1$ implies that $\tcE^\circ_1\oplus \tcE^{\circ}_2$ is stable under $F$ and $V$.
  Applying Proposition~\ref{P:abelian-Dieud} with $m=1$, we get a tuple $(B,\lambda',\eta',\phi)$ such that $y=(A,\lambda, \eta, B,\lambda',\eta',\phi)\in Y_1(k)$.
   It is immediate to check that $x\mapsto y$ and $\pr_1$ is  the   set theoretic  inverse of each other.
  It remains to show that $\pr_1$ induces a bijection between $T_{Y_1, y}$ and $T_{Z_1,x}$. Proposition~\ref{P:normal-bundle} already implies that we have an inclusion
$T_{Y_1, y} \hookrightarrow
T_{Z_1,x} \hookrightarrow T_{\Sh_{1,n-1}, x}$. It suffices to check that $\dim T_{Z_1, x} = n-1$. 
The tangent space $T_{Z_1,x}$ is the space of deformations $(\hat A, \hat\lambda, \hat\eta)$ over $\hat k = k[\epsilon]/(\epsilon^2)$ of $(A,\lambda, \eta)$ such that $V:  \omega^{\circ}_{\hat A^\vee/\hat k, 2}\ra \omega^{\circ,(p)}_{\hat A^\vee/\hat k, 1}=\omega^{\circ,(p)}_{ A^\vee/k, 1}\otimes_k \hat k$ vanishes.
 This uniquely determines the lift $\hat \omega^{\circ}_{ A^\vee, 2}=\omega^{\circ}_{\hat A^\vee/\hat k,2}$.  So by deformation theory (Theorem~\ref{T:STGM}), the tangent space $T_{Z_1, x}$ is determined by the liftings $\hat\omega^{\circ}_{A^\vee,1}=\omega^{\circ}_{\hat A^\vee/\hat k,1}$
 of $\omega^{\circ}_{A^\vee / k,1}$. So it is of dimension $n-1$.  This concludes the proof of the corollary.
\end{proof}


\subsection{Geometric Jacquet--Langlands morphism}\label{S:Tate-cycles}
 Let $\ell\neq p$ be a prime number.
 For $1\leq j\leq n$,  the diagram \eqref{D:two-projection} gives rise to  a natural morphism
\begin{equation}\label{E:jacquet-langlands}
\JL_j\colon H^0_\et\big(\overline \Sh_{0,n},\Ql\big)\xra{\pr_{j}'^*} H^0_\et\big(\overline Y_{j},\Ql\big)\xra{\pr_{j,!}} H^{2(n-1)}_\et\big(\overline\Sh_{1,n-1},\Ql(n-1)\big),
\end{equation}
where $\pr_{j,!}$ is \eqref{E:Gysin-map}, whose restriction to each $H^0_\et(Y_{j,z},\Ql)$ for $z\in \Sh_{0,n}(\Fpb)$ is the Gysin map associated to the closed immersion $Y_{j,z}\hra \overline\Sh_{1,n-1}$.
It is clear that the image of $\JL_j$ is the subspace generated by the cycle classes of $[Y_{j,z}]\in A^{n-1}(\overline\Sh_{1,n-1})$ with $z\in\Sh_{0,n}(\Fpb) $.
According to \cite{helm-PEL}, $\JL_j$ should be considered as a certain geometric realization of the Jacquet--Langlands transfer from  $G_{0,n}$ to  $G_{1,n-1}$.
Putting all $\JL_j$'s together, we get  a morphism
\begin{equation}\label{E:total-JL}
\JL=\sum_{j}\JL_j\colon \bigoplus_{j=1}^n H^{0}_\et\big(\overline\Sh_{0,n},\Ql\big) \longrightarrow H^{2(n-1)}_\et\big(\overline\Sh_{1,n-1},\Ql(n-1)\big).
\end{equation}
Recall that  we have fixed an isomorphism $G_{1,n-1}(\AAA^{\infty})\simeq G_{0,n}(\AAA^{\infty})$, which we write uniformly as  $G(\AAA^{\infty})$.
Denote by $\scrH(K^p,\Qlb)=\Qlb[K^p\backslash G(\AAA^{\infty,p})/K^p]$ the prime-to-$p$ Hecke algebra.
Then the homomorphism \eqref{E:total-JL} is a homomorphism of $\scrH(K^p,\Qlb)$-modules. 


For an irreducible admissible representation $\pi$ of $G(\AAA_{\infty})$, we write $\pi=\pi^p\otimes \pi_p$, where $\pi^p$ (resp. $\pi_p$) is the prime-to-$p$ part (resp. the $p$-component) of $\pi$.

\begin{lemma}\label{L:automorphic-prime-to-p}
Let $\pi_1$ and $\pi_2$ be two admissible irreducible representations of $G(\AAA^\infty)$, and  $(r_i, s_i)$ for $i=1,2$ be two pairs of integers with $0\leq r_i,s_i\leq n$ and $r_1+s_1\equiv r_2+s_2\mod 2$.
Assume that $\pi_1$ satisfies Hypothesis~\ref{H:automorphic assumption} with  $a_{\bullet}=(r_1, s_1)$,
 and there exists an admissible irreducible representation $\pi_{2,\infty}$ of $G_{(r_2,s_2)}(\RR)$ 
 such that $\pi_{2}\otimes\pi_{2,\infty}$ is a cuspidal automorphic representation of $G_{(r_2,s_2)}(\AAA)$. 
 If $\pi^p_1$ and $\pi^p_2$ are isomorphic as representations  of $G(\AAA^{p,\infty})$, then $\pi_{1,p}\simeq \pi_{2,p}$, and   $\pi_2\otimes\pi_{2,\infty}$ admits  a base change to a cuspidal automorphic representation of $\GL_{n}(\AAA_E)\times \AAA_{E_0}^{\times}$; in particular, $\pi_2$ satisfies Hypothesis~\ref{H:automorphic assumption} for $a_{\bullet}=(r_2,s_2)$.
\end{lemma}
\begin{proof}
By assumption on $\pi_1$, there exists an irreducible admissible representation $\pi_{1,\infty}$ of $G_{(r_1,s_1)}(\RR)$ such that $\pi_1\otimes\pi_{1,\infty}$ is a cuspidal automorphic representation of $G_{r_1,s_1}(\AAA)$, which base changes to a cuspidal automorphic representation $(\Pi_1, \chi_1)$ of $\GL_n(\AAA_E)\times\AAA_{E_0}^{\times}$. 
On the other hand, by \cite[Theorem~1.1]{shin}, there exists always a  base change of $\pi_{2}\otimes\pi_{2,\infty}$ to an automorphic representation $(\Pi_2,\chi_2)$ of $\GL_n(\AAA_{E})\times \AAA_{E_0}^{\times}$. The base changes $(\Pi_i,\chi_i)$ with $i=1,2$ satisfy the following properties:
\begin{itemize}
\item  $\Pi_i$ is conjugate self-dual,
\item for every unramified rational prime $x$, the $x$-component of $(\Pi_i,\psi_i)$ depends only on the $x$-component of $\pi_{i}$, and
\item if one writes $\pi_{i,p}=\pi_{i,0}\otimes \pi_{i,\gothp}$ as representation of $G(\Q_p)\simeq\Q_p^{\times}\times \GL_n(E_{\gothp})$, then  $\Pi_{i,p}=(\pi_{i,\gothp}\otimes \check\pi_{i,\gothp}^c)$ as representation of $\GL_{n}(E\otimes \Q_p)\cong \GL_n(E_{\gothp})\times \GL_n(E_{\bar\gothp})$, and $\psi_{i,p}= \pi_{i,0}\otimes \pi_{i,0}^{-1}$ as representation of $(E_{0}\otimes \Q_p)^{\times}=\Q_p^{\times}\times \Q_p^{\times}$. Here, $\check\pi_{i,\gothp}^c$  denotes the complex conjugate of the contragredient of $\pi_{i,\gothp}$.
\end{itemize}
As $\pi^p_1\simeq \pi_2^p$,  $(\Pi_{1},\psi_1)$ and $(\Pi_{2},\psi_2)$ are isomorphic at almost all finite places. 
By the strong multiplicity one theorem for $\GL_n$ \cite{jacquet-shalika}, we have $(\Pi_1,\psi_1)\simeq (\Pi_2,\psi_2)$; in particular, $(\Pi_2,\psi_2)$ is cuspidal. By the description of $(\Pi_{i,p}, \psi_{i,p})$, it follows immediately that $\pi_{1,p}\simeq \pi_{2,p}$.
\end{proof}

Let $\scrA_K$ be the set of isomorphism classes of irreducible admissible  representations $\pi$ of $G(\AAA^{\infty})$ satisfying Hypothesis~\ref{H:automorphic assumption} with $a_{\bullet}=(0,n)$. In particular, each $\pi\in \scrA_K$ is the finite part of an automorphic cuspidal representation of $G_{0,n}(\AAA)$.

We fix such a  $\pi\in \scrA_{K}$.
Let
 \[
 \JL_{\pi}\colon \bigoplus_{i=1}^n H^{0}_\et\big(\overline\Sh_{0,n},\Qlb\big)_{\pi^p}\longrightarrow H_\et^{2(n-1)}\big(\overline\Sh_{1,n-1},\Qlb(n-1)\big)_{\pi^p}
 \]
 denote
the homomorphism on the $(\pi^p)^{K^p}$-isotypic components induced by $\JL$, where for an $\scrH(K^p, \overline \QQ_\ell)$-module $M$ we put
\[
M_{\pi^p}: = \Hom_{\scrH(K^p, \overline \QQ_\ell)}((\pi^p)^{K^p}, M) \otimes (\pi^p)^{K^p}.
\]
Then Lemma~\ref{L:automorphic-prime-to-p} implies that $\pi$ is completely determined by its prime-to-$p$ part. 
Hence, taking the $\pi^p$-isotypic components is the same as taking the $\pi$-isotypic components. We can thus write  $M_{\pi}$ instead of $M_{\pi^p}$ for a $\scrH(K,\overline\QQ_{\ell})$-module $M$.

 Recall that the image of $\JL_{\pi}$ is included  in  $H_\et^{2(n-1)}\big(\overline\Sh_{1,n-1},\Qlb(n-1)\big)_{\pi}^{\mathrm{fin}}$, which is the maximal subspace of $H_\et^{2(n-1)}\big(\overline\Sh_{1,n-1},\Qlb(n-1)\big)_{\pi}$ where the action of $\Gal(\Fpb/\F_{p^2})$ factors through a finite quotient.
 Note that, at this moment, it is not clear if the target of $\JL_\pi$ is nonzero. But this will follow from the proof of our main Theorem~\ref{T:main-theorem} below.

Our main result claims that this inclusion is actually an equality under certain genericity  conditions on $\pi_{p}$. To make this precise, write $\pi_{p}=\pi_{p,0}\otimes \pi_{\gothp}$ as a representation of $G(\Q_p)\simeq\Q_p^{\times}\times \GL_n(E_{\gothp})$.
 Let
\[
 \rho_{\pi_{\gothp}}: W_{\Q_{p^2}}\ra \GL_n(\Qlb)
\]
  be the unramified representation of   the Weil group of $\Q_{p^2}$ defined in \eqref{E:defn-rho-pi}.
   It induces a continuous $\ell$-adic representation of $\Gal(\Fpb/\F_{p^2})$, which we denote by the same notation.
Then $\rho_{\pi_{\gothp}}(\Frob_{p^2})$ is semisimple with characteristic polynomial \eqref{E:hecke-polynomial}.
Using this, we get an explicit description of  $H_\et^{2(n-1)}\big(\overline\Sh_{1,n-1},\Qlb(n-1)\big)_{\pi}$ and $H^{0}_\et(\overline\Sh_{0,n},\Qlb)_{\pi}$ in terms of $\rho_{\pi_{\gothp}}$ by \eqref{E:decomposition-cohomology} and \eqref{E:Galois-Sh}.

 We can now state our main theorem.

  \begin{theorem}\label{T:main-theorem}
  Fix a $\pi$ in $\scrA_K$.
   Let $\alpha_{\pi_{\gothp},1}, \dots, \alpha_{\pi_{\gothp, n}}$ denote the eigenvalues of $\rho_{\pi_{\gothp}}(\Frob_{p^2})$. 
 \begin{enumerate}
 \item If $\alpha_{\pi_{\gothp},1}, \dots, \alpha_{\pi_{\gothp},n}$ are distinct, then the map $\JL_{\pi}$ is injective,
 
\item Let $m_{1,n-1}(\pi)$ (resp. $m_{0,n}(\pi)$) denote the multiplicity for $\pi$ appearing in Theorem~\ref{T:galois-action} for $a_{\bullet}=(1,n-1)$ (resp. for $a_{\bullet}=(0,n)$). Assume that $m_{1,n-1}(\pi)=m_{0,n}(\pi)$ and that $\alpha_{\pi_{\gothp},i}/\alpha_{\pi_{\gothp},j}$ is not a root of unity for all $1\leq i,j\leq n$. Then the map
\[
\JL_{\pi}:\bigoplus_{j=1}^n H^{0}_\et\big(\overline\Sh_{0,n},\Qlb\big)_{\pi}\longrightarrow H_\et^{2(n-1)}\big(\overline\Sh_{1,n-1},\Qlb(n-1)\big)_{\pi}^{\mathrm{fin}}
\]
is an isomorphism. In other words, $H^{2(n-1)}_\et\big(\overline\Sh_{1,n-1},\Qlb(n-1)\big)_{\pi}^{\mathrm{fin}}$ is generated by the  cycle classes of the irreducible components of $Y_{j}$ for $1\leq j\leq n$.
 \end{enumerate}
 \end{theorem}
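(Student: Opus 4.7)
My plan is to reduce both assertions to a careful analysis of the intersection matrix of the cycles $Y_1,\dots,Y_n$ on $\Sh_{1,n-1}$ and a dimension count on the target, both taken on the $\pi$-isotypical component.

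\textbf{Step 1 (intersection pairing as a $p$-adic Hecke operator).}
By Poincar\'e duality, the injectivity of $\JL_\pi$ is equivalent to the non-degeneracy of the pairing
\[
\bigl(H^0_\et(\Sh_{0,n,\Fpb},\Qlb)_\pi\bigr)^{\oplus n}\times \bigl(H^0_\et(\Sh_{0,n,\Fpb},\Qlb)_\pi\bigr)^{\oplus n}\longrightarrow \Qlb,\qquad (x_i),(y_j)\mapsto \sum_{i,j}\langle\JL_i(x_i),\JL_j(y_j)\rangle.
\]
Using the projection formula together with proper base change applied to $W_{ij}:=Y_i\times_{\Sh_{1,n-1}}Y_j$, the $(i,j)$-entry becomes $\langle x_i,T_{ij}y_j\rangle$, where $T_{ij}$ is the endomorphism of $H^0(\Sh_{0,n,\Fpb},\Qlb)$ obtained by pulling back along $\pr'_j$, lifting through the two correspondence legs of $W_{ij}\to Y_i,Y_j$, and then pushing forward along $\pr'_i$. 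Inspecting the moduli problems, the composition $\phi_i^{-1}\circ\phi_j$ realizes $W_{ij}$ as a family over $\Sh_{0,n}\times\Sh_{0,n}$ whose fiber is a closed subvariety of a product of Grassmannians (exactly the kind of variety studied in Section~\ref{S:fundamental-intersection-number}); thus $T_{ij}$ is the Hecke operator associated to an explicit double coset of $\GL_n(\cO_{E_\gothp})$ in $\GL_n(E_\gothp)$, weighted by the degree computed through Schubert calculus.

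\textbf{Step 2 (non-degeneracy via distinct Satake parameters).}
Invoking Theorem~\ref{Th-Conj} (to appear in Section~\ref{Sect:intersection-matrix}), the matrix $M=(T_{ij})_{1\le i,j\le n}$ can be diagonalized on $\pi_\gothp^{K_p}$ via the Satake isomorphism. Each entry becomes an explicit polynomial expression in the Satake parameters $\alpha_{\pi_\gothp,1},\dots,\alpha_{\pi_\gothp,n}$ whose structure mirrors the $\binom{n}{j}$-symmetry of the supersingular stratification and yields, after row and column manipulations, a determinant of Vandermonde type in $\alpha_{\pi_\gothp,1},\dots,\alpha_{\pi_\gothp,n}$ (times a nonzero monomial in these parameters). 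Since the $\alpha_{\pi_\gothp,i}$ are distinct, this determinant is nonzero; this proves part (1).

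\textbf{Step 3 (dimension count for surjectivity).}
By \eqref{E:Galois-Sh}, the source has dimension
\[
n\cdot\#\ker^1(\QQ,G_{0,n})\cdot m_{0,n}(\pi)\cdot\dim\pi^K.
\]
For the target, the same formula applied to $a_\bullet=(1,n-1)$ gives $\#\ker^1(\QQ,G_{1,n-1})\cdot m_{1,n-1}(\pi)\cdot\dim\pi^K$ copies of $\wedge^1\rho_{\pi_\gothp}\otimes\wedge^{n-1}\rho_{\pi_\gothp}$ (up to a character and Tate twist). The eigenvalues of $\Frob_{p^2}$ on a single copy are
\[
\alpha_{\pi_\gothp,i}\cdot\prod_{k\ne m}\alpha_{\pi_\gothp,k}\cdot(\text{unit independent of }i,m),
\]
and when the ratios $\alpha_{\pi_\gothp,i}/\alpha_{\pi_\gothp,j}$ are not roots of unity, exactly the $n$ pairs with $i=m$ contribute to the $\mathrm{fin}$-part, as observed after \eqref{E:invariant-subspace}. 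Using the fact (Remark after \eqref{E:shimura-complex}) that $\ker^1(\QQ,G_{a_\bullet})$ depends only on $E/F$ and the parity of $n$, we have $\ker^1(\QQ,G_{1,n-1})=\ker^1(\QQ,G_{0,n})$; combined with the hypothesis $m_{1,n-1}(\pi)=m_{0,n}(\pi)$, the two dimensions agree.

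\textbf{Main obstacle and conclusion.}
The plan is then: (1) follows from Steps~1--2; (2) follows by combining (1) with the dimension count of Step~3. The heart of the argument, and the main obstacle, is Step~2: the explicit computation of the intersection matrix. This requires both a geometric identification of the components of $W_{ij}$ with Grassmannian bundles (so that degrees can be read off) and a Schubert-calculus computation that expresses the resulting Hecke operator in a form whose $\pi$-projection is visibly a Vandermonde-type determinant. Both ingredients are the content of Sections~\ref{S:fundamental-intersection-number} and~\ref{Sect:intersection-matrix}.
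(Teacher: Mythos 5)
Your overall route is essentially the paper's: part (1) is reduced to the invertibility of the $\pi$-projection $M_\pi$ of the intersection matrix of the $Y_j$'s, whose entries are Hecke operators at $\gothp$ weighted by the Chern-class numbers of Section~\ref{S:fundamental-intersection-number} (Proposition~\ref{P:intersection-cycles}, Theorem~\ref{Th-Conj}); part (2) then follows from (1) together with the dimension count based on \eqref{E:Galois-Sh}, the hypothesis that no $\alpha_{\pi_\gothp,i}/\alpha_{\pi_\gothp,j}$ is a root of unity, the equality $\#\mathrm{ker}^1(\Q,G_{1,n-1})=\#\mathrm{ker}^1(\Q,G_{0,n})$, and the assumption $m_{1,n-1}(\pi)=m_{0,n}(\pi)$. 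However, two points in your write-up need repair.

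First, Step 1 is not correct as stated. The cup-product pairing is Hecke-compatible in the sense $\langle Tx,y\rangle=\langle x,T^\vee y\rangle$, so it pairs the $\pi$-isotypic component with the $\check\pi$-isotypic component; when $\pi\not\cong\check\pi$ the pairing you wrote on $\big(H^0_\et(\Sh_{0,n,\Fpb},\Qlb)_\pi\big)^{\oplus n}\times\big(H^0_\et(\Sh_{0,n,\Fpb},\Qlb)_\pi\big)^{\oplus n}$ is identically zero, and the claimed equivalence ``injectivity of $\JL_\pi$ $\Leftrightarrow$ nondegeneracy of this pairing'' fails. The paper avoids duality entirely: it introduces $\nu_j=\Tr_{\pr'_j}\circ\pr_j^*$ and shows that the composite $\nu_\pi\circ\JL_\pi$, an \emph{endomorphism} of $\bigoplus_{i}H^0_\et(\Sh_{0,n,\Fpb},\Qlb)_\pi$, is given by the scalar matrix $M_\pi$, so $\det M_\pi\neq 0$ gives injectivity. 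Your computation of the operators $T_{ij}$ is exactly this composite (note also that each entry is a $\ZZ$-linear combination of operators $R^{(a,b)}_\gothp S_\gothp^{-1}$ weighted by the excess-intersection numbers $N(\cdot,\cdot)$, not a single double coset), so the fix is simply to drop the duality framing. Second, the heart of part (1)---that $\det M_\pi\neq 0$ whenever the $\alpha_{\pi_\gothp,i}$ are distinct---is asserted (``visibly a Vandermonde-type determinant after row and column manipulations'') rather than proved. In the paper this is Lemma~\ref{L:Hecke-determiant}, giving $\det(M_\pi)=\pm p^{n(n^2-1)/3}\prod_{i<j}(\alpha_{\pi_\gothp,i}-\alpha_{\pi_\gothp,j})^2\big/\big(\prod_{i}\alpha_{\pi_\gothp,i}\big)^{n-1}$, and it rests on the nontrivial determinant identity of Appendix~\ref{A:determinant}, proved via a resultant matrix, not on elementary manipulations; as written, this key nonvanishing is a gap. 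Finally, in Step 3 you should only claim the inequality $\dim H^{2(n-1)}_\et(\Sh_{1,n-1,\Fpb},\Qlb(n-1))_\pi^{\fin}\leq n\dim H^0_\et(\Sh_{0,n,\Fpb},\Qlb)_\pi$: equality of dimensions would require semisimplicity of $\Frob_{p^2}$ on the cohomology, which is conjectural, and the inequality combined with injectivity already forces $\JL_\pi$ to be an isomorphism onto the finite part.
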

 The proof of this theorem will be given at the end of Section~\ref{Sect:intersection-matrix}.
 \begin{remark}\label{R:automorphic-mult}
 The equality  $m_{1,n-1}(\pi)=m_{0,n}(\pi)$ is conjectured to be  true according to Arthur's formula on the automorphic multiplicities of unitary groups, and is known to hold when $\pi$ is the finite part of an automorphic representation of $G_{1,n-1}(\AAA)$ whose base change to  $\GL_n(\AAA_E)\times \AAA_{E_0}^{\times}$ is cuspidal, and $G_{1,n-1}$ is quasi-split at all finite places. See for instance \cite[Theorem E]{white}.
 
Conversely, Theorem~\ref{T:main-theorem}(1) gives partial results towards the equality $m_{1,n-1}(\pi)=m_{0,n}(\pi)$. 
Indeed, when combining with  Kottwitz' description \ref{T:galois-action} of the $\pi$-isotypic components of the cohomology groups, Theorem~\ref{T:main-theorem}(1) implies (under the assumption that the Satake parameters of $\pi_{\gothp}$ are distinct) that $m_{1,n-1}(\pi) \geq m_{0,n}(\pi)$ without using Arthur's trace formula .  
If we use only the fact that $\JL_{\pi}$ is non-zero (which is an easy consequence of our computation of the intersection matrix in Theorem~\ref{Th-Conj}),  we get the implication $m_{0,n}(\pi)\neq 0\Rightarrow m_{1,n-1}(\pi)\neq 0$. 
 \end{remark}




    \section{Fundamental intersection numbers}\label{S:fundamental-intersection-number}
In this section, we will compute some intersection numbers on certain Deligne--Lusztig varieties. These numbers  will play a key role in the computation  in the next section of the intersection matrix of the cycles $Y_{j}$ on $\Sh_{1,n-1}$.

\begin{notation}
Let $X$ be  an algebraic variety  of pure  dimension $N$ over $\Fpb$.  For an integer $r\geq 0$, let $A^r(X)$  (resp. $A_r(X)$)  denote the group of algebraic cycles on $X$ of codimension $r$ (resp. of dimension $r$) modulo rational equivalences.
If $Y\subseteq X$ is a subscheme equidimensional of codimension $r$, we denote by $[Y]\in A^r(X)$ the class of $Y$. We put $A^{\star}(X)=\bigoplus_{r=0}^{N}A^r(X)$.      For a zero-dimensional cycle $\eta\in A^N(X)$, we denote by
\[
\deg(\eta)=\int_{X}\eta
\]
the \emph{degree} of $\eta$. Let $\calV$ be a vector bundle over $X$. We denote by $c_r(\calV)\in A^r(X)$ the $r$-th Chern class of $\calV$ for $0\leq r\leq N$, and put $c(\calV)=\sum_{r=0}^Nc_r(\calV)t^r$ in the free variable $t$.

\end{notation}

   \subsection{A special Deligne--Lusztig variety}\label{S:grassmannian}
   We fix an integer $n\geq 1$. For an integer $0\leq k\leq n$, we denote by $\Gr(n,k)$ the Grassmannian variety over $\FF_p$ classifying  $k$-dimensional subspaces of $\FF_p^{\oplus n}$.
Given an integer $k$ with   $1\leq k\leq n$, let $Z_k^{\langle n \rangle}$ be the   subscheme   of $\Gr(n,k)\times \Gr(n,k-1)$ whose $S$-valued points are  isomorphism classes of pairs $(L_1,L_2)$, where
 $L_1$ and $L_2$ are respectively subbundles of $ \cO_{S}^{\oplus n}$  of rank $k$ and $k-1$ satisfying $L_2\subseteq L_1^{(p)}$ and $L_2^{(p)}\subseteq L_1$ (with locally free quotients).
 The same arguments as in Proposition~\ref{P:Yj-prime} show that $Z_k^{\langle n \rangle}$ is a  smooth variety over $\FF_p$ of dimension $n-1$.
 We denote the natural closed immersion by $$i_k: Z_k^{\langle n \rangle}\hra \Gr(n,k)\times\Gr(n,k-1).$$

  Let $\calL_1$ and $\calL_2$ denote  the universal subbundles on $\Gr(n,k) \times \Gr(n,k-1)$ coming from the two factors, and  $\calQ_1$ and $\calQ_2$  the universal quotients, respectively. When there is no confusion, we still use $\calL_i$ and $ \calQ_i$ for $i=1,2$ to denote their restrictions to $Z_k^{\langle n \rangle}$.
 We put
 \begin{equation}\label{E:excess-bundle}
  \calE_k= (\calL_1/\calL_2^{(p)})^*\otimes \calL_2^{(p)}\oplus (\calL_1^{(p)}/\calL_2)\otimes \calQ_1^{*,(p)},
  \end{equation}
 which is a vector bundle of rank $n-1$ on $Z_k^{\langle n \rangle}$.
(This vector bundle is modeled on the description of the normal bundle $N_{Y_j}(\Sh_{1,n-1})$ in Proposition~\ref{P:normal-bundle}(1), which is how our computation will be used in the next section; see Proposition~\ref{P:intersection-cycles}.)
  We have the top Chern class  $c_{n-1}(\calE_k)\in A^{n-1}(Z_k^{\langle n \rangle})$.
  We define the \emph{fundamental intersection number} on $Z_k^{\langle n \rangle}$ as 
  \begin{equation}\label{D:fundamental-number}
  N(n,k):=\int_{Z_k^{\langle n \rangle}} c_{n-1}(\calE_k).
 \end{equation}
 The main theorem we prove in this section is the following.

\begin{theorem}\label{T:intersection}
 For integers $n,r$ with $0\leq r\leq n$, let
\[
\binom{n}{r}_{q}=\frac{(q^n-1)(q^{n-1}-1)\cdots (q^{n-r+1}-1)}{(q-1)(q^2-1)\cdots (q^r-1)}
\]
be the Gaussian binomial coefficients, and let $d(n,k)=(2k-1)n-2k(k-1)-1$ denote the dimension of $\Gr(n,k)\times \Gr(n,k-1)$.
Then, for $1\leq k\leq n$, we have
\begin{equation}
\label{E:fundamental intersection formula}
N(n,k)=(-1)^{n-1}\sum_{\delta=0}^{\min\{k-1, n-k\}} (n-2\delta) p^{d(n-2\delta, k-\delta)}\binom{n}{\delta}_{p^2}.
\end{equation}
\end{theorem}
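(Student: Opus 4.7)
My strategy is to realize $Z_k^{\langle n\rangle}$ as a (transverse) zero locus inside the ambient product $G := \Gr(n,k)\times \Gr(n,k-1)$, push the intersection computation up to $G$, and then evaluate by Schubert calculus, with a stratification argument to expose the combinatorial structure of the right-hand side of \eqref{E:fundamental intersection formula}.

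\emph{Step 1 (ambient realization).} The two conditions $L_2\subseteq L_1^{(p)}$ and $L_2^{(p)}\subseteq L_1$ are the vanishing of the natural sections of $\calH om(\calL_2,\calQ_1^{(p)})$ and $\calH om(\calL_2^{(p)},\calQ_1)$ obtained by composing $\calL_2\hookrightarrow \cO_G^{\oplus n}\twoheadrightarrow \calQ_1^{(p)}$ and similarly. These two sections together cut out a locally closed subscheme with expected codimension $2(k-1)(n-k) = \dim G-(n-1)$, matching the actual dimension of $Z_k^{\langle n\rangle}$; the smoothness of $Z_k^{\langle n\rangle}$ (analogous to Proposition~\ref{P:Yj-prime}) implies transversality, so
\[
[Z_k^{\langle n\rangle}] = c_{\mathrm{top}}(\calF),\qquad \calF := \calL_2^*\otimes \calQ_1^{(p)}\ \oplus\ \calL_2^{*,(p)}\otimes \calQ_1.
\]

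\emph{Step 2 (lifting $\calE_k$ to $G$).} Although $\calE_k$ is only defined on $Z_k^{\langle n\rangle}$, its Chern class lifts formally to $G$: the exact sequences $0\to\calL_2^{(p)}\to\calL_1\to\calL_1/\calL_2^{(p)}\to 0$ and $0\to\calL_2\to\calL_1^{(p)}\to\calL_1^{(p)}/\calL_2\to 0$ give
\[
c(\calL_1/\calL_2^{(p)})=c(\calL_1)/c(\calL_2^{(p)}),\qquad c(\calL_1^{(p)}/\calL_2)=c(\calL_1^{(p)})/c(\calL_2),
\]
together with the twist rule $c_i(\calL^{(p)})=p^i\,c_i(\calL)$ on an $\FF_p$-scheme. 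Substituting into $c(\calE_k)=c(\calE_k^{(1)})\cdot c(\calE_k^{(2)})$ gives a well-defined class $\tilde c_{n-1}(\calE_k)\in A^{n-1}(G)$ restricting to $c_{n-1}(\calE_k)$ on $Z_k^{\langle n\rangle}$. By the projection formula,
\[
N(n,k) \;=\; \int_G \tilde c_{n-1}(\calE_k)\cdot c_{\mathrm{top}}(\calF).
\]

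\emph{Step 3 (evaluation and stratification).} By the splitting principle this is a symmetric-function integral in the Chern roots $\alpha_i$ of $\calL_1$ and $\beta_j$ of $\calL_2$, with all twists replaced by multiplication by $p$. Direct Schubert calculus is feasible in principle but does not transparently produce the stratified sum in \eqref{E:fundamental intersection formula}; I would instead give a parallel direct argument by stratifying $Z_k^{\langle n\rangle}$ according to the invariant $\delta := \tfrac12\bigl(\dim(L_1+L_2^{(p)})+\dim(L_1^{(p)}+L_2)\bigr) - k$ (equivalently, by the dimension of the maximal $\Frob^{2}$-stable rational summand of the configuration $(L_1,L_2)$). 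Each stratum fibers over $\Gr_{\FF_{p^2}}(n,\delta)$, accounting for the factor $\binom{n}{\delta}_{p^2}$; the fiber is a ``superbasic'' $Z^{\langle n-2\delta\rangle}_{k-\delta}$ (one in which the $\delta=0$ stratum is dense) twisted by an affine bundle of rank accounting for the $p^{d(n-2\delta,k-\delta)}$; and finally the intersection $c_{n-1}(\calE_k)$ restricted to the top stratum ($\delta=0$, superbasic locus) reduces to the top Chern class of the normal bundle of a point-like cycle, producing $(-1)^{n-1}(n-2\delta)$.

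\emph{Main obstacle.} The principal difficulty is the third step: identifying the stratification precisely and showing that on each stratum the contribution of $c_{n-1}(\calE_k)\cdot[Z_k^{\langle n\rangle}]$ factors into the ``discrete'' Gaussian binomial factor and the ``continuous'' power of $p$. Equivalently, this amounts to proving a nontrivial Schubert-calculus identity with Frobenius-twisted tautological classes. The base case $\delta=\min(k-1,n-k)$ (the superbasic stratum, where $Z^{\langle n-2\delta\rangle}_{k-\delta}$ is essentially a projective line or point) should be handled first by direct computation, and the general case by induction on $\delta$, exploiting that the stratification is compatible with the recursive structure $n\mapsto n-2\delta$, $k\mapsto k-\delta$ visible in the target formula.
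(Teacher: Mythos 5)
Your Steps 1--2 are a reasonable (and fixable) reduction: realizing $Z_k^{\langle n\rangle}$ as the zero scheme of a section of $\calF=\calL_2^*\otimes\calQ_1^{(p)}\oplus\calL_2^{*,(p)}\otimes\calQ_1$ does give $[Z_k^{\langle n\rangle}]=c_{\mathrm{top}}(\calF)\cap[G]$, since the zero scheme has the expected codimension on the smooth (hence Cohen--Macaulay) ambient $G$ (your phrase ``smoothness implies transversality'' is not quite the right justification, but the conclusion stands), and the formal extension of $c_{n-1}(\calE_k)$ via the splitting principle and $c_i(\calL^{(p)})=p^ic_i(\calL)$ is legitimate. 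But this only converts the theorem into an explicit Schubert-calculus identity on $\Gr(n,k)\times\Gr(n,k-1)$; it does not prove the closed formula \eqref{E:fundamental intersection formula}, and you concede that the actual evaluation is deferred to Step 3.

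Step 3, which is the entire content of the theorem, has a genuine gap and, as written, is wrong. The invariant $\delta:=\tfrac12\bigl(\dim(L_1+L_2^{(p)})+\dim(L_1^{(p)}+L_2)\bigr)-k$ is identically zero on $Z_k^{\langle n\rangle}$, because the defining conditions $L_2^{(p)}\subseteq L_1$ and $L_2\subseteq L_1^{(p)}$ force both sums to equal $L_1$ and $L_1^{(p)}$; so your proposed stratification is not defined, and the alternative description (``maximal $\Frob^2$-stable rational summand'') is not made precise. More seriously, even with a correct stratification there is no mechanism offered by which the degree of $c_{n-1}(\calE_k)$ localizes to locally closed strata: restricting a Chern class to a non-proper stratum loses all degree information, the claimed ``affine bundle of rank $d(n-2\delta,k-\delta)$'' is not exhibited, and the asserted top-stratum contribution $(-1)^{n-1}(n-2\delta)$ does not even parse there since $\delta=0$ on the open stratum. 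For comparison, the paper's proof replaces this heuristic by precise geometry: a duality giving $N(n,k)=N(n,n+1-k)$; a passage to partially untwisted models $\tilde Z_k^{\langle n\rangle}$ via partial Frobenius morphisms of degrees $p^{n-k}$ and $p^{k-1}$, which is where the powers of $p$ are accounted for; and an induction on $k$ in which $\tilde Z_k^{\langle n\rangle}$ and $\tilde Z'^{\langle n\rangle}_{k-1}$ are compared on a common blow-up whose exceptional divisor is a disjoint union of $\binom{n}{k-1}_{p^2}$ copies of $\PP^{k-2}\times\PP^{n-k}$, producing exactly one new term $(-1)^{n-1}(n-2k+2)\binom{n}{k-1}_{p^2}$ per step. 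Some device of this kind (blow-up comparison, localization, or an honest symmetric-function evaluation of your Step 2 integral) is needed; without it the proposal does not establish the formula.
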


\begin{remark}
We point out that this theorem seems to be more than a technical result.
It is at the heart of the understanding of these cycles we constructed. 
\end{remark}

\begin{proof}
We first claim that $N(n,k) = N(n,n+1-k)$ for $1 \leq k\leq n$.
Let  $(L_1,L_2)$ be an $S$-valued point of $\Gr(n,k)\times \Gr(n,k-1)$, and $Q_i=\cO_S^{\oplus n}/L_i$ for $i=1,2$ be the corresponding quotient bundles.
Then $(L_1,L_2)\mapsto (Q_2^*,Q_1^*)$ defines a duality isomorphism
\[
\theta \colon \Gr(n,k)\times \Gr(n,k-1)\xra{\sim} \Gr(n,n+1-k)\times \Gr(n,n-k).
\]
Since $L_2^{(p)}\subseteq L_1$ (resp. $L_2\subseteq L_1^{(p)}$) is equivalent to $Q_1^*\subseteq Q_2^{*,(p)}$ (resp. to $Q_1^{*,(p)}\subseteq Q_2^{*}$), $\theta$ induces an isomorphism between $Z_k^{(n)}$ and $Z_{n+1-k}^{(n)}$.
It is also direct to check that $\calE_k=\theta^*(\calE_{n+1-k})$.  This verifies the claim.
Now since the right hand side of \eqref{E:fundamental intersection formula} is also invariant under replacing $k$ by $n+1-k$, it suffices to prove the Theorem when $k \leq \frac{n+1}{2}$.

We reduce the proof of the theorem to an analogous situation where the twists are given on one of the $L_i$'s.
Let $\tilde Z_k^{\langle n \rangle}$ be the subscheme of $\Gr(n,k)\times \Gr(n,k-1)$ whose $S$-valued points are the isomorphism classes of pairs $(\tilde L_1,\tilde L_2)$, where
 $\tilde L_1$ and $\tilde L_2$ are respectively subbundles of $ \cO_{S}^{\oplus n}$  of rank $k$ and $k-1$ satisfying $\tilde L_2\subseteq \tilde L_1$ and $\tilde L_2^{(p^2)}\subseteq \tilde L_1$.
The relative Frobenius morphisms on the two Grassmannian factors induce two morphisms
\[
\xymatrix@R=0pt{
Z_k^{\langle n \rangle} \ar[r]^{\varphi} & \tilde Z_k^{\langle n \rangle}\ar[r]^{\hat \varphi} & (Z_k^{\langle n \rangle})^{(p)}
\\
(L_1, L_2) \ar@{|->}[r] &  (L_1^{(p)}, L_2)
\\
& (\tilde L_1, \tilde L_2) \ar@{|->}[r] &  (\tilde L_1, \tilde L_2^{(p)}),
}
\]
such that the composition is the relative Frobenius on $\tilde Z_k^{\langle n\rangle}$. 
Using a simple deformation computation, we see that $\varphi$ has degree $p^{n-k}$ and $\hat \varphi$ has degree $p^{k-1}$.
Let $\tilde \calL_1$ and $\tilde  \calL_2$ denote the universal subbundles on $\Gr(n,k) \times \Gr(n,k-1)$ when restricted to $\tilde Z_k^{\langle n \rangle}$; let $\tilde \calQ_1$ and $\tilde \calQ_2$ denote the universal quotients, respectively.
 We put
 \begin{equation}
\tilde  \calE_k= (\tilde \calL_1/\tilde \calL_2^{(p^2)})^* \otimes \tilde \calL_2^{(p^2)}\oplus (\tilde \calL_1/\tilde \calL_2)\otimes \tilde \calQ_1^{*},
  \end{equation}
which is a vector bundle of rank $n-1$ on $\tilde Z_k^{\langle n \rangle}$.

Note that
\[
\varphi^*(\tilde \calE_k) = 
(\calL_1^{(p)}/\calL_2^{(p^2)})^* \otimes \calL_2^{(p^2)}\oplus (\calL_1^{(p)}/\calL_2)\otimes \calQ_1^{*,(p)}.
\]
Comparing with $\calE_k$, we see that $c_{n-1}(\varphi^*(\tilde \calE_k)) = p^{k-1} c_{n-1}(\calE_k)$, where the factor $p^{k-1}$ comes from the Frobenius twist on the first factor.
Thus, we have
\begin{align}
\label{E:one side untwist}
\int_{\tilde Z_k^{\langle n \rangle}}c_{n-1}(\tilde \calE_k)& = (\deg \varphi)^{-1} \int_{Z_k^{\langle n \rangle}} c_{n-1}(\varphi^*(\tilde \calE_k))\\
&
= p^{k-n} \int_{Z_k^{\langle n \rangle}} p^{k-1} c_{n-1}(\calE_k) = p^{2k-n-1} 
N(n,k).\nonumber
\end{align}
Since $d(n-2\delta,k-\delta) + 2k-n-1 = 2(k-\delta-1)(n-k-\delta+1)$, the Theorem is in fact equivalent to the following (for each fixed $k$).
\end{proof}

\begin{proposition}
\label{P:untwist intersection}
For $1 \leq k \leq \frac{n+1}{2}$, we have
\begin{equation}
\label{E:intersection formula untwisted}
\int_{\tilde Z_k^{\langle n \rangle}} c_{n-1}(\tilde \calE_k) = 
(-1)^{n-1} \sum_{\delta=0}^{k-1} (n-2\delta) p^{2(k-\delta-1)(n-k-\delta +1)}\binom{n}{\delta}_{p^2}.
\end{equation}
\end{proposition}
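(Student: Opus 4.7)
The plan is to realize $\tilde Z_k^{\langle n \rangle}$ as a regularly embedded subvariety of the two-step partial flag variety $\mathrm{Fl}(k-1,k;n) \subseteq \Gr(n,k) \times \Gr(n,k-1)$ (the closed locus $\tilde L_2 \subseteq \tilde L_1$), reduce the desired intersection number to a Chern class computation on $\mathrm{Fl}(k-1,k;n)$, and then match the result with the combinatorial sum in \eqref{E:intersection formula untwisted} via a stratification by Frobenius-rationality.

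First I would observe that $\tilde Z_k^{\langle n\rangle}$ is precisely the zero locus of the section of the rank $(k-1)(n-k)$ bundle $(\tilde\calL_2^{(p^2)})^* \otimes \tilde\calQ_1$ on $\mathrm{Fl}(k-1,k;n)$ given by the composition $\tilde\calL_2^{(p^2)} \hookrightarrow \cO^{\oplus n} \twoheadrightarrow \tilde\calQ_1$. Since $\tilde Z_k^{\langle n\rangle}$ is smooth of this expected codimension (by the same argument as Proposition~\ref{P:Yj-prime}), this section is regular, and hence
\[
[\tilde Z_k^{\langle n \rangle}] = c_{(k-1)(n-k)}\bigl((\tilde\calL_2^{(p^2)})^* \otimes \tilde\calQ_1\bigr) \in A^*(\mathrm{Fl}(k-1,k;n)).
\]
The integral in question becomes an intersection number on $\mathrm{Fl}(k-1,k;n)$. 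Introducing Chern roots $x_1,\dots,x_{k-1}$ for $\tilde\calL_2$, $y$ for the line bundle $\tilde\calL_1/\tilde\calL_2$, and $z_1,\dots,z_{n-k}$ for $\tilde\calQ_1$, the Chern roots of $\tilde\calL_2^{(p^2)}$ are $p^2 x_i$; both Chern classes become explicit symmetric polynomials in these variables. Pushing forward along $\mathrm{Fl}(k-1,k;n) \to \Gr(n,k) \to \Spec(\FF_p)$ reduces the problem to an explicit polynomial in $p$.

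The final step is to identify this polynomial with the sum over $\delta$. The geometric interpretation I would pursue is to stratify $\tilde Z_k^{\langle n\rangle}$ by the codimension $\delta$ of the largest $\FF_{p^2}$-rational subspace contained in $\tilde L_1$ (equivalently, $\dim \bigcap_{i \geq 0} \tilde L_1^{(p^{2i})} = k-\delta$). The $\delta$-th stratum should fiber over a variety of $\FF_{p^2}$-rational subspaces, accounting for the factor $\binom{n}{\delta}_{p^2}$, with fiber a smaller analogue of $\tilde Z_{k-\delta}^{\langle n-2\delta\rangle}$ in the generic position $\delta=0$. The Frobenius-twist factor $p^{2(k-\delta-1)(n-k-\delta+1)}$ would then arise from the $p^{2i}$-scaling of Chern classes along the rational directions, while $(n-2\delta)$ is the residual integral on the primitive part, yielding the formula by induction on $\min(k-1,n-k)$.

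The main obstacle will be the third step. Bridging the Schubert-theoretic Chern root computation on $\mathrm{Fl}(k-1,k;n)$ with the Gaussian-binomial sum is combinatorially delicate: $\binom{n}{\delta}_{p^2}$ naturally enumerates $\FF_{p^2}$-rational subspaces, whereas the Chern root calculation produces symmetric function expressions whose $p$-dependence is hidden in cancellations. Reconciling the two viewpoints may require either a careful blow-up along the $\FF_{p^2}$-rational loci to make the stratification geometrically transparent, or a generating-function identity (perhaps an instance of the $q$-binomial theorem) to rearrange the symmetric function integrals. An alternative would be to directly exploit the Deligne-Lusztig structure of $\tilde Z_k^{\langle n\rangle}$ for $\GL_n(\FF_{p^2})$ and apply character-theoretic intersection formulas, but the Frobenius-twisted bundle $\tilde\calE_k$ does not fit into the standard DL framework without significant additional bookkeeping.
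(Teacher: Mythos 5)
Your first two steps are sound and genuinely different from the paper's route: on the partial flag variety $\mathrm{Fl}(k-1,k;n)\subseteq \Gr(n,k)\times\Gr(n,k-1)$ the condition $\tilde L_2^{(p^2)}\subseteq \tilde L_1$ is indeed the scheme-theoretic vanishing locus of the section of $(\tilde\calL_2^{(p^2)})^*\otimes\tilde\calQ_1$ (note the quotient $\tilde\calL_1/\tilde\calL_2^{(p^2)}$ is then automatically a line bundle, since a subbundle of $\calO^{\oplus n}$ contained in another subbundle is locally a direct summand of it), the expected codimension $(k-1)(n-k)$ matches the smoothness statement proved as in Proposition~\ref{P:Yj-prime}, so $[\tilde Z_k^{\langle n\rangle}]=c_{(k-1)(n-k)}\bigl((\tilde\calL_2^{(p^2)})^*\otimes\tilde\calQ_1\bigr)$, and since $c(\tilde\calL_1/\tilde\calL_2^{(p^2)})=c(\tilde\calL_1)/c(\tilde\calL_2^{(p^2)})$ on $\tilde Z_k^{\langle n\rangle}$, the class $c_{n-1}(\tilde\calE_k)$ extends to the flag variety and the projection formula turns the left-hand side of \eqref{E:intersection formula untwisted} into an explicit integral of symmetric functions over $\mathrm{Fl}(k-1,k;n)$. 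But this only \emph{restates} the proposition as a polynomial identity in $p$; the entire content — the emergence of $\binom{n}{\delta}_{p^2}$, the exponents $2(k-\delta-1)(n-k-\delta+1)$, and the coefficients $n-2\delta$ — lies in your third step, which is not a proof. Chern-class integrals do not decompose over a stratification by locally closed strata without an extra mechanism (a cycle-level representative adapted to the strata, or a resolution of the closure relations), the asserted fibration structure of the $\delta$-stratum and the identification of its fiber are not established, and the induction cannot close in the naive way you indicate: the coefficient $n-2\delta$ is the $k=1$ value $\pm\int_{\tilde Z_1^{\langle n-2\delta\rangle}}c_{n-2\delta-1}(\tilde\calE_1)$, not a smaller instance $N(n-2\delta,k-\delta)$ of the same integral, so "each stratum contributes a smaller $\tilde Z$" would produce a double sum that you would still have to collapse.

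For comparison, the paper closes exactly this gap geometrically rather than combinatorially: it inducts on $k$ by comparing $\tilde Z_k^{\langle n\rangle}$ with the other partial-Frobenius variant $\tilde Z'^{\langle n\rangle}_{k-1}$ (twist on $\tilde L_1$ instead of $\tilde L_2$) through a common blow-up $W$ classifying $\tilde L'_3\subset \tilde L_2\subset \tilde L_1$ with $\tilde L'_3\subset\tilde L_2^{(p^2)}\subset\tilde L_1$; the exceptional divisor $E$ is exactly the $\FF_{p^2}$-rational locus you want to stratify by, a disjoint union of $\binom{n}{k-1}_{p^2}$ copies of $\PP^{k-2}\times\PP^{n-k}$, and explicit exact sequences involving $\calO_W(E)$ show that the two pulled-back top Chern classes differ by $(-1)^{n-1}(n-2k+2)\binom{n}{k-1}_{p^2}$ after integration. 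Combined with the degree count for the partial Frobenius morphisms (which converts the $\tilde Z'_{k-1}$ integral back into $N(n,k-1)$, as in Remark~\ref{R:equivalent untwist}), this yields the recursion that produces the sum over $\delta$. To complete your approach you would either have to carry out the Schubert/localization evaluation of your flag-variety integral and prove the resulting $q$-binomial identity directly, or make your rationality stratification rigorous — which in practice means reconstructing a blow-up correspondence of the above kind.
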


\begin{remark}
\label{R:equivalent untwist}
Before giving the proof of this proposition, we point out a variant of the construction of $\tilde Z_k^{\langle n\rangle}$.
Let $\tilde Z'^{\langle n\rangle}_k$ be the subscheme  of $\Gr(n,k)\times \Gr(n,k-1)$ whose $S$-valued points are the isomorphism classes of pairs $(\tilde L'_1,\tilde L'_2)$, where
 $\tilde L'_1$ and $\tilde L'_2$ are respectively subbundles of $ \cO_{S}^{\oplus n}$  of rank $k$ and $k-1$ satisfying $\tilde L'_2\subseteq \tilde L'_1$ and $\tilde L'_2\subseteq \tilde L'^{(p^2)}_1$ (Note that the twist is on $L'_1$ as opposed to be on $L'_2$).  This is again a certain partial-Frobenius twist of $Z_k^{\langle n \rangle}$; it is smooth of dimension $n-1$.
Define the universal subbundles and quotient bundles $\tilde \calL'_1$, $\tilde \calL'_2$, $\tilde \calQ'_1$, and $\tilde \calQ'_2$ similarly.
 We put 
 \[
 \tilde \calE'_k = (\tilde \calL'_1 / \tilde \calL'_2)^* \otimes \tilde \calL'_2 \oplus (\tilde \calL'^{(p^2)}_1 / \tilde \calL'_2) \otimes ( \tilde \calQ'^*_1)^{(p^2)}.
 \]
Using the same argument as above, we see that, for every fixed $k$, 
\[
\int_{\tilde Z'^{\langle n \rangle}_k} c_{n-1}(\tilde \calE'_k) = p^{n+1-2k} N(n,k).
  \]
  Note that 
the exponent is different from \eqref{E:one side untwist}.
So Proposition~\ref{P:untwist intersection} for each fixed $k$ is equivalent to
\[
\int_{\tilde Z'^{\langle n \rangle}_k} c_{n-1}(\tilde \calE'_k) = (-1)^{n-1} \sum_{\delta=0}^{k-1} (n-2\delta) p^{2(k-\delta)(n-k-\delta )}\binom{n}{\delta}_{p^2},
\]
as $2(k-\delta)(n-k-\delta)  = d(n-2\delta, k-\delta) + n-2k+1$.
\end{remark}

\begin{proof}[Proof of Proposition~\ref{P:untwist intersection}]
We first prove it in the case of $k=1,2$ and then we explain an inductive process to deal with the general case.

When $k=1$, $\tilde Z_1^{\langle n \rangle}$ classifies a line-subbundle $\tilde L_1$ of $\calO_S^{\oplus n}$ with no additional condition (as $\tilde L_2$ is zero); so $\tilde Z_1^{\langle n\rangle} \cong \PP^{n-1}$ and $\tilde \calL_1 = \calO_{\PP^{n-1}}(-1)$.  The vector bundle $\tilde \calE_1$ is equal to $\tilde \calL_1 \otimes \tilde \calQ_1^*$.
It is straightforward to check that 
\[
c(\tilde  \calE_1) = \big( 1+ c_1(\calO_{\PP^{n-1}}(-1))\big)^n \quad \textrm{and hence} \quad \int_{\tilde Z_1^{\langle n \rangle}}  c_{n-1}(\tilde \calE_1) = (-1)^{n-1}n;
\] the Proposition is proved in this case.

When $k=2$, we consider a forgetful morphism
\[
\xymatrix@R=0pt{
\psi: \quad \tilde Z_2^{\langle n \rangle} \ar[r] & \tilde Z_1^{\langle n \rangle}
\\
\quad (\tilde L_1, \tilde L_2) \ar@{|->}[r] &
\tilde L_2.
}
\]
This morphism is an isomorphism over the closed points $x \in \tilde Z_1^{\langle n\rangle}(\overline \FF_p)$ for which $\tilde L_{2,x} \neq \tilde L_{2,x}^{(p^2)}$, because in this case, $\tilde L_{1,x}$ is forced to be $\tilde L_{2,x} + \tilde L_{2,x}^{(p^2)}$.  On the other hand, for a closed point $x \in \tilde Z_1^{\langle n\rangle}(\overline \FF_p)$ where $\tilde L_{2,x} = \tilde L_{2,x}^{(p^2)}$, i.e. for $x \in \tilde Z_1^{\langle n\rangle}(\FF_{p^2}) \cong \PP^{n-1}(\FF_{p^2})$, $\psi^{-1}(x)$ is the space classifying a line $\tilde L_1$ in $\overline \FF_p^{\oplus n} / \tilde L_{2,x}$; so $\psi^{-1}(x) \simeq \PP^{n-2}$.
A simple tangent space computation shows that $\psi$ is the blow-up morphism of $\tilde Z_1^{\langle n\rangle} \cong \PP^{n-1}$ at all of its $\FF_{p^2}$-points.
We use $E$ to denote the exceptional divisors, which is a disjoint union of $\binom n1_{p^2}$ copies of $\PP^{n-2}$.

Note that the vanishing of the morphism  $\tilde \calL_2 \to \tilde \calL_1 / \tilde \calL_2^{(p^2)}$ defines the divisor  $E$ (as we  can  see using deformation); so 
\[
\calO_{\tilde Z_2^{\langle n \rangle}}(E)\cong \tilde \calL_1 / \tilde \calL_2^{(p^2)} \otimes\tilde  \calL_2^{-1}.
\]
Put $\eta = c_1(\tilde \calL_2) = \psi^* c_1\big(\calO_{\PP^{n-1}}(-1)\big)$ and $\xi = c_1(E)$.
Then
\begin{align}
\nonumber
c(\tilde \calE_2) & = c \big((\tilde \calL_1 / \tilde \calL_2^{(p^2)} )^* \otimes \tilde \calL_2^{(p^2)} \big) 
\cdot
c\big(( \tilde \calL_1 / \tilde \calL_2) \otimes \tilde \calQ_1^*\big)
\\
\label{E:Chern class k=2}
&= (1- \xi + (p^2-1)\eta) \cdot (1+ \xi + p^2 \eta)^n / (1+ \xi + (p^2-1)\eta),
\end{align}
where the computation of the second term comes from the following two exact sequences
\[
0 \to (\tilde \calL_1 / \tilde \calL_2) \otimes \tilde \calQ_1^* \to (\tilde \calL_1 / \tilde \calL_2)^{\oplus n}
\to (\tilde \calL_1 / \tilde \calL_2) \otimes \tilde \calL_1^* \to 0;
\]
\[
0 \to \calO_{\tilde Z_2^{\langle n \rangle}} \to (\tilde \calL_1 / \tilde \calL_2) \otimes\tilde  \calL_1^* \to (\tilde \calL_1 / \tilde \calL_2) \otimes \tilde \calL_2^* \to 0.
\]
Note that $\int_{\tilde Z_2^{\langle n \rangle}} \xi^i \eta^j = 0$ unless $(i,j) = (n-1, 0)$ or $(0, n-1)$, in which case we have  
$$
\int_{\tilde Z_2^{\langle n\rangle}}\eta^{n-1}=(-1)^{n-1}\quad \text{ and }\quad \int_{\tilde Z_2^{\langle n\rangle}}\xi^{n-1}=(-1)^n\binom{n}{1}_{p^2}.
$$
Here, to prove the last formula, we used the fact that the restriction of $\cO_{\tilde Z^{\langle n\rangle}_2}(E)$ to each irreducible component  $\PP^{n-2}$ of $E$ is isomorphic to $\cO_{\PP^{n-2}}(-1)$.
So it suffices to compute 
\begin{itemize}
\item
the $\xi^{n-1}$-coefficient of \eqref{E:Chern class k=2}, which is the same as the $\xi^{n-1}$-coefficient of $(1-\xi)(1+\xi)^{n-1}$ and is equal to $2-n$; and

\item 
the  $\eta^{n-1}$-coefficient of \eqref{E:Chern class k=2}, which is the same as the $\eta^{n-1}$ coefficient of $(1+(p^2-1)\eta) (1+p^2\eta)^n / (1+(p^2-1)\eta) = (1+p^2\eta)^n$ and is equal to $n p^{2(n-1)}$.
\end{itemize}
To sum up, we see that
\[
\int_{\tilde Z_2^{\langle n\rangle}} c_{n-1}(\tilde \calE_2) = (-1)^{n-1} np^{2(n-1)} + (-1)^n(2-n) \binom n1_{p^2},
\]
which is exactly \eqref{E:intersection formula untwisted} for $k=2$.

In general, we make an induction on $k$. 
Assume that the Proposition is proved for $k-1 \geq 1$ and we now prove the Proposition for $k$ (assuming that $k \leq \frac{n+1}{2}$).
By Remark~\ref{R:equivalent untwist}, we get the similar intersection formula for $\tilde \calE'_{k-1}$ on $\tilde Z'^{\langle n\rangle}_{k-1}$: 
\begin{equation}
\label{E:induction hypo intersection formula untwisted}
\int_{\tilde Z'^{\langle n \rangle}_{k-1}} c_{n-1}(\tilde \calE'_{k-1}) = 
(-1)^{n-1} \sum_{\delta=0}^{k-2} (n-2\delta) p^{2(k-\delta-1)(n-k-\delta+1)}\binom{n}{\delta}_{p^2}.
\end{equation}

We consider the moduli space $W$ over $\FF_{p^2}$ whose $S$-points are tuples $(\tilde L_1, \tilde L_2 = \tilde L'_2, \tilde L'_3)$, where $\tilde L_1$, $\tilde L_2$, and $\tilde L'_3$ are  respectively subbundles of $\calO_S^{\oplus n}$ of rank $k$, $k-1$, and $k-2$ satisfying $\tilde L'_3 \subset\tilde  L_2 \subset\tilde  L_1$ and $\tilde L'_3 \subset\tilde  L^{(p^2)}_2 \subset\tilde  L_1$.
It is easy to use deformation theory to check that $W$ is a smooth variety of dimension $n-1$.  There are two natural morphisms
\[
\xymatrix{
& W \ar[rd]^{\psi_{23}} \ar[ld]_{\psi_{12}} &&& (\tilde L_1, \tilde L_2 = \tilde L'_2, \tilde L'_3)
\ar@{|->}[dl]
\ar@{|->}[dr]
\\ \tilde Z_k^{\langle n \rangle} &&
\tilde Z'^{\langle n \rangle}_{k-1} & (\tilde L_1, \tilde L_2) && (\tilde L'_2, \tilde L'_3).
}
\]
Let $E$ denote the subspace of $W$ whose closed points $x \in W(\overline \FF_p)$ are those such that $\tilde L_{2,x} = \tilde L_{2,x}^{(p^2)}$, i.e. $\tilde L_{2,x}$ is an $\FF_{p^2}$-rational subspace of $\FF_{p^2}^{\oplus n}$ of dimension $k-1$.
It is clear that $E$ is a disjoint union of $\binom n{k-1}_{p^2}$ copies (corresponding to the choices of $\tilde L_2$) of $\PP^{n-k} \times \PP^{k-2}$ (corresponding to the choice of $\tilde L_1$ and $\tilde L'_3$ respectively).  It gives rise to a smooth divisor on $W$.

For a point $x \in (W\backslash E)(\overline \FF_p)$, we have $\tilde L_{2,x} \neq \tilde L^{(p^2)}_{2,x}$ and hence it uniquely determines both $\tilde L_{1,x}$ and $\tilde L'_{3,x}$; so $\psi_{12}$ and $\psi_{23}$ are isomorphisms restricted to $W \backslash E$.
On the other hand, when restricted to $E$, $\psi_{12}$ contracts each copy of $\PP^{n-k} \times \PP^{k-2}$ of $E$ into the first factor $\PP^{n-k}$; whereas $\psi_{23}$ contracts each copy of $\PP^{n-k} \times \PP^{k-2}$ of $E$ into the second factor $\PP^{k-2}$.
It is clear from this (with a little bit of help from a deformation argument) that $\psi_{12}$ is the blow-up of $\tilde Z_k^{\langle n \rangle}$ along $\psi_{12}(E)$ and $\psi_{23}$ is the blow-up of $\tilde Z'^{\langle n \rangle}_{k-1}$ along $\psi_{23}(E)$; the divisor $E$ is the exceptional divisor for both blow-ups.

A simple deformation theory shows that the normal bundle  of $E$ in $W$ when restricted to each component $\PP^{n-k} \times \PP^{k-2}$ is $\calO_{\PP^{n-k}}(-1) \otimes \calO_{\PP^{k-2}}(-1)$.
Moreover, we can characterize $E$ as the zero locus of either one of the following natural homomorphisms
\[
 \tilde \calL_2^{(p^2)} / \tilde \calL'_3 \longrightarrow \tilde \calL_1 / \tilde \calL_2 , \quad \tilde \calL_2 / \tilde \calL'_3  \longrightarrow\tilde \calL_1 / \tilde \calL_2^{(p^2)} .
\]
So as a line bundle over $W$, we have 
\[
\calO_W(E) \cong (\tilde \calL_2^{(p^2)} / \tilde \calL'_3)^{-1} \otimes (\tilde \calL_1 / \tilde \calL_2) \cong (\tilde \calL_2 / \tilde \calL'_3)^{-1} \otimes (\tilde \calL_1 / \tilde \calL_2^{(p^2)}).
\]

We want to compare
\begin{equation}
\label{E:reduction from Z to W}
\int_{\tilde Z_k^{\langle n\rangle}} c_{n-1}(\tilde \calE_k) = \int_W c_{n-1}\big(\psi_{12}^*(\tilde \calE_k)\big)
 \quad \textrm{and} \quad \int_{\tilde Z'^{\langle n\rangle}_{k-1}} c_{n-1} (\tilde \calE'_{k-1}) = \int_W c_{n-1} \big(\psi_{23}^*(\tilde \calE'_{k-1})\big).
\end{equation}
We will show that they differ by $(2k-n-2)(-1)^n \binom n{k-1}_{p^2}$ and this will conclude the proof of the Proposition by inductive hypothesis~\eqref{E:induction hypo intersection formula untwisted}.
Indeed, we have
\begin{align}
\label{E:expression of psi12}
c\big(\psi_{12}^*(\tilde \calE_k)\big) &=
c\big( (\tilde \calL_1/\tilde \calL_2^{(p^2)})^* \otimes \tilde \calL_2^{(p^2)}\big) \cdot c\big( (\tilde \calL_1/\tilde \calL_2)\otimes \tilde \calQ_1^{*}\big),
 \quad \textrm{and}\\\label{E:expression of psi23}
 c\big(\psi_{23}^*(\tilde \calE'_{k-1})\big) & = c\big(( \tilde \calL_2 / \tilde \calL'_3)^* \otimes \tilde \calL'_3\big) \cdot c \big( (\tilde \calL^{(p^2)}_2 / \tilde \calL'_3) \otimes \tilde \calQ_2^{*,(p^2)}\big),
\end{align}
where $\tilde \calQ_1$ and $\tilde \calQ_2$ are the universal quotient vector bundles.
Consider the following two exact sequences where the two last terms are identified:
\[
\xymatrix{
& & \cO_W(E) \otimes (\tilde \calL_1/ \tilde \calL_2^{(p^2)})^{-1} \otimes\tilde \calL_2^{(p^2)} \ar@{<->}[d]^\cong
\\
0 \ar[r] &
(\tilde \calL_2 / \tilde \calL'_3 )^{-1} \otimes \tilde \calL'_3
\ar[r] &
(\tilde \calL_2 / \tilde \calL'_3 )^{-1} 
\otimes \tilde \calL_2^{(p^2)}
\ar[r] &
(\tilde \calL_2 / \tilde \calL'_3 )^{-1} \otimes (\tilde \calL_2^{(p^2)} / \tilde \calL'_3)
\ar[r]  \ar@{<->}[d]^\cong & 0
\\
0 \ar[r] &
(\tilde \calL_1 / \tilde \calL_2) \otimes \tilde \calQ_1^*
\ar[r] &
(\tilde \calL_1 / \tilde \calL_2) \otimes \tilde \calQ_2^{*, (p^2)}\ar@{<->}[d]^\cong
\ar[r] &
(\tilde \calL_1 / \tilde \calL_2) \otimes (\tilde \calQ_2^{*, (p^2)} / \tilde \calQ_1^*)
\ar[r] & 0.
\\
& &\cO_W(E) \otimes ( \tilde \calL_2^{(p^2)} / \tilde \calL'_3) \otimes \tilde \calQ_2^{*,(p^2)}
}
\]
Here the right vertical isomorphism is given by
\begin{align*}
&
(\tilde \calL_1 / \tilde \calL_2) \otimes (\tilde \calQ_2^{*, (p^2)} / \tilde \calQ_1^*) \cong (\tilde \calL_1 / \tilde \calL_2) \otimes (\tilde \calL_1 / \tilde \calL_2^{(p^2)})^{-1} 
\\
\cong\; & \big((\tilde \calL_2^{(p^2)} / \tilde \calL'_3) \otimes \cO_W(E) \big) \otimes \big( (\tilde \calL_2 / \tilde \calL'_3) \otimes \cO_W(E)\big)^{-1} \cong (\tilde \calL_2 / \tilde \calL'_3 )^{-1} \otimes (\tilde \calL_2^{(p^2)} / \tilde \calL'_3).
\end{align*}
From these two exact sequences we see that
\[
c\big((\tilde \calL_2/ \tilde \calL'_3)^{-1} \otimes\tilde \calL'_3\big) \cdot c \big(\cO_W(E)\otimes (\tilde \calL_2^{(p^2)} / \tilde \calL'_3) \otimes \tilde \calQ_2^{*, (p^2)} \big) = c\big(( \tilde \calL_1 / \tilde \calL_2) \otimes \tilde \calQ_1^*\big) \cdot c \big(\cO_W(E) \otimes (\tilde \calL_1 / \tilde \calL_2^{(p^2)} )^{-1} 
\otimes \tilde \calL_2^{(p^2)} \big).
\]
Comparing this with \eqref{E:expression of psi23} and \eqref{E:expression of psi12}, we get
\begin{align*}
& c_{n-1}\big(\psi_{12}^*(\tilde \calE_k)\big) - c_{n-1}\big(\psi_{23}^*(\tilde \calE'_{k-1})\big)\\
=\ &  \Big( c_{k-1} \big((\tilde \calL_1 / \tilde \calL_2^{(p^2)} )^{-1} 
\otimes \tilde \calL_2^{(p^2)} \big) - c_{k-1} \big(\cO_W(E) \otimes (\tilde \calL_1 / \tilde \calL_2^{(p^2)} )^{-1} 
\otimes \tilde \calL_2^{(p^2)} \big)\Big) \cdot c_{n-k}\big( ( \tilde \calL_1 / \tilde \calL_2) \otimes \tilde \calQ_1^*\big)
 \\
&-c_{k-2}\big( (\tilde \calL_2/ \tilde \calL'_3)^{-1} \otimes\tilde \calL'_3 \big)\cdot \Big( c_{n-k+1} \big( (\tilde \calL_2^{(p^2)} / \tilde \calL'_3) \otimes \tilde \calQ_2^{*, (p^2)} \big) - c_{n-k+1} \big(\cO_W(E) \otimes (\tilde \calL_2^{(p^2)} / \tilde \calL'_3) \otimes \tilde \calQ_2^{*, (p^2)} \big) \Big).
\end{align*}
Recall that $E$ is the exceptional divisor for the blow-up $\psi_{12}$ centered at a disjoint union of $\PP^{n-k}$; so $c_1(E)$ kills $\psi_{12}^*(A^i(\tilde Z^{\langle n\rangle}_{k}))$ for $i \geq n-k+1$.
Similarly, $c_1(E)$ kills $\psi_{23}^*(A^i(\tilde Z'^{\langle n\rangle}_{k-1}))$ for $i \geq k-1$.
As a result, we can rewrite the above complicated formula as
\begin{align*}
& c_{n-1}\big(\psi_{12}^*(\tilde \calE_k)\big) - c_{n-1}\big(\psi_{23}^*(\tilde \calE'_{k-1})\big)\\
=\ & - c_1(E)^{k-2}|_E \cdot c_{n-k}\big(( \tilde \calL_1 / \tilde \calL_2) \otimes \tilde \calQ_1^*\big) + c_{k-2}\big( (\tilde \calL_2/ \tilde \calL'_3)^{-1} \otimes\tilde \calL'_3 \big) \cdot c_1(E)^{n-k}|_E
\\
=\ &(-1)^{k-1} c_{n-k}\big( ( \tilde \calL_1 / \tilde \calL_2) \otimes \tilde \calQ_1^*\big)|_{\psi_{12}(E)} + (-1)^{n-k} c_{k-2}\big( (\tilde \calL_2/ \tilde \calL'_3)^{-1} \otimes\tilde \calL'_3 \big)|_{\psi_{23}(E)}.
\end{align*}
For the first term, over each $\PP^{n-k}$ of $\psi_{12}(E)$, it is to take the top Chern class of the canonical subbundle of rank $n-k$ twisted by $\calO_{\PP^{n-k}}(-1)$; so the degree of the first term is $(-1)^{n-k}(n-k+1)$ on each $\PP^{n-k}$.
Similarly, for the second term, over each $\PP^{k-2}$, it is the top Chern class of the canonical subbundle of rank $k-2$ twisted by $\calO_{\PP^{k-2}}(-1)$; so the degree of the second term is $(-1)^{k-2}(k-1)$ on each $\PP^{k-2}$.
To sum up, we have
\begin{align}
\label{E:difference over W}
&\int_W c_{n-1}\big(\psi_{12}^*(\tilde \calE_k)\big) - \int_W c_{n-1}\big(\psi_{23}^*(\tilde \calE'_{k-1})\big)
\\
\nonumber
= \ & (-1)^{k-1}(-1)^{n-k}(n-k+1)\tbinom n{k-1}_{p^2} +(-1)^{n-k}(-1)^{k-2}(k-1) \tbinom n{k-1}_{p^2} \\
\nonumber
 =\ & (-1)^{n-1}(n-2k+2)\tbinom n{k-1}_{p^2}.
\end{align}
So by inductive hypothesis,
\begin{align*}
\int_{\tilde Z_k^{\langle n\rangle}} c_{n-1}(\tilde \calE_k) & \stackrel{\eqref{E:reduction from Z to W}}{=} \int_W c_{n-1}\big(\psi_{12}^*(\tilde \calE_k)\big)
\stackrel{\eqref{E:difference over W}}{=}\int_W c_{n-1}\big(\psi_{23}^*(\tilde \calE_k)\big) + (-1)^{n-1}(n-2k+2)\tbinom n{k-1}_{p^2}.
\\
&\stackrel{\eqref{E:reduction from Z to W}}{=}\int_{\tilde Z'^{\langle n\rangle}_{k-1}} c_{n-1} (\tilde \calE'_{k-1}) + (-1)^{n-1}(n-2k+2)\tbinom n{k-1}_{p^2}
\\
& = (-1)^{n-1} \sum_{\delta=0}^{k-2} (n-2\delta) p^{2(k-\delta-1)(n-k-\delta+1 )}\binom{n}{\delta}_{p^2} + (-1)^{n-1}(n-2k+2)\binom n{k-1}_{p^2}
\\
& = (-1)^{n-1} \sum_{\delta=0}^{k-1} (n-2\delta) p^{2(k-\delta-1)(n-k-\delta+1 )}\binom{n}{\delta}_{p^2}.
\end{align*}
This shows the statement of the Proposition for $k$ and hence concludes the inductive proof.
\end{proof}

\section{Intersection matrix of supersingular cycles on $\Sh_{1,n-1}$}\label{Sect:intersection-matrix}
Throughout this section, we fix an integer $n\geq 2$ and keep the notation as in Section~\ref{Section:U(1,n)}.
We will study the intersection theory of cycles $Y_{j}$ for $1\leq j\leq n$ on $\Sh_{1,n-1}$ considered in Section~\ref{Section:U(1,n)}.
For this, we may assume the following.

\begin{hypothesis}
We assume that the tame level structure $K^p$ is taken sufficiently small so that Lemma~\ref{L:finiteness-auto} holds with $N=2$.
\end{hypothesis}

\subsection{Hecke correspondences on $\Sh_{0,n}$}\label{S:hecke-operator}
  Recall that we have an isomorphism
   \[
   G(\Q_p) \simeq \Q_{p}^{\times}\times \GL_{n}(E_{\gothp})\cong \Q_p^{\times}\times\GL_n(\Q_{p^2}).
   \]
   Put $ K_{\gothp}=\GL_{n}(\calO_{E_{\gothp}})$ and $ K_p=\Z_{p}^{\times}\times  K_{\gothp}$.
    The Hecke algebra $\Z[ K_{\gothp}\backslash \GL_{n}(E_{\gothp})/ K_{\gothp}]$ can be viewed as a subalgebra of $\Z[ K_p\backslash G(\Q_p)/ K_p]$ (with trivial factor at the $\Q_p^{\times}$-component).

   For $\gamma\in \GL_{n}(E_{\gothp})$, the double coset $T_{\gothp}(\gamma):=K_{\gothp} \gamma  K_{\gothp}$ defines a Hecke correspondence on $\Sh_{0,n}$.
It induces a set theoretic Hecke correspondence
\[
T_{\gothp}(\gamma): \Sh_{0,n}(\Fpb)\longrightarrow \calS (\Sh_{0,n}(\Fpb)),
\]
where $\calS(\Sh_{0,n}(\Fpb))$ denotes the set of subsets of $\Sh_{0,n}(\Fpb)$.
By Remark~\ref{R:isogenous-class}, $\Sh_{0,n}(\Fpb)$ is a union of $\#\ker^{1}(\Q,G_{0,n})$-isogeny classes of abelian varieties.
Fix a base point $z_0\in \Sh_{0,n}(\Fpb)$.
Let 
\[
\Theta_{z_0}:\Isog(z_0)\xra{\ \sim\ } G_{0,n}(\Q)\big\backslash\big ( G(\AAA^{\infty,p})\times G(\Q_p)\big)\big/K^p\times K_p.
\]
be the bijection constructed as in Corollary~\ref{C:isogeny-classes}. 
Write $K_{\gothp}\gamma K_{\gothp}=\coprod_{i\in I}  \gamma_iK_{\gothp}$.
If $z\in \Isog(z_0)$ corresponds to the class of $(g^p,g_p)\in G(\AAA^{\infty,p})\times G(\Q_p)$ with $g_p=(g_{p,0}, g_{\gothp})$, then $T_{\gothp}(\gamma)(z)$ consists of points in $\Isog(z_0)$ corresponding to the class of $(g^p,(g_{p,0}, g_{\gothp} \gamma_{i}))$ for all $i\in I$.

Alternatively, $T_{\gothp}(\gamma)$ has the following description. Write $z=(A,\lambda,\eta)$, and let  $\LL_{z}$ denote the $\Z_{p^2}$-free module $\tcD(A)^{\circ,F^2=p}_1$.
Then a point  $z'=(B,\lambda', \eta')\in \Sh_{0,n}(\Fpb)$ belongs to $T_{\gothp}(\gamma)(z)$ if and only if  there exists an $\cO_D$-equivariant $p$-quasi-isogeny $\phi: B'\ra B$ (i.e. $p^m \phi$ is an isogeny of $p$-power order for some integer $m$) such that
\begin{enumerate}
\item $\phi^\vee\circ\lambda\circ\phi=\lambda'$,
\item $\phi\circ\eta'=\eta$,
\item $\phi_{*}(\LL_{z'})$ is a lattice of $\LL_z[1/p]=\LL_z\otimes_{\Z_{p^2}}\Q_{p^2}$ with the property:  there exists a $\Z_{p^2}$-basis $(e_1,\dots, e_n)$ for $\LL_z$ such that $( e_1,\dots,e_n)\gamma$ is a $\Z_{p^{2}}$-basis for $\phi_{*}(\LL_{z'})$.
\end{enumerate}
When $\gamma = \diag(p^{a_1}, \dots, p^{a_n})$ with $a_i \in \{-1, 0, 1\}$, 
For given $z$ and $z'$, such a $\phi$ is necessarily unique if it exists, by Lemma~\ref{L:finiteness-auto} (with $N=2$).
Therefore, $T_{\gothp}(\gamma)(z)$ is in natural bijection  with the set of  $\Z_{p^2}$-lattices $\LL'\subseteq \LL_z[1/p]$ satisfying property (3) above.

For each integer $i$ with $0\leq i\leq n$, we put
$$
T^{(i)}_{\gothp}=T_\gothp\big(\diag(\,\underbrace{p,\dots, p}_i,\, \underbrace{1,\dots,1}_{n-i}\,)\big).
$$
By the discussion above, one has a natural bijection
  \[
  T_\gothp^{(i)}(z)\xra{\sim} \big\{
  \LL_{z'}\subseteq \LL_{z}[1/p]\;\big|\; p\LL_{z}\subseteq \LL_{z'}\subseteq \LL_{z},\; \dim_{\F_{p^2}}(\LL_{z}/\LL_{z'})=i
 \big \}
  \]
  for $z\in \Sh_{0,n}(\Fpb)$.
   Note that  $T_\gothp^{(0)}=\id$, and  we put $S_\gothp:=T_\gothp^{(n)}$.
    Then the Satake isomorphism implies $\Z[K_\gothp\backslash \GL_{n}(E_{\gothp})/K_{\gothp}]\cong \Z[T_{\gothp}^{(1)},\dots, T_\gothp^{(n-1)}, S_\gothp, S_\gothp^{-1}]$.
  More generally, for $0\leq a\leq b\leq n$, we put
   $$
R^{(a,b)}_\gothp=T_\gothp\big(\diag(\,\underbrace{p^2,\dots,p^2}_a,\,\underbrace{p,\dots, p}_{b-a},\,\underbrace{1,\dots, 1}_{n-b}\,)\big).
   $$
Note that $R^{(0,i)}_{\gothp}=T_\gothp^{(i)}$, and $R_\gothp^{(a,b)}S_\gothp^{-1}$ is the Hecke operator  $T_\gothp\big(\diag(\,\underbrace{p,\dots, p}_a,\,\underbrace{ 1,\dots, 1}_{b-a},\,\underbrace{p^{-1},\dots, p^{-1}}_{n-b}\,)\big)$.
For the explicit relations between $R^{(a,b)}_{\gothp}$ and  $T_\gothp^{(i)}$, see Proposition~\ref{P:multiplication-Hecke}.

\subsection{Refined Gysin homomorphism}
For an algebraic variety $X$ over $\Fpb$ of pure dimension $N$ and any integer $r\geq 0$, we write $A_{r}(X)=A^{N-r}(X)$ to denote the group of dimension $r$ (codimension $N-r$) cycles in $X$ modulo rational equivalence.
Recall that the restriction of $\pr_j: Y_j\ra \Sh_{1,n-1}$ to  each $Y_{j,z}$ for $z\in \Sh_{0,n}(\Fpb)$ and  $1\leq j\leq n$   is a regular closed immersion (into $\overline \Sh_{1,n-1}$).
There is a well-defined Gysin homomorphism
  \begin{equation}\label{E:refined-Gysin}
  \pr_j^{!}:  A_{n-1}(\overline\Sh_{1,n-1})\longrightarrow A_0(\overline Y_{j})=\bigoplus_{z\in \Sh_{0,n}(\Fpb)}A_0(Y_{j,z}),
  \end{equation}
whose composition with the natural projection $A_0(\overline Y_{j})\ra A_0(Y_{j,z})$  is the refined Gysin map $(\pr_{j}|_{Y_{j,z}})^!$  defined in \cite[6.2]{fulton} for regular immersions.
Let $X\subseteq\overline  \Sh_{1,n-1}$ be a closed subvariety of dimension $n-1$. Consider the Cartesian diagram
\[
\xymatrix{
\overline Y_{j}\times_{\overline \Sh_{1,n-1}}X\ar[r]^-{g_{X}}\ar[d]_{g_j} & X\ar[d]\\
\overline Y_{j} \ar[r]^{\pr_{j}} &\overline \Sh_{1,n-1}.
}
\]
 Assume that the restriction of $g_X$ to each $Y_{j,z}\times_{\overline \Sh_{1,n-1}}X$ with $z\in \Sh_{0,n}(\Fpb)$ is a regular closed immersion as well.
 Then $\pr^!_{j}([X])\in A_0(\overline Y_{j})$ can be described as follows.
Put $N_{Y_{j,z}}(\overline \Sh_{1,n-1}):=\pr_j^*(T_{\overline \Sh_{1,n-1}})/T_{Y_{j,z}}$, and  we define $N_{Y_{j,z}\times_{\overline \Sh_{1,n-1}} X}(X)$ in a similar way.
We define the \emph{excess vector bundle} as
 \[
 \calE(Y_{j,z}, X):= g_j^*N_{Y_{j,z}}(\overline \Sh_{1,n-1})/ N_{Y_{j,z}\times_{\overline \Sh_{1,n-1}}X}(X).
 \]
 This is a vector bundle on $Y_{j,z}\times_{\overline \Sh_{1,n-1}}X$. Let $r$ be its rank function, which is equal to  the dimension of $\overline Y_{j}\times_{\overline \Sh_{1,n-1}}X$ on each of its connected component.
  Then the excess intersection formula \cite[6.3]{fulton} shows that
 \begin{equation}\label{E:excess-intersection}
 \pr_j^{!}([X])=\sum_{z\in \Sh_{0,n}(\Fpb)}
 \int_{Y_{j,z}\times_{\overline \Sh_{1,n-1}}X} c_{r}\big(\calE(Y_{j,z},X)\big),
 \end{equation}
where $c_{r}(\calE(Y_{j,z}, X))$ is the top Chern class of $\calE(Y_{j,z},X)$ over $Y_{j,z}\times_{\overline \Sh_{1,n-1}}X$. 
The integration should be understood as the sum over all connected components of $Y_{j,z}\times_{\overline \Sh_{1,n-1}}X$ of the degrees of $c_r(\calE(Y_{j,z}, X))$.

\begin{proposition}\label{P:intersection-cycles}
Let $i,j$ be integers with  $1\leq i\leq j\leq n$ and $z,z'\in \Sh_{0,n}(\Fpb)$.
\begin{enumerate}
\item
The subvarieties $Y_{i,z}$ and $Y_{j,z'}$ of $\overline \Sh_{1,n-1}$ have non-empty intersection  if and only if there exists an integer $\delta$ with $0\leq \delta\leq \min\{ n-j,i-1\}$ such that $z'\in R_\gothp^{(j-i+\delta, n-\delta)}S_\gothp^{-1}(z)$, or equivalently $z\in R_{\gothp}^{(\delta, n+i-j-\delta)}S_\gothp^{-1}(z')$, where $R_\gothp^{(a,b)}$ and $S_\gothp$ are the Hecke operators defined in Subsection~\ref{S:hecke-operator}.

\item If the condition in \emph{(1)} is satisfied for some $\delta$, then $Y_{i,z}\times_{\overline \Sh_{1,n-1}}Y_{j,z'}$ is isomorphic  to the variety $\overline Z^{\langle n+i-j-2\delta\rangle }_{i-\delta}$ defined in Subsection~\ref{S:grassmannian}.
Moreover,  the excess vector bundles $\calE(Y_{i,z},Y_{j,z'})$ and $\calE(Y_{j,z'},Y_{i,z})$ are both isomorphic to the vector bundle \eqref{E:excess-bundle} on $\overline Z_{i-\delta}^{\langle n+i-j-2\delta\rangle}$.

\end{enumerate}

\end{proposition}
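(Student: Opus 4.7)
The plan is to analyze the fiber product at the level of Dieudonn\'e modules. By Proposition~\ref{P:isom-Yj}, a $k$-point of $Y_{i,z}\times_{\Sh_{1,n-1}}Y_{j,z'}$ over an algebraically closed field $k$ is the datum of a triple $(A,\phi_z,\phi_{z'})$ where $\phi_z\colon B_z\to A$ realizes $A$ as a point of $Y_{i,z}$ and $\phi_{z'}\colon B_{z'}\to A$ realizes $A$ as a point of $Y_{j,z'}$. In Dieudonn\'e terms this is equivalent to a pair of Dieudonn\'e submodules $\tcE^z\subseteq \tcD(B_z)^\circ$ and $\tcE^{z'}\subseteq \tcD(B_{z'})^\circ$, each containing $p\tcD$, with prescribed lengths $(\ell_1(z),\ell_2(z))=(n-i+1,n-i)$ and $(\ell_1(z'),\ell_2(z'))=(n-j+1,n-j)$, together with an isomorphism $\tcE^z\cong\tcE^{z'}$ as Dieudonn\'e modules (both identified with $\tcD(A)^\circ$).

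For part (1), the composition $f=\phi_{z'}^{-1}\circ \phi_z\colon B_z\dashrightarrow B_{z'}$ is a quasi-isogeny whose induced map on isocrystals identifies $\LL_z\otimes\Q_{p^2}$ with $\LL_{z'}\otimes\Q_{p^2}$; the relative position of $\LL_z$ and $\LL_{z'}$ in this common $n$-dimensional $\Q_{p^2}$-space determines the Hecke double coset connecting $z$ to $z'$. I would enumerate the elementary-divisor types compatible with the prescribed lengths on $\tcE^z$ and $\tcE^{z'}$ (essentially, since $\tcE^z$ and $\tcE^{z'}$ are forced to contain a common sub-Dieudonn\'e module, the divisor vector can only have $p$'s, $1$'s, and $p^{-1}$'s), and check that these correspond exactly to $R_\gothp^{(j-i+\delta,n-\delta)}S_\gothp^{-1}$ with $0\leq\delta\leq\min\{n-j,i-1\}$, the bounds coming from non-negativity of the resulting multiplicities $j-i+\delta$, $n-j+i-2\delta$, $\delta$.

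For part (2), once $\delta$ is fixed, choose a $\Z_{p^2}$-basis of $\LL_z[1/p]$ adapted to the elementary-divisor decomposition. Then $\LL_z$ and $\LL_{z'}$ share a distinguished sublattice of $\Z_{p^2}$-rank $n+i-j-2\delta$ which I call the \emph{common piece}; on the complementary coordinates the condition that $(H_1^z,H_2^z)$ (describing the $Y_i$-point for $z$) and $(H_1^{z'},H_2^{z'})$ (for $z'$) are each obtained from the same $\tcD(A)^\circ$ forces these subbundles to be rigidly prescribed. The residual freedom is the choice of two sub-bundles $L_1,L_2$ of rank $i-\delta$ and $i-1-\delta$ inside a common $n+i-j-2\delta$-dimensional space satisfying $L_2\subseteq L_1^{(p)}$ and $L_2^{(p)}\subseteq L_1$, which is precisely the moduli problem defining $Z_{i-\delta}^{\langle n+i-j-2\delta\rangle}$. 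To match the excess bundles, I would apply Proposition~\ref{P:normal-bundle}(1) to express $N_{Y_{i,z}}(\Sh_{1,n-1})$ and $N_{Y_{j,z'}}(\Sh_{1,n-1})$ in terms of $\Lie^\circ_{\calA^\vee,1},\Lie^\circ_{\calA,2}$ and the images/cokernels of $\phi_{z,*}$ and $\phi_{z',*}$; on the intersection these reduce, via the identifications in \eqref{Equ:H-omega} and the explicit basis, to the two tensor summands of $\calE_{i-\delta}$ in \eqref{E:excess-bundle}, with the two computations $\calE(Y_{i,z},Y_{j,z'})$ and $\calE(Y_{j,z'},Y_{i,z})$ giving the same answer by symmetry.

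The main obstacle is in part (2): verifying explicitly that the apparently complicated data of two compatible quadruples $(H_1^z,H_2^z,H_1^{z'},H_2^{z'})$ on two different abelian varieties does reduce to the single $Z$-moduli problem, and that the Frobenius-twist pattern ($L_2\subseteq L_1^{(p)}$ versus $L_2^{(p)}\subseteq L_1$) matches correctly after untangling how $F$ and $V$ behave on the common piece. The combinatorics are $\delta$-dependent, but the final answer must depend only on the pair $(n+i-j-2\delta,\,i-\delta)$; confirming this uniformity, and pinning down the vector bundle $\calE_{i-\delta}$ on the nose (rather than up to a Frobenius twist), is the step requiring the most care.
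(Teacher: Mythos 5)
Your skeleton is the same as the paper's: both directions of (1) are read off from the relative position of the Dieudonn\'e lattices of $\cB_z$ and $\cB_{z'}$ inside $\tcD(A)^\circ$, the fiber product in (2) is governed by a common subquotient of dimension $n+i-j-2\delta$, and the excess bundles come from Proposition~\ref{P:normal-bundle}(1) together with \eqref{Equ:H-omega}. But there are two genuine gaps. First, in part (1) the range $0\leq \delta\leq \min\{i-1,n-j\}$ does \emph{not} follow from non-negativity of the multiplicities $j-i+\delta$, $n+i-j-2\delta$, $\delta$: that only gives $0\leq\delta\leq (n+i-j)/2$, which is strictly weaker in general (for $i=j=1$ it would allow $\delta$ up to $n/2$, whereas the proposition forces $\delta=0$, i.e.\ $z'=z$). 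Since (1) is an equivalence, proving the wrong range breaks the ``only if'' direction. With $\delta:=\dim_{\Fpb}\big(\tcD(\cB_z)^\circ_1+\tcD(\cB_{z'})^\circ_1\big)\big/\tcD(\cB_z)^\circ_1=\dim_{\Fpb}\tcD(\cB_{z'})^\circ_1\big/\big(\tcD(\cB_z)^\circ_1\cap\tcD(\cB_{z'})^\circ_1\big)$, the bound $\delta\leq i-1$ comes from $\tcD(\cB_z)^\circ_1+\tcD(\cB_{z'})^\circ_1\subseteq \tcD(A)^\circ_1$ and $\dim\tcD(A)^\circ_1/\tcD(\cB_z)^\circ_1=i-1$; the bound $\delta\leq n-j$ needs the extra input of Lemma~\ref{L:omega-F}, namely that $\omega^\circ_{A^\vee,1}$ lies in both $\im(\phi_{z,*,1})$ and $\im(\phi_{z',*,1})$, so that $\delta\leq \dim\tcD(\cB_{z'})^\circ_1/\tilde\omega^\circ_{A^\vee,1}=n-j$ (equivalently, one can run the same length count on the $q_2$-components, which are the $F$-images of the $q_1$-lattices and have colengths $i$ and $j$). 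Your enumeration of elementary divisors in $\{p,1,p^{-1}\}$ is fine, but without this containment (or the second component) it proves a weaker statement.

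Second, in part (2) what is needed (and what the intersection-number computation in Section~6 uses) is an isomorphism of \emph{schemes} with $Z^{\langle n+i-j-2\delta\rangle}_{i-\delta}$, not a bijection on $\Fpb$-points plus an identification of the ``residual freedom''. Concretely, from a family $(L_1,L_2)$ over an arbitrary locally noetherian base $S$ one produces, via the moduli description of Proposition~\ref{P:isom-Yj}, an $S$-point of $Y_{i,z}$ and an $S$-point of $Y_{j,z'}$; you then owe the compatibility claim that the two resulting abelian schemes over $S$ define the \emph{same} $S$-point of $\Sh_{1,n-1}$. Over a field this is the Dieudonn\'e computation you sketch, but over a general (possibly non-reduced) base the paper reduces it to geometric fibers using Lemma~\ref{L:isog-abelian-scheme} (a $p$-quasi-isogeny of abelian schemes over an integral base that is an isomorphism on every geometric fiber is an isomorphism), applied in the universal case. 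One then still checks that the resulting morphism to $Y_{i,z}\times_{\Sh_{1,n-1}}Y_{j,z'}$ is bijective on closed points and on tangent spaces; it is precisely this tangent-space identification, combined with Proposition~\ref{P:normal-bundle}(1), that pins the excess bundles down to \eqref{E:excess-bundle} on the nose rather than up to twist. Finally, the Frobenius-twist matching you flag is handled by choosing, via an Artin--Schreier (Lang's theorem) argument, bases of the common pieces $M_1,M_2$ on which the linearizations of $F$ and $V^{-1}$ are the identity; with that normalization the conditions $L_2\subseteq F(L_1^{(p)})$ and $V^{-1}(L_2^{(p)})\subseteq L_1$ become literally those defining $Z^{\langle n+i-j-2\delta\rangle}_{i-\delta}$, and uniformity in $\delta$ is automatic.
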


\begin{proof}
Let $(\cB_{z},\lambda_{z},\eta_{z})$ and  $(\cB_{z'},\lambda_{z'},\eta_{z'})$ be  the universal polarized abelian varieties on $\overline \Sh_{0,n}$ at $z$ and $z'$, respectively.
Then $Y_{i,z}\times_{\overline \Sh_{1,n-1}}Y_{j,z'}$ is the moduli space of  tuples $(A,\lambda, \eta, \phi, \phi')$  where $\phi:\cB_{z}\ra A$ and $\phi': \cB_{z'}\ra A$ are isogenies
 such that $(A,\lambda, \eta, \cB_{z},\lambda_{z},\eta_{z},\phi)$ and $(A,\lambda,\eta,\cB_{z'},\eta_{z'},\phi')$ are  points of $Y_{i,z}$ and $Y_{j,z'}$ respectively.

  Assume first that $Y_{i,z}\times_{\overline \Sh_{1,n-1}}Y_{j,z'}$ is non-empty, and let $(A,\lambda, \eta,\phi,\phi')$ be an $\Fpb$-valued point of it.
  Denote by $\tilde\omega^{\circ}_{A^\vee,k}\subseteq \tcD(A)^{\circ}_{k}$ for $k=1,2$ the inverse image of $\omega^{\circ}_{A^\vee/\overline \FF_p,k}\subseteq H^{\dR}_1(A/\Fpb)^{\circ}_k\cong \tilde\calD(A)^{\circ}_{k}/ p \tilde \calD(A)^\circ_k$.
 We identify $\tcD(\cB_{z})^{\circ}_{k}$ and $\tcD(\cB_{z'})^{\circ}_{k}$ with their images in $\tcD(A)^{\circ}_{k}$  via $\phi_{z,*,k}$ and $\phi_{z',*,k}$. Then we have a diagram of inclusions of $W(\Fpb)$-modules:
 \begin{equation}\label{Diag:intersection-ab}
 \xymatrix{
 &&&\tcD(\cB_{z})^{\circ}_{1}\ar@{^(->}[rd]^{\delta}\\
 p\tcD(A)^{\circ}_{1}\ar@{^(->}[r]^1 &\tilde\omega^\circ_{A^\vee, 1}\ar@{^(->}[r]^-{n-j-\delta}
 &\tcD(\cB_{z})^{\circ}_1\cap \tcD(\cB_{z'})^{\circ}_{1}\ar@{^(->}[ru]^{j-i+\delta} \ar@{^(->}[rd]^{\delta} &&
 \tcD(\cB_{z})^{\circ}_1+\tcD(\cB_{z'})^{\circ}_1 \ar@{^(->}[r]^-{i-\delta-1} & \tcD(A)^{\circ}_{1}.\\
 &&&\tcD(\cB_{z'})^{\circ}_{1}\ar@{^(->}[ur]^{j-i+\delta}
}
 \end{equation}
Here the numbers on the arrows indicate the $\overline \FF_p$-dimensions of the cokernel of the corresponding inclusions, which we shall compute below.
 By the definition of $Y_{i}$ and $Y_j$, we have
 \[
 \dim_{\Fpb}\big(\tcD(A)^{\circ}_1/\tcD(\cB_{z})^{\circ}_{1}\big)=\dim_{\Fpb} \coker(\phi_{*,1})=i-1,
 \]
  and similarly, $\dim_{\Fpb}\big(\tcD(\cB_{z'})^{\circ}_1/\tilde\omega_{A^\vee,1}^{\circ}\big)=n-j$. Therefore, if we put
  \[
  \delta=\dim_{\Fpb}\big(\tcD(\cB_{z})^{\circ}_1+\tcD(\cB_{z'})^{\circ}_1\big)/\tcD(\cB_z)^{\circ}_1=\dim_{\Fpb}\tcD(\cB_{z'})^{\circ}_1/\big(\tcD(\cB_{z})^{\circ}_1\cap \tcD(\cB_{z'})^{\circ}_1\big),
  \]
   we have $0\leq \delta\leq \min\{i-1,n-j\}$.
    Moreover, the quasi-isogeny $\phi_{z,z'}=\phi^{-1}\circ\phi':\cB_{z'}\ra \cB_{z}$ makes $\cB_{z'}$ an element of $\Isog(z)$.
    We identify  $\LL_{z'}$ defined in \eqref{Equ:lattice} with a $\Z_{p^2}$-lattice of $\LL_{z}[1/p]$ via $\phi_{z',z,*,1}$. Then
    \[
    \dim_{\F_{p^2}}(\LL_{z}\cap \LL_{z'})/p\LL_{z}=\dim_{\Fpb}(\tcD(\cB_{z})^{\circ}_1\cap \tcD(\cB_{z'})^{\circ}_1)/p\tcD(\cB_{z})^{\circ}_1=n+i-j-\delta.
    \]
  Take a $\Z_{p^2}$-basis  $(e_1,\dots, e_n)$ of $\LL_{z}$ such that the image of $(e_{j-i+\delta+1},\dots, e_{n})$ in $\LL_{z}/p\LL_z$ form a basis of $(\LL_{z}\cap\LL_{z'})/p\LL_{z}$ and such that $p^{-1}e_{n-\delta+1}, \dots, p^{-1}e_n$ form a basis of $(\LL_z + \LL_{z'}) / \LL_z$.  Then
  \begin{equation}\label{E:basis}
  (pe_{1},\dots, pe_{j-i+\delta},e_{j-i+\delta+1},\dots, e_{n-\delta},p^{-1}e_{n-\delta+1},\dots, p^{-1}e_{n})
  \end{equation}
is a basis of $\LL_{z'}$, that is $z'\in R_{\gothp}^{(j-i+\delta, n-\delta)}S_\gothp^{-1}(z)$ according to the convention of Subsection~\ref{S:hecke-operator}.

Conversely, assume that there exists $\delta$ with $1\leq \delta \leq \min\{i-1,n-j\}$ such that the point $z'\in R_\gothp^{(j-i+\delta, n-\delta)}S_\gothp^{-1}(z)$.
We have to prove statement (2),  then the non-emptiness of $Y_{i,z}\times_{\overline \Sh_{1,n-1}}Y_{j,z'}$ will follow automatically.
Let $\phi_{z',z}: \cB_{z'}\ra \cB_{z}$ be the unique quasi-isogeny which identifies $\LL_{z'}$ with a $\Z_{p^2}$-lattice of $\LL_{z}[1/p]$.
  By the definition of $R_\gothp^{(j-i+\delta, n-\delta)}S_\gothp^{-1}$,   there exists a basis $e_1, \dots, e_n$ of  $\LL_{z}$ such that \eqref{E:basis} is a basis of $\LL_{z'}$.
One checks easily that $p(\LL_{z}+\LL_{z'})\subseteq \LL_{z}\cap\LL_{z'}$.
  We put
  $$
  M_k= \big( \tcD(\cB_z)^{\circ}_k\cap \tcD(\cB_{z'})^{\circ}_k\big) \big/p\big(\tcD(\cB_z)^{\circ}_k+\tcD(\cB_{z'})^{\circ}_k\big)
  $$
  for $k=1,2$. Then one has
\[
\dim_{\Fpb}(M_k)=\dim_{\F_{p^2}}(\LL_{z}\cap\LL_{z'})\big/p(\LL_{z}+\LL_{z'})=n+i-j-2\delta.
\]
The Frobenius and Verschiebung  on $\tcD(\cB_z)$ induce two bijective Frobenius semi-linear maps    $F: M_1\ra M_2$ and $V^{-1}: M_2\ra M_1$. We denote their linearizations by the same notation if no confusions arise.
Let $Z_{\delta}(M_{\bullet})$ be the moduli space which attaches to each locally noetherian $\Fpb$-scheme $S$ the set of isomorphism classes of pairs  $(L_1,L_2)$, where
 $L_1\subseteq M_1\otimes_{\Fpb}\cO_S$ and $L_2\subseteq M_2\otimes_{\Fpb}\cO_S$  are subbundles of rank $i-\delta$ and $i-1-\delta$ respectively such that
 \[
L_2\subseteq  F(L_1^{(p)}),\quad V^{-1}(L_2^{(p)})\subseteq L_1.
 \]
 Note that there exists a basis $(\varepsilon_{k,1}, \dots,\varepsilon_{k, n+i-j-2\delta})$ of $M_k$ for $k=1,2$ under which the matrices of
$F$ and $V^{-1}$ are both identity.
 Indeed, by solving a system of equations of Artin--Schreier type, one can take  a basis $(\varepsilon_{1,\ell})_{1\leq\ell\leq n+i-j-2\delta}$ for $M_1$ such that
\[
 V^{-1}(F(\varepsilon_{1,\ell}))=\varepsilon_{1,\ell}\quad \text{for all } 1\leq \ell\leq n+i-j-2\delta.
\]
We put $\varepsilon_{2,\ell}=F(\varepsilon_{1,\ell})$. Using these bases to identify both $M_1$ and $M_2$ with $\Fpb^{n+i-j-2\delta}$, it is clear that $Z_{\delta}(M_{\bullet})$ is isomorphic to the variety $\overline Z_{i-\delta}^{\langle n+i-j-2\delta\rangle}$ considered in Subsection~\ref{S:grassmannian}.

 We have to establish an isomorphism between $Z_{\delta}(M_{\bullet})$ and $Y_{i,z}\times_{\overline \Sh_{1,n-1}}Y_{j,z'}$.
 Let  $(L_1,L_2)$ be an $S$-point of $Z_{\delta}(M_{\bullet})$. Note that there is a natural surjection
 $$
 \big((\tcD(\cB_{z})^{\circ}_k\cap \tcD(\cB_{z'})^{\circ}_k)/p\tcD(\cB_{z})^{\circ}_k\big)\otimes_{\Fpb}\cO_{S}\ra M_k\otimes_{\Fpb}\cO_S.
 $$
We define $H_{z,k}$ for $k=1,2$ to be the inverse image of $L_k$ under this surjection.
Then $H_{z,k}$ can be naturally viewed as a subbundle of $\cD(\cB_{z})^{\circ}_k\otimes_{\Fpb}\cO_S$ of rank $i+1-k$, and we have  $H_{z,2}\subseteq F(H_{z,1}^{(p)})$ and $V^{-1}(H_{z,2}^{(p)})\subseteq H_{z,1}$ since the pair $(L_1,L_2)$ verifies similar properties.
 Therefore, $(L_1,L_2)\mapsto (\cB_{z,S},\lambda_{z,S},\eta_{z,S}, H_{z,1},H_{z,2})$ gives rise to a well-defined map $\varphi_{i,z}':Z_{\delta}(M_{\bullet})\ra Y'_{i,z}$, where $(\cB_{z,S},\lambda_{z,S},\eta_{z,S})$ is the base change of $(\cB_{z},\lambda_{z},\eta_{z})$ to $S$.
  Similarly, we have a morphism $\varphi'_{j,z'}: Z_{\delta}(M_{\bullet})\ra Y'_{j,z'}$ defined by $(L_1,L_2)\mapsto (\cB_{z',S},\lambda_{z',S},\eta_{z',S}, H_{z',1}, H_{z',2})$, where $H_{z',k}$ is the inverse image of $L_k$ under the natural surjection:
  \[
  \big(\big(\tcD(\cB_{z})^{\circ}_k\cap \tcD(\cB_{z'})^{\circ}_k\big)/p\tcD(\cB_{z'})^{\circ}_k\big)\otimes_{\Fpb}\cO_{S}\ra M_k\otimes_{\Fpb}\cO_S.
  \]
  By  Proposition~\ref{P:isom-Yj}, we get two morphisms
 \[
 \varphi_{i,z}\colon  Z_{\delta}(M_{\bullet})\ra Y_{i,z}, \quad\quad  \varphi_{j,z'}\colon Z_{\delta}(M_{\bullet})\ra Y_{j,z'}.
 \]
We claim that $\pr_{i}\circ\varphi_{i,z}=\pr_{j}\circ\varphi_{j,z}$, so that $(\varphi_{i,z},\varphi_{j,z'})$ defines a map
  \[
  \varphi\colon Z_{\delta}(M_{\bullet})\longrightarrow Y_{i,z}\times_{\overline \Sh_{1,n-1}} Y_{j,z'}.
  \]

Since  $Y_{i,z}\times_{\overline \Sh_{1,n-1}} Y_{j,z'}$ is separated, the 
locus where $\pr_{i}\circ\varphi_{i,z}$ coincides with $\pr_{j}\circ\varphi_{j,z}$ is  a closed subscheme of $Z_{\delta}(M_{\bullet})$. As $Z_{\delta}(M_\bullet)$ is reduced,  
it is enough to show $\pr_i(\varphi_{i,z}(x)) = \pr_j(\varphi_{j,z}(x))$ for each closed geometric point $x = (L_1, L_2) \in Z_\delta(M_\bullet)(\overline \FF_p)$. Let $(A,\lambda,\eta,\cB_{z},\lambda_{z},\eta_{z},\phi)$ and $(A',\lambda',\eta',\cB_{z'},\lambda_{z'},\eta'_{z'}, \phi')$ be respectively the image of $(L_1,L_2)$ under $\varphi_{i,z}$ and $\varphi_{j,z'}$.
To prove the claim, we have to show that there is an isomorphism $(A,\lambda,\eta)\cong (A',\lambda',\eta')$ as objects of $\overline \Sh_{1,n-1}$.
We  identify $\tcD(\cB_{z'})$, $\tcD(A)$, $\tcD(A')$ with $W(\Fpb)$-lattices of $\tcD(\cB_{z})[1/p]$ via the quasi-isogenies $\phi_{z',z}: \cB_{z'}\ra \cB_{z}$, $\phi^{-1}:A\ra \cB_{z}$ and $\phi_{z',z}^{-1}\circ\phi': A'\ra \cB_{z}$.
Then by the construction of $A$ (cf. the proof of Proposition~\ref{P:isom-Yj}), $\tcD(A)^{\circ}_1$ and $\tilde \omega^{\circ}_{A^\vee,1}$ fit into the diagram \eqref{Diag:intersection-ab} such that there is a canonical isomorphism
\begin{equation}\label{E:relation-L_1}
L_1\cong \tilde\omega^{\circ}_{A^\vee,1}\big/p\big(\tcD(\cB_{z})^{\circ}_1+\tcD(\cB_{z'})^{\circ}_{1}\big)\subseteq \big(\tcD(\cB_{z})_1^{\circ}\cap \tcD(\cB_{z'})_1^{\circ}\big)\big/p\big(\tcD(\cB_{z})^{\circ}_1+\tcD(\cB_{z'})^{\circ}_{1}\big)=M_1.
\end{equation}
Similarly, we have
\begin{equation}\label{E:relation-L_2}
L_2\cong p\tilde\omega_{A^\vee,2}^{\circ}/p\big(\tcD(\cB_{z})^{\circ}_2+\tcD(\cB_{z'})^{\circ}_{2}\big)\subseteq \big(\tcD(\cB_{z})_2^{\circ}\cap \tcD(\cB_{z'})_2^{\circ}\big)\big/p\big(\tcD(\cB_{z})^{\circ}_2+\tcD(\cB_{z'})^{\circ}_{2}\big)=M_2.
\end{equation}
 It is easy to see that such relations determine $\tcD(A)$ uniquely from $(L_1,L_2)$. But the same argument shows that the same relations are satisfied with $A$ replaced by $A'$. Hence, we see that the quasi-isogeny $f$ induces an isomorphism between the Dieudonn\'e modules of $A$ and $A'$. As $f$ is a $p$-quasi-isogeny, this implies immediately that $f$ is an isomorphism of abelian varieties, proving the claim.

 It remains  to prove that $\varphi\colon Z_{\delta}(M_{\bullet})\xra{\sim}Y_{i,z}\times_{\overline \Sh_{1,n-1}}Y_{j,z'}$ is an isomorphism.
 It suffices to show that $\varphi$ induces  bijections on closed points and tangents spaces. The argument is similar to the proof of Proposition~\ref{P:isom-Yj}. Indeed, given a closed point $x=(A,\lambda, \eta, \phi,\phi')$ of $Y_{i,z}\times_{\overline \Sh_{1,n-1}}Y_{j,z'}$,  one can construct a unique point $y=(L_1,L_2)$ of $Z_{\delta}(M_{\bullet})$ with $\varphi(y)=x$ by the relations \eqref{E:relation-L_1} and \eqref{E:relation-L_2}. It follows immediately that $\varphi$ induces a bijection on closed points.
 Let $x$ and $y$ be as above.
  By the same argument as in Proposition~\ref{P:Yj-prime}, the tangent space of $Z_{\delta}(M_{\bullet})$ at $y$ is given by
 \[
 T_{Z_{\delta}(M_{\bullet}),y}\cong \big(L_1/V^{-1}(L_2^{(p)})\big)^*\otimes \big(M_1/L_1\big)\oplus
 L_2^*\otimes F(L_1^{(p)})/L_2.
 \]
 On the other hand, using Grothendieck--Messing deformation theory, one sees easily that the tangent space of $Y_{i,z}\times_{\overline \Sh_{1,n-1}}Y_{j,z'}$ at $x$ is given by
 \begin{align*}
 T_{Y_{i,z}\times_{\overline \Sh_{1,n-1}}Y_{j,z'},x}&\cong\Hom_{\Fpb}\Big(\omega_{A^\vee,1}^{\circ}, \big(\tcD(\cB_z)^{\circ}_{1}\cap \tcD(\cB_{z'})^{\circ}_1\big)\big/\tilde\omega^{\circ}_{A^\vee,1}\Big)\\
 \quad &\oplus \Hom_{\Fpb}\Big(\tilde\omega_{A^\vee,2}^{\circ}\big/\big(\tcD(\cB_{z})^{\circ}_2+\tcD(\cB_{z'})^{\circ}_2\big), \tcD(A)^{\circ}_2/\tilde\omega^{\circ}_{A^\vee,2}\Big).
 \end{align*}
From  \eqref{E:relation-L_1} and \eqref{E:relation-L_2}, we see easily that
\begin{gather*}
\omega^{\circ}_{A^\vee,1}\cong L_1/V^{-1}(L_2^{(p)}),\quad  \tcD(\cB_{z'})^{\circ}_1)/\tilde\omega^{\circ}_{A^\vee,1}\cong M_1/L_1,\\
\tilde\omega_{A^\vee,2}^{\circ}\big/\big(\tcD(\cB_{z})^{\circ}_2+\tcD(\cB_{z'})^{\circ}_2\big)\cong L_2, \quad \tcD(A)^{\circ}_2/\tilde\omega^{\circ}_{A^\vee,2}\cong F(L_1^{(p)})/L_2.
\end{gather*}
It follows that $\varphi$ induces a bijection between $T_{Z_{\delta}(M_{\bullet}),y}$ and $ T_{Y_{i,z}\times_{\overline \Sh_{1,n-1}}Y_{j,z'},x}$. This finishes the proof of Proposition~\ref{P:intersection-cycles}.
\end{proof}

\subsection{Applications to cohomology}
Recall that we have a morphism $\JL_j$ \eqref{E:jacquet-langlands} for each $j =1, \dots, n$.
 We consider another map in the opposite direction:
\[
\nu_{j}\colon H_\et^{2(n-1)}(\overline \Sh_{1,n-1},\Ql(n-1))\xra{\pr_j^*} H_\et^{2(n-1)}(\overline Y_{j},\Ql)\xra{\sim} H_\et^0(\overline \Sh_{0,n},\Ql),
\]
where the second isomorphism is induced by the trace map $\mathrm{Tr}_{\pr'_j}: R^{2(n-1)}\pr'_{j,*}\Ql(n-1)\xra{\sim} \Ql$.
For $1\leq i,j\leq n$, we define
\[
m_{i,j}=\nu_{j}\circ \JL_{i}: H_\et^{0}(\overline \Sh_{0,n},\Ql)\xra{\JL_i}H_\et^{2(n-1)}\big(\overline \Sh_{1,n-1},\Ql(n-1)\big)\xra{\nu_{j}} H_\et^{0}(\overline \Sh_{0,n},\Ql).
\]
Putting all the morphisms $\JL_i$ and $\nu_{j}$ together, we get a sequence of  morphisms:
\begin{equation}\label{E:sequence-morphisms}
\bigoplus_{i=1}^{n} H_\et^0(\overline \Sh_{0,n},\Ql)\xra{\JL} H_\et^{2(n-1)}\big(\overline \Sh_{1,n-1},\Ql(n-1)\big)\xra{\nu=(\nu_1,\dots, \nu_n)} \bigoplus_{j=1}^n H_\et^0(\overline \Sh_{0,n},\Ql).
\end{equation}
We see that the composed morphism above is given by the  matrix $M=(m_{i,j})_{1\leq i,j\leq n}$, and we call it the \emph{intersection matrix} of cycles $Y_j$ on $\Sh_{1,n-1}$.  All these morphisms are equivariant under the natural action of the Hecke algebra $\scrH(K^p,\Ql)$. We will describe the intersection matrix in terms of the Hecke action of $\Qlb[K_{\gothp}\backslash \GL_n(E_{\gothp})/K_\gothp]$ on $H_\et^0(\overline \Sh_{0,n},\Ql)$.

  The group $H^0_\et(\overline \Sh_{0,n},\Ql)$ is the space of functions on $\Sh_{0,n}(\Fpb)$ with values in $\Ql$. For $z\in \Sh_{0,n}(\Fpb)$, let $e_{z}$ denote the characteristic function of $z$. Then the image of $z$ under $K_\gothp\gamma K_\gothp$ for $\gamma\in \GL_n(E_{\gothp})$ is
\[
[K_\gothp\gamma K_\gothp]_*(e_z)=\sum_{z'\in T_\gothp(\gamma)(z)} e_{z'},
\]
where $T_\gothp(\gamma)(z)$ means the set theoretic Hecke correspondence defined in Subsection~\ref{S:hecke-operator}.
In the sequel, we will use the same notation $T_\gothp(\gamma)$ to denote the  action of $[K_\gothp\gamma K_\gothp]$ on $H^0_\et(\overline \Sh_{0,n},\Ql)$. In particular, we have Hecke operators $T_\gothp^{(i)}$, $S_\gothp$, $R_\gothp^{(a,b)}$, \dots

\begin{proposition}\label{P:intersection-matrix}
For $1\leq i\leq j\leq n$, we have
\begin{align*}
m_{i,j}&=\sum_{\delta=0}^{\min\{i-1,n-j\}} N(n+i-j-2\delta,i-\delta) R_\gothp^{(j-i+\delta, n-\delta)} S_\gothp^{-1},\\
m_{j,i}&=\sum_{\delta=0}^{\min\{i-1,n-j\}} N(n+i-j-2\delta, i-\delta) R_\gothp^{(\delta, n+i-j-\delta)} S_\gothp^{-1},
\end{align*}
where $N(n+i-j-2\delta,i-\delta)$ are the fundamental intersection numbers defined by \eqref{D:fundamental-number}.
\end{proposition}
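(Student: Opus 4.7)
The plan is to interpret each matrix entry $m_{i,j}(e_z)$ as an intersection number via proper base change, then apply the refined Gysin formalism to compute it using the excess intersection formula of \cite[\S 6]{fulton}, reducing everything to the fundamental intersection numbers $N(\bullet,\bullet)$ of Section~\ref{S:fundamental-intersection-number}.

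First I will reduce the computation to a pointwise one. Writing $H_\et^0(\Sh_{0,n,\Fpb},\Qlb)=\bigoplus_{z}\Qlb\cdot e_z$, I note that $\pr_i^{\prime\,*}(e_z)$ is the characteristic function of $Y_{i,z}=\pr_i^{\prime\,-1}(z)$, so $\JL_i(e_z)$ is the cohomological cycle class $\cl^{n-1}([Y_{i,z}])\in H_\et^{2(n-1)}(\Sh_{1,n-1,\Fpb},\Qlb(n-1))$. Applying $\nu_j$ and using proper base change along $\pr'_j$, one obtains the scalar identification
\[
m_{i,j}(e_z)(z')=\int_{Y_{j,z'}}\pr_{j,z'}^{*}\bigl(\cl^{n-1}([Y_{i,z}])\bigr)=\deg\bigl(\pr_{j,z'}^{!}[Y_{i,z}]\bigr),
\]
where $\pr_{j,z'}:Y_{j,z'}\hookrightarrow\Sh_{1,n-1,\Fpb}$ is the regular closed immersion of Proposition~\ref{P:normal-bundle}(2), and $\pr_{j,z'}^{!}$ is the refined Gysin map of \eqref{E:refined-Gysin}.

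Next I invoke Proposition~\ref{P:intersection-cycles}. For $1\leq i\leq j\leq n$, the fiber product $Y_{i,z}\times_{\Sh_{1,n-1}}Y_{j,z'}$ is empty unless there exists a (necessarily unique) integer $\delta$ with $0\leq\delta\leq\min\{i-1,n-j\}$ such that $z'\in R_\gothp^{(j-i+\delta,\,n-\delta)}S_\gothp^{-1}(z)$, in which case the intersection is identified with $Z_{i-\delta}^{\langle n+i-j-2\delta\rangle}$ and both excess normal bundles agree with the vector bundle $\calE_{i-\delta}$ of \eqref{E:excess-bundle}. Since this intersection has the expected dimension $n+i-j-2\delta-1$ only when $i=j$ and $\delta=0$, the excess intersection formula \eqref{E:excess-intersection} yields
\[
\pr_{j,z'}^{!}[Y_{i,z}]=c_{n-1}(\calE_{i-\delta})\cap\bigl[Z_{i-\delta}^{\langle n+i-j-2\delta\rangle}\bigr]\in A_0\bigl(Y_{j,z'}\bigr),
\]
whose degree is exactly the fundamental intersection number $N(n+i-j-2\delta,\,i-\delta)$ by definition~\eqref{D:fundamental-number}. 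Summing over $z'$ and recalling that $R_\gothp^{(j-i+\delta,\,n-\delta)}S_\gothp^{-1}(e_z)=\sum_{z'}e_{z'}$, where $z'$ ranges over the appropriate Hecke orbit, yields the claimed formula for $m_{i,j}$.

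For $m_{j,i}$ (still assuming $i\leq j$), the same argument applies with the roles of $(i,z)$ and $(j,z')$ interchanged: the geometry of the intersection $Y_{j,z}\cap Y_{i,z'}$ and the excess bundle are unchanged (by the symmetric statement in Proposition~\ref{P:intersection-cycles}(2)), so the multiplicity at each $z'$ is still $N(n+i-j-2\delta,\,i-\delta)$, but the condition on $(z,z')$ is now expressed by the alternative form of Proposition~\ref{P:intersection-cycles}(1), namely $z'\in R_\gothp^{(\delta,\,n+i-j-\delta)}S_\gothp^{-1}(z)$. This gives the second formula. The main obstacle is essentially bureaucratic: carefully tracking the two equivalent descriptions of the Hecke orbit in Proposition~\ref{P:intersection-cycles}(1) and verifying that the refined Gysin map on Chow groups represents the cohomological composition $\nu_j\circ\JL_i$ (which is standard from the compatibility of cycle classes with proper pushforward and Gysin maps for regular immersions); no new geometric input beyond Proposition~\ref{P:intersection-cycles} and Theorem~\ref{T:intersection} is needed.
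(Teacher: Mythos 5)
Your proposal is correct and follows essentially the same route as the paper's own proof: reduce $m_{i,j}(e_z)$ to pointwise degrees of refined Gysin classes via the compatibility of cycle class maps with $\pr_{i,*}$ and $\pr_j^!$, then apply Proposition~\ref{P:intersection-cycles} together with the excess intersection formula \eqref{E:excess-intersection}, identify each local contribution with $N(n+i-j-2\delta,i-\delta)$, and use the two equivalent Hecke-orbit descriptions for $m_{j,i}$. The only slip (besides the garbled aside about expected dimension) is writing $c_{n-1}(\calE_{i-\delta})$: the excess bundle on $Z_{i-\delta}^{\langle n+i-j-2\delta\rangle}$ has rank $n+i-j-2\delta-1$, so the correct class is its top Chern class $c_{n+i-j-2\delta-1}(\calE_{i-\delta})$, which is exactly what the definition \eqref{D:fundamental-number} of $N(n+i-j-2\delta,i-\delta)$ integrates, so your conclusion is unaffected.
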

\begin{proof}
 We have a commutative diagram:
\begin{equation}\label{E:diag-cycles}
\xymatrix@C=40pt{
A_{n-1}(\overline Y_{i})\ar[r]^-{\pr_{i,*}}\ar[d]^{\cl}& A_{n-1}(\overline \Sh_{1,n-1})\ar[r]^-{\pr_{j}^!}\ar[d]^{\cl} &A_{0}(\overline Y_{j})\ar[d]^{\cl}\\
H_\et^0(\overline Y_{i},\Ql)\ar[r]^-{\mathrm{Gys}_{\pr_i}} &H_\et^{2(n-1)}\big(\overline \Sh_{1,n-1},\Ql(n-1)\big) \ar[r]^-{\pr_j^*} & H_\et^{2(n-1)}(\overline Y_{j},\Ql).
}
\end{equation}
Here, the vertical arrows are cycle class maps, and $\pr_j^!$ is the refined Gysin map defined in \eqref{E:refined-Gysin}.
 For $z\in \Sh_{0,n}(\Fpb)$, the image of $e_z$ under $m_{i,j}$ is given by
\begin{align*}
m_{i,j}(e_{z})&=\mathrm{Tr}_{\pr'_j}\pr^*_j\Gys_{\pr_i}\cl([Y_{i,z}])=\mathrm{Tr}_{\pr'_j}\big(\cl(\pr_j^!\pr_{i,*}[Y_{i,z}])\big)\\
&=\mathrm{Tr}_{\pr'_j}\Big(\sum_{z'\in \Sh_{0,n}(\Fpb)}\cl\Big( c_{r(z,z')}\big(\calE(Y_{j,z'},Y_{i,z})\big)\Big)\cdot \cl\big(Y_{j,z'}\times_{\overline \Sh_{1,n-1}}Y_{i,z}\big)\Big)\\
&=\sum_{z'\in \Sh_{0,n}(\Fpb)}\Big(\int_{Y_{j,z'}\times_{\Sh_{1,n-1}}Y_{i,z}}c_{r(z,z')}\big(\calE(Y_{j,z'},Y_{i,z})\big)\Big)e_{z'},
\end{align*}
where $r(z,z')$ is the rank of $\calE(Y_{j,z'},Y_{i,z})$, and we used \eqref{E:excess-intersection} in the second step.   Indeed, Proposition~\ref{P:intersection-cycles}(2) says that the schematic intersection $Y_{i,z}\times_{\overline \Sh_{1,n-1}}Y_{j,z'}$ is smooth, so  the closed immersion $Y_{i,z}\times_{\overline \Sh_{1,n-1}}Y_{j,z'}\hra Y_{j,z'}$ is a regular immersion and the assumptions for \eqref{E:excess-intersection} are thus satisfied here. 

By Proposition~\ref{P:intersection-cycles}(1),  $e_{z'}$ has a non-zero contribution to the summation above if and only if there exists an integer $\delta$ with $0\leq \delta\leq \min\{i-1,n-j\}$ such that $z'\in R_\gothp^{(j-i+\delta,n-\delta)}S_\gothp^{-1}(z)$. 
 In that case, Proposition~\ref{P:intersection-cycles}(2) implies that  the coefficient of $e_{z'}$ is nothing but the fundamental intersection number  $N(n+i-j-2\delta, i-\delta)$ defined in ~\eqref{D:fundamental-number}. 
The formula for $m_{i,j}$ now follows immediately. The formula for $m_{j,i}$ is proved in the same manner.
\end{proof}

If we express $m_{i,j}$ in terms of the elementary Hecke operators $T_\gothp^{(k)}$, we get the following.
\begin{theorem}\label{Th-Conj}
Put $d(n,k)=(2k-1)n-2k(k-1)-1$ for integers $1\leq k\leq n$. Then, for $1\leq i\leq j\leq n$, we have
\begin{align*}
m_{i,j}&= \sum_{\delta=0}^{\min\{i-1,n-j\}}(-1)^{n+1+i-j} (n+i-j-2\delta) p^{d(n+i-j-2\delta,i-\delta)} T_\gothp^{(j-i+\delta)}T_\gothp^{(n-\delta)}S_\gothp^{-1},\\
m_{j,i}&=\sum_{\delta=0}^{\min\{i-1,n-j\}}(-1)^{n+1+i-j} (n+i-j-2\delta) p^{d(n+i-j-2\delta,i-\delta)} T_\gothp^{(\delta)} T_\gothp^{(n+i-j-\delta)}S_\gothp^{-1}.
\end{align*}
\end{theorem}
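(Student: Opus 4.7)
The plan is to derive Theorem~\ref{Th-Conj} by combining Proposition~\ref{P:intersection-matrix} (which expresses $m_{i,j}$ in terms of the Hecke operators $R_\gothp^{(a,b)}$ weighted by the fundamental intersection numbers) with Theorem~\ref{T:intersection} (which evaluates $N(n',k')$ explicitly) and finally with the Hecke algebra multiplication formula (referenced as Proposition~\ref{P:multiplication-Hecke}) that expresses $R_\gothp^{(a,b)}$ in terms of products $T_\gothp^{(c)} T_\gothp^{(d)}$.

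First I would substitute the identity
\[
N(n+i-j-2\delta,\, i-\delta) = (-1)^{n+i-j-1} \sum_{\delta'} (n+i-j-2\delta-2\delta')\, p^{d(n+i-j-2\delta-2\delta',\, i-\delta-\delta')} \binom{n+i-j-2\delta}{\delta'}_{p^2},
\]
coming directly from Theorem~\ref{T:intersection} (note that $(-1)^{n+i-j-2\delta-1} = (-1)^{n+i-j-1}$), into the formula of Proposition~\ref{P:intersection-matrix}. Re-indexing the resulting double sum by $\epsilon := \delta+\delta'$, one obtains
\[
m_{i,j} = (-1)^{n+1+i-j} \!\!\sum_{\epsilon=0}^{\min\{i-1,n-j\}}\! (n+i-j-2\epsilon)\, p^{d(n+i-j-2\epsilon,\, i-\epsilon)} \Bigl(\sum_{\delta=0}^{\epsilon} \binom{n+i-j-2\delta}{\epsilon-\delta}_{p^2} R_\gothp^{(j-i+\delta,\, n-\delta)}\Bigr) S_\gothp^{-1}.
\]
A simple check shows that the constraints $0 \leq \delta' \leq \min\{i-\delta-1, n-j-\delta\}$ for each $\delta$ translate exactly into the condition $0 \leq \delta \leq \epsilon \leq \min\{i-1, n-j\}$ after the reindexing.

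Second, I would invoke the standard multiplication formula in the spherical Hecke algebra of $\GL_n(E_\gothp)$ at the minuscule cocharacters: for $a \leq b$,
\[
T_\gothp^{(a)} T_\gothp^{(b)} = \sum_{c=0}^{a} \binom{b-a+2c}{c}_{p^2} R_\gothp^{(a-c,\, b+c)}.
\]
Applying this with $a = j-i+\epsilon$ and $b = n-\epsilon$, for which $a \leq b$ holds since $\epsilon \leq \min\{i-1, n-j\}$, and substituting $\delta = \epsilon - c$, we obtain
\[
T_\gothp^{(j-i+\epsilon)} T_\gothp^{(n-\epsilon)} = \sum_{\delta=0}^{\epsilon} \binom{n+i-j-2\delta}{\epsilon-\delta}_{p^2} R_\gothp^{(j-i+\delta,\, n-\delta)},
\]
which is precisely the inner sum in the previous display. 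Plugging this in yields the formula for $m_{i,j}$ stated in Theorem~\ref{Th-Conj}. The formula for $m_{j,i}$ follows by the same argument using the second identity in Proposition~\ref{P:intersection-matrix} and the analogous multiplication identity obtained by interchanging the two factors in $R_\gothp^{(a,b)}$ (equivalently, applying the Cartan involution of the Hecke algebra).

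The main obstacle in this argument is the Hecke multiplication formula itself (Proposition~\ref{P:multiplication-Hecke}). This is a combinatorial identity that amounts to counting, for each intermediate lattice configuration, the number of ways in which two successive lattice inclusions of the prescribed elementary divisor types can be realized; this count is controlled by the number of $\FF_{p^2}$-rational points of certain flag varieties, which produces the Gaussian binomials $\binom{\cdot}{\cdot}_{p^2}$. While standard in principle, the bookkeeping is delicate and constitutes the only genuinely non-formal input beyond Theorem~\ref{T:intersection} and Proposition~\ref{P:intersection-matrix}.
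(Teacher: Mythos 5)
Your proposal is correct and is essentially the paper's own proof: both rest on exactly the three ingredients Proposition~\ref{P:intersection-matrix}, Theorem~\ref{T:intersection}, and Proposition~\ref{P:multiplication-Hecke}, with the same reindexing $\epsilon=\delta+\delta'$ (the paper merely runs the algebra in the opposite direction, expanding the claimed $T$-expression into $R_\gothp^{(a,b)}$'s and matching coefficients with $N(n+i-j-2r,i-r)$, while you expand the known formula and recombine). The only cosmetic slip is the remark about ``interchanging the two factors'' or a Cartan involution for $m_{j,i}$: Proposition~\ref{P:multiplication-Hecke} applies verbatim with $a=\epsilon$, $b=n+i-j-\epsilon$, so no extra identity is needed.
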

\begin{proof}
We prove only the statement for $m_{i,j}$,  and that for $m_{j,i}$ is similar. By Proposition~\ref{P:multiplication-Hecke} in Appendix A,  the right hand side of the first formula above is
\begin{align*}
& \sum_{\delta=0}^{\min\{i-1,n-j\}} (-1)^{n+1-i-j} (n+i-j-2\delta) p^{d(n+i-j-2\delta,i-\delta)}\Big(\sum_{k=0}^{\delta} \tbinom{n+i-j-2\delta +2k}{k}_{p^2} R^{(j-i+\delta-k,n-\delta +k)}_\gothp S_\gothp^{-1}\Big)\\
&=\sum_{r=0}^{\min\{i-1,n-j\}}\big( \star\big) R_\gothp^{(j-i+r, n-r)}S_\gothp^{-1}.
\end{align*}
Here, we have put $r=\delta-k$, and the expression $\star$ in the parentheses is
\begin{align*}
\star&=\sum_{k=0}^{\min\{i-1-r,n-j-r\}}(-1)^{n+1+i-j}(n+i-j-2r-2k) p^{d(n+i-j-2r-2k,i-r-k)}\binom{n+i-j-2r}{k}_{p^2}\\
&=N(n+i-j-2r,i-r).
\end{align*}
Here, the last equality is Theorem~\ref{T:intersection}. 
The statement for $m_{i,j}$ now follows from Proposition~\ref{P:intersection-matrix}.
\end{proof}

\begin{example}
We  write down explicitly the intersection matrices when $n$ is small.

(1) Consider first the case $n=2$. This case is essentially the same as the Hilbert quadratic case studied in \cite{tian-xiao2}, and the intersection matrix can be written:
\[
M=\begin{pmatrix}
-2p & T_\gothp^{(1)}\\
T_\gothp^{(1)} S_\gothp^{-1} &-2p
\end{pmatrix}.
\]

(2) When $n=3$,  Theorem~\ref{Th-Conj} gives
\[
M=\begin{pmatrix}
3p^2 & -2p T_\gothp^{(1)} &T_\gothp^{(2)}\\
-2p T_\gothp^{(2)}S_\gothp^{-1} & 3p^4+T_\gothp^{(1)} T_\gothp^{(2)}S_\gothp^{-1} & -2p T_\gothp^{(1)}\\
T_\gothp^{(1)} S_\gothp^{-1} & -2p T_\gothp^{(2)}S_\gothp^{-1} & 3p^2
\end{pmatrix}.
\]

(3) The intersection matrix for $n=4$ can be written:
\[
M=\begin{pmatrix}
-4p^3 & 3p^2T_\gothp^{(1)} & -2p T_\gothp^{(2)} & T_\gothp^{(3)}\\
3p^2 T_\gothp^{(3)}S_\gothp^{-1} & -4p^7-2pT_\gothp^{(1)}T_\gothp^{(3)}S_\gothp^{-1} & 3p^4 T_\gothp^{(1)} +T_\gothp^{(2)}T_\gothp^{(3)}S_\gothp^{-1} &-2pT_\gothp^{(2)}\\
-2p T_\gothp^{(2)} S_\gothp^{-1} &3p^4T_\gothp^{(3)}S_\gothp^{-1}+T_\gothp^{(1)}T_\gothp^{(2)}S_\gothp^{-1} & -4p^7-2pT_\gothp^{(1)}T_\gothp^{(3)}S_\gothp^{-1} & 3p^2T_\gothp^{(1)}\\
T_\gothp^{(1)}S_\gothp^{-1} &-2p T_\gothp^{(2)}S_\gothp^{-1} & 3p^2T_\gothp^{(3)}S_\gothp^{-1} & -4p^3
\end{pmatrix}.
\]

\end{example}

\subsection{Proof of Theorem~\ref{T:main-theorem}(1)}
 Let $\pi\in \scrA_K$ as in the statement of  Theorem~\ref{T:main-theorem}(1). Consider the $(\pi^p)^{K^p}$-isotypic direct factor  of  the $\scrH(K^p,\Qlb)$-equivariant sequence \eqref{E:sequence-morphisms}:
  \begin{equation}
  \label{E:pi component of JL nu}
 \bigoplus_{i=1}^n H_\et^{0}(\overline \Sh_{0,n},\Ql)_{\pi^p}\xra{\JL_{\pi}} H_\et^{2(n-1)}\big(\overline \Sh_{1,n-1},\Ql(n-1)\big)_{\pi^p}\xra{\nu_{\pi}}  \bigoplus_{j=1}^n
 H_\et^{0}(\overline \Sh_{0,n},\Ql)_{\pi^p}.
\end{equation}
In particular, when $i=j=1$, $\nu_1\circ \JL_1$ is given by multiplication by $-np^{n-1}$. So the $\pi^p$-isotypic component of \eqref{E:pi component of JL nu} is nonzero.
 This implies that $\pi^p$ appears in $H^{2(n-1)}_\et(\overline \Sh_{1,n-1}, \overline \QQ_\ell(n-1))$, i.e., there exist
 admissible irreducible representations $\pi'_p$ of $G_{1,n-1}(\QQ_p)$ and  $\pi_{\infty}'$ of $G_{1,n-1}(\RR)$, which is  cohomological in degree $n-1$, such that $\pi^p \otimes \pi'_p\otimes \pi'_{\infty}$ is a cuspidal automorphic representation $\pi'\otimes \pi'_{\infty}$  of $G_{1,n-1}(\AAA_\QQ)$. 
 By Lemma~\ref{L:automorphic-prime-to-p},  $\pi'\simeq \pi$ satisfies  Hypothesis~\ref{H:automorphic assumption}(2) for $a_{\bullet}=(1,n-1)$. 
Thus, taking the $\pi^p$-isotypic component of \eqref{E:pi component of JL nu} is the same as taking its $\pi$-isotypic component.
 From now on, we use subscript $\pi$ in places of subscript $\pi^p$.

If $a_{\gothp}^{(i)}$ denotes the eigenvalues of $T_{\gothp}^{(i)}$ on $\pi_\gothp^{K_\gothp}$ for each $1\leq i\leq n$, then  $T_\gothp^{(i)}$ acts as the scalar $a_\gothp^{(i)}$ on all the terms in \eqref{E:pi component of JL nu}. Therefore, $\nu_{\pi}\circ\JL_{\pi}$ is given by the matrix $M_{\pi}$, which is obtained by replacing  $T_\gothp^{(i)}$ by $a_\gothp^{(i)}$ in each entry of $M$.
  By definition,  the $\alpha_{\pi_{\gothp},i}$  are the roots of the Hecke polynomial \eqref{E:hecke-polynomial}:
  \[
  X^n + \sum_{i=1}^n(-1)^{i} p^{i(i-1)}a_\gothp^{(i)} X^{n-i}.
  \]
  Then  Theorem~\ref{T:main-theorem}(1) follows easily from the following.

  \begin{lemma}\label{L:Hecke-determiant}

We have
  \[
  \det(M_{\pi})=\pm p^{\frac{n(n^2-1)}{3}}\frac{\prod_{i<j}(\alpha_{\pi_{\gothp},i}-\alpha_{\pi_\gothp, j})^2}{(\prod_{i=1}^n\alpha_{\pi_\gothp,i})^{n-1}}.
  \]
  Here, $\pm$ means that the formula holds up to sign.
  In particular,  $\nu_{\pi}\circ\JL_{\pi}$ is an isomorphism if the $\alpha_{\pi_\gothp,i}$ are distinct.
  \end{lemma}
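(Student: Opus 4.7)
The strategy is to reduce the computation to an explicit symmetric-function identity via the Satake isomorphism. First, I would use Satake to express each Hecke eigenvalue on $\pi_\gothp^{K_\gothp}$ as
\[
R_\gothp^{(a,b)}S_\gothp^{-1}\big|_{\pi_\gothp^{K_\gothp}}=p^{\nu(a,b)}\,\frac{W(n-a,\,n-b-1)}{V(\alpha)},
\]
where $V(\alpha)=\prod_{i<j}(\alpha_{\pi_\gothp,i}-\alpha_{\pi_\gothp,j})$ is the Vandermonde, $W(c_1,c_2)$ is the $n\times n$ alternant $\det(\alpha_{\pi_\gothp,i}^{\,k})$ with $k$ running over $\{-1,0,\ldots,n\}\setminus\{c_1,c_2\}$, and the integer $\nu(a,b)$ is pinned down by the normalization $T_\gothp^{(i)}|_{\pi_\gothp^{K_\gothp}}=p^{-i(i-1)}e_i(\alpha)$ read off from the Hecke polynomial \eqref{E:hecke-polynomial}.

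Substituting these values into Proposition~\ref{P:intersection-matrix} and reparametrizing each inner sum by $(c_1,c_2)=(n-a,\,n-b-1)$, one observes the pleasant fact that the pairs $(c_1,c_2)$ appearing in $(M_\pi)_{i,j}$ all satisfy $c_1+c_2=n+i-j-1$. Pulling out a common factor of $V(\alpha)^{-1}$ from each entry one is left with a matrix $\widetilde M_\pi$ whose entries are $\ZZ[p^{\pm 1}]$-linear combinations of the alternants $W(c_1,c_2)$. The decisive step is to exhibit a factorization $\widetilde M_\pi=X\cdot D\cdot Y^{T}$, where $X$ and $Y$ are explicit $n\times n$ matrices of $\alpha$-monomials of Vandermonde type, with $\det X=V(\alpha)/\prod_i\alpha_{\pi_\gothp,i}^{a_{X,i}}$ and $\det Y=V(\alpha)/\prod_i\alpha_{\pi_\gothp,i}^{a_{Y,i}}$, and $D$ is a diagonal matrix whose entries are pure powers of $p$; the closed formula of Theorem~\ref{T:intersection} for $N(n,k)$ is exactly what is needed to match the entries of $\widetilde M_\pi$ with the Cauchy--Binet-style expansion of $XDY^{T}$. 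Granting this, the $p$-powers of $D$ and the monomial corrections in $X, Y$ combine to give $\prod_i D_{ii}=\pm p^{n(n^2-1)/3}$ and $\prod_i\alpha_{\pi_\gothp,i}^{-(a_{X,i}+a_{Y,i})}=\prod_i\alpha_{\pi_\gothp,i}^{-(n-1)}$, and one concludes
\[
\det M_\pi = V(\alpha)^{-n}\det\widetilde M_\pi = V(\alpha)^{-n}\cdot \det X\,\det Y\,\prod_i D_{ii} = \pm p^{n(n^2-1)/3}\,\frac{V(\alpha)^2}{\prod_i\alpha_{\pi_\gothp,i}^{n-1}}.
\]

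The main obstacle is producing and verifying the factorization $\widetilde M_\pi = XDY^{T}$, since matching Theorem~\ref{T:intersection}'s intricate sum for $N(n,k)$ with the corresponding combinatorial expansion requires delicate bookkeeping; the small cases $n=2,3,4$ tabulated just above are consistent with the answer, which is reassuring. A more robust alternative is a qualitative argument:  (i) $\det M_\pi$ is a symmetric Laurent polynomial in the $\alpha_{\pi_\gothp,i}$'s over $\ZZ[p^{\pm 1}]$ (by Satake); (ii) the Tate-class count of \eqref{E:invariant-subspace}, combined with the Weyl-group equivariance implicit in the Satake description, forces $V(\alpha)^2\mid\det M_\pi$ as Laurent polynomials; (iii) an $\alpha$-degree bound, using that each $T_\gothp^{(i)}$ contributes degree $i$ in the $\alpha$'s and each $S_\gothp^{-1}$ contributes degree $-n$, shows the only pole is along $\prod_i\alpha_{\pi_\gothp,i}=0$ of order at most $n-1$, whence $\det M_\pi\cdot\prod_i\alpha_{\pi_\gothp,i}^{n-1}/V(\alpha)^2$ is a scalar in $\QQ(p)$; (iv) this scalar is pinned down as $\pm p^{n(n^2-1)/3}$ by a single convenient specialization, for instance by tracking the leading Laurent monomial as $\alpha_{\pi_\gothp,1}\to\infty$ with the other $\alpha_{\pi_\gothp,i}$'s fixed, where only the diagonal term $\prod_i m_{i,i}$ of $\det M_\pi$ survives and its $p$-power can be read off directly from Theorem~\ref{Th-Conj}.
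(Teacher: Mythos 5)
Your first route breaks down at its very first step: the eigenvalue of the single double coset $R_\gothp^{(a,b)}S_\gothp^{-1}$ on $\pi_\gothp^{K_\gothp}$ is \emph{not} of the form $p^{\nu(a,b)}\,W(n-a,n-b-1)/V(\alpha)$. Under the Satake isomorphism a double coset $K_\gothp\,\varpi^\lambda\,K_\gothp$ maps to a power of $p$ times a Hall--Littlewood polynomial, which equals a single Schur function (an alternant ratio) only when $\lambda$ is minuscule, i.e.\ only for the operators $T_\gothp^{(i)}$. For $\lambda=(2^a,1^{b-a},0^{n-b})$ there are lower-order Schur terms: already for $n=2$, with Satake parameters $x_1,x_2$ and $q=N\gothp$, the operator $T_\gothp(\diag(p^2,1))$ acts by $q(x_1^2+x_2^2)+(q-1)x_1x_2$, which is not a $p$-power multiple of $x_1^2+x_1x_2+x_2^2$; equivalently, $R_\gothp^{(1,1)}=(T_\gothp^{(1)})^2-(q+1)S_\gothp$. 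These correction terms are exactly what the Gaussian-binomial identity of Proposition~\ref{P:multiplication-Hecke} encodes when one passes from the $R_\gothp^{(a,b)}$-expansion of Proposition~\ref{P:intersection-matrix} to the $T_\gothp$-product expansion of Theorem~\ref{Th-Conj}; substituting the naive alternant values into Proposition~\ref{P:intersection-matrix} drops them, so the matrix you go on to factor is not $M_\pi$. Moreover the ``decisive step'', the factorization $\widetilde M_\pi=XDY^{T}$, is only asserted: no candidate $X$, $D$, $Y$ is produced, and matching such a factorization against Theorem~\ref{T:intersection} is precisely the computation that has to be carried out.

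The fallback qualitative argument also has genuine gaps. Step (ii), the divisibility of $\det M_\pi$ by $V(\alpha)^2$ as a universal Laurent identity, is the heart of the lemma and does not follow from the Tate-class count \eqref{E:invariant-subspace} or from Weyl-group equivariance: symmetry gives invariance under permuting the $\alpha_{\pi_\gothp,i}$, not vanishing to order two along $\alpha_{\pi_\gothp,i}=\alpha_{\pi_\gothp,j}$. Step (iii) is likewise insufficient: since the $(i,j)$-entry is homogeneous of degree $j-i$ in the $\alpha$'s, the ratio $\det M_\pi\cdot(\prod_i\alpha_{\pi_\gothp,i})^{n-1}/V(\alpha)^2$ is homogeneous of degree $0$ and symmetric, but such a Laurent polynomial (e.g.\ $\alpha_1/\alpha_2+\alpha_2/\alpha_1$) need not be a scalar; and in (iv) the diagonal entries grow linearly in $\alpha_1$ just as the superdiagonal ones do, so the limit does not isolate $\prod_i m_{i,i}$. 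For comparison, the paper's proof proceeds differently and avoids all of this: setting $\beta_i=\alpha_{\pi_\gothp,i}/p^{n-1}$, one checks that the $p$-exponent of every term of $m_{i,j}$ in Theorem~\ref{Th-Conj} is independent of $\delta$ and equals $e(i,j)=(n+1)(i+j-1)-(i^2+j^2)$, with $\sum_i e(i,\sigma(i))=\frac{n(n^2-1)}{3}$ for every permutation $\sigma$; this pulls $p^{n(n^2-1)/3}s_n^{-n}$ out of the determinant, and the remaining purely symmetric-function determinant is the case $q=-1$ of Theorem~\ref{T:determinant}, proved by locating that matrix inside the Sylvester resultant of $\prod_i(x+q^{-1}\alpha_i)$ and $\prod_i(x+q\alpha_i)$. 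If you want to salvage your approach, some such entry-wise $p$-power extraction plus a resultant-type determinant identity is unavoidable, and the correct starting point is Theorem~\ref{Th-Conj}, not Proposition~\ref{P:intersection-matrix}.
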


  \begin{proof}
Put $\beta_i=\alpha_{\pi_\gothp,i}/p^{n-1}$ for $1\leq i\leq n$. For $i=1,\dots, n$, let $s_i$ be the  $i$-th elementary symmetric polynomial in  $\beta_1, \dots, \beta_n$.  Then we have  $a_\gothp^{(i)}=p^{i(n-i)}s_i$. It follows from Theorem~\ref{Th-Conj} that the $(i,j)$-entry of $M_{\pi}$ with $1\leq i\leq j\leq n$ is given by
\[
m_{i,j}(\pi)=s_{n}^{-1}\sum_{\delta=0}^{\min\{i-1,n-j\}}(-1)^{n+1+i-j} (n+i-j-2\delta) p^{d(n+i-j-2\delta, i-\delta)+ (j-i+\delta)(n+i-j-\delta)+\delta(n-\delta)} s_{j-i+\delta} s_{n-\delta} .
\]
A direct computation shows that the exponent index on $p$ in each term above is independent of $\delta$, and is equal to $e(i,j):=(n+1)(i+j-1)-(i^2+j^2)$. The same holds when $i>j$. In summary, we get $m_{i,j}(\pi)=s_n^{-1}p^{e(i,j)}m_{i,j}'(\pi)$ with
\[
m'_{i,j}(\pi)=\begin{cases}
\sum_{\delta=0}^{\min\{i-1,n-j\}}(-1)^{n+1+i-j} (n+i-j-2\delta)  s_{j-i+\delta}\; s_{n-\delta}, &\text{if }i\leq j,\\
\sum_{\delta=0}^{\min\{j-1,n-i\}}(-1)^{n+1+j-i} (n+j-i-2\delta)  s_{\delta}\; s_{n+j-i-\delta}, &\text{if }i>j.
\end{cases}
\]
For any $n$-permutation $\sigma$, we have
\[\sum_{i=1}^{n}e(i,\sigma(i))=\frac{n(n^2-1)}{3}.\]
Thus we get  $\det(M_{\pi})=p^{\frac{n(n^2-1)}{3}}s_n^{-n}\det(m'_{i,j}(\pi))$.  The rest of the computation is purely combinatorial, which is the case $q=-1$ of Theorem~\ref{T:determinant} in Appendix~\ref{A:determinant}.
  \end{proof}
  
\begin{remark}
We point out that the determinant of the intersection matrix computed by Theorem~\ref{T:determinant} holds with  an auxiliary variable $q$. The similar phenomenon also appeared in the case of Hilbert modular varieties \cite{tian-xiao2}, where the computation was related to the combinatorial model of periodic semi-meanders.
These motivate us to ask, out of curiosity,  whether there might be some quantum version of the construction of cycles, or even Conjecture~\ref{Conj:main}, possibly for the geometric Langlands setup.

\end{remark}

\subsection{Proof of  Theorem~\ref{T:main-theorem}(2)} Given  Theorem~\ref{T:main-theorem}(1), it suffices to prove that
\begin{equation}\label{E:inequality-finite-part}
n \dim H_\et^0(\overline \Sh_{0,n},\Ql)_{\pi}\geq \dim H_\et^{2(n-1)}\big(\overline \Sh_{1,n-1},\Ql(n-1)\big)_{\pi}^{\mathrm{fin}}.
\end{equation}
Actually, by \eqref{E:decomposition-cohomology} and \eqref{E:Galois-Sh},
we have
 \[
H_\et^0(\overline \Sh_{0,n},\Ql)_{\pi}=\pi^K\otimes R_{(0,n),\ell}(\pi), \quad H_\et^{2(n-1)}(\overline \Sh_{1,n-1},\Ql)_{\pi}=\pi^K\otimes R_{(1,n-1),\ell}(\pi).
\]
Write $\pi_p=\pi_{p,0}\otimes \pi_{\gothp}$ as a representation of $G(\Q_p)\simeq \Q_p^{\times}\times \GL_n(E_{\gothp})$.  Let $\chi_{\pi_{p,0}}: \Gal(\Fpb/\F_{p^2})\ra \Qlb^{\times}$ denote the character sending $\Frob_{p^2}$ to $\pi_{p,0}(p^2)$, and let $\rho_{\pi_{\gothp}}$ be as in \eqref{E:defn-rho-pi}.
According to \eqref{E:Galois-Sh}, up to  semi-simplification, we have
\begin{align}
\label{E:R0n}
\big[R_{(0,n),\ell}(\pi)\big]&= \#\ker^1(\Q,G_{0,n})m_{0,n}(\pi)\big[\wedge^{n}\rho_{\pi_{\gothp}}\otimes\chi_{\pi_{p,0}}^{-1}\otimes \Qlb(\tfrac{n(n-1)}{2})\big],\\
\label{E:R1n-1}
\big[R_{(1,n-1),\ell}(\pi)\big]&= \#\ker^1(\Q,G_{1,n-1})m_{0,n}(\pi)\big[ \rho_{\pi_\gothp}\otimes\wedge^{n-1}\rho_{\pi_\gothp}\otimes \chi_{\pi_{p,0}}^{-1}\otimes \Qlb(\tfrac{(n-1)(n-2)}{2})\big].
\end{align}
Note that
\[
\dim \Big(\rho_{\pi_\gothp}\otimes\wedge^{n-1}\rho_{\pi_\gothp}\otimes \chi_{\pi_{p,0}}^{-1}\otimes \Qlb\big(\tfrac{(n-1)(n-2)}{2}\big)\Big)^{\mathrm{fin}}=\sum_{\zeta} \dim \big(\rho_{\pi_\gothp}\otimes \wedge^{n-1}\rho_{\pi_\gothp}\big)^{\Frob_{p^2}=p^{n(n-1)}\zeta},
   \]
where the superscript ``$\mathrm{fin}$'' means taking the subspace on which $\Gal(\Fpb/\F_{p^2})$ acts through a finite quotient, and  $\zeta$ runs through all roots of unity.
If $\alpha_{\pi_{\gothp}, i}/\alpha_{\pi_{\gothp},j}$ is not a root of unity for any pair $i\neq j$, the right hand side above is equal to  the sum of the multiplicities of $\prod_{i=1}^n\alpha_{\pi_\gothp, i}=p^{n(n-1)}\zeta$ as eigenvalues of $(\rho_{\pi_\gothp}\otimes\wedge^{n-1}\rho_{\pi_\gothp})(\Frob_{p^2})$, which is $n$. 
Therefore, under these conditions on the $\alpha_{\pi_\gothp, i}$, we have by \eqref{E:R1n-1}
\[
\dim R_{(1,n-1),\ell}(\pi)^{\mathrm{fin}}\leq n \cdot\#\ker^1(\Q,G_{1,n-1})\cdot m_{1,n-1}(\pi),
\]
and the equality holds if $\Frob_{p^2}$ is semi-simple on $R_{(1,n-1),\ell}(\pi)$. 
On the other hand, we have from \eqref{E:R0n}
   \[\dim R_{(0,n),\ell}(\pi)=\#\ker^1(\Q,G_{0,n})\cdot m_{0,n}(\pi).\]
By a result of White \cite[Theorem E]{white}, the multiplicity $m_{a_{\bullet}}(\pi)$  above is equal to $1$ for $a_{\bullet}=(1,n-1)$  and   $a_{\bullet}=(0,n)$. 
Now the inequality \eqref{E:inequality-finite-part} follows immediately from this and the fact that $\#\ker^1(\Q,G_{1,n-1})=\#\ker^1(\Q,G_{0,n})$. This finishes the proof of Theorem~\ref{T:main-theorem}(2). \hfill $\Box$

\section{Construction of cycles in the case of $G(U(r,s) \times U(s,r))$}
\label{Sec:GU rs}
We keep the notation of Subsection~\ref{S:notation-real-quadratic}.
In this section,  we will give the construction of certain cycles on Shimura varieties for $G(U(r,s) \times U(s,r))$. We always assume that $s \geq r$.

\subsection{Description of the cycles in terms of Dieudonn\'e modules}
\label{S:description of cycles}
Let $\delta$ be a non-negative integer with $\delta \leq r$. We consider the case of Conjecture~\ref{Conj:main} when $n=r+s$, $a_1 = r$, $a_2=s$, $b_1 = r-\delta$, and $b_2 = s+\delta$.
The representation $r_{a_\bullet}$ of $\GL_n$ involved is
\[
r_{a_\bullet} = \wedge^r \mathrm{Std} \otimes \wedge^s \mathrm{Std}.
\]
The weight $\lambda$ of Conjecture~\ref{Conj:main} is
\[
\lambda=
\big( \underbrace{2, \dots, 2}_{r-\delta}, \underbrace{1, \dots, 1}_{s-r+2\delta}, \underbrace{0, \dots, 0}_{r-\delta} \big).
\]
By elementary calculation of representations of $\GL_n$, the multiplicity of $\lambda$ in $r_{a_\bullet}$ is $m_\lambda(a_\bullet) = \binom{s-r+2\delta}{\delta}$. Then
Conjecture~\ref{Conj:main} thus predicts the existence of $\binom{s-r+2\delta}{\delta}$ cycles $Y_j$ on $\Sh_{r,s}$, each of dimension \[
\tfrac{1}{2}\big(\dim \Sh_{r,s} + \dim \Sh_{r-\delta,s+\delta} \big) = \tfrac{1}{2}\big(
2rs + 2(r-\delta)(s+\delta)\big) = 2rs - (s - r)\delta - \delta^2,
\]
and each admits a rational map to $\Sh_{r - \delta, s + \delta}$.  The principal goal of this section is to construct these cycles, at least conjecturally.  We start with the description in terms of the Dieudonn\'e modules at closed points.

Consider the interval $[r-\delta, s+\delta]$; it contains $s-r+2\delta$ unit segments with integer endpoints.
We will parametrize the cycles 
on the Shimura variety by the subsets of these $s-r+2\delta$ unit segments of cardinality $\delta$. There are exactly $\binom{s-r+2\delta}\delta$ such subsets.
Let $\bfj$ be one of them.
  Then we can write the union of all the segments in $\bfj$ as
\begin{equation}
\label{E:break j into intervals}
[j_{1, 1}, j_{1, 2}] \cup [j_{2,1}, j_{2 ,2}] \cup \cdots \cup [j_{\epsilon,1}, j_{\epsilon,2}] 
\end{equation}
such that all $j_{\alpha,i}$ are integers,
\[
r-\delta \leq j_{1,1} < j_{1,2}< j_{2,1}<j_{2,2} < \cdots < j_{\epsilon,1}<j_{\epsilon,2} \leq s+\delta,
\]
and we have $\sum_{\alpha=1}^\epsilon (j_{\alpha,2} - j_{\alpha,1}) = \delta$.
For notational convenience, we put $j_{0,1} = j_{0,2} = 0$.

We define $Z_{\bfj}$ to be the subset of $\overline \FF_p$-points $z$ of $\Sh_{r,s}$ such that the reduced Dieudonn\'e modules $\tilde \calD(\calA_z)^\circ_1$ and $\tilde\calD(\calA_z)^\circ_2$ contain submodules $\tilde \calE_1$ and $\tilde \calE_2$ satisfying \eqref{E:condition for getting another abelian varieties} for $m =\epsilon$, i.e.
\[
p^\epsilon\tcD(\calA_{z})^{\circ}_{i}\subseteq \tcE_i, \quad F(\tcE_i)\subseteq \tcE_{3-i}, \quad \textrm{and} \quad  V(\tcE_i)\subseteq \tcE_{3-i},\quad \textrm{for }i=1,2,
\]
 and the following condition for $i=1,2$:
\begin{equation}
\label{E:D over E}
\tilde \calD(\calA_z)^\circ_i/\tilde \calE_i \simeq \big(W(\overline \FF_p) / p^\epsilon\big)^{\oplus j_{1,i}} \oplus \big(W(\overline \FF_p) / p^{\epsilon-1}\big)^{\oplus (j_{2,i}-j_{1,i})} \oplus \cdots \oplus \big(W(\overline \FF_p) / p\big)^{\oplus (j_{\epsilon,i} - j_{\epsilon-1,i})}.
\end{equation}
We refer to the toy model discussed in Example~\ref{E:toy model} for the motivation of this condition.
For technical reasons, we will not prove the set $Z_\bfj$ is the set of $\overline \FF_p$-points of a closed subscheme of $\Sh_{r,s}$; instead we prove that a closely related subset of $Z_\bfj$ is.  See Remark~\ref{R:Yj neq Zj}.

Applying Proposition~\ref{P:abelian-Dieud} with $m=\delta$, the submodules $\tilde \calE_1$ and $\tilde \calE_2$ give rise to a 
polarized abelian variety  $(\calA'_{z},\lambda'_z)$ over $z$ with an $\cO_{D}$-action and an $\cO_{D}$-equivariant  isogeny $\calA'_{z} \to \calA_z$.
Moreover, by \eqref{E:dimension of new differentials}, we have
\[
\dim\omega^\circ_{\calA'^\vee_z/\overline \FF_p, 1} = 
\dim \omega^\circ_{\calA^\vee_z/\overline \FF_p,1} + \sum_{\alpha =0}^{\epsilon-1} \Big((\epsilon-\alpha)(j_{\alpha+1, 1} - j_{\alpha,1}) - (\epsilon-\alpha)(j_{\alpha+1,2} - j_{\alpha,2})\Big) = r-\delta
\]
and similarly $\dim\omega^\circ_{\calA'^\vee_z/\overline \FF_p, 2} = s+\delta$.
So $\calA'_{z}$ satisfies the moduli problem for $\Sh_{r-\delta, s+\delta}$;
  this suggests a geometric relationship between $Z_{\bfj}$ and $\Sh_{r-\delta,s+\delta}$ that we make precise
in Definition~\ref{D:cycle U(r,s)}.

We make an immediate remark that when $\delta = r$, the abelian variety $\calA_z$ coming from a point $z$ of $Z_{\bfj}$ is isogenous to an abelian variety 
$\calA'_z$ that is a moduli object for the Shimura variety $\Sh_{0, n}$.  Thus both $\calA'_z$ and $\calA_z$ are supersingular.
So every $Z_\bfj$ is contained in the supersingular locus of $\Sh_{r,s}$. In fact, we shall show in Theorem~\ref{T:Zj supersingular locus} that the supersingular locus of $\Sh_{r,s}$ is exactly the union of these $Z_\bfj$.

\subsection{Towards a moduli interpretation}
\label{S:towards a moduli}
We need to reinterpret the Dieudonn{\'e}-theoretic condition defining  $Z_{\bfj}$ in a more geometric manner.
For $\alpha = 0, \dots, \epsilon$, we define submodules 
\[
\tilde \calE_{\alpha,1}: = \tilde \calD(\calA_z)^{\circ}_1 \cap \frac 1{p^{\epsilon -\alpha}}\tilde \calE_1\quad \textrm{and} \quad\tilde \calE_{\alpha,2} : =  \tilde \calD(\calA_z)^{\circ}_2 \cap \frac 1{p^{\epsilon -\alpha}} \tilde \calE_2
\]
of $\tilde \calD(\calA_z)^{\circ}_1$ and $\tilde \calD(\calA_z)^{\circ}_2$. They are easily seen to satisfy condition~\eqref{E:condition for getting another abelian varieties} with $m = \alpha$.  Thus, Proposition~\ref{P:abelian-Dieud}
generates a polarized abelian variety $(A_\alpha,\lambda_\alpha)$ with $\cO_D$-action and an $\cO_D$-equivariant isogeny $A_\alpha \rightarrow \calA_z $, where 
\begin{equation}
\label{E:signature of A epsilon}
r_\alpha: = \dim \omega^\circ_{A_\alpha^\vee/\overline \FF_p, 1} = 
r - \sum_{\alpha'=1}^\alpha \big(j_{\alpha',2} - j_{\alpha',1}\big) \quad \textrm{and} \quad s_\alpha: =\dim \omega^\circ_{A_\alpha^\vee/\overline \FF_p,2} = n - \dim \omega^\circ_{A_\alpha^\vee/\overline \FF_p, 1}
\end{equation}
by the formula \eqref{E:dimension of new differentials}.
In particular $r_0 = r$, $s_0 = s$, $r_\epsilon = r-\delta$, and $s_\epsilon = s+\delta$.

In fact, applying Proposition~\ref{P:abelian-Dieud} (with $m=1$) to the sequence of inclusions
\[
\tilde \calE_i  =
\tilde \calE_{\epsilon,i} \subset \tilde \calE_{\epsilon-1,i} \subset
\cdots \subset \tilde \calE_{0,i} = \tcD(\calA_z)^\circ_i,
\]
we
obtain a sequence of isogenies (each with $p$-torsion kernels):
\begin{equation}
\label{E:chain of isogenies}
\calA'_z = 
A_{\epsilon} \xrightarrow{\;\phi_\epsilon\;} A_{\epsilon - 1} \xrightarrow{\phi_{\epsilon-1}} \cdots \xrightarrow{\;\phi_1\;} A_0 = \calA_z.
\end{equation}
We have $\ker \phi_\alpha \subseteq A_\alpha[p]$, so that there exists a unique isogeny $\psi_\alpha: A_{\alpha-1} \rightarrow A_\alpha$ such that $\psi_\alpha \phi_\alpha = p \cdot \id_{A_\alpha}$ and $\phi_\alpha \psi_\alpha = p \cdot \id_{A_{\alpha-1}}$.

For each $\alpha$, the cokernel of the induced map on cohomology
\[
\phi_{\alpha, *,i}: H_1^\dR(A_\alpha /\overline \FF_p)^\circ_i \to H_1^\dR(A_{\alpha-1} / \overline\FF_p)^\circ_i \quad \big(\textrm{resp. }\psi_{\alpha, *,i}: H_1^\dR(A_{\alpha-1} /\overline \FF_p)^\circ_i \to H_1^\dR(A_{\alpha} / \overline\FF_p)^\circ_i  \big)
\]
is canonically isomorphic to  $\tilde \calE_{\alpha-1,i} / \tilde \calE_{\alpha,i}$ (resp. $\tilde \calE_{\alpha,i} / p\tilde \calE_{\alpha-1,i}$), which has dimension $j_{\alpha,i}$ (resp. $n-j_{\alpha,i}$) over $\overline \FF_p$ by a straightforward computation using 
\eqref{E:D over E}.

The upshot is that all these numeric information of the chain of isogenies \eqref{E:chain of isogenies} can be used to reconstruct $\tilde \calE_i$ inside $\tcD(\calA_z)^\circ_i$.
This idea will be made precise after this important example.

\begin{example}
\label{E:toy model}
We give a good toy model for the isogenies of Dieudonn\'e modules.  This is the inspiration of the construction of this section.
We start with the Dieudonn\'e module  $\tcD(A_\epsilon)^\circ_1 = \oplus_{i=1}^n W(\overline \FF_p) \bfe_j$ and $\tcD(A_\epsilon)^\circ_2 = \oplus_{j=1}^n W(\overline \FF_p) \bff_j$.
The maps $V_1: \tcD(A_\epsilon)^\circ_1 \to \tcD(A_\epsilon)^\circ_2$ and $V_2: \tcD(A_\epsilon)^\circ_2 \to \tcD(A_\epsilon)^\circ_1$, with respect to the given bases, are given by the diagonal matrices 
\[
\Diag\big(\underbrace{1,\dots, 1}_{s+\delta}, \underbrace{p, \dots, p}_{r-\delta} \big) \quad \textrm{and} \quad \Diag\big(\underbrace{1,\dots,1}_{r-\delta}, \underbrace{p,\dots,p}_{s+\delta} \big),
\]
respectively.
Using the isogenies $\phi_\alpha$ we may naturally identify $\tcD(A_\alpha)^\circ_i$ as lattices in $\tcD(A_\epsilon)^\circ_i[\frac{1}{p}]$ with induced Frobenius and Verschiebung morphisms.
For our toy model, we choose
\begin{align*}
\tcD(A_\alpha)^\circ_1 = W(\overline \FF_p)\textrm{-span of } &
\tfrac 1{p^{\epsilon-\alpha}}\bfe_1, \dots,\tfrac 1{p^{\epsilon-\alpha}}\bfe_{j_{\alpha+1,1}}, 
\tfrac1{p^{\epsilon-\alpha-1}} \bfe_{j_{\alpha+1,1}+1}, \dots,\tfrac1{p^{\epsilon-\alpha-1}} \bfe_{j_{\alpha+2,1}},\\
&
\tfrac1{p^{\epsilon-\alpha-2}} \bfe_{j_{\alpha+2,1}+1},
\dots, 
\tfrac 1p \bfe_{j_{\epsilon, 1}},  \bfe_{j_{\epsilon,1}+1},\dots,
\bfe_n; \textrm{ and}
\\
\tcD(A_\alpha)^\circ_2 = W(\overline \FF_p)\textrm{-span of } &
\tfrac 1{p^{\epsilon-\alpha}}\bff_1, \dots,\tfrac 1{p^{\epsilon-\alpha}}\bff_{j_{\alpha+1,2}}, 
\tfrac1{p^{\epsilon-\alpha-1}} \bff_{j_{\alpha+1,2}+1}, \dots,\tfrac1{p^{\epsilon-\alpha-1}} \bff_{j_{\alpha+2,2}},\\
&
\tfrac1{p^{\epsilon-\alpha-2}} \bff_{j_{\alpha+2,2}+1},
\dots, 
\tfrac 1p \bff_{j_{\epsilon, 2}},  \bff_{j_{\epsilon,2}+1},\dots,
\bff_n.
\end{align*}
In particular, the Verschiebung $V_1:\tcD(A_\alpha)^\circ_1\to \tcD(A_\alpha)^\circ_2$ with respect to the bases above is given by 
\[
\Diag \big(\underbrace{1,\dots, 1}_{j_{\alpha+1, 1}}, ***\dots *** , \underbrace{p,\dots,p}_{r-\delta} \big),
\]
where the $***$ part is $p$ if the place is in $[j_{\alpha',1}+1,j_{\alpha',2}]$ for some $\alpha' \geq \alpha$, and is $1$ otherwise.
Similarly, the Verschiebung $V_2:\tcD(A_0)^\circ_2\to \tcD(A_0)^\circ_1$ with respect to the bases above is given by 
\[
\Diag \big(\underbrace{1,\dots, 1}_{r-\delta}, \underbrace{p, \dots, p}_{j_{\alpha+1,1} - r+\delta}, ***\cdots ***, \underbrace{p,\dots,p}_{n-j_{\alpha_{\epsilon,2}}} \big),
\]
where the $***$ part is $1$ if the place is in $[j_{\alpha',1}+1,j_{\alpha',2}]$ for some $\alpha' \geq \alpha$, and is $p$ otherwise.

So the sheaf of differentials is given by
\begin{align*}
\omega_{A_\alpha^\vee/\overline \FF_p, 1}^\circ = \overline \FF_p\textrm{-span of } &
\tfrac 1{p^{\epsilon-\alpha}}\bfe_1, \dots,\tfrac 1{p^{\epsilon-\alpha}}\bfe_{r-\delta}, 
\tfrac1{p^{\epsilon-\alpha}} \bfe_{j_{\alpha,1}+1}, \dots,\tfrac1{p^{\epsilon-\alpha}} \bfe_{j_{\alpha,2}},\tfrac1{p^{\epsilon-\alpha-1}} \bfe_{j_{\alpha+1,1}+1}, \dots,\\
& 
\tfrac 1p \bfe_{j_{\epsilon-1,2}}, \bfe_{j_{\epsilon,1}+1},\dots,
\bfe_{j_{\epsilon,2}}; \textrm{ and}
\\
\omega_{A_\alpha^\vee/\overline \FF_p,2}^\circ = \overline \FF_p\textrm{-span of } &
\tfrac 1{p^{\epsilon-\alpha}}\bff_1, \dots,\tfrac 1{p^{\epsilon-\alpha}}\bff_{j_{\alpha+1,1}}, 
\tfrac1{p^{\epsilon-\alpha-1}} \bff_{j_{\alpha+1,2}+1}, \dots,\tfrac1{p^{\epsilon-\alpha-1}} \bff_{j_{\alpha+2,1}},\\
& \tfrac1{p^{\epsilon-\alpha-2}}\bff_{j_{\alpha+2,1}},\dots, \tfrac 1p \bff_{j_{\epsilon,1}}, \bff_{j_{\epsilon,2}+1},\dots,
\bff_{s+\delta-1}.
\end{align*}
\end{example}

\begin{definition}
\label{D:cycle U(r,s)}
Let $\bfj$ be as above.  Define the numbers $j_{\alpha,i}$ as in \eqref{E:break j into intervals} and the numbers $r_\alpha$, $s_\alpha$ as in \eqref{E:signature of A epsilon}.
Let $\underline Y_{\bfj}$ be the functor
taking a locally noetherian ${\FF}_{p^2}$-scheme $S$ to the set of isomorphism classes of tuples
\begin{equation}
\label{E:tuple for underline Yj}\big(A_0, \dots, A_{\epsilon}, \lambda_0, \dots, \lambda_{\epsilon}, \eta_0, \dots, \eta_{\epsilon}, \phi_1, \dots, \phi_{\epsilon}, \psi_1, \dots, \psi_\epsilon \big)
\end{equation}
such that:
\begin{enumerate}
\item For each $\alpha$, $(A_\alpha,\lambda_\alpha,\eta_\alpha)$ is an $S$-point of $\Sh_{r_\alpha,s_\alpha}$.
\item For each $\alpha$, $\phi_\alpha$ is an $\calO_D$-isogeny $A_\alpha \rightarrow A_{\alpha-1}$, with kernel contained in $A_\alpha[p]$, which is compatible with the polarizations
in the sense that $p \lambda_\alpha = \phi_\alpha^{\vee} \circ \lambda_{\alpha-1} \circ \phi_\alpha$ and with the tame level structures in the sense that $\phi_\alpha \circ \eta_\alpha = \eta_{\alpha-1}$.
\item $\psi_\alpha$ is the isogeny $A_{\alpha-1} \to A_\alpha$ such that $\phi_\alpha \psi_\alpha =p \cdot \id_{A_\alpha}$ and $\psi_\alpha \phi_\alpha = p \cdot \id_{A_{\alpha-1}}$.
\item For each $\alpha$ and $i = 1,2$, the cokernel of the induced map $\phi^\dR_{\alpha,*,i}: H_1^{\dR}(A_{\alpha}/S)_i^{\circ} \rightarrow H_1^{\dR}(A_{\alpha-1}/S)_i^{\circ}$
is a locally free $\cO_S$-module  of  rank $j_{\alpha,i}$.
\item
For each $\alpha$ and $i = 1,2$, the cokernel of the induced map $\psi^\dR_{\alpha,*,i}: H_1^{\dR}(A_{\alpha-1}/S)_i^{\circ} \rightarrow H_1^{\dR}(A_{\alpha}/S)_i^{\circ}$
is a locally free $\cO_S$-module of rank $n-j_{\alpha,i}$.\footnote{This is in fact a corollary of (2) and (4).}
\item
For each $\alpha$, $\Ker(\phi^\dR_{\alpha, *,2})$ is contained in $\omega_{A_{\alpha}^\vee/S,2}^\circ$.
\item
For each $\alpha$,
the $(r_{\alpha-1}-r_\alpha+ r_{\epsilon}+1)$st Fitting ideal of the cokernel 
of $\phi_{\alpha, *, 1}^\dR: \omega^\circ_{A_{\alpha}^\vee/S, 1} \to \omega^\circ_{A_{\alpha-1}^\vee/S, 1}$ is zero, or equivalently, 
Zariski locally on $S$, if we represent the map $\phi_{\alpha, *, 1}^\dR: \omega^\circ_{A_{\alpha}^\vee/S, 1} \to \omega^\circ_{A_{\alpha-1}^\vee/S, 1}$
by an $r_{\alpha-1} \times r_\alpha$-matrix (after choosing local bases), 
then all $(r_{\alpha}-r_\epsilon +1) \times (r_{\alpha}-r_\epsilon +1)$-minors vanish.
\item
For each $\alpha$, the $(r_\alpha-r_\epsilon+1)$st Fitting ideal of
the cokernel of $\psi^\dR_{\alpha,*,1}: \omega^\circ_{A_{\alpha-1}^\vee/S,1} \to \omega^\circ_{A_\alpha^\vee/S,1}$ is zero.
\end{enumerate}
Note that conditions (6)(7)(8) are all closed conditions. So the moduli problem $\underline Y_{\bfj}$ is represented by a proper scheme $Y_\bfj$ of finite type over $\FF_{p^2}$.
The moduli space $Y_\bfj$ admits natural maps to $\Sh_{r,s}$ and $\Sh_{r-\delta, s+\delta}$ by sending the tuple \eqref{E:tuple for underline Yj} to $(A_0, \lambda_0, \eta_0)$ and $(A_\epsilon, \lambda_\epsilon, \eta_\epsilon)$, respectively.
\[
\xymatrix{
& Y_\bfj \ar[dl]_{\pr_\bfj} \ar[dr]^{\pr'_{\bfj}}
\\
\Sh_{r,s} && \Sh_{r-\delta, s+\delta}.
}
\]

We also point out that conditions (2) and (3) together imply that, for each $\alpha$ and $i=1,2$, we have $\mathrm{Im} (\psi^\dR_{\alpha,*,i}) = \Ker(\phi^\dR_{\alpha,*,i})$ and $\mathrm{Im}( \phi^\dR_{\alpha,*,i} )= \Ker(\psi^\dR_{\alpha,*,i})$.
We shall freely use this property later.
\end{definition}

\begin{remark}
\label{R:Yj neq Zj}
Conditions (6)(7)(8) in Definition~\ref{D:cycle U(r,s)} are satisfied by the toy model in Example~\ref{E:toy model}. 
They did not appear in moduli problem in Subsection~\ref{S:cycles} because they  trivially hold in that case. The purpose of keeping these conditions in the moduli problem and carefully formulating them is so that the moduli space may hope to have the correct irreducible components. 
We think the picture is the following: $Z_\bfj$ is probably or at least heuristically the set of $\overline \FF_p$-points of a closed subscheme of $\Sh_{r,s}$.
But this scheme has many irreducible components, which may have overlaps with other $Z_{\bfj'}$.
Conditions (6)(7)(8) will help select one irreducible component that is ``special" for $\bfj$.
When taking the union of all images of $Y_\bfj$'s, we should still get the union of $Z_\bfj$'s.
This is verified in the case of supersingular locus (i.e. $r=\delta$) in Theorem~\ref{T:Zj supersingular locus}.
\end{remark}

\begin{notation}
\label{N:Yj circ}
Let $Y_\bfj$ as above. It will be convenient to introduce some dummy notation:
\begin{itemize}
\item
$\phi_0$ is the identity map on $A_0$;
\item
$\psi_\epsilon$ is the identity map on $A_\epsilon$.
\end{itemize}
We use $Y_\bfj^\circ$ to denote the open subscheme of $Y_\bfj$ representing the functor that takes a locally noetherian $\FF_{p^2}$-scheme $S$ to the subset of isomorphism classes of tuples 
$$\big(A_0, \dots, A_{\epsilon}, \lambda_0, \dots, \lambda_{\epsilon}, \eta_0, \dots, \eta_{\epsilon}, \phi_1, \dots, \phi_{\epsilon}, \psi_1, \dots, \psi_\epsilon \big)$$
of $Y_\bfj(S)$ such that
\begin{itemize}
\item[(i)]
for each $\alpha = 1, \dots, \epsilon$, the sum $\phi_{\alpha, *,2}(\omega^\circ_{A^\vee_\alpha/S, 2}) + \Ker(\phi^\dR_{\alpha -1, *, 2})$ is an $\calO_S$-subbundle of $H_1^\dR(A_{\alpha-1}/S)^\circ_2$ of rank
\[
\rank \omega_{A_\alpha^\vee/S, 2} - \rank \Ker(\phi_{\alpha,*,2}^\dR) + \rank \Ker(\phi^\dR_{\alpha -1, *, 2}) = s_\alpha-j_{\alpha,2}+j_{\alpha-1,2},
\]
\item[(ii)]
for each $\alpha =1, \dots, \epsilon$, $\Ker(\phi_{\alpha, *,1}^\dR) + \Ker(\psi_{\alpha+1, *, 1}^\dR)$ is an $\calO_S$-subbundle of rank 
\[
\rank \Ker(\phi_{\alpha, *,1}^\dR) + \rank \Ker(\psi_{\alpha+1, *, 1}^\dR) = 
j_{\alpha, 1} + (n-j_{\alpha+1,1}),
\]
\item[(iii)]
for each $\alpha$, the cokernel of $\phi_{\alpha, *, 1}^\dR: \omega^\circ_{A_{\alpha}^\vee/S, 1} \to \omega^\circ_{A_{\alpha-1}^\vee/S, 1}$
is a locally free $\cO_S$-module of rank $r_{\alpha-1} - (r_\alpha-r_\epsilon)$,
\item[(iv)]
for each $\alpha$, the cokernel of $\psi^\dR_{\alpha,*,1}: \omega^\circ_{A_{\alpha-1}^\vee/S,1} \to \omega^\circ_{A_\alpha^\vee/S,1}$ is a locally free $\cO_S$-module of rank $r_{\alpha} - r_\epsilon$.

\end{itemize}
We note that the ranks in Conditions (i) and (ii) are maximal possible and the ranks in Conditions (iii) and (iv) are minimal possible, under the conditions in Definition~\ref{D:cycle U(r,s)}.  So $Y_\bfj^\circ$ is an open subscheme of $Y_\bfj$.

We point out an additional benefit of having Conditions (ii)--(iv).
By (iii), $\omega^\circ_{A_\alpha^\vee/S, 1} \cap \Ker(\phi_{\alpha, *, 1}^\dR)$ is an $\calO_S$-subbundle of $\omega^\circ_{A_\alpha^\vee/S, 1}
$ of rank $r_\epsilon$, for $\alpha =1, \dots, \epsilon$; by (iv), $\omega^\circ_{A_{\alpha}^\vee/S, 1} \cap \Ker(\psi_{\alpha+1, *, 1}^\dR)$ is an $\calO_S$-subbundle of $\omega^\circ_{A_\alpha^\vee/S, 1}
$ of rank $r_\alpha -r_\epsilon$, for $\alpha = 0, \dots, \epsilon-1$.
Combining these two rank estimates and Condition (ii) which implies that $\Ker(\phi^\dR_{\alpha, *,1})$ and $\Ker(\psi^\dR_{\alpha+1, *, 1})$ are disjoint subbundles, we arrive at a \emph{direct sum} decomposition
\begin{equation}
\label{E:phi psi decomposition}
\omega^\circ_{A^\vee_{\alpha}/S, 1} = \big(\omega^\circ_{A^\vee_{\alpha}/S, 1} \cap \Ker(\phi_{\alpha, *, 1}^\dR) \big) \oplus \big(\omega^\circ_{A^\vee_{\alpha}/S, 1} \cap \Ker(\psi_{\alpha+1, *, 1}^\dR) \big),
\end{equation}
for $\alpha = 1, \dots, \epsilon-1$; and we know that $\omega^\circ_{A^\vee_0/S, 1} \cap \Ker(\psi^\dR_{1, *, 1})$ has rank $r_0-r_\epsilon = \delta$  and $\omega^\circ_{A^\vee_\epsilon/S, 1} \subseteq \Ker(\phi^\dR_{\epsilon, *, 1})$.

We shall show below in Theorem~\ref{T:smoothness Yj circ} that $Y_\bfj^\circ$ is smooth.
Unfortunately, we do not know how to prove the non-emptiness of $Y_\bfj^\circ$, nor do we know if some $Y_\bfj$ is completely contained in some other $Y_\bfj$;
but the fact that the Dieudonne modules in Example~\ref{E:toy model} satisfy conditions (i) - (iv) above is good evidence for this nonemptiness.
Of course, if one can compute the intersection matrix in the sense of Theorem~\ref{Th-Conj} and calculate the determinant, one can then probably show that these $Y_\bfj$ are essentially different.  But the difficulties of this computation lie in understanding the singularities at $Y_\bfj \backslash Y_\bfj^\circ$, which seems to be very combinatorially involved.

\end{notation}

\begin{theorem}
\label{T:smoothness Yj circ}
Each $Y_\bfj^\circ$ is smooth of dimension $rs + (r-\delta)(s+\delta)$ (if not empty).
\end{theorem}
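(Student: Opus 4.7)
The plan is to prove smoothness via the infinitesimal lifting criterion combined with a direct computation of the tangent space dimension, in the spirit of Propositions~\ref{P:Yj-prime} and~\ref{P:isom-Yj}. The first step is to apply Grothendieck--Messing theory (Theorem~\ref{T:STGM}) to reduce the deformation problem to a flag-lifting problem. Given a square-zero thickening $S_0\hookrightarrow S$ over $\FF_{p^2}$ with ideal $I$, and an $S_0$-point $y_0$ of $Y_\bfj^\circ$ classifying a tuple $(A_0,\dots,A_\epsilon,\lambda_\bullet,\eta_\bullet,\phi_\bullet,\psi_\bullet)$, a lift $y\colon S\to Y_\bfj^\circ$ corresponds to the data of, for each $\alpha=0,\dots,\epsilon$ and $i=1,2$, a subbundle $\hat\omega^\circ_{A_\alpha^\vee,i}\subseteq H_1^{\cris}(A_\alpha/S)_i^\circ$ lifting $\omega^\circ_{A_\alpha^\vee/S_0,i}$, such that $\phi_\alpha$ and $\psi_\alpha$ lift to $S$---equivalently,
\[
\phi_{\alpha,*,i}\bigl(\hat\omega^\circ_{A_\alpha^\vee,i}\bigr)\subseteq\hat\omega^\circ_{A_{\alpha-1}^\vee,i}\quad\text{and}\quad\psi_{\alpha,*,i}\bigl(\hat\omega^\circ_{A_{\alpha-1}^\vee,i}\bigr)\subseteq\hat\omega^\circ_{A_\alpha^\vee,i}
\]
for each $\alpha=1,\dots,\epsilon$ and $i=1,2$.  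All remaining conditions in Definition~\ref{D:cycle U(r,s)} and Notation~\ref{N:Yj circ} are open, hence extend automatically.

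Writing each lift as a graph of a first-order deformation $\xi_{\alpha,i}\in\Hom_{\cO_{S_0}}\bigl(\omega^\circ_{A_\alpha^\vee/S_0,i},\Lie^\circ_{A_\alpha/S_0,i}\bigr)\otimes_{\cO_{S_0}} I$, the compatibilities become the linear system
\[
\phi^{\Lie}_{\alpha,*,i}\circ\xi_{\alpha,i}=\xi_{\alpha-1,i}\circ\phi^{\omega}_{\alpha,*,i},\qquad \psi^{\Lie}_{\alpha,*,i}\circ\xi_{\alpha-1,i}=\xi_{\alpha,i}\circ\psi^{\omega}_{\alpha,*,i},
\]
where the superscripts indicate the induced maps on $\omega$ and on $\Lie$.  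The crucial input for analyzing this system is the direct-sum decomposition~\eqref{E:phi psi decomposition} guaranteed by conditions (iii)--(iv) of Notation~\ref{N:Yj circ}, together with its counterpart for $i=2$, which can be derived either from condition~(i) of Notation~\ref{N:Yj circ} combined with condition~(6) of Definition~\ref{D:cycle U(r,s)}, or equivalently by dualizing through the polarization pairing between $H_1^{\dR}(A_\alpha)_1^\circ$ and $H_1^{\dR}(A_\alpha)_2^\circ$ (under which $\phi_{\alpha,*}$ on one side becomes $\psi_{\alpha,*}$ on the other).  These decompositions split each $\omega^\circ_{A_\alpha^\vee,i}$ into two subbundles of constant, explicitly known ranks---one inside $\Ker\phi_{\alpha,*,i}$ and one inside $\Ker\psi_{\alpha+1,*,i}$---and turn the compatibility system into a transparent zigzag in which each $\xi_{\alpha,i}$ is controlled piece by piece by the data at its two neighbors.

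Carrying out this bookkeeping should yield both claims at once: the resulting linear system is consistent for every $I$ (so infinitesimal lifts exist locally, giving formal smoothness and hence smoothness), and when $I=k\epsilon$ the space of solutions has dimension exactly $rs+(r-\delta)(s+\delta)$.  The cleanest formulation of the dimension count is that the tangent space injects via the endpoint maps $(\pr_\bfj,\pr'_\bfj)$ into $T_{\Sh_{r,s},A_0}\oplus T_{\Sh_{r-\delta,s+\delta},A_\epsilon}$, and the compatibility constraints leave precisely the residual freedom of total dimension $rs+(r-\delta)(s+\delta)$.  The main obstacle will be this combinatorial bookkeeping: one must systematically track how the rank constraints on the $\omega$- and $\Lie$-pieces propagate through the entire chain of isogenies, making uniform use of the open rank conditions in Notation~\ref{N:Yj circ} to keep all relevant subbundles locally free of constant rank.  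A secondary but essential technical point is to set up the $i=2$ analog of~\eqref{E:phi psi decomposition} rigorously via polarization duality, since only the $i=1$ version is spelled out in the paper's setup.
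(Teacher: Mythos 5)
Your overall strategy (Grothendieck--Messing, reduce to lifting the reduced Hodge filtrations compatibly with $\phi_\alpha,\psi_\alpha$, then count) is the paper's starting point, but there is a genuine gap at the reduction step: the claim that ``all remaining conditions in Definition~\ref{D:cycle U(r,s)} and Notation~\ref{N:Yj circ} are open, hence extend automatically'' is false. Condition (6) of Definition~\ref{D:cycle U(r,s)}, namely $\Ker(\phi^\dR_{\alpha,*,2})\subseteq \omega^\circ_{A_\alpha^\vee/S,2}$, is a \emph{closed} containment condition; on a square-zero thickening it becomes the genuine constraint $\hat\omega_{\alpha,2}\supseteq \Ker(\phi^{\cris}_{\alpha,*,2})$ on the lifted filtration, which a generic lift satisfying only your two inclusion conditions will violate. (In the $r=1$ case of Section~4 the analogous containment was automatic from the signature of the $(0,n)$ side, cf.\ Lemma~\ref{L:omega-F}, but for intermediate signatures $(r_\alpha,s_\alpha)$ it is not, which is exactly why it is imposed in the moduli problem.) Similarly, conditions (7)--(8), refined by (iii)--(iv) of Notation~\ref{N:Yj circ}, become the requirement that the cokernels of $\phi^{\cris}_{\alpha,*,1}$ and $\psi^{\cris}_{\alpha,*,1}$ \emph{restricted to the chosen lifts} $\hat\omega_{\alpha,1}$ be flat of the prescribed ranks; this is again a constraint on the lifts, not an open condition that propagates for free (only (i)--(ii) and the cokernel conditions (4)--(5) on the full homology extend automatically).

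This matters for both halves of the theorem. The paper's proof imposes precisely these extra constraints (its conditions (b) and (c)) and then shows unobstructedness by an iterated torsor analysis: at $q_2$, constraint (b) makes the $\psi$-compatibility automatic and, together with condition (i), lets one lift $\hat\omega_{\alpha,2}$ step by step from $\alpha=\epsilon$ down to $0$, each step a torsor under an explicit $\calH om$-sheaf; at $q_1$, constraint (c) forces the lift to respect the splitting \eqref{E:phi psi decomposition} into $\Ker\phi$- and $\Ker\psi$-parts, producing three contributions $T_1^{\Ker\phi}$, $T_1^{\Ker\psi}$, $T_1^{\Ker\phi,0}$ whose ranks add up, with the $q_2$ part, to $r_0s_0+r_\epsilon s_\epsilon=rs+(r-\delta)(s+\delta)$. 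Your zigzag linear system, which only encodes compatibility with $\phi_\alpha$ and $\psi_\alpha$, computes the tangent space of a strictly larger moduli problem: its solution space is bigger than the true tangent space, so the dimension count would not come out to $rs+(r-\delta)(s+\delta)$, and smoothness of $Y_\bfj^\circ$ itself (rather than of the ambient problem you are implicitly deforming) would not follow. Note also that the paper never needs an $i=2$ analogue of \eqref{E:phi psi decomposition}: once (b) is imposed, the $q_2$ side is handled by the successive-lifting argument rather than by a splitting, so the duality step you flag as ``essential'' is not the missing ingredient --- reinstating (b) and (c) as constraints on the lifts is.
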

\begin{proof}

Let $\hat R$ be a noetherian $\FF_{p^2}$-algebra and $\hat I \subset \hat R$ an ideal such that $\hat I^2=0$. Put $R = \hat R/\hat I$.
Say we want to lift an $R$-point  $$\big(A_0, \dots, A_{\epsilon}, \lambda_0, \dots, \lambda_{\epsilon}, \eta_0, \dots, \eta_{\epsilon}, \phi_1, \dots, \phi_{\epsilon}, \psi_1, \dots, \psi_\epsilon \big)$$ of $Y_\bfj^\circ$ an $\hat R$-point and we try to compute the corresponding tangent space.
By Serre--Tate and Grothendieck--Messing deformation theory we recalled in Theorem~\ref{T:STGM}, it is enough to lift, for $i =1,2$ and each $\alpha =0, \dots, \epsilon$, the differentials $\omega^\circ_{A_\alpha^\vee/R, i} \subseteq H_1^\dR(A_\alpha/R)^\circ_i$ to a subbundle $\hat \omega_{\alpha,i} \subseteq H_1^\cris(A_\alpha/\hat R)^\circ_i$ such that
\begin{itemize}
\item[(a)]
 $\phi_{\alpha, *,i}^\cris(\hat  \omega_{\alpha,i}) \subseteq \hat \omega_{\alpha-1,i}$ and $\psi_{\alpha, *,i}^\cris(\hat \omega_{\alpha-1,i}) \subseteq \hat \omega_{\alpha,i}$ (so that both $\phi_\alpha$ and $\psi_\alpha$ are lifted, which would automatically imply $\Ker(\phi_\alpha) \in A_\alpha[p]$),
\item[(b)]
$\hat \omega_{\alpha,2} \supseteq \Ker(\phi_{\alpha, *,2}^\cris)$, and
\item[(c)]
$\hat \omega_{\alpha-1,1}  / \phi_{\alpha, *,1}^\cris(\hat  \omega_{\alpha,1})$ is a flat $\hat R$-module of rank $r_{\alpha-1} - (r_\alpha-r_\epsilon)$, and $\hat \omega_{\alpha,1}  / \psi_{\alpha, *,1}^\cris(\hat  \omega_{\alpha-1,1})$ is a flat $\hat R$-module of rank $r_\alpha-r_\epsilon$.
\end{itemize}
We shall see that Condition (i) of Notation~\ref{N:Yj circ} is automatic.  Also, Condition (ii) already holds: since $ H_1^\cris(A_\alpha/\hat R)^\circ_1 \big/ \big( \Ker(\phi_{\alpha, *, 1}^\cris)+ \Ker(\psi_{\alpha+1, *, 1}^\cris) \big)$ is locally generated by $j_{\alpha+1, 1} -j_{\alpha, 1}$ elements after modulo $\hat I$, it is so prior to modulo $\hat I$ by Nakayama's lemma.
Note that rank of $\Ker(\phi_{\alpha, *, 1}^\cris)$ and $\Ker(\psi_{\alpha+1, *, 1}^\cris)$ and the number of the generators of the quotient above add up to exactly $n$; it follows that $\Ker(\phi_{\alpha, *, 1}^\cris)+\Ker(\psi_{\alpha+1, *, 1}^\cris)$ is a direct sum and the sum is a subbundle of $H_1^\cris(A_\alpha/\hat R)^\circ_1$.

We separate the discussion of lifts at $q_1$ and $q_2$, and show that the tangent space $T_{Y_\bfj^\circ}$ is isomorphic to $T_1 \oplus T_2$ for the contributions $T_1$ and $T_2$ from the two places.
We first look at $q_2$, as it is easier.
Note that the condition (b) $\hat \omega_{\alpha, 2} \supseteq \Ker(\phi_{\alpha, *,2}^\cris) = \mathrm{Im}(\psi_{\alpha,*,2}^\cris)$ automatically implies that $\psi_{\alpha, *,2}^\cris(\hat \omega_{\alpha-1,2}) \subseteq \hat \omega_{\alpha,2}$; so we can proceed as follows:
\begin{itemize}
\item[Step 0:]
First lift $\omega^\circ_{A^\vee_\epsilon/R, 2}$ to a subbundle  $\hat \omega_{\epsilon,2}$ of $H_1^\cris(A_\epsilon / \hat R)^\circ_2$ so that it contains $ \Ker(\phi_{\epsilon, *,2}^\cris)$,
\item[Step 1:]
then lift $\omega^\circ_{A^\vee_{\epsilon-1}/R, 2}$ to a subbundle $\hat \omega_{\epsilon-1,2}$ of $H_1^\cris(A_{\epsilon-1} / \hat R)^\circ_2$ so that it contains $\phi^\cris_{\epsilon, *,2}(\hat \omega_{\epsilon,2}) +  \Ker(\phi_{\epsilon-1, *,2}^\cris)$,
\item[$\cdots \cdots$] $\cdots \cdots$
\item[Step $\alpha$:]
then lift $\omega^\circ_{A^\vee_{\epsilon-\alpha}/R, 2}$ to a subbundle $\hat \omega_{\epsilon-\alpha,2}$ of $H_1^\cris(A_{\epsilon-\alpha} / \hat R)^\circ_2$ so that it contains $\phi^\cris_{\epsilon-\alpha+1, *,2}(\hat \omega_{\epsilon-\alpha+1,2}) +  \Ker(\phi_{\epsilon-\alpha, *,2}^\cris)$,
\item[$\cdots \cdots$] $\cdots \cdots$
\item[Step $\epsilon$:]
finally lift $\omega^\circ_{A^\vee_0/R, 2}$ to a subbundle $\hat \omega_{0,2}$ of $H_1^\cris(A_{0} / \hat R)^\circ_2$ so that it contains $\phi^\cris_{1, *,2}(\hat \omega_{1,2})$.
\end{itemize}

At Step 0, the choices form a torsor for the group
\[
\Hom_R\big( \omega^\circ_{A_{\epsilon}^\vee/R,2} \big/\Ker(\phi_{\epsilon, *,2}^\dR), \Lie_{A_{\epsilon}/R,2}^\circ \big) \otimes_R \hat I;
\]
the Hom space is a locally free $R$-module of rank $(s_\epsilon - j_{\epsilon, 2}) r_\epsilon$.

At Step $\alpha = 1, \dots, \epsilon$, 
we observe that condition (i) of the moduli problem $\underline Y_\bfj^\circ$ implies that $\phi_{\epsilon-\alpha+1, *,2}( \omega_{A^\vee_{\epsilon-\alpha+1}/R,2}^\circ) +  \Ker(\phi_{\epsilon-\alpha, *,2}^\dR)$ is an $R$-subbundle of $H_1^\dR(A_{\epsilon-\alpha} / R)_2^\circ$
 of rank
 \begin{equation}
 \label{E:rank of phi+Ker}
s_{\epsilon-\alpha+1} - j_{\epsilon-\alpha+1, 2} + j_{\epsilon-\alpha,2} =s_{\epsilon-\alpha} + (j_{\epsilon-\alpha+1, 1} - j_{\epsilon-\alpha,2})
\end{equation}
if $\alpha =1, \dots, \epsilon-1$, and of rank $s_1 - j_{1,2}$ if $\alpha = \epsilon$.
So $\phi^\cris_{\epsilon-\alpha+1, *,2}(\hat \omega_{\epsilon-\alpha+1,2}) +  \Ker(\phi_{\epsilon-\alpha, *,2}^\cris)$ is an $\hat R$-subbundle of $H_1^\cris(A_{\epsilon-\alpha} / \hat R)_2^\circ$ of the same rank.
The choices of the lifts $\hat \omega_{\epsilon -\alpha, 2}$ form a torsor for the group
\[
\Hom_R\Big( \omega^\circ_{A_{\epsilon-\alpha}^\vee/R,2} \big/\big(\phi_{\epsilon-\alpha+1, *,2}(\omega^\circ_{A_{\epsilon-\alpha+1}^\vee/R, 2}) +\Ker(\phi_{\epsilon-\alpha, *,2}^\dR)\big), \Lie_{A_{\epsilon-\alpha}/R,2}^\circ \Big) \otimes_R \hat I.
\]
By \eqref{E:rank of phi+Ker}, this Hom space is a locally free $R$-module of rank $(j_{\epsilon-\alpha+1, 1} - j_{\epsilon -\alpha, 2}) r_{\epsilon-\alpha}$ if $\alpha =1, \dots, \epsilon-1$ and of rank $(s_0 - (s_1-j_{1,2}))r_0$ if $\alpha =\epsilon$.
This implies that the contribution $T_2$ to the tangent space $T_{Y_\bfj^\circ}$ at $q_2$ admits a filtration such that the subquotients are 
\[
\calH om\Big( \omega^\circ_{\calA_{\epsilon-\alpha}^\vee,2} \big/\big(\phi_{\epsilon-\alpha+1, *,2}(\omega^\circ_{\calA_{\epsilon-\alpha+1}^\vee, 2}) +\Ker(\phi_{\epsilon-\alpha, *,2}^\dR)\big), \Lie_{\calA_{\epsilon-\alpha},2}^\circ \Big)
\]
where the $\calA_{\epsilon-\alpha}$'s are the universal abelian varieties and $\phi_{\epsilon+1, *,2}(\omega^\circ_{\calA_{\epsilon+1}^\vee, 2})$ is interpreted as zero.
In particular, $T_2$ is a locally free sheaf on $Y_\bfj^\circ$ of rank 
\begin{align}
\nonumber
&(s_\epsilon - j_{\epsilon, 2})
r_\epsilon + \big(s_0 - (s_1-j_{1,2})\big)r_0 + 
\sum_{\alpha = 1}^{\epsilon-1}
(j_{\epsilon-\alpha+1, 1} - j_{\epsilon-\alpha, 2}) r_{\epsilon -\alpha}
\\
= &\label{E:dimension of T2}
(s_\epsilon - j_{\epsilon, 2})
r_\epsilon + j_{1,1}r_0 + 
\sum_{\alpha = 1}^{\epsilon-1}
(j_{\alpha+1, 1} - j_{\alpha, 2}) r_{\alpha}.
\end{align}

We now look at the place $q_1$.
By condition (ii), $\phi^\cris_{\alpha, *, 1}$ when restricted to $\Ker(\psi^\cris_{\alpha+1, *, 1})$ is a saturated injection of $\hat R$-bundles; and $\psi^\cris_{\alpha, *, 1}$ when restricted to $\Ker(\phi^\cris_{\alpha-1, *, 1})$ is also a saturated injection of $\hat R$-bundles.
We first recall from the discussion in Notation~\ref{N:Yj circ} especially \eqref{E:phi psi decomposition} that, when $\alpha = 1, \dots, \epsilon-1$, $ \omega^\circ_{A_\alpha^\vee/R, 1}$ is the direct sum of 
\[
\omega^{\circ, \Ker\phi}_{A_\alpha^\vee/R, 1}: = \omega^\circ_{A_\alpha^\vee/R, 1} \cap \Ker(\phi^\dR_{\alpha, *, 1}) \quad \textrm{and} \quad \omega^{\circ, \Ker\psi}_{A_\alpha^\vee/R, 1}: = \omega^\circ_{A_\alpha^\vee/R, 1} \cap \Ker(\psi^\dR_{\alpha+1, *, 1}),
\]
which are locally free $R$-modules of rank $r_\epsilon$ and $r_\alpha - r_\epsilon$, respectively.
Similarly, we put 
\[
\omega^{\circ, \Ker\phi}_{A^\vee_\epsilon/R, 1} := \omega^\circ_{A^\vee_\epsilon/R, 1}, \quad \omega^{\circ, \Ker\psi}_{A^\vee_\epsilon/R, 1}:=0, \quad \textrm{and}\quad
\omega^{\circ, \Ker\psi}_{A^\vee_0/R, 1} = \omega^\circ_{A^\vee_0/R, 1} \cap \Ker(\psi^\dR_{1,*,1});
\]
they have ranks $r_\epsilon$, $0$, and $r_0-r_\epsilon$, respectively.  We shall avoid talking about $\omega^{\circ, \Ker\phi}_{A^\vee_0/R, 1}$ (as it does not make sense) but only psychologically understands it as the process that enlarges $\omega^{\circ, \Ker\phi}_{A^\vee_0/R, 1}$ to $\omega^{\circ}_{A^\vee_0/R, 1}$.

For $\alpha =1, \dots, \epsilon$, the lift $\hat \omega_{\alpha, 1}$ takes the form of $\hat \omega^{\Ker \phi}_{\alpha, 1} \oplus \hat \omega^{\Ker\psi}_{\alpha, 1}$, where the two direct summands are $\hat R$-subbundles of $\Ker(\phi^\cris_{\alpha, *,1})$ and of $\Ker(\psi^\cris_{\alpha+1, *,1})$, lifting $\omega^{\circ, \Ker\phi}_{A_\alpha^\vee/R, 1} $ and $ \omega^{\circ, \Ker\psi}_{A_\alpha^\vee/R, 1}$, respectively.
Whereas, the lift $\hat \omega_{0,1}$ contains the lift $\hat \omega_{0,1}^{\Ker \psi}$ of $\omega^{\circ, \Ker\psi}_{A^\vee_0/R, 1}$ as an $\hat R$-subbundle of $\Ker(\psi^\cris_{1,*,1})$.
Now the compatibility conditions $\phi^\cris_{\alpha, *,1}(\hat \omega_{\alpha, 1}) \subseteq \hat \omega_{\alpha-1,1}$ and  $\psi^\cris_{\alpha, *,1}(\hat \omega_{\alpha-1, 1}) \subseteq \hat \omega_{\alpha,1}$ 
together with the condition (c)
are equivalent to
\[
\phi^\cris_{\alpha, *,1}(\hat \omega_{\alpha, 1}^{\Ker \psi}) \subseteq \hat \omega_{\alpha-1,1}^{\Ker \psi} \quad \textrm{and} \quad 
\psi^\cris_{\alpha, *,1}(\hat \omega_{\alpha-1, 1}^{\Ker\phi}) \subseteq \hat \omega_{\alpha,1}^{\Ker\phi}.
\]
(The condition (c) on ranks of the quotients are also automatic.)
In particular, the tangent space $T_1$ has three contributions, coming from the lifts $\hat \omega_{\alpha, 1}^{\Ker\phi}$ (for $\alpha =1, \dots, \epsilon$), from the lifts $\hat \omega_{\alpha, 1}^{\Ker\psi}$ (for $\alpha =0, \dots, \epsilon$), and from lifting $\omega^\circ_{A^\vee_0/R, 1}$ to an $\hat R$-subbundle $\hat \omega_{0,1}$ of $H_1^\cris(A_0/\hat R)^\circ_1$ containing $\hat \omega_{0,1}^{\Ker \psi}$. We shall use $T_1^{\Ker \phi}$, $T_1^{\Ker \psi}$, and $T_1^{\Ker \phi,0}$ to denote these three parts of the tangent space; and they will sit in an exact sequence
\begin{equation}
\label{E:T1 exact sequence}
0 \to T_1^{\Ker \phi,0} \longrightarrow T_1 \longrightarrow T_1^{\Ker \phi} \oplus T_1^{\Ker \psi} \to 0.
\end{equation}

We first determine the lifts $\hat \omega_{\alpha, 1}^{\Ker \phi}$ for $\alpha =1, \dots, \epsilon$.
For $\hat \omega_{1, 1}^{\Ker \phi}$, it lifts $\omega^{\circ, \Ker \phi}_{A^\vee_1/R, 1}$ as an $\hat R$-subbundle of $H_1^\cris(A_1/\hat R)^\circ_1$ of rank $r_\epsilon$ (with no further constraint).
Then due to the rank constraint (and the injectivity of $\psi^\cris_{\alpha, *, 1}$ when restricted to $\Ker(\phi^\cris_{\alpha+1, *, 1})$), the lift $\hat \omega_{\alpha, 1}^{\Ker\phi}$ for each $\alpha = 2, \dots, \epsilon$  is then forced to be equal to the image
\[
\psi^\cris_{\alpha, *, 1}\circ \dots \circ \psi^\cris_{1, *, 1}(\hat \omega_{1, 1}^{\Ker\phi}).
\] 
So it suffices to consider the choices of the lift $\hat \omega_{1, 1}^{\Ker \phi}$, which form a torsor for the group
\[
\Hom_R \Big(\omega^{\circ, \Ker\phi}_{A^\vee_1/R, 1}, \Ker(\phi^\dR_{1, *, 1}) \big/  \omega^{\circ, \Ker\phi}_{A^\vee_1/R, 1} \Big) \otimes_R \hat I.
\]
This Hom space is  a locally free $R$-module of rank 
\begin{equation}
\label{E:rank T1Ker phi}
r_\epsilon (j_{1,1} - r_\epsilon).
\end{equation}
It follows that the tangent space $T_1^{\Ker\phi}$ is simply just
\[
\calH om\Big( \omega^{\circ, \Ker\phi}_{\calA^\vee_0,1}, \Ker(\phi^\dR_{1,*, 1}) \big/ \omega^{\circ, \Ker\phi}_{\calA^\vee_0,1} \Big).
\]

We now determine the lifts $\hat \omega_{\alpha,1}^{\Ker \psi}$ for $\alpha = 0, \dots, \epsilon$ following the steps below:
\begin{itemize}
\item[Step 0:]
We start with putting $\hat \omega_{\epsilon,1}^{\Ker \psi} =0$ because $\omega^{\circ, \Ker\psi}_{A^\vee_\epsilon/R, 1}$ is,
\item[$\cdots \cdots$] $\cdots \cdots$
\item[Step $\alpha$:]
lift $\omega^{\circ, \Ker\psi}_{A_{\epsilon-\alpha}^\vee/R, 1}$ to a subbundle  $\hat \omega_{\epsilon-\alpha,1}^{\Ker\psi}$ of $\Ker(\psi^\cris_{\epsilon-\alpha+1, *,1})$ so that it contains $\phi^\cris_{\epsilon-\alpha+1}(\hat \omega_{\epsilon-\alpha+1,1}^{\Ker \psi})$,
\item[$\cdots \cdots$] $\cdots \cdots$
\item[Step $\epsilon$:]
finally lift $\omega^{\circ, \Ker\psi}_{A^\vee_0/R, 1}$ to a subbundle $\hat \omega^{\Ker\psi}_{0, 1}$ of $\Ker(\psi_{1, *,1}^\cris)$ so that it contains $\phi^\cris_{1, *,1}(\hat \omega_{1,1}^{\Ker \psi})$.
\end{itemize}

At Step $\alpha =1, \dots, \epsilon$, 
the choices of the lifts $\hat \omega^{\Ker \psi}_{\epsilon -\alpha, 1}$ form a torsor for the group
\[
\Hom_R\Big( \omega^{\circ, \Ker\psi}_{A_{\epsilon-\alpha}^\vee/R,1}\big /\phi_{\epsilon-\alpha+1, *,1}(\omega^{\circ, \Ker \psi}_{A_{\epsilon-\alpha+1}^\vee/R, 1}), \Ker(\psi^\dR_{\epsilon-\alpha+1, *, 1}) \big/ \omega^{\circ, \Ker\psi}_{A_{\epsilon-\alpha}^\vee/R,1} \Big) \otimes_R \hat I.
\]
This Hom space is a locally free $R$-module of rank 
\[
\big((r_{\epsilon-\alpha} - r_\epsilon) - (r_{\epsilon-\alpha+1} - r_\epsilon)\big)\big((n-j_{\epsilon-\alpha+1,1}) - (r_{\epsilon-\alpha} - r_\epsilon) \big).
\]
This implies that the tangent space $T_1^{\Ker \psi}$ admits a filtration such that the subquotients are 
\[
\calH om\Big( \omega^{\circ, \Ker\psi}_{\calA_{\epsilon-\alpha}^\vee,1}\big /\phi_{\epsilon+1-\alpha, *,1}(\omega^{\circ, \Ker \psi}_{\calA_{\epsilon+1-\alpha}^\vee, 1}), \Ker(\psi^\dR_{\epsilon+1-\alpha, *, 1}) \big/ \omega^{\circ, \Ker\psi}_{\calA_{\epsilon-\alpha}^\vee,1} \Big)
\]
In particular, $T_1^{\Ker\psi}$ is a locally free sheaf on $Y_\bfj^\circ$ of rank 
\begin{align}
\nonumber
&\sum_{\alpha =1}^\epsilon \big((r_{\epsilon-\alpha} - r_\epsilon) - (r_{\epsilon-\alpha+1} - r_\epsilon)\big)\big((n-j_{\epsilon-\alpha+1,1}) - (r_{\epsilon-\alpha} - r_\epsilon) \big)
\\
\label{E:rank of T1ker psi}
=&\sum_{\alpha =0}^{\epsilon-1}(r_{\alpha} - r_{\alpha+1})(s_\alpha-j_{\alpha+1,1}+ r_\epsilon).
\end{align}

Finally, we discuss the $\hat R$-module $\hat \omega_{0, 1}$ that lifts $\omega^\circ_{A^\vee_0/R, 1}$ and contains $\hat \omega_{0,1}^{\Ker \psi}$ we obtained earlier.
The lift is subject to one condition: $\hat \omega_{0,1} \subseteq (\psi^\cris_{1,*,1})^{-1}(\hat \omega_{1,1}^{\Ker \phi})$.
So the choices of the lift form a torsor for the group
\[
\Hom_R \Big(\omega^\circ_{A^\vee_0/R, 1} \big/ \omega_{A^\vee_0/R,1}^{\circ, \Ker \psi} ,(\psi^{\dR}_{1,*, 1})^{-1}  (\omega_{A^\vee_1/R, 1}^{\circ, \Ker \phi}) \big/\omega^\circ_{A^\vee_0/R, 1} \Big) \otimes_R \hat I.
\]
This implies that
\[
T_1^{\Ker\phi,0} = \calH om\Big(\omega^\circ_{\calA^\vee_0, 1} \big/ \omega_{\calA^\vee_0,1}^{\circ, \Ker \psi}, (\psi^{\dR}_{1,*, 1})^{-1}  (\omega_{\calA^\vee_1, 1}^{\circ, \Ker \phi}) \big/\omega^\circ_{\calA^\vee_0, 1} \Big),
\]
which is locally free of rank
\begin{equation}\label{E:rank of T1Kerphi0}
\big(r_0 - (r_0 -r_\epsilon)\big) \big( (r_\epsilon + n-j_{1,1}) - r_0 \big)
=r_\epsilon (s_0 + r_\epsilon -j_{1,1}).
\end{equation}

To sum up, 
the tangent space $T_{Y_\bfj^\circ}$, as the direct sum $ T_1 \oplus T_2$ with $T_1$ sitting in the exact sequence \eqref{E:T1 exact sequence}, is a locally free sheaf
of rank given by $\eqref{E:rank of T1Kerphi0}+\eqref{E:rank T1Ker phi}+\eqref{E:rank of T1ker psi}+ \eqref{E:dimension of T2} $, that is
\begin{align*}
& r_\epsilon (s_0 + r_\epsilon -j_{1,1})+ r_\epsilon (j_{1,1}-r_\epsilon) + \sum_{\alpha =0}^{\epsilon-1}(r_{\alpha} - r_{\alpha+1})(s_\alpha-j_{\alpha+1,1}+ r_\epsilon)
\\
& \quad \quad  + (s_\epsilon - j_{\epsilon, 2})
r_\epsilon + j_{1,1}r_0 + 
\sum_{\alpha = 1}^{\epsilon-1}
(j_{\alpha+1, 1} - j_{\alpha, 2}) r_{\alpha}
\\
=\;&
r_\epsilon s_0 + \sum_{\alpha =0}^{\epsilon-1}r_{\alpha}(s_\alpha-j_{\alpha+1,1}+ r_\epsilon) -
 \sum_{\alpha =1}^{\epsilon}r_{\alpha}(s_{\alpha-1}-j_{\alpha,1}+ r_\epsilon)
\\
& \quad \quad  + (s_\epsilon - j_{\epsilon, 2})
r_\epsilon + j_{1,1}r_0 + 
\sum_{\alpha = 1}^{\epsilon-1}
(j_{\alpha+1, 1} - j_{\alpha, 2}) r_{\alpha}
\\
=\;& r_\epsilon s_0 + r_0(s_0 -j_{1,1} +r_\epsilon) + r_\epsilon(s_{\epsilon-1} -j_{\epsilon,1}+r_\epsilon) + (s_\epsilon - j_{\epsilon, 2})r_\epsilon + j_{1,1}r_0
\\
&\quad \quad \sum_{\alpha =1}^{\epsilon-1} r_\alpha \Big((s_\alpha-j_{\alpha+1,1}+ r_\epsilon) - (s_{\alpha-1}-j_{\alpha,1}+ r_\epsilon) +(j_{\alpha+1, 1} - j_{\alpha, 2}) \Big).
\end{align*}
One easily checks that the first line adds up to $r_\epsilon s_\epsilon + r_0s_0$, and the second line cancels to zero.
This concludes the proof of this Theorem.
\end{proof}

In the special  case of $\delta = r$, each abelian variety $A_\alpha$ appearing in the moduli problem of $Y_{\bfj}$ is isogenous to $A_\epsilon$, which is a certain abelian variety parametrized by the discrete Shimura variety $\Sh_{0, n}$ and is hence supersingular (by Remark~\ref{R:Sh0n supersingular}).
So in particular, the image $\pr_{\bfj}(Y_\bfj)$ in this case is contained in the supersingular locus of $\Sh_{r,s}$.  In fact, the converse is also true.
\begin{theorem}
\label{T:Zj supersingular locus}
Assume $\delta =r$.  The supersingular locus of $\Sh_{r,s}$ is the union of all $\pr_{\bfj}(Y_{\bfj})$.
\end{theorem}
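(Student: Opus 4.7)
The first inclusion is already recorded above: each $A_\alpha$ appearing in a tuple of $Y_\bfj$ is isogenous to $A_\epsilon \in \Sh_{0, n}(\overline{\FF}_p)$, which is supersingular. The plan, therefore, is to establish the reverse inclusion by constructing, for each supersingular $\overline \FF_p$-point $z$ of $\Sh_{r, s}$, a tuple in $Y_\bfj(\overline \FF_p)$ for some $\bfj$ which maps to $z$ under $\pr_\bfj$.

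The construction of the superspecial cover $A_\epsilon$ follows the recipe already used in the proof of Proposition~\ref{P:normal-bundle}(3). Since $z$ is supersingular, the reduced Dieudonn\'e modules $\tcD(\calA_z)_i^\circ$ are isoclinic of slope $1/2$, so the $\QQ_{p^2}$-spaces $\LL_\QQ^i := (\tcD(\calA_z)_i^\circ[1/p])^{F^2 = p}$ each have dimension $n$. I would set $\LL^i := \LL_\QQ^i \cap \tcD(\calA_z)_i^\circ$ and $\tilde \calE_i := \LL^i \otimes_{\ZZ_{p^2}} W(\overline \FF_p)$. On $\LL_\QQ$ we have $F^2 = p$, hence $V = F$, from which one checks $F(\tilde \calE_1) = V(\tilde \calE_1) = \tilde \calE_2$ and $F(\tilde \calE_2) = V(\tilde \calE_2) = p\tilde \calE_1$, making $\tilde \calE_1 \oplus \tilde \calE_2$ a Dieudonn\'e submodule of $\tcD(\calA_z)^\circ$. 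Choosing $\epsilon$ minimal with $p^\epsilon \tcD(\calA_z)_i^\circ \subseteq \tilde \calE_i$, Proposition~\ref{P:abelian-Dieud} produces a polarized $\cO_D$-abelian variety $A_\epsilon$ with an isogeny to $\calA_z$; formula~\eqref{E:dimension of new differentials}, combined with $V(\tilde \calE_2) = p \tilde \calE_1$, then yields $\dim \omega^\circ_{A^\vee_\epsilon, 1} = 0$, placing $A_\epsilon$ in $\Sh_{0, n}(\overline \FF_p)$.

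To build the intermediate chain, define $\tilde \calE_{\alpha, i} := \tcD(\calA_z)_i^\circ \cap p^{-(\epsilon-\alpha)}\tilde \calE_i$ for $\alpha = 0, \ldots, \epsilon$, obtaining a descending chain $\tcD(\calA_z)_i^\circ = \tilde \calE_{0, i} \supseteq \cdots \supseteq \tilde \calE_{\epsilon, i} = \tilde \calE_i$ on which $F$ and $V$ restrict compatibly. Iterating Proposition~\ref{P:abelian-Dieud} produces the chain $A_\epsilon \to A_{\epsilon-1} \to \cdots \to A_0 = \calA_z$ with all the required polarizations, level structures, and isogenies $\phi_\alpha, \psi_\alpha$. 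Setting $j_{\alpha, i} := \len_W(\tilde \calE_{\alpha-1, i}/\tilde \calE_{\alpha, i})$, the decomposition~\eqref{E:D over E} then holds by construction and the signature of $A_\alpha$ computed via~\eqref{E:dimension of new differentials} is exactly $(r_\alpha, s_\alpha)$ of~\eqref{E:signature of A epsilon}. Conditions (1)--(5) of Definition~\ref{D:cycle U(r,s)} are immediate from the construction; condition (6) translates to $p\tilde \calE_{\alpha-1, 2} \subseteq F(\tilde \calE_{\alpha, 1}) + p\tilde \calE_{\alpha, 2}$ and follows from $F^2 = p$ on $\LL_\QQ$; and conditions (7)(8) hold since the relevant cokernels attain their minimal $\overline \FF_p$-dimension at closed points.

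The hard part will be verifying that the numbers $(j_{\alpha, i})$ just defined satisfy the strict interleaving $0 \leq j_{1,1} < j_{1,2} < j_{2,1} < \cdots < j_{\epsilon,2} \leq n$ needed in order for them to come from an honest subset $\bfj$ of unit segments of $[0, n]$. This is a nontrivial constraint relating the elementary divisor types of the two quotients $\tcD(\calA_z)_i^\circ/\tilde \calE_i$, and I expect to force it by propagating the Hodge-filtration bounds $\dim_{\overline \FF_p}\coker F = r$ and $\dim_{\overline \FF_p}\coker V = s$ through each subquotient $\tilde \calE_{\alpha-1, i}/\tilde \calE_{\alpha, i}$. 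Should the interleaving fail for the canonical choice above, one has enough flexibility to modify $\LL^i$ within its isogeny class so as to secure a valid $\bfj$; carrying out the precise bookkeeping of how the cokernels of $F$ and $V$ distribute across the chain is what drives the combinatorial argument.
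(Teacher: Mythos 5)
Your overall strategy coincides with the paper's: produce a superspecial submodule $\tcE$ from the slope-$\frac12$ structure, refine it to the chain $\tcE_{\alpha,i}$, and read off $\bfj$ from the elementary divisors. But the proposal stops exactly where the actual content of the proof begins. The strict interleaving \eqref{E:j inequality} is not a bookkeeping statement: propagating the sizes of the cokernels of $F$ and $V$ (equivalently, comparing the ``divisible sequences'' of the quotients under the inclusions $F\big(\tcD(\calA_z)^\circ_1/\tcE_1\big)\subseteq \tcD(\calA_z)^\circ_2/\tcE_2$ and $F\big(\tcD(\calA_z)^\circ_2/\tcE_2\big)\subseteq \tcD(\calA_z)^\circ_1/p\tcE_1$) only yields the \emph{non-strict} inequalities $j_{\alpha,1}\leq j_{\alpha,2}\leq j_{\alpha+1,1}$. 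Strictness needs a genuinely different input: in the paper, an equality $j_{\alpha,1}=j_{\alpha,2}$ (resp.\ $j_{\alpha,2}=j_{\alpha+1,1}$) forces $F$ and $V$ to be isomorphisms between explicit auxiliary lattices built from $\tcD(\calA_z)^\circ$ and $\tcE$, and then Hilbert 90 produces $F=V$-fixed vectors which, by the maximality of $\LL_\QQ\cap\tcD(\calA_z)^\circ_1$ inside $\tcD(\calA_z)^\circ_1$, must already lie in $\tcE_1$, a contradiction. Your fallback of ``modifying $\LL^i$ within its isogeny class'' is not a substitute: the argument needs precisely the canonical saturated choice to run the Hilbert 90 contradiction, and an ad hoc superspecial sublattice gives no handle on conditions (6)--(8) of Definition~\ref{D:cycle U(r,s)}, which are the conditions that put the point on $Y_\bfj$ rather than merely in the set $Z_\bfj$.

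There is also a definitional slip that breaks several of your intermediate claims. With the symmetric definition $\tcE_i:=\big(\LL^i_\QQ\cap\tcD(\calA_z)^\circ_i\big)\otimes_{\ZZ_{p^2}}W(\Fpb)$ for both $i$, the identities you assert, $F(\tcE_1)=\tcE_2$ and $F(\tcE_2)=V(\tcE_2)=p\tcE_1$, do \emph{not} follow from $F^2=p$ on $\LL_\QQ$: an element $x\in\LL^2$ lies in $F(\LL^1)$ only when $Fx\in p\,\tcD(\calA_z)^\circ_1$, so $F(\tcE_1)$ can be a proper sublattice of your $\tcE_2$. Already for the module of Example~\ref{E:toy model} with $\delta=r$, every rescaled basis vector of $\tcD(\calA_z)^\circ_2$ satisfies $F^2=p$, so your definition gives $\tcE_i=\tcD(\calA_z)^\circ_i$, $\epsilon=0$, and the ``superspecial cover'' is $\calA_z$ itself, of signature $(r,s)$ rather than $(0,n)$; in particular your deduction $\dim\omega^\circ_{A_\epsilon^\vee,1}=0$ fails. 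The paper avoids this by \emph{defining} $\tcE_2:=F(\tcE_1)=V(\tcE_1)$, and then uses these identities essentially: in the length counts behind the Hilbert 90 step, and in checking (6)--(8) (condition (6) reduces to $p\,\tcD(A_{\alpha-1})^\circ_2\subseteq V\big(\tcD(A_\alpha)^\circ_1\big)$ via $V\tcE_1=\tcE_2$, and condition (8), which for $\delta=r$ forces $\omega^\circ_{A^\vee_{\alpha-1},1}\subseteq\Ker(\psi^\dR_{\alpha,*,1})$, uses $V\tcE_2=p\tcE_1$). Your justification that (7)--(8) ``hold since the cokernels attain their minimal dimension at closed points'' is not a proof: (8) in particular demands that $\psi^\dR_{\alpha,*,1}$ annihilate $\omega^\circ_{A^\vee_{\alpha-1},1}$, a genuine constraint that must be, and in the paper is, verified from the Dieudonn\'e-module identities above. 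So the construction should be set up with $\tcE_2=F(\tcE_1)$, the non-strict inequalities proved by the divisible-sequence comparison, strictness by the Hilbert 90 argument, and (6)--(8) checked directly; as written, each of these steps is either missing or rests on an identity that is false for your definition.
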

\begin{proof}
We say a finite torsion $W(\overline \FF_p)$-module has \emph{divisible sequence} $(a_1, a_2, \dots, a_\epsilon)$ with nonnegative integers $a_1 \leq \cdots \leq a_\epsilon$ if it is isomorphic to
\[
\big( W(\overline \FF_p)/p^\epsilon \big)^{\oplus{a_1}} \oplus \big( W(\overline \FF_p)/p^{\epsilon-1} \big)^{\oplus(a_2 - a_1)} \oplus  \cdots \oplus \big( W(\overline \FF_p) / p \big)^{\oplus(a_\epsilon - a_{\epsilon-1})}.
\]
The following is an elementary linear algebra fact, whose proof we leave as an exercise.

{\bf Claim:} If $M_1$ and $M_2$ are two torsion $W(\overline \FF_p)$-modules with divisible sequences $(a_{1,i}, \dots, a_{\epsilon,i})$ for $i=1,2$ respectively, and if $M_1 \subseteq M_2$, then we have  $a_{\alpha, 1} \leq a_{\alpha,2}$ for all $\alpha = 1, \dots, \epsilon$.

The proof of the Theorem is similar to the proof of Proposition~\ref{P:normal-bundle}(3), which is a special case of this theorem. It suffices to look at the closed points of $\Sh_{r,s}$.
Let $z = (\calA_z, \lambda, \eta)\in \Sh_{r,s}(\overline \FF_p)$ be a supersingular point. Consider
\[
\LL_\QQ = \big( \tilde \calD(\calA_z)^\circ_1[1/p] \big)^{F^2=p} = \big\{ a \in \tilde \calD(\calA_z)^\circ_1[1/p] \; \big|\; F^2(a) = pa \big\}.
\]
Since $x$ is supersingular, $\LL_\QQ$ is a $\QQ_{p^2}$-vector space of dimension $n$, and $\tilde \calD(\calA_z)^\circ_1[1/p]$ may be identified with the extension of scalars of $\LL_\QQ$ from $\QQ_{p^2}$ to $W(\overline \FF_p)[1/p]  $.
We put
\[
\tilde \calE^\circ_1 = \big( \LL_\QQ \cap \tilde \calD(\calA_z)^\circ_1 \big) \otimes_{\ZZ_{p^2}} W(\overline \FF_p) \quad \textrm{and} \quad \tilde \calE_2^\circ = F(\tilde \calE_1^\circ) = V(\tilde \calE_1^\circ) \subseteq \tilde \calD(\calA_z)^\circ_2.
\]
Then we have 
\begin{equation}
\label{E:D over E supsersingular}
\tilde \calD(\calA_z)^\circ_i/\tilde \calE_i \simeq \big(W(\overline \FF_p) / p^\epsilon\big)^{\oplus j_{1,i}} \oplus \big(W(\overline \FF_p) / p^{\epsilon-1}\big)^{\oplus (j_{2,i}-j_{1,i})} \oplus \cdots \oplus \big(W(\overline \FF_p) / p\big)^{\oplus (j_{\epsilon,i} - j_{\epsilon-1,i})},
\end{equation}
for non-decreasing sequences $0 \leq j_{1,i}\leq j_{2,i} \leq \cdots \leq j_{\epsilon, i} \leq n$ with $i=1,2$; in other words, $\tilde \calD(\calA_z)^\circ_i / \tilde \calE_i$ has divisible sequence $(j_{1,i}, \dots, j_{\epsilon,i})$.
Without loss of generality, we assume that $j_{1,1}$ and $j_{1,2}$ are not both zero.
The essential part of the proof consists of checking the sequence of inequalities
\begin{equation}
\label{E:j inequality}
0 \leq j_{1,1} < j_{1,2}< j_{2,1}<j_{2,2} < \cdots < j_{\epsilon,1}<j_{\epsilon,2} \leq n.
\end{equation}

We first prove \eqref{E:j inequality} with all strict inequalities replaced by non-strict ones.
Indeed, the obvious inclusion $F(\tilde \calD(\calA_z)^\circ_i) \subseteq \tilde \calD(\calA_z)^\circ_{3-i}$ implies that
\[
F\big(\tilde \calD(\calA_z)^\circ_1 / \tilde \calE_1 \big) = 
F(\tilde \calD(\calA_z)^\circ_1) / \tilde \calE_2 \subseteq \tilde \calD(\calA_z)^\circ_2 / \tilde \calE_2, \quad \textrm{and}
\]
\[
F\big(\tilde \calD(\calA_z)^\circ_2 / \tilde \calE_2 \big) = 
F(\tilde \calD(\calA_z)^\circ_2) / p\tilde \calE_1 \subseteq \tilde \calD(\calA_z)^\circ_1 / p \tilde \calE_1.
\]
By \eqref{E:D over E supsersingular}, the first inclusion embeds a torsion $W(\overline \FF_p)$-module with divisible sequence $(j_{1,1}, \dots, j_{\epsilon,1})$ into  a torsion $W(\overline \FF_p)$-module with divisible sequence $(j_{1,2}, \dots, j_{\epsilon,2})$. The Claim above implies that $j_{\alpha, 1} \leq j_{\alpha,2}$ for all $\alpha =1, \dots, \epsilon$.
Similarly, by \eqref{E:D over E supsersingular}, the second inclusion embeds a torsion $W(\overline \FF_p)$-module with divisible sequence $(j_{1,2}, \dots, j_{\epsilon,2})$ into  a torsion $W(\overline \FF_p)$-module with divisible sequence $(j_{1,1}, \dots, j_{\epsilon,1}, n)$. The Claim above implies that $j_{\alpha, 2} \leq j_{\alpha+1,1}$ for all $\alpha =1, \dots, \epsilon-1$, and $j_{\epsilon,2} \leq n$.

We now use the construction of $\LL_\QQ$ to show the strict inequalities in \eqref{E:j inequality}.
Suppose first that $j_{\alpha,1} = j_{\alpha,2}$ for some $\alpha =1, \dots, \epsilon$.
Then it follows that the maps
\begin{equation}
\label{E:FV isomorphisms}
F,\ V: \Big( p^{\epsilon-\alpha} \tilde \calD(\calA_z)^\circ_1 \cap \frac 1{p} \tilde \calE_1^\circ \Big) +\tilde \calE_1^\circ \longrightarrow \Big( p^{\epsilon-\alpha}\tilde \calD(\calA_z)^\circ_2 \cap \frac 1{p} \tilde \calE_2^\circ \Big) + \tilde \calE_2^\circ
\end{equation}
are both isomorphisms (due to an easy length computation as $\tilde \calE_2^\circ =F(\tilde \calE_1^\circ) = V(\tilde \calE_1^\circ)$).
By the definition of $\LL_\QQ$ and $\tilde \calE_1^\circ$, we must have
\[
\Big(
\Big( p^{\epsilon-\alpha} \tilde \calD(\calA_z)^\circ_1 \cap \frac 1{p} \tilde \calE_1^\circ \Big) +\tilde \calE_1^\circ \Big)^{F = V} \subseteq \LL_\QQ \cap \tilde \calD(\calA_z)^\circ_1 \subseteq \tilde \calE_1^\circ.
\]
But this is absurd because the isomorphisms \eqref{E:FV isomorphisms} implies by Hilbert 90 that the left hand side above generates the source of \eqref{E:FV isomorphisms}, which is clearly not contained in $\tilde \calE_1^\circ$.

Similarly, suppose that $j_{\alpha,2} = j_{\alpha+1,1}$ for some $\alpha=1, \dots, \epsilon-1$.
Then the following morphisms are isomorphisms
\begin{equation}
\label{E:FV isomorphisms2}
F,\ V: \Big( p^{\epsilon-\alpha} \tilde \calD(\calA_z)^\circ_2 \cap \frac 1{p} \tilde \calE_2^\circ \Big) +\tilde \calE_2^\circ \longrightarrow \Big( p^{\epsilon-\alpha}\tilde \calD(\calA_z)^\circ_1 \cap \tilde \calE_1^\circ \Big) + p\tilde \calE_1^\circ,
\end{equation}
because $p\tilde \calE_1^\circ =F(\tilde \calE_2^\circ) = V(\tilde \calE_2^\circ)$ and for length reasons.
By the definition of $\LL_\QQ$ and $\tilde \calE_1^\circ$, we must have
\[
\Big(
\Big( p^{\epsilon-\alpha} \tilde \calD(\calA_z)^\circ_1 \cap  \tilde \calE_1^\circ \Big) +p\tilde \calE_1^\circ \Big)^{F^{-1} = V^{-1}} \subseteq \LL_\QQ \cap p\tilde \calD(\calA_z)^\circ_1 \subseteq p\tilde \calE_1^\circ.
\]
(Note that $\epsilon -\alpha \geq 1$ now.)
But this is absurd because the isomorphisms \eqref{E:FV isomorphisms2} implies by Hilbert 90 that the left hand side above generates the target of \eqref{E:FV isomorphisms2}, which is clearly not contained in $p\tilde \calE_1^\circ$.

Summing up, we have proved the strict inequalities \eqref{E:j inequality}. So the $j_{\alpha, i}$'s define a $\bfj$ as in the beginning of Subsection~\ref{S:description of cycles}.  We now construct a point of $Y_\bfj$ which maps to the point $z \in \Sh_{r,s}$.  Put 
\begin{equation}
\label{E:defintiion of calE alpha}
\tilde \calE_{\alpha,1}: = \tilde \calD(\calA_z)^{\circ}_1 \cap \frac 1{p^{\epsilon -\alpha}}\tilde \calE_1\quad \textrm{and} \quad\tilde \calE_{\alpha,2} : =  \tilde \calD(\calA_z)^{\circ}_2 \cap \frac 1{p^{\epsilon -\alpha}} \tilde \calE_2.
\end{equation}
Using the exact construction in Subsection~\ref{S:towards a moduli}, we get the sequence of isogenies of abelian varieties
\[
\xymatrix{
A_\epsilon \ar@<2pt>[r]^{\phi_\epsilon} & A_{\epsilon -1} \ar@<2pt>[l]^{\psi_\epsilon} \ar@<2pt>[r] ^{\phi_{\epsilon-1}} & \ar@<2pt>[l]^{\psi_{\epsilon-1}} \cdots \ar@<2pt>[r]^-{\phi_1} & \ar@<2pt>[l]^-{\psi_1} A_0 = \calA_z,
}
\]
such that $A_\alpha$ together with the induced polarization $\lambda_\alpha$ and the tame level structure $\eta_\alpha$ gives an $\overline \FF_p$-point of $\Sh_{r_\alpha, s_\alpha}$, and $\tilde \calD(A_\alpha)^\circ_i = \tilde \calE_{\alpha,i}$ for all $\alpha$ and $i=1,2$. 

Conditions (2)--(5) of Definition~\ref{D:cycle U(r,s)} easily follow from the description of the quotients $\tilde \calD(\calA_z)^\circ_i/\tilde \calE_i$ in \eqref{E:D over E supsersingular}.
Condition (6) of Definition~\ref{D:cycle U(r,s)} is equivalent to
\[
p \tilde \calD(A_{\alpha-1})^\circ_2 \subseteq
V(\tilde \calD(A_{\alpha})^\circ_1).
\]
By the construction of these Dieudonn\'e modules in \eqref{E:defintiion of calE alpha}, this is equivalent to
\[
p \Big( \tilde \calD(\calA_z)^\circ_2 \cap \frac{1}{p^{\epsilon-\alpha+1}} \tilde \calE_2 \Big) \subseteq V\Big(\tilde \calD(\calA_z)^\circ_1 \cap \frac{1}{p^{\epsilon-\alpha}} \tilde \calE_1 \Big).
\]
But this follows from $p\tilde \calD(\calA_z)^\circ_2 \subseteq V\tilde \calD(\calA_z)^\circ_1$ and $ \tilde \calE_2 = V \tilde \calE_1$.
Condition (7) of Definition~\ref{D:cycle U(r,s)} is equivalent to $\omega^\circ_{A_\alpha^\vee/\overline \FF_p, 1} \cap \Ker(\phi^\dR_{\alpha, *, 1})$ has dimension $r_\epsilon$, which is zero in our case.
Translating it into the language of Dieudonn\'e modules, this is equivalent to
\[
V\tilde \calD(A_{\alpha})^\circ_2 \cap p \tilde \calD(A_{\alpha-1})^\circ_1
= p \tilde \calD(A_{\alpha})^\circ_1.
\]
By the construction of these Dieudonn\'e module in \eqref{E:defintiion of calE alpha}, this is equivalent to
\[
\Big(V \tilde \calD(\calA_z)^\circ_2 \cap \frac 1{p^{\epsilon-\alpha}} V\tilde \calE_2\Big) \cap \Big(p \tilde \calD(\calA_z)^\circ_1 \cap \frac 1{p^{\epsilon-\alpha}} \tilde \calE_1 \Big) = p \tilde \calD(\calA_z)^\circ_1 \cap \frac 1{p^{\epsilon-\alpha-1}} \tilde \calE_1,
\]
which follows from observing that $V\tilde \calD(\calA_z)^\circ_2 \supseteq p \tilde \calD(\calA_z)^\circ_1
$ and $V\tilde \calE_2 = p\tilde \calE_1$.
Condition (8) of Definition~\ref{D:cycle U(r,s)} is equivalent to $\omega^\circ_{A_{\alpha-1}^\vee/\overline \FF_p, 1} \subseteq \Ker(\psi^\dR_{\alpha, *, 1})$ (note that $r_\epsilon =0$ in our case).
Translating it into the language of Dieudonn\'e modules and using \eqref{E:defintiion of calE alpha}, this is equivalent to
\[
V\tilde \calD(A_{\alpha-1})^\circ_2 \subseteq \tilde \calD(A_\alpha)^\circ_1,\textrm{ or equivalently, }
V \tilde \calD(\calA_z)^\circ_2 \cap \frac 1{p^{\epsilon-\alpha+1}} V\tilde \calE_2 \subseteq  \tilde \calD(\calA_z)^\circ_1 \cap \frac 1{p^{\epsilon-\alpha}} \tilde \calE_1,
\]
which follows from observing that $V\tilde \calD(\calA_z)^\circ_2 \subseteq  \tilde \calD(\calA_z)^\circ_1
$ and $V\tilde \calE_2 = p\tilde \calE_1$.
This concludes the proof of the Theorem.
\end{proof}

\begin{conjecture}
\label{Conj:quadratic-case}
The varieties $Y_{\bfj}$ together with the natural morphisms  to $\Sh_{r-\delta, s+\delta}$ and $\Sh_{r,s}$ satisfy the condition (3) of Conjecture~\ref{Conj:main}.
Moreover, the union of the images of $Y_{\bfj}$ in $\Sh_{r,s}$ is the closure of the locus where the Newton polygon of the universal abelian variety has slopes $0$ and  $1$ each with multiplicity $2(r-\delta)n$, and slope $\frac 12$ with multiplicity $2(n-2r+2\delta)n$.
\end{conjecture}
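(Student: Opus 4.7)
The conjecture splits into two assertions: (A) the varieties $Y_\bfj$ with the natural projections satisfy condition (3) of Conjecture~\ref{Conj:main}, and (B) $\bigcup_\bfj \pr_\bfj(Y_\bfj)$ equals the closure $N_\delta$ of the specified Newton stratum in $\Sh_{r,s}$. Part (A) is essentially built into the moduli problem, so the plan is to dispatch it quickly and devote the bulk of the effort to (B).

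For (A), the composition $\Phi = \phi_1 \circ \cdots \circ \phi_\epsilon : \pr_\bfj^{\prime *}\calA_{r-\delta, s+\delta} \to \pr_\bfj^*\calA_{r, s}$ is a canonical $\cO_D$-equivariant $p$-isogeny with kernel contained in the $p^\epsilon$-torsion; it is compatible with the polarizations up to the scalar $p^\epsilon$ and with the tame level structures by Definition~\ref{D:cycle U(r,s)}. Since $\Phi$ has $p$-power order, it induces an isomorphism of rational $\ell$-adic Tate modules for every $\ell \neq p$, and thereby an isomorphism $\Phi_* : \pr_\bfj^{\prime *}\calL_\xi \xra{\sim} \pr_\bfj^*\calL_\xi$ of the $\ell$-adic sheaves attached to any algebraic representation $\xi$ of $G$. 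Conditions (1) and (2) of Conjecture~\ref{Conj:main}, although not explicitly asked for, can be treated in the same spirit: Hecke equivariance is visible from the functoriality of the moduli problem in $K^p$; properness of both projections follows from that of $Y_\bfj \to \Spec \FF_{p^2}$; smoothness and the expected dimension $\tfrac{d(a_\bullet)+d(b_\bullet)}{2} = rs + (r-\delta)(s+\delta)$ for $Y_\bfj^\circ$ are already in Theorem~\ref{T:smoothness Yj circ}; and birationality of $\pr_\bfj$ onto its image should follow by showing that the chain $\tilde\calE_{\bullet, i}$ is uniquely recovered from a generic $\calA_z$ once $\bfj$ is fixed (via the slope filtration on the Dieudonné module).

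For the easy direction of (B), namely $\pr_\bfj(Y_\bfj) \subseteq N_\delta$, one observes that $\Phi$ preserves Newton polygons, so it suffices to show that $\calA_\epsilon[\gothp^\infty]$ generically has the slope pattern $(0^{r-\delta},(\tfrac{1}{2})^{n-2r+2\delta},1^{r-\delta})$. This amounts to showing $\pr'_\bfj(Y_\bfj) \subseteq \Sh_{r-\delta,s+\delta}$ lies in the $\mu$-ordinary stratum: the rank conditions (4)--(8) of Definition~\ref{D:cycle U(r,s)}, interpreted in terms of the reduced de Rham homology at each $A_\alpha$, combined with the Hodge--Newton bound and the signature conditions, should force exactly this slope pattern at the generic point of every irreducible component of $Y_\bfj^\circ$.

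The reverse inclusion $N_\delta \subseteq \bigcup_\bfj \pr_\bfj(Y_\bfj)$ is the main obstacle. The strategy generalizes the proof of Theorem~\ref{T:Zj supersingular locus}: given $z = (\calA_z, \lambda, \eta) \in N_\delta(\overline\FF_p)$, decompose $\tilde\calD(\calA_z)^\circ_i[1/p]$ by Dieudonn{\'e}--Manin into its slope $0$, $\tfrac{1}{2}$, and $1$ isotypic pieces, and consider the slope-$\tfrac{1}{2}$ lattice $\LL_\QQ := \bigl(\tilde\calD(\calA_z)^\circ_1[1/p]\bigr)^{F^2 = p}$, which is a $\QQ_{p^2}$-vector space of dimension $n - 2(r-\delta)$. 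Intersecting the slope decomposition with $\tilde\calD(\calA_z)^\circ_i$ yields a candidate integral submodule $\tilde\calE_i$ whose elementary divisor sequence should determine a combinatorial type $\bfj$ via \eqref{E:D over E}; Proposition~\ref{P:abelian-Dieud} then produces the desired chain $A_\epsilon \to \cdots \to A_0 = \calA_z$. The hard part will be twofold: first, to show that the relative position of the Hodge filtration $\omega_{\calA_z^\vee,\bullet}^\circ$ with respect to this slope decomposition assigns $z$ to a \emph{well-defined} type $\bfj$ with the prescribed strict inequalities $j_{\alpha,1} < j_{\alpha,2} < j_{\alpha+1,1}$ (the analogue of the argument via Hilbert~90 at the end of the proof of Theorem~\ref{T:Zj supersingular locus}); and second, to verify the openness/transversality conditions (i)--(iv) of Notation~\ref{N:Yj circ}, which is essential to land in $Y_\bfj^\circ$ rather than some spurious component. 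In the non-basic case the Hodge filtration and the slope filtration interact more subtly than in the supersingular case, and one likely has to introduce a finer stratification of $N_\delta$ (of Ekedahl--Oort type) on each piece of which the type $\bfj$ is locally constant, before stratifying and patching together the constructions.
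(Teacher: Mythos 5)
You should be aware that the paper does not prove this statement at all: it is left as a conjecture, and the only ingredients the paper establishes are the smoothness and expected dimension of $Y_\bfj^\circ$ (Theorem~\ref{T:smoothness Yj circ}), the identity of $\bigcup_\bfj \pr_\bfj(Y_\bfj)$ with the supersingular locus in the basic case $\delta=r$ (Theorem~\ref{T:Zj supersingular locus}), and the case $r=\delta=1$ treated in Sections~\ref{Section:U(1,n)}--\ref{Sect:intersection-matrix}; the authors state explicitly that they do not know how to compute the relevant intersection matrix in general, and (Notation~\ref{N:Yj circ}) that they cannot even prove $Y_\bfj^\circ\neq\emptyset$. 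So your text has to stand on its own, and as written it is a strategy outline whose deferred steps are exactly the open points. Your treatment of condition (3) itself is fine but carries no real content: the $p$-isogeny (in the direction required by Conjecture~\ref{Conj:main}(3) one should take $\psi_\epsilon\circ\cdots\circ\psi_1$ rather than your $\Phi$, but both exist), the relation $\Phi^\vee\circ\lambda_0\circ\Phi=p^\epsilon\lambda_\epsilon$, and the induced isomorphism of prime-to-$p$ sheaves are immediate from Definition~\ref{D:cycle U(r,s)}. Moreover, since the authors' remark that the case $r=\delta=1$ ``was proved in Theorem~\ref{T:main-theorem}'' shows they intend the cohomological content of Conjecture~\ref{Conj:main}(4) as well, the central missing ingredient is the analogue of Sections~\ref{S:fundamental-intersection-number}--\ref{Sect:intersection-matrix}: the computation of the intersection matrix of the $Y_\bfj$'s and the nonvanishing of its $\pi$-projected determinant for generic Satake parameters. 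Your proposal does not touch this.

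On the Newton-stratum clause, both inclusions have genuine gaps. For $\pr_\bfj(Y_\bfj)$ to lie in the \emph{closure of the exact stratum}, Mazur's inequality applied to $A_\epsilon\in\Sh_{r-\delta,s+\delta}$ only yields that the Newton polygon lies on or above the $\mu$-ordinary polygon of signature $(r-\delta,s+\delta)$ (which is the prescribed one, cf.\ Remark~\ref{R:remark after Conj}(3)); you still must show that equality holds at the generic point of \emph{every} irreducible component of $Y_\bfj$, and your assertion that conditions (4)--(8) ``should force'' this is unsubstantiated --- a priori a component could sit entirely over a smaller (more supersingular) stratum, which is precisely the kind of degeneration Remark~\ref{R:Yj neq Zj} worries about, and it is entangled with the unproved nonemptiness of $Y_\bfj^\circ$. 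For the reverse inclusion your plan to imitate Theorem~\ref{T:Zj supersingular locus} breaks at the step where that proof uses Hilbert~90: there the lattice $\LL_\QQ$ generates the whole isocrystal because everything has slope $\tfrac12$, whereas here $\LL_\QQ$ only spans the slope-$\tfrac12$ part; intersecting the integral Dieudonn\'e module with the three-slope decomposition does give $F,V$-stable submodules $\tcE_i$, but nothing in your sketch shows that their elementary divisors satisfy \eqref{E:D over E} with the strict interlacing $j_{\alpha,1}<j_{\alpha,2}<j_{\alpha+1,1}$, nor that conditions (6)--(8) of Definition~\ref{D:cycle U(r,s)} and the openness conditions (i)--(iv) of Notation~\ref{N:Yj circ} hold, and these were exactly the places where the supersingular argument needed its special structure. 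Invoking a finer Ekedahl--Oort-type stratification is a proposal for where such an argument might live, not an argument; until those steps are carried out the proof does not exist.
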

This Conjecture in the case of $r=\delta = 1$ was proved in Theorem~\ref{T:main-theorem}.

\bigskip
\setcounter{section}{0}
\setcounter{subsection}{0}
\renewcommand{\thesection}{\Alph{section}}





\if
false

\yichao{Old proof:}

\begin{proof}[Proof of Lemma~\ref{L:change-signature}]
The proof is similar to \cite[Lemma I.7.1]{harris-taylor}. We will look for an $\alpha\in D^{\times}$ such that $\beta_{b_{\bullet}}=\alpha\beta_{a_{\bullet}}$ satisfies the conditions of the Lemma.
  Let $G_{a_{\bullet}}^{1,\mathrm{ad}}$ denote the adjoint group of $G_{a_{\bullet}}^1$. Note that $G_{a_{\bullet}}^{1,\mathrm{ad}}$ is the Weil restriction to $\Q$ of an algebraic group $PG^1_{a_{\bullet}}$ over $F$.
  The condition that $\beta_{b_{\bullet}}\in (D^{\times})^{*=-1}$ is equivalent to the condition that $\beta_{a_{\bullet}}\alpha^*\beta_{a_{\bullet}}^{-1}=\alpha$.
   Thus $\alpha$ defines a class in $H^1(E/F, PG^1_{a_{\bullet}})$. Conversely, every class in $H^1(E/F,PG^1_{a_{\bullet}})$ arises in this way (See \emph{loc. cit.} for details).

  Clozel shows in \cite[Lemma 2.1]{clozel} that if $n$ is odd then the natural restriction map
  \[
  H^1(F,PG^1_{a_{\bullet}})\longrightarrow \bigoplus_{x} H^1(F_{x}, PG^1_{a_{\bullet}})
  \]
  is surjective, where $x$ runs through all places of $F$.  If $n$ is even he shows that   there is an exact sequence
  \[
  H^1(F,PG^1_{a_{\bullet}})\longrightarrow \bigoplus_{x} H^1(F_{x},PG^1_{a_{\bullet}})\xra{\phi=\sum_x \phi_x} \Z/2\Z\ra 0.
  \]
 Here, $H^1(F_x, PG^1_{a_{\bullet}})$ is  the pointed set  classifying the inner forms of $G^1_{a_{\bullet},F_x}$ over $F_x$. 
If $x=\tau_i$ is an infinite place of $F$, 
the map $\phi_{\tau_i}: H^1(F_{\tau_i}, PG^1_{a_{\bullet}})\ra \Z/2\Z$  sends the class of $U(r_i, n-r_i)$ to $r_i-a_i\mod 2$.

Since $\sum_i a_i \equiv \sum_i b_i \pmod 2$ if $n$ is even,   there exists a class $\eta \in H^1(F,PG^1_{a_\bullet})$ whose image in $H^1(F_x, PG^1_{a_{\bullet}})$
\begin{itemize}
\item is the distinguished class given by $G^1_{a_{\bullet},F_x}$ itself  if $x$ a finite place,
\item and represents the class $U(b_i, n-b_i)$ if $x=\tau_i$ is an infinite place.
\end{itemize}
Note that this $\eta$ maps to zero  in $H^1(E,PG^1_{a_{\bullet}})$, because the natural map  
$$H^1(E,PG^1_{a_{\bullet}})\ra \bigoplus_{x} H^1(E_x, PG^1_{a_{\bullet}})$$ is injective. Thus $\eta$ is the image of some class $[\alpha]\in H^1(E/F,PG^1_{a_{\bullet}})$ for some $\alpha\in D^{\times}$ with $\beta_{a_{\bullet}}\alpha^*\beta_{a_{\bullet}}^{-1}=\alpha$. This $\alpha$ satisfies the  desired property by construction.
\end{proof}
\fi

\section{An explicit formula in the local spherical Hecke algebra for $\GL_n$}
\label{Sec:appendix A}

In this appendix,  let $F$ be a local field with ring of integers $\cO$,  $\varpi\in \cO$ be a uniformizer, $\FF=\cO/\varpi\cO$ and $q=\#\FF$. Fix an integer $n\geq 1$. We consider the spherical Hecke algebra $\scrH_K=\Z[K\backslash \GL_n(F)/K]$ with $K=\GL_n(\cO)$.
Here, the product of two double cosets $u=KxK$ and $v=KyK$ in $\scrH_K$ is defined as \begin{equation}
\label{E:double-coset}
u\cdot v=\sum_{w} m(u,v;w)\, w, \footnote{
We may also view elements of $\scrH_K$ as $\Z$-valued locally constant and compactly supported functions on $\GL_{n}(F)$ which are bi-invariant under $K$, and define the  product of  $f,g\in \scrH_K$ as  $(f*g)(x)=\int_{\GL_n(F)}f(y)g(y^{-1}x) dy$, where $dy$ means the unique bi-invariant Haar measure on $\GL_n(F)$ with $\int_K dy=1$. For the equivalence between these two definitions, see \cite[p.4]{gross}.}
\end{equation}
where the sum runs through all the double cosets $w=KzK$ contained in $KxKyK$, and the coefficient $m(u,v;w)\in \Z$ is determined as follows:
If $KxK=\coprod_{i\in I} x_iK$ and $KyK=\coprod_{j\in J} y_jK$, then
\begin{equation}\label{E:coefficient-Hecke}
m(u,v;w)=\#\big\{(i,j)\in I\times J \;|\;  x_iy_jK= zK\text{ for a fixed element $z$ in  $w$}\big\}.
\end{equation}
 By the theory of elementary divisors, all double cosets $KxK$ are of the form
\[
T(a_1,\dots, a_n):=K \, \diag(\varpi^{a_1},\dots, \varpi^{a_n})\, K, \quad \text{for $a_i\in \Z$ with }a_1\geq a_2\geq \cdots\geq a_n.
\]
They form a $\Z$-basis of $\scrH_K$. We put
 \begin{align*}
 T^{(r)}&=T(\underbrace{1,\dots,1}_r,\underbrace{0,\dots, 0}_{n-r})\quad \text{for }0\leq r\leq n,\\
 R^{(r,s)}&=T(\underbrace{2,\dots, 2}_{r},\underbrace{1,\dots, 1}_{s-r}, \underbrace{0,\dots, 0}_{n-s}) \quad\text{for }0\leq r\leq s\leq n.
 \end{align*}
In particular, $R^{(0,s)}=T^{(s)}$ and $T^{(0)}=[K]$.

Because of the lack of reference, we include a proof of the following.
\begin{proposition}\label{P:multiplication-Hecke}
For $1\leq r\leq n$, let
\begin{equation}
\label{E:gaussian-binom}
\binom{n}{r}_{q}=\frac{(q^n-1)(q^{n-1}-1)\cdots (q^{n-r+1}-1)}{(q-1)(q^2-1)\cdots (q^r-1)}
\end{equation}
 be the Gaussian binomial coefficients, and put $\binom{n}{0}_{q}=1$. Then for $0\leq r\leq s\leq n$, we have
 \begin{align*}
 T^{(r)}T^{(s)}= \sum_{i=0}^{\min\{r, n-s\}} \binom{s-r+2i}{i}_q R^{(r-i, s+i)}.
 \end{align*}
\end{proposition}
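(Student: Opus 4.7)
The plan is to compute the product directly via the lattice interpretation of $\scrH_K$. Fix $L_0 = \cO^n$ and identify $\GL_n(F)/K$ with the set of full-rank $\cO$-lattices in $F^n$ by $gK \mapsto gL_0$; under this bijection the double coset $T(a_1,\dots,a_n)$ is the $\GL_n(\cO)$-orbit of sublattices $L \subset L_0$ with $L_0/L \cong \bigoplus_j \cO/\varpi^{a_j}$. The combinatorial definition \eqref{E:coefficient-Hecke} of the product then translates into a geometric statement: for any $z \in \GL_n(F)$ and $L'' := zL_0$, the coefficient $m(T^{(r)}, T^{(s)}; KzK)$ equals the number of intermediate lattices $L'$ with $L_0 \supset L' \supset L''$, $L_0/L'$ of type $T^{(r)}$, and $L'/L''$ of type $T^{(s)}$.

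I will first determine the double cosets $KzK$ that actually contribute. Since any such tower forces $\varpi^2 L_0 \subset L''$ and $L_0/L''$ has $\cO$-length $r+s$, its elementary divisor sequence must take the form $(\underbrace{2,\ldots,2}_{r-i},\underbrace{1,\ldots,1}_{s-r+2i},\underbrace{0,\ldots,0}_{n-s-i})$ for some integer $i$, and the admissibility constraints $r-i \ge 0$, $s+i \le n$ cut out exactly the range $0 \le i \le \min\{r, n-s\}$. Thus the only double cosets appearing in the product are $R^{(r-i, s+i)}$ for $i$ in this range, which already accounts for the support of the sum on the right-hand side.

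It remains to count, for a fixed $L''$ of type $R^{(r-i,s+i)}$, the number of admissible $L'$. Set $M := L_0/L'' \cong (\cO/\varpi^2)^{r-i} \oplus (\cO/\varpi)^{s-r+2i}$ and $N := L'/L'' \subset M$. I claim that the two type conditions on $M/N \cong L_0/L'$ and on $N \cong L'/L''$ are jointly equivalent to the sandwich $\varpi M \subset N \subset M[\varpi]$ together with $\dim_\FF N = s$: the inclusion $\varpi M \subset N$ is equivalent to $M/N$ being killed by $\varpi$ (whence a length count forces $M/N \cong (\cO/\varpi)^r$), and dually $N \subset M[\varpi]$ is equivalent to $\varpi N = 0$ (whence $N \cong (\cO/\varpi)^s$ by length). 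Consequently $N \mapsto N/\varpi M$ gives a bijection between admissible $N$ and $\FF$-subspaces of $M[\varpi]/\varpi M \cong \FF^{\,s-r+2i}$ of dimension $s - (r-i) = s-r+i$, and the number of such subspaces is $\binom{s-r+2i}{s-r+i}_q = \binom{s-r+2i}{i}_q$, which matches the claimed coefficient. The only delicate point is this reduction to a pure subspace count, which requires care with the identifications of $\varpi M$ and $M[\varpi]$ inside $M$ and with the resulting dimension of their quotient; once that bookkeeping is settled the proof is immediate.
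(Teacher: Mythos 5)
Your proof is correct, but it follows a genuinely different route from the paper's. The paper argues with explicit coset representatives: it decomposes $T^{(r)}=\coprod_{x\in\calS(n,r)}xK$ into upper-triangular-type matrices, shows by a block-matrix computation that the coefficient $C^{(r,s)}(n,i)$ of $R^{(r-i,s+i)}$ equals $C^{(i,s-r+i)}(s-r+2i,i)$, i.e.\ reduces to the extreme case $n=r+s$, and then evaluates that case by induction on $n$ using the $q$-Pascal identity $\binom{n}{r}_q=q^r\binom{n-1}{r}_q+\binom{n-1}{r-1}_q$. You instead invoke the standard dictionary between cosets $gK$ and lattices, under which the structure constant becomes a Hall-type count of intermediate lattices $L_0\supset L'\supset L''$ with $L_0/L'\cong(\cO/\varpi)^r$ and $L'/L''\cong(\cO/\varpi)^s$, and you evaluate this in one stroke: the two conditions are equivalent to $\varpi M\subseteq N\subseteq M[\varpi]$ with $\dim_\FF N=s$ (where $M=L_0/L''$, $N=L'/L''$), so admissible $N$ correspond to $(s-r+i)$-dimensional subspaces of $M[\varpi]/\varpi M\cong\FF^{\,s-r+2i}$, giving $\binom{s-r+2i}{i}_q$. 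This buys you two things the paper's method does not: no reduction step and no induction are needed, and the Gaussian binomial appears with its natural meaning as a subspace count rather than emerging from a recursion. The paper's computation, on the other hand, stays entirely inside explicit matrix manipulations, which is closer in spirit to how these operators are used elsewhere in the text.

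One small point to repair in your support argument: the constraints you cite, $r-i\ge 0$ and $s+i\le n$, give $i\le r$ and $i\le n-s$ but not $i\ge 0$; when $r<s$ the shape of $L_0/L''$ alone allows types $(2^{r-i},1^{s-r+2i},0^{n-s-i})$ with $i<0$. This is harmless, because your own counting disposes of them: for $i<0$ one has $\dim_\FF M[\varpi]=s+i<s$, so no $N$ of $\FF$-dimension $s$ fits inside $M[\varpi]$ (equivalently, the relevant count $\binom{s-r+2i}{s-r+i}_q$ vanishes), hence such double cosets occur with coefficient $0$. Alternatively, since $\varpi L'\subseteq L''$, multiplication by $\varpi$ on $L_0/L''$ factors through $L_0/L'$, so the number of $\cO/\varpi^2$-summands of $L_0/L''$ is at most $r$, which forces $i\ge 0$ directly. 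Either remark should be inserted to make the determination of the support complete; with that, the proof is sound.
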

 \begin{proof}
 We fix a set of representatives $\tilde \FF\subseteq \cO$ of $\FF=\cO/\varpi\cO$ which contains $0$. Then we have
$
 T^{(r)}= \coprod_{x\in \calS(n,r)} xK,
 $
 where $\calS(n,r)$ is the set of  $n\times n$ matrices $x=(x_{i,j})_{1\leq i,j\leq n}$ such that
 \begin{itemize}
 \item $r$ of the diagonal entries are equal to $\varpi$ and the reminding $n-r$ ones are qual to $1$;
 \item  if $i\neq j$, then  $x_{i,j}=0$ unless $i>j$, $x_{i,i}=1$ and $x_{j,j}=\varpi$, in which case
 $x_{i,j}$ can take any values in $\tilde \FF$.
 \end{itemize}
 For instance,  the set $\calS(3,2)$ consists of matrices:
 \[
 \begin{pmatrix}
 1&0&0\\
 x_{2,1}&\varpi&0\\
 x_{3,1}&0&\varpi
 \end{pmatrix},\quad
 \begin{pmatrix}
 \varpi&0&0\\
 0&1&0\\
 0&x_{3,2}&\varpi
 \end{pmatrix},\quad
 \begin{pmatrix}\varpi &0&0\\
 0&\varpi&0\\
 0&0&1\end{pmatrix}
  \]
 with $x_{2,1},x_{3,1},x_{3,2}\in \tilde\FF$.
 We have a similar decomposition  $T^{(s)}=\coprod_{y\in \calS(n,s)} yK$.
  We write  $T^{(r)}T^{(s)}$ as a linear combination of $T(a_{1},\dots, a_n)$ with $a_i\in \ZZ$ and $a_{1}\geq \cdots\geq a_n$.
By looking at the diagonal entries of $xy$, we see easily that only $R^{(r-i,s+i)}$ with $0\leq i\leq \min\{r, n-s\}$ have non-zero coefficients, namely, we have
\[
T^{(r)}T^{(s)} = \sum_{i=0}^{\min\{r, n-s\}} C^{(r,s)}(n,i) R^{(r-i, s+i)} \quad \textrm{ for some }C^{(r,s)}(n,i) \in \ZZ.
\]
Here by \eqref{E:double-coset}, $C^{(r,s)}(n,i)$ is the number of pairs $(x,y)\in \calS(n,r)\times \calS(n,s)$ such that
 \[
xy K=\diag(\underbrace{\varpi^{2},\dots, \varpi^2}_{r-i},\underbrace{ \varpi,\dots,\varpi}_{s-r+2i},  \underbrace{1,\dots, 1}_{n-s-i})K.
\]
In this case, $x$ and $y$ must be of the form
\[
x=\begin{pmatrix} \varpi I_{r-i} &0&0\\
0&A &0\\
0&0&I_{n-s-i}
\end{pmatrix},
\quad y=\begin{pmatrix}
 \varpi I_{r-i}& 0&0\\
0&B&0\\
0&0&I_{n-s-i}
\end{pmatrix},
\]
where  $I_k$ denotes the $k\times k$ identity matrix, and  $A\in \calS(s-r+2i, i)$, $B\in  \calS(s-r+2i, s-r+i)$ satisfying $ AB\cdot \GL_{s-r+2i}(\cO)= \varpi I_{s-r+2i}\GL_{s-r+2i}(\cO)$.
By \eqref{E:double-coset}, we see that
$C^{(r,s)}(n,i)=C^{(i,s-r+i)}(s-r+2i, i)$.
 Therefore, one is reduced to proving the following Lemma, which is a special case of our proposition.
 \end{proof}

 \begin{lemma}
 Under the notation and hypothesis of Proposition above, assume moreover that $n=r+s$. Then the coefficient of $R^{(0,n)}$ in the product  $T^{(r)}T^{(s)}$ is $\binom{n}{r}_q$.
 \end{lemma}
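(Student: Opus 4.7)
The plan is to translate the computation into a count of lattice chains in $F^n$, where it becomes essentially tautological.  Set $\Lambda_0 = \cO^n$ (thought of as column vectors) and use the bijection $gK \mapsto g\Lambda_0$ between $\GL_n(F)/K$ and $\cO$-lattices in $F^n$.  Under this dictionary, the left cosets in $T^{(k)}/K$ correspond to lattices $L$ with $\varpi\Lambda_0 \subseteq L \subseteq \Lambda_0$ and $\Lambda_0/L \cong \FF^{k}$; equivalently, to $(n-k)$-dimensional $\FF$-subspaces $L/\varpi\Lambda_0$ of $\Lambda_0/\varpi\Lambda_0 \cong \FF^n$.  In particular $\#(T^{(k)}/K) = \binom{n}{k}_q$.

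By the product formula \eqref{E:coefficient-Hecke} applied to a fixed representative $z = \varpi I_n$ of the double coset $R^{(0,n)}$, the coefficient of $R^{(0,n)}$ in $T^{(r)}T^{(s)}$ equals
\[
\#\bigl\{(x,y) \in \calS(n,r) \times \calS(n,s) \;\big|\; xy K = \varpi I_n K\bigr\}.
\]
In lattice language, writing $L_1 = x\Lambda_0$ and $M_1 = y\Lambda_0$, the condition becomes $xM_1 = \varpi \Lambda_0$, with the constraints $\Lambda_0/L_1 \cong \FF^r$ and $\Lambda_0/M_1 \cong \FF^s$.

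The key observation is that once $x$ (equivalently $L_1$) is chosen, the equation $xM_1 = \varpi\Lambda_0$ has the unique solution $M_1 = x^{-1}\varpi\Lambda_0$, and this solution automatically satisfies the required conditions.  Indeed, $M_1 \subseteq \Lambda_0$ is equivalent to $\varpi\Lambda_0 \subseteq x\Lambda_0 = L_1$, which holds; $\varpi\Lambda_0 \subseteq M_1$ is equivalent to $x\Lambda_0 \subseteq \Lambda_0$, which holds; and multiplication by $x$ gives an $\cO$-linear isomorphism $\Lambda_0/M_1 \xrightarrow{\sim} L_1/\varpi\Lambda_0$, so $\dim_\FF \Lambda_0/M_1 = \dim_\FF L_1/\varpi\Lambda_0 = n - r = s$, where the last equality uses the hypothesis $r+s = n$.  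Hence $M_1$ lies in the lattice class of $T^{(s)}/K$ and is represented by a unique element $y \in \calS(n,s)$.

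Therefore the number of valid pairs $(x,y)$ equals the number of choices of $x$, which is $\#\calS(n,r) = \binom{n}{r}_q$, completing the proof.  No step in this argument is really an obstacle; the only point that requires the hypothesis $n=r+s$ (as opposed to being automatic) is the dimension count $\dim_\FF\Lambda_0/M_1 = s$ in the third paragraph, which is precisely what forces the uniqueness of $y$ for each $x$ and makes the enumeration collapse to $\binom{n}{r}_q$.
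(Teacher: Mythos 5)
Your argument is correct, and it is genuinely different from the paper's. The paper fixes the explicit coset representatives $\calS(n,r)$, $\calS(n,n-r)$ and counts admissible pairs by induction on $n$: it splits according to whether $x_{1,1}=1$ or $x_{1,1}=\varpi$, reduces each case to the $(n-1)$-dimensional count, and concludes with the $q$-Pascal identity $\binom{n}{r}_q=q^r\binom{n-1}{r}_q+\binom{n-1}{r-1}_q$. You instead pass to the lattice model: since $\varpi I_n$ is central, $R^{(0,n)}$ is a single left coset, and for each left coset $xK\subseteq T^{(r)}$ the equation $xy\Lambda_0=\varpi\Lambda_0$ forces $y\Lambda_0=x^{-1}\varpi\Lambda_0$; your check that multiplication by $x$ identifies $\Lambda_0/x^{-1}\varpi\Lambda_0$ with $L_1/\varpi\Lambda_0\cong\FF^{\,n-r}=\FF^{s}$ (this is where $n=r+s$ enters) shows this lattice does lie in the $T^{(s)}$-orbit, so each $x$ pairs with exactly one $y$ and the coefficient is $\#(T^{(r)}/K)=\binom{n}{r}_q$, the latter counted via $(n-r)$-dimensional subspaces of $\FF^n$ rather than via the matrices in $\calS(n,r)$. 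What your route buys is a bijective, induction-free proof that in fact yields the slightly stronger statement (unique $y$ for each $x$); what the paper's route buys is that it stays entirely within the explicit representatives already set up for the Proposition, at the cost of the case analysis and the recursion. One small point worth making explicit if this were written up: distinct left cosets $y_jK$ correspond to distinct lattices $y_j\Lambda_0$, which is what converts "at most one solution lattice" into "at most one index $j$"; you use this implicitly and it is immediate from the coset decomposition.
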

 \begin{proof}
 We make an induction on $n\geq1$. The case  $n=1$ is trivial. We assume thus $n>1$, and that  the statement is true when $n$ is replaced by $n-1$. The case of $r=0$ being trivial, we may assume that $r\geq1$.
 We say a pair $(x,y)\in \calS(n,r)\times \calS(n,n-r)$ is admissible if  $xy K=\varpi I_{n}\, K$. We have to show that the number of admissible pairs is   equal to $\binom{n}{r}_q$.
 Let $(x,y)$ be an admissible  pair.
 Denote by $I$  (resp. by $J$) the set integers $1\leq i\leq n$ such that $x_{i,i}=\varpi$ (resp. $y_{i,i}=\varpi$).
Note that $(x,y)$ being admissible implies that  $J=\{1,\dots,n\}\backslash I$.

 Assume first that $x_{1,1}=1$. Then $x$ and $y$ must be of the form
 $
x= \begin{pmatrix} 1 &0\\
 *& A
 \end{pmatrix}\quad \text{and}\quad
 y=\begin{pmatrix}\varpi & 0\\ 0&B\end{pmatrix}
 $
 where $(A,B)\in \calS(n-1, r)\times \calS(n-1,n-1-r)$ admissible.
  Note that $xy K= \varpi I_n\, K$ always hold. We have $x_{i,1}=0$ for $i\notin I$,  and $x_{i,1}$ can take any values in $\FF$ for $i\in I$.
   Therefore, the number of admissible pairs $(x,y)$ with $x_{1,1}=1$ is equal to $q^{\#I}=q^r$ times that of admissible $(A,B)$'s. The latter is equal to $\binom{n-1}{r}_{q}$ by the induction hypothesis.

   Consider now the case $x_{1,1}=\varpi$. One can write
     $
   x=\begin{pmatrix}\varpi & 0\\ 0& A\end{pmatrix}$,
    and $y=\begin{pmatrix}1 &0\\
   *&B\end{pmatrix}$
  with $(A,B)\in \calS(n-1, r-1)\times \calS(n-1, n-r)$ admissible. Put $z=xy$.
  Then an easy computation shows that $z_{j,1}=y_{j,1}$ if $j\in J$, and $z_{j,1}=0$ if $j\notin J$.
  Hence, $xy K= \varpi I_n\,K$ forces that $y_{j,1}=0$ for all $j>1$.
  Therefore,  the number of admissible $(x,y)$ in this case is equal to  that of admissible $(A,B)$'s, which is $\binom{n-1}{r-1}_q$ by induction hypothesis. The Lemma now follows immediately from the equality
 \[
 \binom{n}{r}_q=q^r\binom{n-1}{r}_q+\binom{n-1}{r-1}_q.\qedhere
 \]

 \end{proof}
 
\section{A determinant formula}\label{A:determinant}
In this appendix, we prove the following.
\begin{theorem}\label{T:determinant}
Let $\alpha_{1},\dots, \alpha_{n}$ be $n$ indeterminates. For $i=1,\dots, n$, let $s_{i}$ denote the $i$-th elementary symmetric polynomial in $\alpha$'s, and $s_0=1$ by convention. Let $q$ be another indeterminate. We put $q_r=q^{r-1}+q^{r-3}+\cdots+q^{1-r}$. Consider the matrix $M_n(q)=(m_{i,j})$ given as follows:
\[
m_{i,j}=\begin{cases}
\displaystyle\sum_{\delta=0}^{\min\{i-1,n-j\}}q_{n+i-j-2\delta}\;s_{j-i+\delta}\;s_{n-\delta}&\text{if }i\leq j;\\
\displaystyle\sum_{\delta=0}^{\min\{j-1,n-i\}}q_{n+j-i-2\delta}\;s_{\delta} \; s_{n+j-i+\delta} &\text{if }i>j.
\end{cases}
\]
Then we have
\[
\det(M_n(q))=\alpha_1\cdots \alpha_n\prod_{i\neq j}(q\alpha_i-\frac{1}{q}\alpha_j).
\]
\end{theorem}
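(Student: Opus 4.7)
Both sides of the claimed identity are homogeneous symmetric polynomials in $\alpha_1,\ldots,\alpha_n$ (with coefficients in $\ZZ[q,q^{-1}]$) of total degree $n^2$. Indeed, each $m_{i,j}$ is homogeneous of degree $n+j-i$ in the $\alpha$'s (uniformly in $\delta$), so every term in the Leibniz expansion of $\det M_n(q)$ has degree $\sum_i(n+\sigma(i)-i)=n^2$; on the right, $\alpha_1\cdots\alpha_n$ contributes $n$ and the $n(n-1)$ linear factors contribute $n(n-1)$. Symmetry of the right-hand side in the $\alpha$-variables is a direct check, pairing the factors $(i,j)$ and $(j,i)$; symmetry of the left-hand side is automatic since each entry depends on the $\alpha$'s only through the $s_k$.

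My plan is in three steps. First, the constraint $0\leq\delta\leq\min(i-1,n-j)$ forces $\delta=0$ in the first row, so $m_{1,j}=q_{n+1-j}\,s_{j-1}\,s_n$; pulling $s_n=\alpha_1\cdots\alpha_n$ out of the first row gives $\det M_n(q)=s_n\cdot\det M'_n(q)$, where $M'_n(q)$ has first row $(q_n,\,q_{n-1}s_1,\,\ldots,\,q_1s_{n-1})$. Second, by the $S_n$-symmetry of both sides in the $\alpha$'s, it suffices to show that $\det M_n(q)$ vanishes at the single specialization $\alpha_1=q^{-2}\alpha_2$ in order to conclude that each of the $n(n-1)$ linear factors $q\alpha_i-q^{-1}\alpha_j$ divides $\det M_n(q)$. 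Third, after Step 2 the quotient $\det M_n(q)/\bigl(s_n\prod_{i\neq j}(q\alpha_i-q^{-1}\alpha_j)\bigr)$ lies in $\ZZ[q,q^{-1}]$ by the degree count, and it is pinned to $1$ by comparing one chosen coefficient, for instance the coefficient of the monomial $\alpha_1^n\alpha_2^{n-1}\cdots\alpha_n$ in the lexicographic order, or by specializing $\alpha_k=q^{2(k-1)}t$ to reduce to an identity in the single variable~$t$.

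The main obstacle is Step 2: producing an explicit linear relation between the columns (or rows) of $M_n(q)$ at $\alpha_1=q^{-2}\alpha_2$. The natural tool is the generating function $E(x)=\prod_k(1+\alpha_k x)=\sum_k s_k x^k$ together with the identity $E(qx)E(q^{-1}x)=\prod_k\bigl(1+q_2\alpha_k x+\alpha_k^2 x^2\bigr)$, which expresses $\sum_{a+b=N}q^{b-a}s_a s_b$ as the coefficient of $x^N$; combined with the antisymmetry $q_{a-b}+q_{b-a}=0$, this lets one reorganize each $m_{i,j}$ as the coefficient of $x^{n+j-i}$ in a particular partial sum of these generating-function coefficients. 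At the specialization $\alpha_1=q^{-2}\alpha_2$, both $E(qx)$ and $E(q^{-1}x)$ acquire the common linear factor $1+q^{-1}\alpha_2 x$, so $E(qx)E(q^{-1}x)$ becomes divisible by $(1+q^{-1}\alpha_2 x)^2$; the vanishing of this polynomial and of its derivative at $x=-q\alpha_2^{-1}$ provides two explicit linear relations among the coefficients $[x^N]E(qx)E(q^{-1}x)$, which I expect to organize into a single column dependence of $M_n(q)$ after the partial-sum reorganization above. Making this organization precise is the purely combinatorial heart of the argument, and the identity is of the same flavour as the Gram-determinant formulas for Temperley--Lieb-type Hecke modules underlying the periodic semi-meander combinatorics alluded to in the main text; the hope is that a natural pairing structure on the indexing set of columns will produce the required dependence.
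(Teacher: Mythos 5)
Your overall skeleton (factor $s_n$ out of the first row, show the determinant vanishes on the hyperplane $\alpha_2=q^2\alpha_1$, invoke the $S_n$-symmetry and pairwise coprimality of the linear forms, match degrees, and pin down the remaining scalar) is a legitimate strategy, but as written it is not a proof: the decisive step is exactly the one you flag as the ``main obstacle'' and leave at the level of ``I expect'' and ``the hope is''. Nothing in the proposal actually exhibits a linear dependence among the rows or columns of $M_n(q)$ at $\alpha_1=q^{-2}\alpha_2$. That is where all the work lies: each entry $m_{i,j}$ is a \emph{truncated} antidiagonal sum $\sum q_{b-a}s_as_b$ over $a+b=n+j-i$, while the two relations you extract from the double zero of $E(qx)E(q^{-1}x)$ concern the \emph{complete} antidiagonal sums; converting the latter into a relation among the particular truncations occurring in the different rows is precisely the combinatorial content of the theorem, and it is not supplied. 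The normalization in your Step 3 is also only sketched (extracting the chosen leading coefficient of the determinant, or controlling the specialization $\alpha_k=q^{2(k-1)}t$, still requires an argument), though that part is routine once Step 2 is in place. A smaller inaccuracy: for $i>j$ the printed entry $s_\delta\, s_{n+j-i+\delta}$ is \emph{not} homogeneous uniformly in $\delta$; your degree count tacitly uses the corrected form $s_\delta\, s_{n+j-i-\delta}$, which is what Lemma~\ref{L:Hecke-determiant} and Theorem~\ref{Th-Conj} actually produce (the theorem statement has a sign typo in that index), so you should note the correction rather than assert homogeneity of the printed formula.

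For comparison, the paper's proof avoids the divisibility-and-degree interpolation entirely and is completely explicit: it starts from the Sylvester resultant matrix $N_n(q)$ of $f(x)=\prod_i(x+q^{-1}\alpha_i)$ and $g(x)=\prod_i(x+q\alpha_i)$, whose determinant is $\prod_{i,j}(q\alpha_j-q^{-1}\alpha_i)$, subtracts the first $n$ rows from the last $n$ rows to pull out the factor $(q-q^{-1})^n$, and then multiplies by an explicit determinant-one matrix to reach a block-triangular form whose lower-right block is $M_n(q)$; the identity then drops out with no separate vanishing, degree, or normalization steps. Note that your generating function $E(qx)E(q^{-1}x)$ is exactly $f\cdot g$ up to rescaling, and your specialization $\alpha_1=q^{-2}\alpha_2$ is exactly the locus where $f$ and $g$ acquire a common root; so the most natural way to close your gap is to upgrade the hoped-for column dependence into an identification of $M_n(q)$, up to unimodular row operations and the factor $(q-q^{-1})^n$, with the resultant matrix---which is precisely the paper's argument.
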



\begin{proof}
Let $N_n(q)$ be the resultant matrix of the polynomials $f(x)=\prod_{i=1}^n(x+q^{-1}\alpha_i)$ and $g(x)=\prod_{i=1}^n(x+q\alpha_i)$, that is, $N_{n}(q)$ is the $2n\times 2n$ matrix given by
\[
N_n(q)=
\begin{pmatrix}
s_0 &q^{-1}s_1 & q^{-2}s_2 &\cdots & q^{1-n}s_{n-1} &q^{-n}s_n &0 & \cdots &0\\
0& s_0 &q^{-1} s_1 &\cdots & q^{2-n}s_{n-2} & q^{1-n}s_{n-1} &q^{-n}s_n &\cdots &0\\
\vdots &\vdots & \vdots &\ddots & \vdots &\vdots& \vdots &\ddots& \vdots\\
0 &0 &0&\cdots &s_0 & q^{-1}s_1 &q^{-2}s_2&\cdots &q^{-n}s_n\\
s_0 &qs_1& q^2s_2 &\cdots & q^{n-1}s_{n-1} &q^ns_n & 0 &\cdots & 0\\
0 &s_0 &qs_1 &\cdots &q^{n-2}s_{n-2} & q^{n-1}s_{n-1} &q^ns_n &\cdots &0\\
\vdots &\vdots &\vdots  &\ddots & \vdots &\vdots& \vdots &\ddots& \vdots\\
0 &0 &0&\cdots &s_0 & qs_1 &q^2s_2&\cdots &q^n s_n\\
\end{pmatrix}.
\]
It is well known that $\det(N_n(q))=\prod_{i,j}(-q^{-1}\alpha_i+q\alpha_j)$. Thus it suffices to show that $\det(N_n(q))=(q-q^{-1})^n\det(M_n(q))$.

We first make the following row operations on $N_n(q)$: subtract row $i$ from row $n+i$ for all $i=1,\dots, n$. We obtain a matrix whose first column are all $0$ expect the first entry being $1$; moreover, one can take out a factor $(q-q^{-1})$ from row $n+1, \dots, 2n$. Let $N_n'(q)$ be the right lower $(2n-1)\times (2n-1)$ submatrix of the remaining matrix. Then we have
\[
N_{n}'(q)=
\begin{pmatrix}
s_0 &q^{-1}s_1 & q^{-2}s_2 &\cdots & q^{1-n}s_{n-1} &q^{-n}s_n &0 & \cdots &0\\
0& s_0 &q^{-1} s_1 &\cdots & q^{2-n}s_{n-2} & q^{1-n}s_{n-1} &q^{-n}s_n &\cdots &0\\
\vdots &\vdots & \vdots &\ddots & \vdots &\vdots& \vdots &\ddots& \vdots\\
0 &0&0& \cdots &s_0 &q^{-1}s_1 &q^{-2}s_2&\cdots &q^{-n}s_n\\
q_1s_1 &q_2s_2 &q_3s_3 &\cdots & q_{n-1}s_{n-1} &q_ns_n &0 &\cdots &0\\
0 &q_1s_1 &q_2s_2 & \cdots & q_{n-2}s_{n-2} &q_{n-1}s_{n-1} &q_ns_n &\cdots &0\\
\vdots &\vdots & \vdots &\ddots & \vdots &\vdots& \vdots &\ddots& \vdots\\
0 &0 &0 &\cdots & 0 &q_1s_1 &q_2s_2 &\cdots & q_ns_n
\end{pmatrix}
\]
with $\det(N_n(q))=(q-q^{-1})^n\det(N_n'(q))$. Thus we are reduced to  proving that $\det(N_n'(q))=\det(M_n(q))$. Consider the $(2n-1)\times (2n-1)$ matrix $R=\begin{pmatrix}I_{n-1} &0\\ C& D\end{pmatrix}$ with the lower $n\times (2n-1)$ submatrix given by
 \[
 \begin{pmatrix}C&D\end{pmatrix}=\begin{pmatrix}
 -q_1s_1 &-q_2s_2 &\cdots &-q_{n-1}s_{n-1} &1 &q^{-1}s_{1} &q^{-2}s_{2} &\cdots &q^{2-n}s_{n-2}&q^{1-n}s_{n-1}\\
 0 &-q_1s_1 &\cdots& -q_{n-2}s_{n-2} &0  &1 & q^{-1}s_1&\cdots & q^{3-n}s_{n-3}&q^{2-n}s_{n-2}\\
 \vdots &\vdots &\ddots &\vdots &\vdots & \vdots &\vdots &\ddots  &\vdots &\vdots\\
 0 & 0&\cdots &-q_1s_1 &0  & 0& 0&\cdots &1&q^{-1}s_1\\
 0 &0 &\cdots &0 &0 &0&0 &\cdots &0 &1
 \end{pmatrix}.
 \]
By a careful computation, one verifies without difficulty that $RN_n'(q)=\begin{pmatrix}U& *\\
 0&M_{n}(q)\end{pmatrix}$, where $U$ is an $(n-1)\times (n-1)$-upper triangular matrix with all diagonal entries equal to $1$.
Note that $\det(R)=\det(D)=\det(U)=1$, it follows immediately  that $\det(N_n'(q))=\det(M_n(q))$.
\end{proof}

\end{document}